\documentclass[12pt,reqno]{amsart}
\usepackage{amsmath,amsthm,amssymb,mathrsfs,stmaryrd,color,mathtools}

\usepackage[all]{xy}
\usepackage{url}
\usepackage{tikz-cd}
\usepackage[margin=1in,footskip=.5in]{geometry}

\usepackage[utf8]{inputenc}
\usepackage[T1]{fontenc}

\usepackage{relsize}
\usepackage[bbgreekl]{mathbbol}
\usepackage{amsfonts}

\DeclareSymbolFontAlphabet{\mathbb}{AMSb}
\DeclareSymbolFontAlphabet{\mathbbl}{bbold}

\usepackage{enumitem}
\usepackage[colorlinks=true,hyperindex, linkcolor=magenta, pagebackref=false, citecolor=cyan,pdfpagelabels]{hyperref}
\usepackage[capitalize]{cleveref}

\theoremstyle{plain}
\newtheorem{thm}{Theorem}[section]
\newtheorem{thm2}{Theorem}

\newtheorem{prop}[thm]{Proposition}
\newtheorem{prop2}[thm2]{Proposition}
\newtheorem{ques}[thm]{Question}
\newtheorem{cor}[thm]{Corollary}
\newtheorem{cor2}[thm2]{Corollary}
\newtheorem{lem}[thm]{Lemma}

\theoremstyle{definition}
\newtheorem{defi}[thm]{Definition}
\newtheorem{rmk}[thm]{Remark}

\newtheorem{ass}[thm]{Assumption}
\newtheorem{ass2}[thm2]{Assumption}

\newtheorem{sett}[thm]{Setting}

\numberwithin{equation}{section}

\newcommand{\bb}[1]{\mathbb{#1}}
\newcommand{\cl}[1]{{\mathcal{#1}}}

\newcommand{\msf}[1]{{\mathsf{#1}}}
\newcommand{\mfr}[1]{{\mathfrak{#1}}}
\newcommand{\mrm}[1]{{\mathrm{#1}}}

\newcommand{\ov}[1]{{\overline{#1}}}

\newcommand{\wtd}[1]{{\widetilde{#1}}}

\newcommand{\Qla}{{\overline{\mathbb{Q}}_\ell}}
\newcommand{\Qlax}{{\overline{\mathbb{Q}}_\ell^\times}}

\newcommand{\Aut}{{\operatorname{Aut}}}

\newcommand{\Frob}{{\operatorname{Frob}}}

\newcommand{\Lie}{{\operatorname{Lie}}}
\newcommand{\Img}{{\operatorname{Im}}}

\newcommand{\Spec}{{\operatorname{Spec}}}

\newcommand{\PerfAlg}{{\operatorname{PerfAlg}}}

\newcommand{\Tr}{{\operatorname{Tr}}}

\newcommand{\Ker}{{\operatorname{Ker}}}

\newcommand{\id}{{\operatorname{id}}}

\newcommand{\pr}{{\operatorname{pr}}}

\newcommand{\ad}{{\operatorname{ad}}}

\newcommand{\Hom}{{\operatorname{Hom}}}

\newcommand{\Ind}{{\operatorname{Ind}}}
\newcommand{\cInd}{{\operatorname{cInd}}}
\newcommand{\tr}{{\operatorname{tr}}}

\newcommand{\der}{{\operatorname{der}}}
\newcommand{\dt}{{\operatorname{det}}}
\newcommand{\IC}{{\operatorname{IC}}}
\newcommand{\Nm}{{\operatorname{Nm}}}
\newcommand{\sgn}{{\operatorname{sgn}}}
\newcommand{\asym}{{\operatorname{asym}}}
\newcommand{\sym}{{\operatorname{sym}}}
\newcommand{\Yu}{{\operatorname{Yu}}}

\newcommand{\Av}{{\operatorname{Av}}}

\newcommand{\Gal}{{\operatorname{Gal}}}

\newcommand{\GL}{{\operatorname{GL}}}

\newcommand{\Stab}{{\operatorname{Stab}}}

\newcommand{\Sp}{{\operatorname{Sp}}}

\usepackage[citestyle=alphabetic,bibstyle=alphabetic,backend=bibtex, maxalphanames=10, maxnames=10, url=false, doi=false]{biblatex}
\title{Characteristic-free approaches around Yu's construction}
\author{Yuta Takaya}
\address{Graduate School of Mathematical Sciences, The University of Tokyo, 3-8-1 Komaba,
Meguro-ku, Tokyo 153-8914, Japan}
\email{takaya@ms.u-tokyo.ac.jp}
\begin{document}
\begin{abstract}
    We give a direct characteristic-free construction of twisted Heisenberg-Weil representations when there are no symmetric and ramified roots. As a consequence, we show that twisted Yu's construction naturally extends to residual characteristic $2$. Moreover, we give a geometric realization of such twisted Heisenberg-Weil representations via the Deligne-Lusztig construction for Heisenberg group schemes. As an application, we give an explicit description of positive-depth parahoric Deligne-Lusztig induction in the generic case. 
\end{abstract}

\maketitle
\setcounter{tocdepth}{2}
\tableofcontents

\section*{Introduction}

\subsection{Background}

In the smooth representation theory of $p$-adic groups, Yu's construction \cite{Yu01} is one of the most general and wide-ranging constructions of supercuspidal representations to date. It is the main source of Kaletha's local Langlands correspondence for regular supercuspidal representations (\cite{Kal19}, \cite{Kal21}). However, there are several technical subtleties in the construction, arising essentially from the use of symplectic Heisenberg-Weil representations developed in \cite{Ger77}. One is the exceptional behavior at $p = 2$, which is studied in detail in \cite{FS25}, and the other is the need to incorporate a quadratic twist as in \cite{FKS23} for the construction of stable $L$-packets. We refer to this twisted version as \textit{twisted} Yu's construction. This quadratic twist is intrinsic also from the geometric viewpoint: it has been observed (e.g. \cite{CO25_JEMS}, \cite{CO25}, \cite{Nie24}, \cite{IN26}) that twisted Yu's construction is innately realized in the cohomology of positive-depth Deligne-Lusztig varieties. 

In this paper, we aim to give direct characteristic-free constructions of \textit{twisted} Heisenberg-Weil representations from the representation-theoretic and geometric point of view. As a consequence, we show that \textit{twisted} Yu's construction naturally extends to $p = 2$ and provide an explicit description of positive-depth parahoric Deligne-Lusztig induction in the generic case. 

\subsection{Representation-theoretic construction}

Let $G$ be a connected reductive group over a non-archimedean local field $F$ and let $M \subset G$ be a tamely ramified twisted Levi subgroup. Let $x \in \cl{B}(M, F)$ be a vertex of the enlarged Bruhat-Tits building and let $T \subset M$ be a tamely ramified maximal torus such that $x \in \cl{A}(T, F)$. Let $E$ be a tamely ramified Galois extension of $F$ such that $T$ splits over $E$. Throughout this paper, we work under the following assumption (see \Cref{ass:without_symmetric_ramification} for a precise statement). 

\begin{ass2} \label{ass2:without_symmetric_ramification}
    There are no nontrivial symmetric and ramified $Z_M$-weights in $\Lie(G)$.  
\end{ass2}

In fact, this holds if the ramification index of $E / F$ is odd, so it is automatic when $p = 2$ or $M$ splits over the completed maximal unramified extension $\breve{F}$ of $F$. One benefit of this assumption is that it simplifies the description of the quadratic twist $\epsilon_x^{G / M}$ in \cite{FKS23}. 

Fix a depth $r > 0$ and let $\msf{m}_{x, r} = \Lie(M)_{x, r} / \Lie(M)_{x, r+}$ be the graded piece of the Moy-Prasad filtration. Let $k$ be the residue field of $F$ and take a $k$-linear map $\phi \colon \msf{m}_{x, r} \to k$ that is stable under the conjugation by $M(F)_x$ and satisfies a certain genericity condition (see \Cref{defi:r_generic}). Then, $\msf{m}^{\perp}_{x, r/2} = \Lie(M)^\perp_{x, r/2} / \Lie(M)^\perp_{x, r/2+}$ is equipped with a symplectic form $\langle -, - \rangle_\phi$ coming from the commutator pairing. An intrinsic benefit of \Cref{ass2:without_symmetric_ramification} is the following simple but crucial observation. 

For introduction, let $\Gamma = \Gal(E / F)$ and $\Sigma = \Gamma \times \{ \pm  1\}$. Let $\Phi^M_\Sigma$ be the set of $\Sigma$-orbits of nontrivial $Z_M$-weights in $\Lie(G)$ and consider an orthogonal weight decomposition $\msf{m}^\perp_{x, r/2} = \bigoplus_{C \in \Phi^M_\Sigma} \msf{m}^{\perp, C}_{x, r/2}$ stable under $M(F)_x$. 
For each $Z_M$-weight $\alpha$, let $F_\alpha = E^{\Stab_\Gamma(\alpha)}$ and let $k_\alpha$ be the residue field of $F_\alpha$. 

\begin{prop2} \label{prop2:decomposition_m_perp}
    Let $C \in \Phi_{\Sigma}^M$ and fix a representative $\alpha \in C$. 
    \begin{enumerate}
        \item If $C \neq \Gamma \cdot \alpha$, $\msf{m}^{\perp, C}_{x, r/2} \cong V^\alpha \oplus (V^\alpha)^*$ for some vector space $V^\alpha$ over $k_\alpha$. Moreover, the action of $M(F)_x$ on $\msf{m}^{\perp, C}_{x, r/2}$ factors through $\GL(V_\alpha / k_\alpha)$.
        \item  If $C = \Gamma \cdot \alpha$, $\msf{m}^{\perp, C}_{x, r/2} \cong V^\alpha$ for some vector space $V^\alpha$ equipped with a non-degenerate skew-Hermitian form over $k_\alpha$. Moreover, the action of $M(F)_x$ on $\msf{m}^{\perp, C}_{x, r/2}$ factors through the unitary group $\msf{U}(V_\alpha / k_\alpha)$.
    \end{enumerate}
\end{prop2}

This follows from a detailed study of the weight decomposition of $\msf{m}^\perp_{x, r/2}$ over the algebraic closure $\ov{k}$. We prove this in an axiomatic way in \Cref{prop:orbit_description}. 

Let $\msf{H}_{x, r, \phi} = (G(F)_{x, r, r/2} / G(F)_{x, r+, r/2+})/\Ker(\phi)$ be the associated Heisenberg group. Let 
\[
    \msf{M}_{x, r, \phi} \subset \Sp(\msf{m}^{\perp}_{x, r/2}, \langle - , - \rangle_\phi)
\]
be the image of the adjoint action of $M(F)_x$. It can be seen that the adjoint action of $M(F)_x$ on $\msf{H}_{x, r, \phi}$ factors through $\msf{M}_{x, r, \phi}$ (see \Cref{lem:adjoint_M_xrphi}). Then, we can form a Heisenberg-Weil group $\msf{HW}_{x, r, \phi} = \msf{H}_{x, r, \phi}  \rtimes \msf{M}_{x, r, \phi}$. In this setting, we can construct Heisenberg-Weil representations uniformly and describe their characters explicitly. Fix a nontrivial character $\psi \colon k \to \Qlax$. 

\begin{thm2} \textup{(\Cref{thm:Heisenberg_Weil_representations})}
    \label{thm2:Heisenberg_Weil_representations}
    There is a unique irreducible representation $\kappa_{x, r, \phi}$ of $\msf{HW}_{x, r, \phi}$ such that
    \begin{enumerate}
        \item $\kappa_{x, r, \phi} \vert_{\msf{H}_{x, r, \phi}}$ is the Heisenberg representation with central character $\psi$, and
        \item $\tr(\gamma \mid \kappa_{x, r, \phi}) = (-1)^{d_\gamma} \sqrt{\lvert \msf{m}^{\perp, \gamma}_{x, r/2} \rvert}$ for every $\gamma \in \msf{M}_{x, r, \phi}$ (see \Cref{defi:d_gamma_sign_factor} for $d_\gamma$). 
    \end{enumerate}
    Moreover, the trace function of $\kappa_{x, r, \phi}$ is supported on the elements conjugate to $k \cdot \msf{M}_{x, r, \phi}$. 
\end{thm2}

Here, we utilize the decomposition in \Cref{prop2:decomposition_m_perp} and construct $\kappa_{x, r, \phi}$ as a product of Heisenberg-Weil representations for general linear groups and unitary groups, as constructed in \cite{Ger77} in arbitrary characteristic. When $p \neq 2$, we show in \Cref{prop:comparison_epsilon_twist} that $\kappa_{x, r, \phi}$ differs from the symplectic Heisenberg-Weil representation $\kappa_\Yu$ exactly by $\epsilon_x^{G / M}$. In other words, $\epsilon_x^{G / M}$ measures the difference between polarized or unitary Heisenberg-Weil representations and symplectic ones. 

\subsubsection{Application to Yu's construction}

By replacing $\kappa_\Yu$ with $\kappa_{x, r, \phi}$, we get twisted Yu's construction in residual characteristic $2$. Thanks to a description of the restriction of $\kappa_{x, r, \phi}$ to parabolic subgroups (see \Cref{prop:restrction_as_induction}), the existing proofs in \cite{Yu01}, \cite{Fin21} and \cite{FKS23} work even in residual characteristic $2$. We present an apparently different exposition emphasizing the relation to positive-depth parahoric Deligne-Lusztig induction, but we essentially follow the same ideas, extracting an induction step in Yu's construction. 

Let $\phi \colon M(F)_x \to \Qlax$ be a $(G, M)$-supergeneric character of depth $r$ (see \Cref{defi:genericity}) and let $\rho$ be an $M(F)_x / M(F)_{x, r}$-representation. When $Z_M / Z_G$ is anisotropic, we say that the $G(F)_x / G(F)_{x, r+}$-representation
\[
    \Ind_{r, \phi}(\rho) = \Ind^{G(F)_x}_{G(F)_{x, r/2} M(F)_x} (\kappa_{x, r, \phi} \otimes \rho)
\]
is the positive-depth induction of $\rho$ twisted by $\phi$ (see \Cref{defi:twisted_positive_depth_induction}). In fact, we will see later that $\Ind_{r, \phi}(\rho)$ arises as the positive-depth Deligne-Lusztig induction of $\rho \otimes \phi$. Here, the main representation-theoretic property of $\Ind_{r, \phi}$ is as follows. 

\begin{thm2} \textup{(\Cref{thm:intertwining_induction})}
    Let $\rho$ be an irreducible representation of $M(F)_x / M(F)_{x, r}$. 
    \begin{enumerate}
        \item $\Ind_{r, \phi}(\rho)$ is an irreducible representation of $G(F)_x / G(F)_{x, r+}$. 
        \item If every intertwiner of $\rho$ in $M(F)$ lies in $M(F)_x$, then every intertwiner of $\Ind_{r, \phi}(\rho)$ in $G(F)$ lies in $G(F)_x$. 
    \end{enumerate}
    In particular, if $\cInd_{M(F)_x}^{M(F)} \rho$ is irreducible and supercuspidal, then so is $\cInd_{G(F)_x}^{G(F)} \Ind_{r, \phi}(\rho)$. 
\end{thm2}

The same holds without \Cref{ass2:without_symmetric_ramification} (or if $p \neq 2$) by setting $\kappa_{x, r, \phi} = \kappa_\Yu \otimes \epsilon^{G / M}_x$. Then, twisted Yu's construction can be regarded as an iteration of positive-depth induction. 

\begin{cor2}\textup{(\Cref{thm:twisted_Yu_construction})} \label{thm2:twisted_Yu_construction}
    For every generic datum $((G_i)_{1 \leq i \leq n + 1}, x, (r_i)_{1\leq i \leq n}, \rho, (\phi)_{1 \leq i \leq n})$, $\cInd_{\wtd{K}}^{G(F)} \wtd{\rho}$ is irreducible and supercuspidal. 
\end{cor2}

Here, $\wtd{K}$ and $\wtd{\rho}$ are defined as in the usual Yu's construction. In particular, our construction does not involve any additional choices for $p = 2$ in contrast to \cite{FS25}. Nevertheless, it would be interesting to study the behavior of $\Ind_{r, \phi}$ when $\phi$ does not satisfy {\bf GE2} (see \Cref{rmk:behavior_without_GE2}). 


\subsection{Geometric realization}

A standard approach in the geometric realization of twisted Yu's construction (e.g. \cite{CO25_JEMS}, \cite{CO25}, \cite{Nie24}, \cite{IN26}) is via the computation of trace characters, but it suffers from technical largeness conditions on $p$ and $q = \lvert k \rvert$ due to the limitation of elements whose trace can be computed. We will avoid this issue by realizing $\kappa_{x, r, \phi}$ geometrically in arbitrary residual characteristic via a more explicit study of the geometry.


Let $\Phi^M_e$ be the set of inertial orbits of nontrivial $Z_M$-weights in $\Lie(G)$. Fix a decomposition $\Phi_e^M = \Phi_e^{M+} \sqcup \Phi_e^{M-}$ such that $\Phi_e^{M-} = - \Phi_e^{M+}$ and let 
\[
    I_0 = \{ \alpha \in \Phi_e^{M+} \mid \Gal(\ov{k} / k) \cdot \alpha \subset \Phi_e^{M+} \}, \quad 
    I_1 = \Phi_e^{M+} \cap \sigma(\Phi_e^{M-}), \quad 
    I = I_0 \sqcup I_1.  
\]
Let $\msf{u}_{I, r/2} \subset \msf{m}^{\perp}_{x, r/2} \otimes \ov{k}$ be the direct summand coming from the $Z_M$-weight decomposition. Since $\msf{u}_{I, r/2}$ is isotropic, the inclusion can be lifted to $H_{x, r, \phi}$ (see \Cref{prop:lifting_to_Heisenberg}). 

\begin{thm2}\textup{(\Cref{cor:Heisenberg_realization} and \Cref{thm:extended_HW_rep})} \label{thm2:coh_W_Ir}
    Let $\msf{W}_{I, r} \to \msf{u}_{I, r}$ be the finite \'{e}tale $\msf{H}_{x, r, \phi}$-torsor given by the Cartesian diagram
    \begin{center}
        \begin{tikzcd}
            \msf{W}_{I, r} \ar[r] \ar[d] & H_{x, r, \phi, \ov{k}} \ar[d, "h \mapsto \sigma(h) h^{-1}"] \\
            \msf{u}_{I, r/2} \ar[r] & H_{x, r, \phi, \ov{k}}. 
        \end{tikzcd}
    \end{center}
    Then, as a representation of $\msf{HW}_{x, r, \phi}$, we have 
    \[
        H_c^i(\msf{W}_{I, r}, \Qla)[\psi] = 0 \quad (i \neq n) ,\quad 
        H_c^n(\msf{W}_{I, r}, \Qla)[\psi] \cong \kappa_{x, r, \phi} 
    \]  
    for some $n$. Moreover, $n = \dim \msf{W}_{I, r}$ if and only if $I_0 = \phi$. In that case, the natural map
    \[
        H_c^n(\msf{W}_{I, r}, \Qla)[\psi] \to H^n(\msf{W}_{I, r}, \Qla)[\psi]
    \]  
    is an isomorphism. 
\end{thm2}

This construction can be viewed as the Deligne-Lusztig construction for Heisenberg groups. Here, $H_c^i(\msf{W}_{I, r}, \Qla)[\psi]$ denotes the $\psi$-isotypic part of the \'{e}tale cohomology for the central action of $k \subset \msf{HW}_{x, r, \phi}$. In \Cref{thm:cohAS}, we first compute the cohomology of certain Artin-Schreier sheaves and deduce $\dim H_c^i(\msf{W}_{I, r}, \Qla)[\psi]$ as an application. It is enough to determine the action of $\msf{H}_{x, r, \phi}$, but one needs to be careful about the $\msf{M}_{x, r, \phi}$-action since a standard method only computes the trace of tame elements. It turns out that exceptional cases where it is not sufficient (e.g. when $q = 2$) can be reduced to the computation in \cite{IT23}. 

In fact, $\msf{W}_{I, r}$ is found out in the course of the generalization of \cite{BW16}: in our forthcoming work as a continuation of \cite{Tak25z}, we will show that $\msf{W}_{I, r}$ is realized as the reduction of special affinoids in local Shimura varieties. The property $H_c^n(\msf{W}_{I, r}, \Qla)[\psi] \cong H^n(\msf{W}_{I, r}, \Qla)[\psi]$ plays an important role in lifting the cohomology of $\msf{W}_{I, r}$ to local Shimura varieties via the nearby cycle functor (see \cite[Theorem 2.8]{Mie16}).

We note that the use of a polarization $\Phi_e^M = \Phi_e^{M+} \sqcup \Phi_e^{M-}$ looks similar to the positive-depth Deligne-Lusztig theory of \cite{IN26} in the tamely ramified setting. We expect that even without \Cref{ass2:without_symmetric_ramification}, $\kappa_{x, r, \phi}$ should be geometrically realized by utilizing $p \neq 2$. 

\subsubsection{Application to positive-depth Deligne-Lusztig theory}

For this application, we assume that $M$ splits over $\breve{F}$ and $Z_M / Z_G$ is anisotropic. To obtain the middle concentration property, we work with 
\[
    \msf{Y}_{P, r} = \{ h \in \msf{U}_r \cap \sigma^{-1}(\msf{U}_r) \backslash \msf{G}_{r, \ov{k}} \mid \sigma(h) h^{-1} \in \msf{U}_r \}
\] 
as a positive-depth parahoric Deligne-Lusztig variety (see \Cref{ssec:preliminaries} for our notation). We follow the approach taken in \cite{IN25} to apply \Cref{thm2:coh_W_Ir} to the computation of positive-depth parahoric Deligne-Lusztig induction. In \Cref{defi:MP_DL_context}, we introduce a depth filtration $\{\msf{Y}_{P, [s, r]}\}_{s > 0}$ inside $\msf{Y}_{P, r}$ so that $\msf{W}_{I, r}$ is realized as a quotient of $\msf{Y}_{[r/2, r]}$. By showing that the \textit{generic} contribution is concentrated on $\msf{Y}_{P, [r/2, r]}$, we get the following computation. Let $\IC_{\msf{Y}, r} = \Qla[\dim \msf{Y}_{P, r}]$ be the intersection cohomology complex on $\msf{Y}_{P, r}$. 

\begin{thm2}\textup{(\Cref{cor:induction_step_positive} and \Cref{thm:positive_depth_DL_induction})}
    Let $\phi \colon M(F)_x \to \Qlax$ a $(G, M)$-generic character of depth $r$ and let $\rho$ be a representation of $M(F)_x / M(F)_{x, r}$. For every $s > 0$, 
    \[
        R\Gamma_c(\msf{Y}_{P, [s, r]}, \IC_{\msf{Y}, r}\vert_{\msf{Y}_{P, [s, r]}}) \mathop{\otimes}_{\msf{M}_r(k)} (\rho \otimes \phi) = \Ind_{r, \phi}(\rho). 
    \]
    Moreover, when $P$ is convex (see \Cref{defi:convex_parabolic}), we have 
    \[
        R\Gamma_c(\msf{Y}_{P, r}, \IC_{\msf{Y}, r}) \mathop{\otimes}_{\msf{M}_r(k)} (\rho \otimes \phi) = \Ind_{r, \phi}(\rho). 
    \]
\end{thm2}

Here, the main obstacle to establishing the last claim in full generality is \Cref{conj:obstruction_for_depth_0_induction}, concerning the generic vanishing of the cohomology of certain explicit subvarieties of Lie algebras. The convexity assumption on $P$ introduced in \cite{IN25} is quite a clean way to simplify the associated geometry. It would be interesting to see if the natural map
\[
    R\Gamma_c(\msf{Y}_{P, r}, \IC_{\msf{Y}, r}) \mathop{\otimes}\limits_{\msf{M}_r(k)} (\rho \otimes \phi) \to R\Gamma(\msf{Y}_{P, r}, \IC_{\msf{Y}, r}) \mathop{\otimes}\limits_{\msf{M}_r(k)} (\rho \otimes \phi) 
\]
is an isomorphism as in the Deligne-Lusztig theory (see \cite[Theorem 10.7]{BR03}). As in loc. cit., Yu's genericity condition {\bf GE2}, or supergenericity, is irrelevant to middle concentration. 

In particular, we verify \cite[Conjecture 5.12]{CO25} for convex parabolic subgroups, which conjectures the Euler characteristic formula of $R\Gamma_c(\msf{Y}_{P, r}, \IC_{\msf{Y}, r}) \otimes (\rho \otimes \phi)$. As commented in loc. cit., \cite[Theorem 5.6]{CO25} is recovered for suitably chosen Borel subgroups without technical conditions on residual characteristic. 


\subsection*{The structure of the paper}

In \Cref{sec:symplectic_geometry}, we prove \Cref{prop2:decomposition_m_perp} by introducing an axiomatic framework for decomposition of symplectic representations. In \Cref{sec:twisted_Heisenberg_Weil}, we construct the twisted Heisenberg-Weil representation $\kappa_{x, r, \phi}$ as a tensor product of polarized or unitary Heisenberg-Weil representations. In \Cref{sec:Yu_construction}, we show that twisted Yu's construction works in residual characteristic $2$. In \Cref{sec:cohAS}, we compute the cohomology of certain Artin-Schreier sheaves, and we use this result to show that $\kappa_{x, r, \phi}$ is realized in the cohomology of $\msf{W}_{I, r}$ in \Cref{sec:geometric_realization}. In \Cref{sec:positive_depth_induction}, we apply the computation in \Cref{sec:geometric_realization} to the explicit description of positive-depth Deligne-Lusztig induction in the generic case.  

\subsection*{Acknowledgements}
I would like to thank my advisor Yoichi Mieda for his constant support and encouragement. This work was supported by the WINGS-FMSP program at the Graduate School of Mathematical Sciences, the University of Tokyo and JSPS KAKENHI Grant number JP24KJ0865.

\subsection*{Notation}
All rings are assumed to be commutative. Fix a prime number $p$. The center of an algebraic group $G$ is denoted by $Z_G$. For an endomorphism $\tau$ of a vector space or a scheme, the $\tau$-fixed locus is denoted by the superscript $(-)^\tau$. Unless otherwise stated (i.e. except for \Cref{sec:symplectic_geometry}), the coefficient field of representations of finite groups is taken as $\Qla$. 

Let $k$ be a finite field in characteristic $p$ of cardinality $q$ and let $\ov{k}$ be its algebraic closure. We fix a nontrivial additive character $\psi \colon k \to \Qlax$. The base change from $k$ to $\ov{k}$ is denoted by $\ov{(-)}$ or $(-)_{\ov{k}}$. The Frobenius action relative to $k$ is denoted by $\sigma$. Note that $\sigma$ also denotes the Frobenius element in $\Gal(\ov{k} / k)$. Let $\PerfAlg$ be the category of perfect $\ov{k}$-algebras. For a perfect $\ov{k}$-scheme $\msf{X}$ perfectly of finite type, $x \in \msf{X}$ denotes a $\ov{k}$-valued point of $\msf{X}$ unless otherwise stated.

\section{Decomposition of symplectic representations} \label{sec:symplectic_geometry}

\subsection{Genuine weight decomposition} \label{ssec:weight_decomposition}

In this section, we study symplectic representations over finite fields equipped with certain direct sum decomposition and show that they can be decomposed into polarized or unitary symplectic representations. 



\begin{defi}
    We say that a smooth action of $\Gal(\ov{k} / k) \times \{\pm 1\}$ on a set $S$ is \textit{genuine} if $s \neq -s$ for every $s \in S$. Here, $-s = (-1) \cdot s$. 
\end{defi}

\begin{defi} \label{defi:weight_decomposition}
    Let $\msf{M}$ be a finite group and let $(V, \langle - , -\rangle)$ be a symplectic $\msf{M}$-representation over $k$. We say that an $\msf{M}$-stable direct sum decomposition
    \[
        \ov{V} = \bigoplus_{s \in S} \ov{V}^s
    \]
    over $\ov{k}$ is a \textit{genuine weight decomposition} if $S$ is equipped with a genuine smooth action of $\Gal(\ov{k} / k) \times \{\pm 1\}$ so that the following hold for every $s \in S$. 
    \begin{enumerate}
        \item $\sigma(\ov{V}^s) = \ov{V}^{\sigma(s)}$. 
        \item $\ov{V}^t$ is orthogonal to $\ov{V}^s$ for every $t \neq -s$. 
    \end{enumerate}
\end{defi}

Since $\langle -, - \rangle$ is non-degenerate, (2) implies that $\langle -, - \rangle \vert_{\ov{V}^s \times \ov{V}^{-s}}$ is non-degenerate. 

Now, fix a symplectic $\msf{M}$-representation $(V, \langle - , -\rangle)$ over $k$ equipped with a genuine weight decomposition indexed by $S$. Then, we show that $V$ can be decomposed into polarized or unitary symplectic $\msf{M}$-representations. 

\begin{defi} \label{defi:rational_form}
    For $s\in S$, let $d_s$ be the minimum positive integer with $\sigma^{d_s}(s) = s$. Let $k_s$ be the finite extension of $k$ of degree $d_s$ and let $V^s  = (\ov{V}^s)^{\sigma^{d_s}}$ be the $k_s$-rational form of $\ov{V}^s$. 
\end{defi}
 
\begin{defi} \label{defi:C_component}
    For each subset $C \subset S$ stable under $\Gal(\ov{k} / k)$, let 
    \[
        \ov{V}^C = \bigoplus_{s \in C} \ov{V}^s \subset \ov{V} ,\quad V^C = \left(\ov{V}^C \right)^{\Gal(\ov{k} / k)} \subset V. 
    \]
\end{defi}

\begin{prop} \label{prop:orbit_description}
    Let $C \subset S$ be an orbit under $\Gal(\ov{k} / k) \times \{ \pm 1\}$ and let $s \in C$ be a representative. Then, the following hold.
    \begin{enumerate}
        \item When $C \neq \Gal(\ov{k} / k) \cdot s$, we have $V^C \cong V^s \oplus V^{-s}$. Moreover, 
        \[
            B \colon V^s \times V^{-s} \to k_s, \quad 
            (v, w) \mapsto \langle v, w \rangle
        \]
        is an $\msf{M}$-stable non-degenerate pairing such that under $V^C \cong V^s \oplus V^{-s}$, 
        \[
            \langle (v,v^*), (w, w^*) \rangle = \Tr_{k_s / k} (B(v, w^*) - B(w, v^*)) \quad 
            (v, w \in V^s, v^*, w^* \in V^{-s})
        \]
        In particular, $V^C$ is a polarized symplectic $\msf{M}$-representation over $k_s$. 
        \item When $C = \Gal(\ov{k} / k) \cdot s$, we have $V^C \cong V^s$ and $d_s$ is even. Moreover, 
        \[
            B \colon V^s \times V^s \to k_s, \quad (v, w) \mapsto \langle v, \sigma^{d_s/2}(w) \rangle
        \] 
        is an $\msf{M}$-stable non-degenerate skew-Hermitian form such that under $V^C \cong V^s$, 
        \[  
            \langle v , w \rangle = \Tr_{k_s / k}(B(v, w)) \quad (v, w \in V^s). 
        \]
        In particular, $(V^C, B)$ is a skew-Hermitian unitary $\msf{M}$-representation over $k_s$. 
    \end{enumerate}
\end{prop}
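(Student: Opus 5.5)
The plan is to reduce both parts to Galois descent along a single Frobenius orbit. For a $\Gal(\ov{k}/k)$-stable subset $D \subset S$ that is a single Frobenius orbit $\{s, \sigma(s), \dots, \sigma^{d_s-1}(s)\}$, I will first show that projection to the $s$-component gives an isomorphism
\[
    V^D = \Bigl(\bigoplus_{i=0}^{d_s-1} \ov{V}^{\sigma^i(s)}\Bigr)^{\sigma} \xrightarrow{\ \sim\ } (\ov{V}^s)^{\sigma^{d_s}} = V^s .
\]
The inverse is $v \mapsto \sum_{i} \sigma^i(v)$. This uses condition (1) of \Cref{defi:weight_decomposition}, and it is $\msf{M}$-equivariant because $\msf{M}$ preserves each $\ov{V}^t$ and commutes with $\sigma$, the action being defined over $k$. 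Since $V^s$ is a $k_s$-form of $\ov{V}^s$ by Lang/Hilbert 90 for vector spaces, this identifies $V^D$ with a $k_s$-vector space. The pairing is then computed by expanding $\langle \sum_i \sigma^i v, \sum_j \sigma^j w\rangle$ and discarding cross terms using condition (2): only pairs $(\sigma^i s, \sigma^j t)$ with $\sigma^j t = -\sigma^i s$ survive.

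\textbf{Case (1).} Here $-s \notin \Gal(\ov{k}/k)\cdot s$, so $C$ is the disjoint union of the orbits of $s$ and $-s$. These two orbits have the same length $d_s$, since $\sigma$ commutes with $-1$. Hence $V^C = V^{\mathrm{orb}(s)} \oplus V^{\mathrm{orb}(-s)} \cong V^s \oplus V^{-s}$. Each orbit is isotropic, since $\sigma^i s \neq -\sigma^j s$. For $v \in V^s$ and $w^* \in V^{-s}$ the only surviving terms are $\langle \sigma^i v, \sigma^i w^*\rangle = \sigma^i\langle v, w^*\rangle$, using that $\langle\,,\rangle$ is defined over $k$. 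This gives $\Tr_{k_s/k} B(v,w^*)$, and $B$ is $k_s$-valued because $\sigma^{d_s}$ fixes $v$ and $w^*$. Antisymmetry then yields the stated formula. Non-degeneracy of $B$ follows from the remark after \Cref{defi:weight_decomposition} (the pairing $\ov{V}^s\times\ov{V}^{-s}$ is perfect) together with descent, and $\msf{M}$-invariance is inherited from $\langle\,,\rangle$.

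\textbf{Case (2).} Here $-s = \sigma^m s$ for some $0<m<d_s$. Applying this twice gives $s = \sigma^{2m}s$, so $d_s \mid 2m$. Since $0 < m < d_s$, this forces $2m = d_s$, so $d_s$ is even and $m = d_s/2$. The orbit is all of $C$, so $V^C \cong V^s$. In the expansion, the surviving terms are $\langle \sigma^i v, \sigma^{i+d_s/2} w\rangle$ for $i \bmod d_s$. These sum to $\Tr_{k_s/k}\langle v, \sigma^{d_s/2} w\rangle = \Tr_{k_s/k}B(v,w)$. The form $B$ is $k_s$-linear in $v$ and $\sigma^{d_s/2}$-semilinear in $w$. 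Skew-Hermitianity is the identity
\[
    B(w,v) = \langle w, \sigma^{d_s/2} v\rangle = -\langle \sigma^{d_s/2}v, w\rangle = -\sigma^{d_s/2}\langle v, \sigma^{d_s/2}w\rangle ,
\]
which uses $\sigma^{d_s} w = w$ and $k$-rationality of the pairing. Non-degeneracy and $\msf{M}$-invariance follow as in case (1).

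The main obstacle is bookkeeping rather than substance: identifying exactly which cross terms survive in each case, and checking that the semilinear twist in (2) yields a genuinely skew-Hermitian, non-degenerate $k_s$-form. Genuineness, i.e. $s\neq -s$, enters in case (2) to guarantee $m \neq 0$. Without it, $d_s$ could fail to be even and we would get a symplectic $k_s$-form rather than a Hermitian one.
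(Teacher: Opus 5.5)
Your proposal is correct and follows essentially the same route as the paper. In both, $V^C$ is identified with the $\sigma^{d_s}$-fixed part of single weight spaces via $v \mapsto \sum_i \sigma^i(v)$, and orthogonality (condition (2)) kills every term except those pairing $\ov{V}^{t}$ with $\ov{V}^{-t}$, which yields the trace formulas. The only variation is minor: you show $d_s$ is even from $-s=\sigma^m s$ forcing $2m=d_s$, whereas the paper notes that $t\mapsto -t$ is a fixed-point-free involution on $C$, so $d_s=\lvert C\rvert$ is even. Your explicit checks of skew-Hermitianity and non-degeneracy supply details the paper leaves implicit.
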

\begin{proof}
    First, we show (1). In this case, $C = \Gal(\ov{k} / k) \cdot s \sqcup  \Gal(\ov{k} / k) \cdot (-s)$. We have
    \[
        V^s \oplus V^{-s} \cong V^C, \quad 
        (v, v^*) \mapsto \sum_{0 \leq i < d_s} \sigma^i(v) + \sum_{0 \leq i < d_s} \sigma^i(v^*)
    \]  
    since $d_s = d_{-s}$. By condition (2) in \Cref{defi:weight_decomposition}, $B$ is a $k_s$-linear non-degenerate pairing, and $\langle (v,v^*), (w, w^*) \rangle = \Tr_{k_s / k} (B(v, w^*) - B(w, v^*))$ follows from the above description. 

    Next, we show (2). In this case, $C = \Gal(\ov{k} / k) \cdot s$, so $C = -C$. It follows that $d_s = \lvert C \rvert$ is even since the action of $\Gal(\ov{k} / k) \times \{ \pm 1 \}$ on $S$ is genuine. Then, $\sigma^{d_s / 2}$ induces $V^s \cong V^{-s}$ and condition (2) in \Cref{defi:weight_decomposition} implies that $B$ is a non-degenerate skew-Hermitian form. We have an isomorphism
    \[
        V^s \cong V^C ,\quad v \mapsto \sum_{0\leq k < d_s} \sigma^k(v)
    \]
    and it is easy to check $\langle v , w \rangle = \Tr_{k_s / k}(B(v, w))$ under this isomorphism.   
\end{proof}     

In particular, the direct sum decomposition
\[
    V = \bigoplus_{C \subset S} V^C
\]
over orbits $C \subset S$ is a decomposition into polarized or unitary symplectic $\msf{M}$-representations. 

\subsection{Tame toral action without symmetric ramification} \label{ssec:tame_toral_action}

In this section, we explain how weight decompositions of symplectic representations appear in a practical situation. We fix a finite group $\msf{M}$. 

\begin{defi} \label{defi:tame_extension_group}
    We say that a profinite group $\Gamma$ equipped with an extension 
    \[
        0 \to \bb{Z} / e \bb{Z} \to \Gamma \to \Gal(\ov{k} / k) \to 0
    \]
    for some $e \geq 1$ coprime to $p$ is a tame extension of $\Gal(\ov{k} / k)$. 
\end{defi}

\begin{defi} \label{defi:tame_toral_action}
    Let $\msf{\ov{T}}$ be a torus over $\ov{k}$ and let $(\ov{V}, \langle - , -\rangle)$ be a symplectic $\msf{M}$-representation over $\ov{k}$. Let $\Gamma$ be a tame extension of $\Gal(\ov{k} / k)$. We say that a datum of actions $(\msf{\ov{T}} \circlearrowright \ov{V}, \Gamma \circlearrowright \msf{\ov{T}}, \ov{V})$ is a \textit{tame toral action} if the following conditions hold. 
    \begin{enumerate}
        \item The toral action $\msf{\ov{T}} \circlearrowright \ov{V}$ preserves $\langle -, - \rangle$ and commutes with $\msf{M}$. 
        \item The tame actions $\Gamma \circlearrowright \msf{\ov{T}}, \ov{V}$ are semilinear over $\ov{k}$ and commute with $\msf{M}$. Moreover, 
        \[
            \langle \gamma(v), \gamma(w) \rangle = \gamma\langle v, w \rangle ,\quad 
            \gamma(t \cdot v) = \gamma(t) \cdot \gamma(v)
        \]
        for every $v, w \in \ov{V}$, $t \in \msf{\ov{T}}$ and $\gamma \in \Gamma$. 
        \item The profinite group actions $\Gamma \circlearrowright X^*(\msf{\ov{T}}), \ov{V}$ are smooth. Here, $X^*(\msf{\ov{T}})$ is the set of characters of $\msf{\ov{T}}$. 
    \end{enumerate}
    Let $\Phi \subset X^*(\msf{\ov{T}})$ be the set of weights in $\ov{V}$. We say that a tame toral action $(\msf{\ov{T}}, \ov{V}, \Gamma)$ is \textit{without symmetric ramification} if $- \alpha \notin \bb{Z} / e \bb{Z} \cdot \alpha$ for every $\alpha \in \Phi$. 
\end{defi}

Here, $\Phi = - \Phi$ since $\msf{\ov{T}} \circlearrowright \ov{V}$ preserves the symmetric pairing $\langle -, - \rangle$. Let $\ov{V}^\alpha \subset \ov{V}$ be the weight space of weight $\alpha$ for each $\alpha \in \Phi$. By condition (1), each $\ov{V}^\alpha$ is stable under $\msf{M}$. 

\begin{lem} \label{lem:criterion_without_symmetric_ramification}
    A tame toral action is without symmetric ramification when $e$ is odd and $0 \not \in \Phi$. 
\end{lem}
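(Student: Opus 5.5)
Suppose, for contradiction, that some $\alpha \in \Phi$ satisfies $-\alpha = \tau \cdot \alpha$, where $\tau$ lies in the subgroup $\bb{Z}/e\bb{Z} \subset \Gamma$. The plan is to show that this forces $\alpha = 0$; since $0 \notin \Phi$ by hypothesis, this is the desired contradiction.

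The argument uses only two facts: $\Gamma$ acts on $X^*(\msf{\ov{T}})$ by group automorphisms, and the order of $\tau$ divides $e$, which is odd. Apply $\tau$ to the relation $\tau\alpha = -\alpha$. By linearity,
\[
    \tau^2 \alpha = \tau(-\alpha) = -\tau\alpha = \alpha,
\]
so $\alpha$ is fixed by $\tau^2$. Since $\tau$ has odd order $m$, the element $\tau^2$ generates the same cyclic group as $\tau$. Indeed, $2$ is invertible modulo $m$, so $\tau = (\tau^2)^{j}$ for some $j$. It follows that $\tau \alpha = \alpha$. Combining this with $\tau\alpha = -\alpha$ gives $2\alpha = 0$ in $X^*(\msf{\ov{T}})$.

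Since $X^*(\msf{\ov{T}})$ is a free abelian group, hence torsion-free, we get $\alpha = 0$. This contradicts $0 \notin \Phi$.

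The only subtle point is the claim that $\bb{Z}/e\bb{Z}$ acts on characters through group automorphisms. This follows from condition (2) of \Cref{defi:tame_toral_action}: the relation $\gamma(t \cdot v) = \gamma(t)\cdot\gamma(v)$, together with semilinearity, shows that the induced action on $X^*(\msf{\ov{T}})$ is the natural one, $(\gamma\chi)(t) = \gamma(\chi(\gamma^{-1}t))$. That action is additive in $\chi$. Everything else is elementary.
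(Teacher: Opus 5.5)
Your proof is correct, but it takes a slightly different route from the paper's.

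The paper argues by counting. The orbit $C = \bb{Z}/e\bb{Z}\cdot\alpha$ has size dividing $e$, hence odd. On the other hand, if $-\alpha\in C$ then $C=-C$, and since $0\notin C$, negation is a fixed-point-free involution on $C$, so $\lvert C\rvert$ is even.

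You instead work with a single element. From $\tau\alpha=-\alpha$ you get $\tau^2\alpha=\alpha$. Since $\tau$ has odd order it lies in $\langle\tau^2\rangle$, so $\tau\alpha=\alpha$, hence $2\alpha=0$.

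Both arguments rest on the same two inputs:
\begin{enumerate}
\item $\bb{Z}/e\bb{Z}$ acts on $X^*(\msf{\ov{T}})$ by group automorphisms, so the action commutes with negation;
\item $X^*(\msf{\ov{T}})$ is torsion-free, so $\beta=-\beta$ forces $\beta=0$.
\end{enumerate}
Both also work verbatim for any finite group of odd order in place of $\bb{Z}/e\bb{Z}$.

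The paper's parity count is a one-liner but leaves these inputs implicit, in particular the step $-\alpha\in C\Rightarrow C=-C$. Your version is a little longer but makes each input explicit. That includes deriving the additivity of the induced action on characters from condition (2) of \Cref{defi:tame_toral_action}.
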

\begin{proof}
    When $e$ is odd, any $\bb{Z} / e \bb{Z}$-orbit $C$ in $X^*(\msf{\ov{T}})$ has odd number of elements. However, if $0 \not \in C$ and $C = - C$, $C$ has even number of elements, so we get a contradiction. 
\end{proof}

Now, fix a tame toral action $(\msf{\ov{T}}, \ov{V}, \Gamma)$ without symmetric ramification. Let $\tau$ be a generator of $\bb{Z}/ e \bb{Z}$. We will show that the $\tau$-invariants $\ov{V}^{\tau}$ admits a $k$-rational form $V^\tau$ and a genuine weight decomposition. 

\begin{defi}
    Let $\Phi_e = \Phi / \langle \tau \rangle$ be the set of $\bb{Z} / e \bb{Z}$-orbits in $\Phi$. For each $C \in \Phi_e$, let $-C = \{ -c \mid c\in C\} \in \Phi_e$ and let
    \[
        \ov{V}^C = \bigoplus_{\alpha \in C} \ov{V}^\alpha, \quad 
        \ov{V}^{\tau, C} = \left(\ov{V}^C\right)^{\tau} \subset \ov{V}^\tau. 
    \]
    Here, $(-)^\tau$ denotes the set of $\tau$-invariants. 
\end{defi}

\begin{prop} \label{prop:Vtau_decomposition}
    Let $V^\tau = \ov{V}^\Gamma$. Then, $V^\tau$ is a $k$-rational form of $\ov{V}^\tau$ and a symplectic $\msf{M}$-representation over $k$. Moreover, the decomposition
    \[
        \ov{V}^\tau = \bigoplus_{C \in \Phi_e} \ov{V}^{\tau, C}
    \]
    is a genuine weight decomposition. 
\end{prop}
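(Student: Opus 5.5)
The plan has three parts: Galois descent for the rational form, non-degeneracy of the restricted form, and a direct check of the axioms of \Cref{defi:weight_decomposition} for the index set $S = \Phi_e$.

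\textbf{Rational form.} The subgroup $\langle \tau \rangle = \bb{Z}/e\bb{Z}$ is normal in $\Gamma$, so the quotient $\Gal(\ov{k}/k)$ acts on $\ov{V}^\tau$. This action is semilinear and smooth by condition (2) of \Cref{defi:tame_toral_action}. By Galois descent (Lang's lemma / Hilbert 90 for $\GL_n$), $(\ov{V}^\tau)^{\Gal(\ov{k}/k)} = \ov{V}^\Gamma = V^\tau$ is then a $k$-rational form of $\ov{V}^\tau$. Since $\Gamma$ commutes with $\msf{M}$, $V^\tau$ is $\msf{M}$-stable. For $v, w \in \ov{V}^\Gamma$ we have $\langle v,w\rangle = \gamma\langle v,w\rangle$ for all $\gamma \in \Gamma$; as $\tau$ acts trivially on $\ov{k}$, this means the pairing takes values in $k$.

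\textbf{Non-degeneracy.} This will follow once the weight-space orthogonality is in hand. The toral action preserves $\langle -,-\rangle$, so $\ov{V}^\alpha \perp \ov{V}^\beta$ unless $\alpha+\beta=0$. Hence $\ov{V}^C \perp \ov{V}^{D}$ unless $D=-C$, and $\ov{V}^C \times \ov{V}^{-C} \to \ov{k}$ is non-degenerate. The relation $\gamma(t\cdot v)=\gamma(t)\gamma(v)$ gives $\tau(\ov{V}^\alpha) = \ov{V}^{\tau\alpha}$, so $\tau$ permutes the weight spaces within each orbit $C$.

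For an orbit $C = \{\alpha, \tau\alpha, \dots\}$ of size $m$, the map $v \mapsto \sum_{i<m} \tau^i v$ is an isomorphism $\ov{V}^\alpha \cong \ov{V}^{\tau,C}$ that is semilinear only in the sense of $\tau$ acting trivially on $\ov{k}$, hence genuinely $\ov{k}$-linear. This describes $\ov{V}^{\tau,C}$ explicitly. Now take $v \in \ov{V}^{\tau,C}$ with component $v_\alpha$, and $w \in \ov{V}^{\tau,-C}$ with component $w_{-\alpha}$. Because $\tau$ preserves the pairing,
\[
\langle v, w\rangle = \sum_i \langle \tau^i v_\alpha, \tau^i w_{-\alpha}\rangle = m\,\langle v_\alpha, w_{-\alpha}\rangle .
\]
Here $m$ divides $e$, which is prime to $p$, so $m$ is invertible. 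Therefore the pairing $\ov{V}^{\tau,C}\times\ov{V}^{\tau,-C}\to\ov{k}$ is non-degenerate. Combining over all orbits, $\ov{V}^\tau$ is symplectic, and hence so is $V^\tau$.

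\textbf{Weight-decomposition axioms.} Take $S = \Phi_e$. The action of $\Gal(\ov{k}/k)$ on $\Phi_e$ is well defined because $\langle\tau\rangle$ is normal, and it commutes with $C \mapsto -C$. It is smooth because $\Gamma$ acts smoothly on $X^*(\msf{\ov{T}})$. It is genuine precisely because of the hypothesis of no symmetric ramification: $-C \ne C$, since $-\alpha \notin \langle\tau\rangle\alpha$.

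Each summand $\ov{V}^{\tau,C}$ is $\msf{M}$-stable, since $\msf{M}$ commutes with the torus and with $\Gamma$. Axiom (1), $\sigma(\ov{V}^{\tau,C}) = \ov{V}^{\tau,\sigma C}$, follows by lifting $\sigma$ to $\Gamma$ and again using $\gamma(\ov{V}^\alpha)=\ov{V}^{\gamma\alpha}$. Axiom (2) is the orthogonality $\ov{V}^C\perp\ov{V}^D$ for $D\ne -C$ established above. Finally, the decomposition $\ov{V}^\tau=\bigoplus_C \ov{V}^{\tau,C}$ holds because $\ov{V}=\bigoplus_C \ov{V}^C$ is a $\tau$-stable direct sum, so taking $\tau$-invariants commutes with the sum.

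The main subtle point is the non-degeneracy, that is, the averaging step. It is exactly where the tameness assumption ($p\nmid e$) enters; the rest of the argument is formal bookkeeping.
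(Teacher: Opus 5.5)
Your outline (Galois descent for the rational form, orthogonality of weight spaces, genuineness from the absence of symmetric ramification, axioms (1) and (2)) matches the paper. However, the non-degeneracy step rests on a false claim. For an orbit $C$ of size $m$ you assert that $v\mapsto\sum_{i<m}\tau^i v$ is an isomorphism $\ov{V}^\alpha\cong\ov{V}^{\tau,C}$. In fact $\tau\bigl(\sum_{i<m}\tau^i v\bigr)-\sum_{i<m}\tau^i v=\tau^m v-v$, so the image lies in $\ov{V}^\tau$ only when $\tau^m v=v$. The correct statement is $(\ov{V}^\alpha)^{\tau^m}\cong\ov{V}^{\tau,C}$.

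The element $\tau^m$ stabilizes the character $\alpha$, but nothing forces it to act trivially on the weight space $\ov{V}^\alpha$. For example, take $\msf{\ov{T}}=\bb{G}_m$ with trivial $\tau$-action, weights $\pm1$ with one-dimensional weight spaces, and $\tau$ acting on them by $\zeta^{\pm1}$ for a nontrivial root of unity $\zeta$ of order dividing $e$. Then $m=1$ but $\ov{V}^\tau=0$. In the application this is the typical situation: $\tau$ acts on the graded pieces $\msf{u}_{\beta,x,r/2}$ by roots of unity and their invariants can vanish (compare the remark in the proof of \Cref{prop:interpretation_epsilon_twist} that $\msf{u}^{\Gamma_\alpha}_{[\beta],r/2}$ is only at most one-dimensional).

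Consequently your identity $\langle v,w\rangle=m\langle v_\alpha,w_{-\alpha}\rangle$ only reduces the problem to non-degeneracy of the pairing between $(\ov{V}^\alpha)^{\tau^m}$ and $(\ov{V}^{-\alpha})^{\tau^m}$. That does not follow from non-degeneracy of $\ov{V}^\alpha\times\ov{V}^{-\alpha}$, since the restriction of a non-degenerate pairing to a pair of subspaces can be degenerate.

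The missing ingredient is one more averaging step. For nonzero $v_\alpha\in(\ov{V}^\alpha)^{\tau^m}$, choose $w\in\ov{V}^{-\alpha}$ with $\langle v_\alpha,w\rangle\neq0$. Since $\tau$ preserves the pairing and acts trivially on $\ov{k}$, you get $\bigl\langle v_\alpha,\sum_{j<e/m}\tau^{mj}w\bigr\rangle=(e/m)\langle v_\alpha,w\rangle\neq0$, using $p\nmid e$.

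Once you have this, the orbit-by-orbit reduction is superfluous. The paper does the averaging globally: it takes $v\in\ov{V}^\tau$ and $w\in\ov{V}$ with $\langle v,w\rangle\neq0$, and observes $\bigl\langle v,\sum_{i<e}\tau^i w\bigr\rangle=e\langle v,w\rangle\neq0$. The remaining parts of your argument (descent, $\msf{M}$-stability, genuineness, and axioms (1) and (2)) are correct and agree with the paper.
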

\begin{proof}
    First, we show that $\ov{V}^\tau$ is symplectic. For every $v \in \ov{V}^\tau$, $\langle v, w \rangle \neq 0$ for some $w \in \ov{V}$. Then, $\langle v, \sum_{0 \leq i < e} \tau^i w \rangle = e \langle v, w \rangle \neq 0$ since $e$ is coprime to $p$, so $\ov{V}^\tau$ is symplectic. 

    Since the $\Gal(\ov{k} / k)$-action on $\ov{V}^\tau$ is smooth, it admits a $k$-rational form $V^\tau$ as given in the statement. By condition (2) in \Cref{defi:tame_toral_action}, $V^\tau$ is stable under $\msf{M}$ and the symplectic pairing $\langle - , - \rangle\vert_{\ov{V}^\tau}$ descends to $V^\tau$. Thus, $V^\tau$ is a symplectic $\msf{M}$-representation over $k$. 
    
    It is enough to show that the decomposition indexed by $\Phi_e$ is a genuine weight decomposition. By condition (2) in \Cref{defi:tame_toral_action}, there is a natural $\Gamma$-action on $\Phi$ so that $\gamma(\ov{V}^\alpha) = \ov{V}^{\gamma(\alpha)}$ for every $\alpha \in \Phi$ and $\gamma \in \Gamma$. It induces a smooth $\Gal(\ov{k} / k)$-action on $\Phi_e$ since $\Gamma / (\bb{Z} / e \bb{Z}) \cong \Gal(\ov{k} / k)$. The involution sending $C$ to $-C$ is an action of $\{ \pm 1\}$ on $\Phi_e$ commuting with $\Gal(\ov{k} / k)$. Since $(\ov{\msf{T}}, \ov{V}, \Gamma)$ is without symmetric ramification, $C \neq -C$ for every $C \in \Phi_e$, so the action of $\Gal(\ov{k} / k) \times \{ \pm 1\}$ on $\Phi_e$ is genuine. 
    
    By construction, condition (1) in \Cref{defi:weight_decomposition} is satisfied. Moreover, condition (1) in \Cref{defi:tame_toral_action} implies that $\ov{V}^\alpha$ is orthogonal to $\ov{V}^\beta$ if $\beta \neq -\alpha$. It implies condition (2) in \Cref{defi:weight_decomposition}. 
\end{proof}

\begin{cor} \label{cor:decomposition_tame_toral}
    For every tame toral action $(\msf{\ov{T}}, \ov{V}, \Gamma)$ without symmetric ramification, the symplectic $\msf{M}$-representation $V^\tau$ can be decomposed into polarized or unitary symplectic $\msf{M}$-representations. 
\end{cor}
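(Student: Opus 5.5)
This corollary is a direct combination of the two preceding results. Starting from a tame toral action $(\msf{\ov{T}}, \ov{V}, \Gamma)$ without symmetric ramification, I first invoke \Cref{prop:Vtau_decomposition}. It gives the $k$-rational form $V^\tau = \ov{V}^\Gamma$ of $\ov{V}^\tau$ as a symplectic $\msf{M}$-representation over $k$. It also shows that
\[
    \ov{V}^\tau = \bigoplus_{C \in \Phi_e} \ov{V}^{\tau, C}
\]
is a genuine weight decomposition indexed by $S = \Phi_e$. Here $\Phi_e$ carries the $\Gal(\ov{k}/k)$-action induced from $\Gamma$ and the involution $C \mapsto -C$.

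Next I apply \Cref{prop:orbit_description} to $(V^\tau, \langle -,- \rangle)$ with this indexing set. For each orbit $\mathcal{O} \subset \Phi_e$ under $\Gal(\ov{k}/k) \times \{\pm 1\}$, the summand $(V^\tau)^{\mathcal{O}}$ is polarized over $k_s$ when $\mathcal{O} \neq \Gal(\ov{k}/k)\cdot s$. It is skew-Hermitian unitary over $k_s$ when $\mathcal{O} = \Gal(\ov{k}/k)\cdot s$.

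It remains to check that $V^\tau = \bigoplus_{\mathcal{O}} (V^\tau)^{\mathcal{O}}$ is an orthogonal, $\msf{M}$-stable decomposition.
\begin{enumerate}
    \item Each $\ov{V}^{\mathcal{O}}$ is $\sigma$-stable, since $\mathcal{O}$ is Galois-stable. Therefore taking $\Gal(\ov{k}/k)$-invariants commutes with the finite direct sum, and the summands exhaust $V^\tau$.
    \item Each summand is $\msf{M}$-stable because each $\ov{V}^{\alpha}$ is.
    \item Distinct orbits are orthogonal by condition (2) of \Cref{defi:weight_decomposition}, since $t \neq -s$ whenever $s$ and $t$ lie in different orbits. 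This is the remark made right after \Cref{prop:orbit_description}.
\end{enumerate}
Hence the symplectic form on $V^\tau$ is the orthogonal sum of the forms on the summands, and the corollary follows.

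I expect no serious obstacle. The only point needing care is the compatibility of Galois descent with the orbit decomposition in step (1), and this is immediate from the Galois-stability of orbits.
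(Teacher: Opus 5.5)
Your proposal is correct and takes the same approach as the paper, whose proof simply combines \Cref{prop:Vtau_decomposition} with \Cref{prop:orbit_description}. Your three checks (the orbit summands are Galois-stable, $\msf{M}$-stable and mutually orthogonal) just spell out the remark following \Cref{prop:orbit_description}.
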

\begin{proof}
    It follows from \Cref{prop:orbit_description} and \Cref{prop:Vtau_decomposition}. 
\end{proof}

\section{Twisted Heisenberg-Weil representations} \label{sec:twisted_Heisenberg_Weil}

In this section, we give a direct interpretation of twisted Heisenberg-Weil representations in Yu's construction when there are no symmetric and ramified roots. In particular, our construction works uniformly even if $p = 2$. 

\subsection{Preliminaries} \label{ssec:prelim_HW}
 
In this section, we introduce basic notation in Yu's construction. We will follow \cite{KP23} for Bruhat-Tits buildings and Moy-Prasad filtrations. 

Let $F$ be a non-archimedean local field and let $O_F$ be the ring of integers of $F$ with a residue field $k$. Let $\pi$ be a uniformizer of $F$. 

\begin{sett}
    Let $G$ be a connected reductive group over $F$ and let $M \subset G$ be a tamely ramified twisted Levi subgroup. Let $E / F$ be a finite tamely ramified extension such that $M$ splits over $E$. 
\end{sett}

Let $\pi_E$ be a uniformizer of $E$ and let $k_E$ be the residue field of $E$. Let $\breve{F}$ (resp.\ $\breve{E}$) be the completed maximal unramified extension of $F$ (resp.\ $E$). Fix an embedding $\breve{F} \subset \breve{E}$ so that $\breve{E} = E \breve{F}$. Let $e = [\breve{E} \colon \breve{F}]$ and take the additive valuation $\nu$ on $\breve{E}$ so that $\nu(\pi_E) = \tfrac{1}{e}$. Then, $\Gamma = \Aut(\breve{E} / F)$ is a tame extension of $\Gal(\ov{k} / k)$ in the sense of \Cref{defi:tame_extension_group}. Let $\Sigma = \Gamma \times \{\pm 1\}$. 

For a perfect ring $R$, $W(R)$ denotes the ring of Witt vectors when $F$ is in characteristic $0$, and denotes the ring of power series $R\llbracket \pi \rrbracket$ when $F$ is in characteristic $p$. For a perfect $k$-algebra $R$, we set $W_{O_F}(R)=W(R)\otimes_{W(k)} O_F$. The Frobenius on $W_{O_F}(R)$ relative to $k$ is denoted by $\sigma$. 
We set $W_{O_{\breve{E}}}(R) = W(R) \otimes_{W(\ov{k})} O_{\breve{E}}$ for $R \in \PerfAlg$. It is equipped with a natural semilinear $\Gamma$-action over $O_F$.  

For an $F$-scheme $X$, its base change to $\breve{F}$ is denoted by $\breve{X}$ and the Frobenius action on $\breve{X}$ is simply denoted by $\sigma$.  By abuse of notation, its base change to $\breve{E}$ is denoted by $\breve{X}_E$.

\begin{sett}
    Let $x \in \cl{B}(M, F)$ be a vertex of the enlarged Bruhat-Tits building and let $T \subset M$ be a tamely ramified maximal torus such that $x \in \cl{A}(T, F)$. We enlarge $E$ so that $T$ splits over $E$. 
\end{sett}

Here, $\cl{B}(-, F)$ denotes the enlarged Bruhat-Tits building. As in \cite[Section 14.2]{KP23}, we fix an admissible embedding $\cl{B}(M, F) \subset \cl{B}(G, F)$. Let $M(F)_x \subset M(F)$ (resp.\ $G(F)_x \subset G(F)$) be the full stabilizer of the image of $x$ in the reduced Bruhat-Tits building $\cl{B}(M^\ad, F)$ (resp.\ $\cl{B}(G^\ad, F)$). 

Let $\Phi$ be the set of roots of $\breve{G}_{E}$ with respect to $\breve{T}_E$. Let $\Phi_M \subset \Phi$ be the set of roots inside $\breve{M}_E$ and let $\Phi^M = \Phi - \Phi^M$. For each $\alpha \in \Phi$, let $U_\alpha \subset \breve{G}_E$ be the root group associated to $\alpha$ and let $\nu_{\alpha, x}$ be the valuation on $U_{\alpha}(\breve{E})$ associated to $x$. For each $r \geq 0$, let $U_{\alpha, x, r}  = \nu_{\alpha, x}^{-1}([r, \infty])$ and let $\cl{U}_{\alpha, x, r}$ be the associated integral model of $U_\alpha$ over $O_{\breve{E}}$. Similarly, let $\breve{T}_{E, x, r} \subset T(\breve{E})$ be the Moy-Prasad filtration at $x$ and let $\cl{\breve{T}}_{E, x, r}$ be the associated integral model of $\breve{T}_E$.

Let $\mfr{u}_\alpha = \Lie(U_\alpha)$, $\mfr{\breve{t}}_{E} = \Lie(\breve{T}_E)$, $\mfr{\breve{m}}_{E} = \Lie(\breve{M}_E)$ and $\mfr{\breve{g}}_{E} = \Lie(\breve{G}_E)$. Let
\[
    \mfr{\breve{m}}_{E}^\perp = \bigoplus_{\alpha \in \Phi^M} \mfr{u}_\alpha
\]
be a complement of $\mfr{\breve{m}}_E$ in $\mfr{\breve{g}}_E$. In each case, $(-)_{x, r}$ denotes the Moy-Prasad filtration for Lie algebras (e.g. $\mfr{u}_{\alpha, x, r} \subset \mfr{u}_\alpha$ and $\mfr{\breve{t}}_{E, x, r} \subset \mfr{\breve{t}}_E$). Let $\mfr{t} = \Lie(T)$, $\mfr{m} = \Lie(M)$ and $\mfr{g} = \Lie(G)$. The semilinear $\Gamma$-action on $\breve{\mfr{g}}_E$ stabilizes $\mfr{\breve{m}}_{E}^\perp$ and let $\mfr{m}^\perp = (\mfr{\breve{m}}_{E}^\perp)^\Gamma$ be the associated complement of $\mfr{m}$ inside $\mfr{g}$. 


For $r \geq 0$, $r+$ denotes $r + \varepsilon$ for sufficiently small $\varepsilon > 0$. Since the valuation on $\breve{E}$ is discrete, $r+$ is valid as an index of Moy-Prasad filtrations. For each $r \geq 0$, we use the sans-serif type to denote the quotient $\ov{k}$-vector spaces such as 
\[
    \msf{u}_{x, \alpha, r} = \mfr{u}_{x, \alpha, r} / \mfr{u}_{x, \alpha, r+}, \quad \msf{t}_{E, x, r} = \mfr{\breve{t}}_{E, x, r} / \mfr{\breve{t}}_{E, x, r+}. 
\]
Here, we remove breve accents and use similar notation for other Lie algebras. When there is no confusion, such $\ov{k}$-vector spaces are also regarded as $\ov{k}$-varieties. Similarly, we use the sans-serif type to denote the quotient $k$-vector spaces such as 
\[
    \msf{m}_{x, r} = \mfr{m}_{x, r} / \mfr{m}_{x, r+}, \quad 
    \msf{m}^\perp_{x, r} = \mfr{m}^\perp_{x, r} / \mfr{m}^\perp_{x, r+}. 
\]

\subsection{Heisenberg group schemes}

In this section, we introduce Heisenberg group schemes as quotients of positive loop groups of certain Moy-Prasad group schemes. They later play an essential role in the geometrization of twisted Heisenberg-Weil representations. 

\begin{defi}
    For each $0 < \tfrac{r}{2} \leq s \leq r$, let $f_{r, s}\colon \Phi \cup \{0\} \to \bb{R}_{>0}$ be the concave function 
    \[
        f_{r, s}(\alpha) = \left\{
            \begin{alignedat}{4}
                & s & \; & (\alpha \in \Phi^M) \\
                & r & \; & (\alpha \in \Phi_M \cup \{0\}). 
            \end{alignedat}
        \right.
    \]
\end{defi}

\begin{defi} \label{defi:MP_subgroup}
    Let $\breve{\cl{G}}_{E, x, r, s}$ be the Moy-Prasad group scheme over $O_{\breve{E}}$ associated to $f_{r, s}$ at $x$ (see \cite[Theorem 8.5.2]{KP23}) and let $(L\breve{G}_E)_{x, r, s} = L^+\breve{\cl{G}}_{E, x, r, s}$ be the positive loop group associated to $\breve{\cl{G}}_{E, x, r, s}$. In other words, it is a group presheaf
    \[
        (L\breve{G}_E)_{x, r, s}(R) = \breve{\cl{G}}_{E, x, r, s}(W_{O_{\breve{E}}}(R)) ,\quad 
        R \in \PerfAlg. 
    \]
\end{defi}

\begin{lem} \label{lem:product_description_loop}
    Let $0 < \tfrac{r}{2} \leq s \leq r$. The natural open immersion \textup{(see \cite[Theorem 8.5.2]{KP23})}
    \[
        \cl{\breve{T}}_{E, x, s} \times \prod_{\alpha \in \Phi} \cl{U}_{\alpha, x, f_{r, s}} \hookrightarrow \cl{\breve{G}}_{E, x, r, s}
    \]
    induces an isomorphism 
    \[
        (L\breve{G}_E)_{x, r, s} \cong L^+\cl{\breve{T}}_{E, x, s} \times \prod_{\alpha \in \Phi} L^+\cl{U}_{\alpha, x, f_{r, s}}. 
    \]
    Here, the order of the product factors can be taken arbitrary. 
\end{lem}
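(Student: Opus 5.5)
We have to show that the open immersion of $O_{\breve E}$-schemes
\[
    j\colon \cl{\breve{T}}_{E, x, s} \times \prod_{\alpha \in \Phi} \cl{U}_{\alpha, x, f_{r, s}} \hookrightarrow \cl{\breve{G}}_{E, x, r, s}
\]
becomes an isomorphism after applying $L^+$, for any ordering of the factors. The plan is to reduce to residue fields and then lift through Witt vectors.

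\textbf{Step 1: reduction to the special fiber.} Since $L^+$ commutes with products, the source of $L^+ j$ is $L^+\cl{\breve{T}}_{E, x, s} \times \prod_\alpha L^+\cl{U}_{\alpha, x, f_{r, s}}$. Also, $L^+$ of an open immersion is injective on $R$-points. It therefore suffices to show that for every $R \in \PerfAlg$, every point $g \in \cl{\breve{G}}_{E, x, r, s}(W_{O_{\breve{E}}}(R))$ factors through the open subscheme $\Omega$ given by the image of $j$.

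Because $\Omega$ is open, factoring through $\Omega$ is a condition on points of $\Spec W_{O_{\breve{E}}}(R)$. The ideal $\pi_E W_{O_{\breve{E}}}(R)$ is $\pi_E$-adically complete, hence contained in the Jacobson radical. So every maximal ideal, and every closed point, lies over $\Spec R$. An open subset containing all closed points of an affine scheme is the whole scheme. Hence it is enough to check that the reduction $\bar g \in \cl{\breve{G}}_{E, x, r, s}(R)$ lands in the special fiber $\Omega_{\ov{k}}$. Checking this pointwise reduces further to $R$ replaced by a field $K$, which we may take to be an algebraically closed perfect field.

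\textbf{Step 2: the special fiber when $0 < \tfrac{r}{2}\le s \le r$.} This is the main point. Since $f_{r,s} > 0$, the special fiber of $\cl{\breve{G}}_{E,x,r,s}$ is a connected unipotent group. The key input is the concavity inequality $f(\alpha)+f(\beta) \ge r \ge f(\alpha+\beta)$ in the relevant cases, where $2s \ge r$ is used for roots in $\Phi^M$. From it, all commutators $[U_{\alpha}, U_{\beta}]$ with $\alpha+\beta \in \Phi_M \cup \{0\}$ land in depth $\ge r$. I would then use the structure result of \cite[Theorem 8.5.2]{KP23}, or its proof via a filtration by the groups $\cl{\breve{G}}_{E,x,r,s'}$: the special fiber is unipotent and admits a filtration with vector-group graded pieces $\msf{t}$ and $\msf{u}_\alpha$.

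On such a group, the product map in any order is an isomorphism of varieties. For each graded step the statement is linear. One then inducts on the filtration, using that products of elements in subsequent layers can be reordered modulo deeper layers, because commutators are deeper. Consequently $\Omega_{\ov{k}}$ is the whole special fiber.

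\textbf{Obstacle.} The delicate part is Step 2. We need to justify that the special fiber really is the unipotent group with the expected filtration, including for the torus part $\cl{\breve T}_{E,x,s}$ at positive depth. We also need the argument to handle arbitrary orderings, which is exactly where the condition $s \ge r/2$ enters. An alternative route would be to verify Step 2 on $\ov{k}$-points directly, via the Moy–Prasad factorization of $\breve{G}_{E}(\breve E)_{x,f_{r,s}}$ as a product of $T_{x,s}$ and $U_{\alpha,x,f_{r,s}}$ in any order. One would then observe that $W_{O_{\breve E}}(K)=O_{\breve E}\otimes$ is a similar complete DVR for algebraically closed $K$, so that the same factorization applies.
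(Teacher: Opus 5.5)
Your proof is correct. If you take the ``alternative route'' at the end of Step~2, it is essentially the paper's argument. The paper notes that $L^+$ of the open immersion is again an open immersion. It then checks bijectivity on geometric points using the factorization of Moy--Prasad subgroups in any order, \cite[Proposition 7.3.12 (4),(5)]{KP23}.

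Your Step~1 uses that $\pi_E$ lies in the Jacobson radical of $W_{O_{\breve{E}}}(R)$, so a $W_{O_{\breve{E}}}(R)$-point factors through $\Omega$ as soon as its reduction does. This is an explicit proof of the paper's first fact, and it gives a bit more. It reduces surjectivity to the finite type special fiber, so you need the factorization only over $\breve{E}$ itself, not over $W_{O_{\breve{E}}}(K)$ for large algebraically closed $K$. Concretely:
\begin{itemize}
\item $\cl{\breve{G}}_{E,x,r,s}$ is smooth over the complete ring $O_{\breve{E}}$, so each $\ov{k}$-point of its special fiber lifts to $\cl{\breve{G}}_{E,x,r,s}(O_{\breve{E}})$, which is the Moy--Prasad subgroup attached to $f_{r,s}$.
\item The cited result writes this lift as a product of torus and root-subgroup elements in the chosen order, so its reduction lies in $\Omega_{\ov{k}}$.
\item An open subscheme of a finite type $\ov{k}$-scheme that contains all $\ov{k}$-points is the whole scheme.
\end{itemize}
So the ``obstacle'' you describe does not arise. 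Your primary route, via a filtration of the unipotent special fiber with reordering modulo deeper layers, amounts to reproving that factorization modulo $\pi_E$. It would also need the connectedness and structure of the special fiber as separate inputs, so it is a detour rather than an independent argument.

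Two minor corrections:
\begin{itemize}
\item The hypothesis $s \geq r/2$ is what makes $f_{r,s}$ concave, i.e.\ what makes $\cl{\breve{G}}_{E,x,r,s}$ available at all. The freedom in the ordering instead comes from $f_{r,s} > 0$, so that $f_{r,s}(\alpha)+f_{r,s}(-\alpha) > 0$ for every root.
\item Since $f_{r,s}(0) = r$, the torus factor should be $\cl{\breve{T}}_{E,x,r}$, not $\cl{\breve{T}}_{E,x,s}$. This is a typo in the statement, which you carried over.
\end{itemize}
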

\begin{proof}
    Since the positive loop functor sends open immersions to open immersions, the map 
    \[
        L^+\cl{\breve{T}}_{E, x, s} \times \prod_{\alpha \in \Phi} L^+\cl{U}_{\alpha, x, f_{r, s}} \to (L\breve{G}_E)_{x, r, s}
    \]
    is an open immersion. To see that it is an isomorphism, it is enough to see the bijectivity on geometric points. This follows from \cite[Proposition 7.3.12 (4),(5)]{KP23}. 
\end{proof}

\begin{lem} \label{lem:piecewise_quotient}
    For $\alpha \in \Phi$ and $r \geq 0$, there is a short exact sequence
    \[
        0 \to L^+\cl{U}_{\alpha, x, r+} \to L^+ \cl{U}_{\alpha, x, r} \to \msf{u}_{\alpha, x, r} \to 0
    \]
    of group presheaves on $\PerfAlg$. Similarly, for $r > 0$, there is a short exact sequence
    \[
        0 \to L^+\cl{\breve{T}}_{E, x, r +} \to L^+\cl{\breve{T}}_{E, x, r} \to \msf{t}_{E, x, r} \to 0. 
    \]
\end{lem}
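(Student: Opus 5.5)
The plan is to reduce both sequences to the corresponding statements for the integral models over $O_{\breve{E}}$, using explicit coordinates, and then pass to positive loop groups. For the root group, $\cl{U}_{\alpha, x, r}$ is isomorphic, via a fixed pinning of $U_\alpha$ over $\breve{E}$, to the vector group $\bb{G}_a$ over $O_{\breve{E}}$ rescaled by $\pi_E^{m}$, where $m = \lceil e(r - c) \rceil$ for the appropriate affine root offset $c$ determined by $x$. Hence
\[
L^+\cl{U}_{\alpha, x, r}(R) \cong \pi_E^{m} W_{O_{\breve{E}}}(R).
\]
Likewise, $\cl{U}_{\alpha, x, r+}$ corresponds to $\pi_E^{m'} W_{O_{\breve{E}}}(R)$ with $m' = m+1$ if $e(r-c) \in \bb{Z}$, and $m' = m$ otherwise. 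In the latter case both the quotient $\msf{u}_{\alpha, x, r}$ and the sequence are trivial. In the former case, the quotient of $\pi_E^m W_{O_{\breve{E}}}(R)$ by $\pi_E^{m+1} W_{O_{\breve{E}}}(R)$ is $W_{O_{\breve{E}}}(R)/\pi_E \cong R$, functorially in $R \in \PerfAlg$. This uses the fact that $W_{O_{\breve{E}}}(R) = W(R)\otimes_{W(\ov{k})} O_{\breve{E}}$ is free over $W(R)$ with basis $1, \pi_E, \dots, \pi_E^{e-1}$, and that $W(R)/p = R$ for perfect $R$. Identifying this with the $\ov{k}$-vector space $\msf{u}_{\alpha, x, r}$, viewed as the variety $\bb{A}^1$ (or $\bb{A}^d$), gives exactness as presheaves. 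Surjectivity holds because the projection admits a section given by Teichmüller lifts times $\pi_E^m$, and this section is set-theoretic but functorial.

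For the torus, I would use that $\breve{T}_E$ is split over $\breve{E}$. Choosing an isomorphism $\breve{T}_E \cong \bb{G}_m^n$, for $r>0$ the Moy-Prasad filtration is $\breve{T}_{E,x,r} = (1+\pi_E^{m}O_{\breve{E}})^n$ with $m = \lceil er\rceil \ge 1$. Thus $L^+\cl{\breve{T}}_{E,x,r}(R) = (1+\pi_E^m W_{O_{\breve{E}}}(R))^n$, and similarly for $r+$. The map
\[
1+\pi_E^m a \mapsto a \bmod \pi_E
\]
is a group homomorphism onto $R^n$ with kernel $1+\pi_E^{m+1}W_{O_{\breve{E}}}(R)$. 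It is a homomorphism because $(1+\pi_E^m a)(1+\pi_E^m b) = 1+\pi_E^m(a+b+\pi_E^m ab)$ and $m\ge 1$. This matches $\msf{t}_{E,x,r} \cong \msf{t}_{E,x,r}$ via the Lie algebra identification $X^*$-wise, and again it is trivial when $er\notin\bb{Z}$.

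The main point requiring care is the identification of the integral models $\cl{U}_{\alpha,x,r}$ and $\cl{\breve{T}}_{E,x,r}$ produced by \cite{KP23} with these explicit rescaled smooth models. I would argue this as follows. Each model is the unique smooth affine $O_{\breve{E}}$-model with connected fibers whose $O_{\breve{E}}$-points (and those over all unramified extensions) are the given filtration subgroup. The explicit models have exactly these points, so by the étoffé property they agree. One also has to check that $L^+$ commutes with the relevant rescalings, which is immediate from the definition $L^+\cl{X}(R) = \cl{X}(W_{O_{\breve{E}}}(R))$.
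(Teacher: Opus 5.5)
Your proposal is correct, and it is an explicit coordinate unwinding of what the paper simply calls ``immediate by construction'': rescaled $\mathbb{G}_a$'s for the root groups, the dilatation of $\mathbb{G}_m^n$ for the split torus, and the identification $W_{O_{\breve{E}}}(R)/\pi_E \cong R$. One harmless slip: when $F/\mathbb{Q}_p$ is ramified, $1,\pi_E,\dots,\pi_E^{e-1}$ is a basis of $W_{O_{\breve{E}}}(R)$ over $W_{O_F}(R)$ rather than over $W(R)$, and in equal characteristic one reduces modulo $\pi$ rather than $p$; the facts you actually use (that $\pi_E$ is a non-zero-divisor and that $W_{O_{\breve{E}}}(R)/\pi_E \cong R$) still hold.
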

\begin{proof}
    The claims are immediate by construction. 
\end{proof}


\begin{lem}
    For $r > 0$, $(L\breve{G}_E)_{x, r+, (r/2)+} \subset (L\breve{G}_E)_{x, r, r/2}$ is a closed normal subgroup scheme. 
\end{lem}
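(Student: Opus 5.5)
We need to show that $(L\breve{G}_E)_{x, r+, (r/2)+}$ is a closed subgroup scheme of $(L\breve{G}_E)_{x, r, r/2}$ and that it is normal. The plan is to reduce everything to the product decompositions of \Cref{lem:product_description_loop} and to commutator estimates for Moy-Prasad filtrations.

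\emph{Containment and closedness.} First I check that $f_{r+, (r/2)+} \geq f_{r, r/2}$ pointwise. Concretely, the value is $(r/2)+ \geq r/2$ on $\Phi^M$ and $r+ \geq r$ on $\Phi_M \cup \{0\}$. By functoriality of Moy-Prasad group schemes \cite[Theorem 8.5.2]{KP23}, we get a homomorphism $\breve{\cl{G}}_{E, x, r+, (r/2)+} \to \breve{\cl{G}}_{E, x, r, r/2}$ extending the identity on generic fibers. Applying $L^+$ gives a group homomorphism of positive loop groups, and it is injective on $R$-points because $W_{O_{\breve{E}}}(R) \subset W_{O_{\breve{E}}}(R)[1/\pi]$.

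To see that it is a closed immersion, I use \Cref{lem:product_description_loop} for both $(r, r/2)$ and $(r+, (r/2)+)$, with the same ordering of factors. In these coordinates the map is the product of the inclusions $L^+\cl{\breve{T}}_{E, x, r+} \subset L^+\cl{\breve{T}}_{E, x, r}$ and $L^+\cl{U}_{\alpha, x, f_{r+,(r/2)+}(\alpha)} \subset L^+\cl{U}_{\alpha, x, f_{r,r/2}(\alpha)}$. By \Cref{lem:piecewise_quotient}, each of these is the kernel of a surjection onto a vector group, namely $\msf{t}_{E,x,r}$ or $\msf{u}_{\alpha,x,s}$. Each is therefore the preimage of $0$ under a morphism to an affine space, hence closed. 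A product of closed immersions is a closed immersion, so the subgroup is closed.

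\emph{Normality.} Since the subgroup is closed, it suffices to check normality on $R$-points for $R \in \PerfAlg$, functorially; reducing to $\ov{k}$-points is not enough here. The group $(L\breve{G}_E)_{x,r,r/2}(R)$ is generated by $L^+\cl{\breve{T}}_{E,x,r}(R)$ and the root subgroup points $L^+\cl{U}_{\alpha,x,f_{r,r/2}(\alpha)}(R)$, by the product decomposition. So it is enough to show that conjugation by each such generator preserves the subgroup. For this I use the commutator estimate
\[
[G_{x,f}, G_{x,f'}] \subset G_{x, f+f'},
\]
valid for concave $f, f'$, where $G_{x,f}$ denotes the Moy-Prasad subgroup attached to $f$ at $x$ (\cite[Proposition 7.3.12]{KP23} or the Chevalley commutator formula). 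I apply it over $W_{O_{\breve{E}}}(R)$, which is legitimate because these estimates hold at the level of integral models.

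Now take $g$ in the big group and $h$ in the small one, and write $ghg^{-1} = [g,h]\,h$. It suffices to have $[g,h] \in G_{x, f_{r,r/2} + f_{r+,(r/2)+}}$ and $f_{r,r/2} + f_{r+,(r/2)+} \geq f_{r+,(r/2)+}$. The inequality is immediate, since all values are positive; for example, $r/2 + (r/2)+ > (r/2)+$ on $\Phi^M$, and the $0$-component has value at least $r+$. When roots combine, as with $\alpha, \beta \in \Phi^M$ and $\alpha + \beta \in \Phi_M \cup \{0\}$, the commutator lands in depth $r + \varepsilon \geq r+$, which is still fine.

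\emph{Main obstacle.} The delicate point is making the commutator estimate rigorous on $R$-points for arbitrary perfect $R$, rather than only on $\breve{E}$-points. I would handle it as follows. The relevant commutator maps are morphisms of smooth affine $O_{\breve{E}}$-schemes. They factor through the smaller integral model on the generic fiber, and by flatness and the extension property of Moy-Prasad models (schematic closure in \cite[Theorem 8.5.2]{KP23}) they factor integrally. Applying $L^+$ then gives the desired factorization on $R$-points.
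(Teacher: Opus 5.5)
Your closedness argument is the paper's: compare the two product decompositions of \Cref{lem:product_description_loop} factor by factor and use \Cref{lem:piecewise_quotient}. For normality you take a different and heavier route.

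\textbf{The paper's route.} Both loop groups are perfect, hence reduced. The subgroup is closed, so the locus in $(L\breve{G}_E)_{x,r,r/2}\times(L\breve{G}_E)_{x,r+,(r/2)+}$ where conjugation lands in the subgroup is a closed subscheme. Such a closed subscheme of a reduced scheme is everything as soon as it contains all geometric points. Normality therefore reduces to the purely group-theoretic statement \cite[Proposition 13.2.5 (4)]{KP23} over $W_{O_{\breve{E}}}(\Omega)$ for algebraically closed $\Omega$. Your aside that reducing to points ``is not enough'' is mistaken, and this reduction is exactly what spares the paper any integral-model argument.

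\textbf{Your route.} You prove the factorization on all $R$-points via $O_{\breve{E}}$-models and then apply $L^+$. This is workable, but the step you flag as the main obstacle needs a different justification. Use the extension property of smooth models: a morphism from a smooth $O_{\breve{E}}$-scheme into $\breve{G}_E$ extends to the smooth affine model $\breve{\cl{G}}_{E,x,r+,(r/2)+}$ as soon as it does so on $O_{\breve{E}}$-points, since $O_{\breve{E}}$ is strictly henselian. Flatness plus schematic closure does not suffice, because $\breve{\cl{G}}_{E,x,r+,(r/2)+}\to\breve{\cl{G}}_{E,x,r,r/2}$ is an isomorphism on generic fibres but not a closed immersion. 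With that tool you could apply it directly to the conjugation map $\breve{\cl{G}}_{E,x,r,r/2}\times\breve{\cl{G}}_{E,x,r+,(r/2)+}\to\breve{G}_E$, using group-theoretic normality on $O_{\breve{E}}$-points, and skip generators and commutators entirely. Your approach gives the statement on arbitrary $R$-points without invoking reducedness; the paper's gives brevity.

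\textbf{The commutator estimate.} As you use it, with the pointwise sum $f+f'$, it is false. Take $\alpha\in\Phi^M$, $g\in U_{\alpha,x,r/2}$ and $h\in U_{-\alpha,x,(r/2)+}$. Then $[g,h]$ has a torus component of depth $r+$, whereas the pointwise sum predicts $2r+$ at $0$. The correct target is $G_{x,f''}$, where $f''(a)$ is the infimum of $\sum_i f(b_i)+\sum_j f'(c_j)$ over decompositions $a=\sum_i b_i+\sum_j c_j$ using at least one term of each kind. Here every such sum is at least $r/2+(r/2)+=r+$, so $f''\geq f_{r+,(r/2)+}$. Your remark about roots combining is in effect this check; ``all values are positive'' does not justify it.
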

\begin{proof}
    By \Cref{lem:product_description_loop} and \Cref{lem:piecewise_quotient}, $(L\breve{G}_E)_{x, r+, (r/2)+} \subset (L\breve{G}_E)_{x, r, r/2}$ is closed. We show that $(L\breve{G}_E)_{x, r+, (r/2)+} \subset (L\breve{G}_E)_{x, r, r/2}$ is normal. Since the inclusion is closed, it is enough to check on geometric points. Then, the claim follows from \cite[Proposition 13.2.5 (4)]{KP23}. 
\end{proof}

\begin{prop} \label{prop:Heisenberg_over_E}
    Let $H_{E, x, r} = (L\breve{G}_E)_{x, r, r/2} / (L\breve{G}_E)_{x, r+, (r/2)+}$ be the quotient presheaf. Then, there is a canonical central extension sequence 
    \[
        0 \to \msf{m}_{E, x, r} \to H_{E, x, r} \to \msf{m}^\perp_{E, x, r/2} \to 0
    \]
    and there is a section $\msf{m}^\perp_{E, x, r/2} \to H_{E, x, r}$. In particular, $H_{E, x, r}$ is representable by an affine perfect group scheme perfectly of finite presentation over $\ov{k}$. 
\end{prop}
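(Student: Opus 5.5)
We will work entirely through the product decomposition of \Cref{lem:product_description_loop}, applied to both $(L\breve{G}_E)_{x, r, r/2}$ and $(L\breve{G}_E)_{x, r+, (r/2)+}$ with the \emph{same} ordering of factors. This gives, as presheaves of sets,
\[
    H_{E, x, r} \cong \msf{t}_{E, x, r} \times \prod_{\alpha \in \Phi_M} \msf{u}_{\alpha, x, r} \times \prod_{\alpha \in \Phi^M} \msf{u}_{\alpha, x, r/2}.
\]
To justify this we use \Cref{lem:piecewise_quotient} factor by factor. Since the subgroup is normal, the quotient by it can be computed on the product coordinates, provided we check that multiplying by an element of the subgroup changes each coordinate only within the corresponding smaller filtration step. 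This check uses the commutator estimates $[U_{\alpha, x, a}, U_{\beta, x, b}] \subset \prod U_{i\alpha + j\beta, x, ia + jb}$ (\cite[Proposition 7.3.12, 13.2.5]{KP23}).

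The first factors $\msf{t}_{E, x, r} \times \prod_{\Phi_M} \msf{u}_{\alpha, x, r}$ identify with $\msf{m}_{E, x, r}$, and the last with $\msf{m}^\perp_{E, x, r/2}$. Define:
\begin{itemize}
    \item the map $H_{E, x, r} \to \msf{m}^\perp_{E, x, r/2}$ as the projection onto the $\Phi^M$-coordinates,
    \item the section as the inclusion of those coordinates with the other coordinates set to $1$,
    \item the map $\msf{m}_{E, x, r} \to H_{E, x, r}$ as the inclusion of $(L\breve{M}_E)_{x, r} \subset (L\breve{G}_E)_{x, r, r/2}$.
\end{itemize}

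The key step is to show that the projection is a group homomorphism with kernel exactly the image of $\msf{m}_{E, x, r}$, which is central. Centrality and abelianness are handled by commutator estimates.
\begin{itemize}
    \item For $g \in (L\breve{G}_E)_{x, r, r/2}$ and $h \in (L\breve{M}_E)_{x, r}$, the commutator $[g, h]$ lies in depth $\geq r + r/2 > r$ on $\Phi_M \cup \{0\}$, and in depth $> r/2$ on $\Phi^M$. Hence it lies in $(L\breve{G}_E)_{x, r+, (r/2)+}$.
    \item Commutators of two elements of depth $r/2$ land in depth $\geq r$, which is $> r/2$ on $\Phi^M$. Hence the quotient by $\msf{m}_{E, x, r}$ is abelian.
    \item That quotient is the vector group $\msf{m}^\perp_{E, x, r/2}$, since on root subgroups the filtration graded pieces are the additive groups $\msf{u}_{\alpha, x, r/2}$ by \Cref{lem:piecewise_quotient}.
\end{itemize}

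Together these yield the central extension; canonicity holds because none of the maps depends on the ordering. Representability then follows: $H_{E, x, r}$ is isomorphic as a presheaf to a product of finitely many perfections of affine spaces, so it is an affine perfect scheme perfectly of finite presentation. The group structure is inherited, because multiplication is a morphism of presheaves.

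The main obstacle is the verification that the quotient presheaf is computed coordinatewise, rather than only after sheafification. One must show that for $g = \prod g_\alpha$ and $n$ in the subgroup, $gn$ has coordinates congruent to the $g_\alpha$ modulo the smaller filtration. This amounts to reordering products via the commutator formula, using the same estimates at the level of $R$-points for perfect $R$. I would argue this on $R$-points directly via the integral models, or reduce to geometric points using that the product map is an isomorphism of schemes by \Cref{lem:product_description_loop}.
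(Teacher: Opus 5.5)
Your proposal is correct and uses the same ingredients as the paper, but you order them in a way that creates the obstacle you flag. The paper starts from the sequence
\[
0 \to (L\breve{G}_E)_{x,r,(r/2)+}/(L\breve{G}_E)_{x,r+,(r/2)+} \to H_{E,x,r} \to (L\breve{G}_E)_{x,r,r/2}/(L\breve{G}_E)_{x,r,(r/2)+} \to 0,
\]
which is exact on $R$-points for trivial group-theoretic reasons. It then identifies the two ends with $\msf{m}_{E,x,r}$ and $\msf{m}^\perp_{E,x,r/2}$ via the Moy--Prasad isomorphism and \Cref{lem:product_description_loop}. Both ends are abelian, and for an abelian quotient no reordering of products is ever needed: the multiplication map from the ordered product of filtration pieces to the quotient is automatically a homomorphism, and its kernel is read off from the uniqueness of the product decomposition. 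Centrality is the same commutator estimate you use. The root-subgroup lifts then give a set-theoretic section $s$, and $(a,b)\mapsto a\cdot s(b)$ is formally an isomorphism $\msf{m}_{E,x,r}\times\msf{m}^\perp_{E,x,r/2}\cong H_{E,x,r}$ of presheaves. So the coordinatewise description of the non-abelian quotient, which you planned to prove first by reordering products or reducing to geometric points, falls out at the end for free. One small correction: the section does depend on the ordering of the $\Phi^M$-factors, since reorderings change it by elements of $\msf{m}_{E,x,r}$ arising from commutators of $\msf{u}_{\alpha,x,r/2}$ with $\msf{u}_{-\alpha,x,r/2}$. Only the two maps of the extension are canonical, which is all the statement claims.
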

\begin{proof}
    We have a short exact sequence
    \[
        0 \to (L\breve{G}_E)_{x, r, (r/2)+} / (L\breve{G}_E)_{x, r+, (r/2)+} \to H_{E, x, r} \to (L\breve{G}_E)_{x, r, r/2} / (L\breve{G}_E)_{x, r, (r/2)+} \to 0. 
    \]
    Here, both left and right sides are abelian due to the Moy-Prasad isomorphism, and the description in the statement follows from \Cref{lem:product_description_loop}. For $x \in \breve{G}_{E, x, r/2}$ and $y \in \breve{G}_{E, x, (r/2)+}$, $[x, y] \in \breve{G}_{E, x, r+}$, so the extension is central. 
    
    For each $\alpha \in \Phi^M$, $L^+\cl{U}_{\alpha, x, r/2} \to (L\breve{G}_E)_{x, r, r/2}$ induces a lift $\msf{u}_{\alpha, x, r/2} \to H_{E, x, r/2}$ by \Cref{lem:piecewise_quotient}. It follows that $H_{E, x, r} \cong \msf{m}_{E, x, r} \times \msf{m}^\perp_{E, x, r/2}$ as a presheaf on $\PerfAlg$, so it is represented by an affine perfect group scheme perfectly of finite presentation. 
\end{proof}

There is a natural semilinear $\Gamma$-action on $(L\breve{G}_E)_{x, r, r/2}$ and it induces a semilinear $\Gamma$-action on the quotient $H_{E, x, r}$. First, we take the fixed locus under the linear inertial action. Recall that $\tau \in \Gal(\breve{E} / \breve{F})$ is a generator. 

\begin{prop}
    Let $H_{E, x, r}^\tau \subset H_{E, x, r}$ be the fixed locus of $\tau$. There is a unique affine perfect group scheme $H_{x, r}$ over $k$ such that $H_{x, r, \ov{k}} =  H_{E, x, r}^\tau$ equivariantly under $\Gal(\ov{k} / k)$. 
\end{prop}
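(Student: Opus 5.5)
The plan is to realize $H^\tau_{E,x,r}$ concretely as a product of vector groups, show that it is a closed subgroup scheme of $H_{E,x,r}$ stable under the residual Galois action, and then descend along $\ov{k}/k$ using the semilinear Frobenius, i.e.\ Galois descent for affine perfect schemes.

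First I would make the semilinear $\Gamma$-action precise on $H_{E,x,r}$. Here $\Gamma=\Aut(\breve{E}/F)$ acts on $W_{O_{\breve E}}(R)$ and on $\breve{\cl G}_{E,x,r,s}$ compatibly, since $x$ is $\Gamma$-fixed and $f_{r,s}$ is $\Gamma$-invariant ($\Phi_M$ and $\Phi^M$ are $\Gamma$-stable). The subgroup $\langle\tau\rangle=\Gal(\breve E/\breve F)$ acts $\ov{k}$-linearly on $(L\breve G_E)_{x,r,r/2}$ and preserves $(L\breve G_E)_{x,r+,(r/2)+}$. So $\tau$ is an automorphism of the affine perfect group scheme $H_{E,x,r}$ over $\ov{k}$, and its fixed locus $H^\tau_{E,x,r}$ is a closed subgroup scheme, being the equalizer of $\id$ and $\tau$ in a separated scheme.

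Next I would describe $H^\tau_{E,x,r}$. The central extension of \Cref{prop:Heisenberg_over_E} is $\tau$-equivariant, and the outer terms are $\ov{k}$-vector spaces on which $\tau$ acts linearly with order dividing $e$. Since $e$ is prime to $p$, taking $\tau$-invariants is exact on such representations, and $H^1(\bb Z/e, \msf m_{E,x,r})=0$. Hence
\[
0\to \msf m_{E,x,r}^\tau\to H^\tau_{E,x,r}\to (\msf m^\perp_{E,x,r/2})^\tau\to 0
\]
is exact on $R$-points, and a section exists: average a set-theoretic lift, or use that the vanishing of $H^1$ lets one correct the given section. Consequently $H^\tau_{E,x,r}\cong \msf m^\tau_{E,x,r}\times(\msf m^\perp_{E,x,r/2})^\tau$ as schemes, and it is affine, perfect and perfectly of finite presentation.

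Finally, the Frobenius $\sigma\in\Gamma/\langle\tau\rangle$ lifts to $\Gamma$, and any two lifts differ by a power of $\tau$. So $\sigma$ gives a well-defined $\sigma$-semilinear automorphism of $H^\tau_{E,x,r}$ compatible with the group structure. To apply Galois descent for affine schemes over $\ov{k}/k$, I need this action to be continuous, i.e.\ each element of the coordinate ring is fixed by some $\sigma^n$. Descent then produces a unique $H_{x,r}$ over $k$ with $H_{x,r,\ov{k}}=H^\tau_{E,x,r}$, and the group structure descends because the multiplication is equivariant.

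The main obstacle is this continuity. I would establish it from the product description: the $\Gamma$-action on the vector groups $\msf m^\tau_{E,x,r}$ and $(\msf m^\perp_{E,x,r/2})^\tau$ is smooth, as in condition (3) of \Cref{defi:tame_toral_action}, since these spaces have finite-dimensional $\Gamma$-stable lattices defined over a finite extension of $k$. The section can be chosen to be defined over a finite level, for instance by using the root group lifts summed over $\Gamma$-orbits. Uniqueness is formal from full faithfulness of descent.
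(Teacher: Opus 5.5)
Your proposal is correct and follows the paper's route: $H^\tau_{E,x,r}$ is a closed, hence affine, subgroup scheme; the semilinear $\Gamma$-action descends to a semilinear action of $\Gamma/\langle\tau\rangle=\Gal(\ov{k}/k)$; this action is smooth; and Galois descent gives a unique $H_{x,r}$. The one difference is the smoothness step. The paper gets it in one line: the positive loop groups, hence $H_{E,x,r}$ together with the action of the open subgroup $\Gal(\breve{E}/E)$, are already defined over $k_E$, and smoothness passes to the $\Gamma$-stable closed subscheme $H^\tau_{E,x,r}$. Your detour through the product decomposition and a $\tau$-averaged, finite-level section is valid but is not needed here; it is essentially the content of the next result, \Cref{lem:pointset_Hxr}.
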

\begin{proof}
    Since $H_{E, x, r}$ is affine, $H_{E, x, r}^\tau$ is affine and the semilinear $\Gamma$-action on $H_{E, x, r}$ induces a semilinear $\Gal(\ov{k} / k)$-action on $H_{E, x, r}^\tau$. Since positive loop groups can be defined over $k_E$, the Galois action is smooth. Moreover, $H_{E, x, r}^\tau$ is of finite presentation, so the Galois descent datum determines a unique affine perfect group scheme $H_{x, r}$ over $k$. 
\end{proof}

We adopt the notation in \cite{Fin21} and let $G_{x, r, s} = G(F) \cap \breve{\cl{G}}_{E, x, r, s}(O_{\breve{E}})$. In \cite{Yu01}, it is denoted by $(M, G)_{x, r, s}$ and it is shown that $G_{x, r, s}$ is independent of the choice of $E$ and $T$. 

\begin{lem} \label{lem:pointset_Hxr}
    There is a central extension sequence 
    \[
        0 \to \msf{m}_{x, r} \to H_{x, r} \to \msf{m}^\perp_{x, r/2} \to 0. 
    \]
    Moreover, we have $H_{x, r}(k) \cong G_{x, r, r/2} / G_{x, r+, (r/2)+}$. 
\end{lem}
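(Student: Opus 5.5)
The plan is to take $\tau$-invariants of the central extension in \Cref{prop:Heisenberg_over_E} and then descend along $\Gal(\ov{k}/k)$. The sequence $0 \to \msf{m}_{E,x,r} \to H_{E,x,r} \to \msf{m}^\perp_{E,x,r/2} \to 0$ is $\Gamma$-equivariant, since the semilinear $\Gamma$-action on $(L\breve{G}_E)_{x,r,r/2}$ preserves $(L\breve{G}_E)_{x,r+,(r/2)+}$ and the subgroup giving the kernel. Taking $\tau$-fixed loci gives a left exact sequence
\[
0 \to \msf{m}_{E,x,r}^\tau \to H_{E,x,r}^\tau \to (\msf{m}^\perp_{E,x,r/2})^\tau.
\]
To get surjectivity on the right, I would use that $\langle\tau\rangle$ has order $e$ prime to $p$ and that the kernel $\msf{m}_{E,x,r}$ is a $\ov{k}$-vector group. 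Then $H^1(\langle\tau\rangle, \msf{m}_{E,x,r}(R)) = 0$ for every $R \in \PerfAlg$, because multiplication by $e$ is invertible. This makes the sequence of fixed points exact on $R$-points.

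Next I identify the invariants with the $F$-rational graded pieces. By tame descent for Moy--Prasad filtrations (using that $E/F$ is tame, which justifies $\mfr{m}_{x,r} = (\mfr{\breve{m}}_{E,x,r})^\Gamma$ and exactness of invariants, as in \cite{Yu01}, \cite{KP23}), we get $(\msf{m}_{E,x,r})^{\Gamma} = \msf{m}_{x,r}$ and similarly for $\msf{m}^\perp$. Because the $\Gal(\ov{k}/k)$-descent of $H^\tau_{E,x,r}$ is compatible with the sequence, the sequence descends to $k$. This yields the central extension $0 \to \msf{m}_{x,r} \to H_{x,r} \to \msf{m}^\perp_{x,r/2} \to 0$ of perfect group schemes over $k$, with $\msf{m}_{x,r}$ and $\msf{m}^\perp_{x,r/2}$ viewed as vector groups. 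Centrality is inherited from $H_{E,x,r}$.

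For the point count, I compute $H_{x,r}(k) = H_{E,x,r}(\ov{k})^\Gamma = \bigl(\breve{G}_{E,x,r,r/2}/\breve{G}_{E,x,r+,(r/2)+}\bigr)^\Gamma$. I then compare this with $G_{x,r,r/2}/G_{x,r+,(r/2)+}$ via the natural map, which is injective by the definition $G_{x,r,s} = G(F) \cap \breve{\cl{G}}_{E,x,r,s}(O_{\breve{E}})$. Surjectivity requires vanishing of $H^1(\Gamma, \breve{G}_{E,x,r+,(r/2)+})$. That group is pro-unipotent, with a filtration whose graded pieces are $\ov{k}$-vector spaces. Under the tame action, $H^1$ of the $\tau$-part vanishes by the prime-to-$p$ argument above, and $H^1(\Gal(\ov{k}/k), -)$ of a vector group vanishes by Lang's theorem or additive Hilbert 90. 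A filtration and limit argument then gives the vanishing on the whole group.

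The main obstacle is this last step: controlling the Galois cohomology of the deeper subgroup $\breve{G}_{E,x,r+,(r/2)+}$ under $\Gamma$, together with the identification of $\Gamma$-invariants of the graded pieces with the $F$-rational Moy--Prasad quotients. For this I would cite the tame descent results of \cite{Yu01} and \cite{KP23}, or alternatively compare cardinalities of both sides using the exact sequence just established.
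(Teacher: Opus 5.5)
Your proposal is correct, and its first half is exactly the paper's argument: take $\tau$-fixed loci, use $p \nmid e$ to kill $H^1(\langle\tau\rangle, \msf{m}_{E,x,r}(R))$, then descend along $\Gal(\ov{k}/k)$. The paper also gets injectivity of $G_{x,r,r/2}/G_{x,r+,(r/2)+} \to H_{E,x,r}(\ov{k})^\Gamma = H_{x,r}(k)$ the same way you do.

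For surjectivity, the paper uses what you list only as a fallback. The short exact sequence
\[
0 \to G_{x,r,(r/2)+}/G_{x,r+,(r/2)+} \to G_{x,r,r/2}/G_{x,r+,(r/2)+} \to G_{x,r,r/2}/G_{x,r,(r/2)+} \to 0
\]
maps compatibly to $\msf{m}_{x,r} \to H_{x,r}(k) \to \msf{m}^\perp_{x,r/2}$, and its outer terms are identified with $\msf{m}_{x,r}$ and $\msf{m}^\perp_{x,r/2}$ by the Moy--Prasad isomorphisms. A diagram chase then gives surjectivity in the middle.

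Your primary route is to show that the connecting map $(B/A)^\Gamma \to H^1(\Gamma, A)$ is trivial, with $A = \breve{G}_{E,x,r+,(r/2)+} \subset B = \breve{G}_{E,x,r,r/2}$. Note that vanishing of $H^1(\Gamma,A)$ is sufficient, not required as you wrote. This route also works and has the merit of not presupposing the $F$-rational graded-piece identifications. It is heavier, though, and the Frobenius step needs care, because $\Gamma$ does not act smoothly on $\breve{E}$-points. So instead of invoking $H^1(\Gal(\ov{k}/k),-)$ directly, you should do two things:
\begin{enumerate}
\item First pass to $\tau$-fixed points, where $H^1(\langle\tau\rangle,A)$ is trivial by the prime-to-$p$ filtration argument.
\item Then prove that $a \mapsto a\sigma(a)^{-1}$ is surjective on $A^\tau$, using Lang's theorem on each graded quotient and successive approximation along the complete filtration.
\end{enumerate}
The paper's diagram chase avoids this non-abelian approximation entirely. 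The price is citing the abelian Moy--Prasad identifications, which are themselves proved by the same kind of tame descent.
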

\begin{proof}
    Since $e$ is coprime to $p$, \Cref{prop:Heisenberg_over_E} induces a short exact sequence 
    \[
        0 \to \msf{m}_{E, x, r}^\tau \to H_{E, x, r}^\tau \to (\msf{m}^\perp_{E, x, r/2})^\tau \to 0. 
    \]
    By the Galois equivariance, it provides a short exact sequence 
    \[
        0 \to \msf{m}_{x, r} \to H_{x, r} \to \msf{m}^\perp_{x, r/2} \to 0. 
    \]
    By construction, $H_{x, r}(k) \cong H_{E, x, r}(\ov{k})^\Gamma$, so there is a natural injection $G_{x, r, r/2} / G_{x, r+, (r/2)+} \to H_{x, r}(k)$. 
    Since we have a compatible short exact sequence coming from
    \[
        0 \to G_{x, r, (r/2)+} / G_{x, r+, (r/2)+} \to G_{x, r, r/2} / G_{x, r+, (r/2)+} \to G_{x, r, r/2} / G_{x, r, (r/2)+} \to 0, 
    \]
    we also get the surjectivity of $G_{x, r, r/2} / G_{x, r+, (r/2)+} \to H_{x, r}(k)$. 
\end{proof}

The commutator pairings of $H_{x, r}$ and $H_{E, x, r}$ provide pairings 
\[
    \langle -, - \rangle \colon \msf{m}^\perp_{x, r/2} \times \msf{m}^\perp_{x, r/2} \to \msf{m}_{x, r}, \quad 
    \langle -, - \rangle_E \colon \msf{m}^\perp_{E, x, r/2} \times \msf{m}^\perp_{E, x, r/2} \to \msf{m}_{E, x, r}. 
\]
Moreover, since $e$ is coprime to $p$, the inclusion $\msf{m}_{x, r, \ov{k}} = \msf{m}_{E, x, r}^\tau \subset \msf{m}_{E, x, r}$ admits a canonical section $\Av_\tau = \tfrac{1}{e} \sum_{0 \leq i < e} \tau^i$. For each $k$-linear map $\phi \colon \msf{m}_{x, r} \to k$, let $\ov{\phi}$ be its base change to $\ov{k}$ and let $\phi_E = \ov{\phi} \circ \Av_\tau$. Then, we set 
\[
    \langle -, - \rangle_\phi = \phi \circ \langle -, - \rangle, \quad 
    \langle -, - \rangle_{E, \phi} = \phi_E \circ \langle -, - \rangle. 
\]

\begin{defi} \label{defi:r_generic}
    We say that a $k$-linear map $\phi \colon \msf{m}_{x, r} \to k$ is $r$-\textit{generic} if it is nonzero and $\langle -, - \rangle_{E, \phi}$ is non-degenerate. Let
    \[
        H_{x, r, \phi} = H_{x, r} / \Ker(\phi) ,\quad 
        H_{E, x, r, \phi} = H_{E, x, r} / \Ker(\phi_E), \quad 
        \msf{H}_{x, r, \phi} = H_{x, r, \phi}(k). 
    \]
    We say that $H_{x, r, \phi}$ is the Heisenberg group scheme associated to $\phi$. 
\end{defi}

\subsection{Decomposition of Heisenberg-Weil groups}

In this section, we apply the result of \Cref{sec:symplectic_geometry} to obtain decompositions of Heisenberg-Weil groups into polarized or unitary Heisenberg-Weil groups. From now on, we fix an $r$-generic $k$-linear map $\phi \colon \msf{m}_{x, r} \to k$.  

Let $Z_M^\circ \subset M$ be the identity component of the center of $M$. Let $\cl{\breve{Z}}_{M, E}$ be the standard integral model of $Z_M^\circ$ over $O_{\breve{E}}$ (see \cite[Section B.2]{KP23}). 

\begin{lem}
    Let $\msf{Z}_{\msf{M}}$ be the special fiber of $\cl{\breve{Z}}_{M, E}$. Then, there is a natural action of $\msf{Z}_{\msf{M}}$ on $\msf{m}^\perp_{E, x, r/2}$ preserving $\langle -, - \rangle_{E}$ such that $(\msf{m}^\perp_{E, x, r/2})^{\msf{Z}_{\msf{M}}} = 0$. 
\end{lem}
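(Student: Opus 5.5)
The plan is to build the action from the conjugation action of $Z_M^\circ$ on $\breve{G}_E$, pass to positive loop groups and then to graded pieces, and read off the invariants from the root-space decomposition.

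First, since $Z_M^\circ$ is central in $M$, it is contained in every maximal torus of $M$, in particular in $T$. Over $\breve{E}$ the torus $Z_M^\circ$ splits, and $\cl{\breve{Z}}_{M,E}$ is the standard split model $\mathbb{G}_m^d$ over $O_{\breve{E}}$. The inclusion $Z_M^\circ \subset T$ extends to a homomorphism $\cl{\breve{Z}}_{M,E} \to \cl{\breve{T}}_{E,x,0}$ of integral models, because a split torus maps into the connected Néron (standard) model of $\breve{T}_E$.

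Next, conjugation by $\cl{\breve{Z}}_{M,E}$ preserves each root group model $\cl{U}_{\alpha,x,s}$. Indeed, it acts on $U_\alpha$ through the character $\alpha|_{Z_M^\circ}$, which is integral on $\cl{\breve{Z}}_{M,E}$ and therefore preserves the valuation $\nu_{\alpha,x}$ on $\cl{\breve{Z}}_{M,E}(O_{\breve{E}})$-points. Hence it preserves the Moy--Prasad group schemes $\breve{\cl{G}}_{E,x,r,s}$ by the product description of \Cref{lem:product_description_loop}. Applying $L^+$, we get an action of $L^+\cl{\breve{Z}}_{M,E}$ on $(L\breve{G}_E)_{x,r,r/2}$ that preserves $(L\breve{G}_E)_{x,r+,(r/2)+}$, and hence an action on $H_{E,x,r}$ by group automorphisms. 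This action is compatible with the central extension of \Cref{prop:Heisenberg_over_E}.

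On $\msf{m}_{E,x,r}$ the action is trivial, since $Z_M^\circ$ centralizes $M$. On the graded piece $\msf{m}^\perp_{E,x,r/2}=\bigoplus_{\alpha\in\Phi^M}\msf{u}_{\alpha,x,r/2}$, the positive-depth part $L^+\cl{\breve{Z}}_{M,E,0+}$ acts trivially, because $[L^+\cl{\breve{Z}}_{M,E,0+}, G_{r/2}]\subset G_{(r/2)+}$. So the action factors through the special fiber $\msf{Z}_{\msf{M}}$, acting on $\msf{u}_{\alpha,x,r/2}$ by the reduction $\bar\alpha$ of $\alpha|_{Z_M^\circ}$. The pairing $\langle-,-\rangle_E$ is induced by commutators, and $\msf{Z}_{\msf{M}}$ acts by automorphisms of $H_{E,x,r}$ that are trivial on the center. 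Therefore $\langle zv,zw\rangle_E=z\cdot\langle v,w\rangle_E=\langle v,w\rangle_E$.

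Finally, $(\msf{m}^\perp_{E,x,r/2})^{\msf{Z}_{\msf{M}}}=\bigoplus_{\bar\alpha=0}\msf{u}_{\alpha,x,r/2}$. So it suffices to show that $\bar\alpha\neq 0$ in $X^*(\msf{Z}_{\msf{M}})$ for every $\alpha\in\Phi^M$. The character lattices of $\cl{\breve{Z}}_{M,E}$ and of its special fiber agree, so $\bar\alpha=0$ would force $\alpha|_{Z_M^\circ}=0$. I expect this to be the main point. For a twisted Levi $M$, which is a Levi subgroup over $E$, $M$ is the centralizer of $Z_M^\circ$, so any root vanishing on $Z_M^\circ$ lies in $\Phi_M$. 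Hence every $\alpha\in\Phi^M$ restricts to a nontrivial character of $Z_M^\circ$. The remaining care is to check that the standard model is the split torus over $O_{\breve{E}}$, so that reduction is injective on characters.
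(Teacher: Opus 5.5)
Your proposal is correct and is essentially the paper's argument. The adjoint action of $\cl{\breve{Z}}_{M,E}$ on each $\cl{U}_{\alpha,x,r/2}$ induces the scalar action of $\msf{Z}_{\msf{M}}$ on $\msf{u}_{\alpha,x,r/2}$ through $\bar\alpha$, and $\langle -,-\rangle_E$ is preserved because it is the commutator pairing of $H_{E,x,r}$. You go slightly further than the paper by justifying $\bar\alpha\neq 0$ for $\alpha\in\Phi^M$. You do this via $M_E = Z_{G_E}(Z_M^\circ)$ and the fact that the standard model of the split torus $Z_M^\circ$ over $O_{\breve{E}}$ reduces isomorphically on character lattices; the paper only asserts this step.
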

\begin{proof}
    There is a natural adjoint action $\cl{\breve{Z}}_{M, E} \circlearrowright \cl{U}_{\alpha, x, r/2}$ for each $\alpha \in \Phi$. It induces a scalar action of $\msf{Z}_{\msf{M}}$ on $\msf{u}_{\alpha, x, r/2}$ given by $\alpha$. If $\alpha \in \Phi^M$, the restriction of $\alpha$ to $\msf{Z}_{\msf{M}}$ is nontrivial. Thus, we get an action of $\msf{Z}_{\msf{M}}$ on $\msf{m}^\perp_{E, x, r/2}$ such that $(\msf{m}^\perp_{x, r/2})^{\msf{Z}_{\msf{M}}} = 0$. Since $\langle - , - \rangle_E$ is the commutator pairing associated to $H_{E, x, r}$, it is stable under the adjoint action of $\msf{Z}_{\msf{M}}$. 
\end{proof}

\begin{prop}
    The triple $(\msf{Z}_{\msf{M}}, \msf{m}^\perp_{E, x, r/2}, \Gamma)$ is a tame toral action. 
\end{prop}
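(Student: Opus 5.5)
We verify the three conditions of \Cref{defi:tame_toral_action} for $\ov{\msf{T}} = \msf{Z}_{\msf{M}}$, $\ov{V} = \msf{m}^\perp_{E, x, r/2}$ with $\langle -, - \rangle_{E, \phi}$, and $\Gamma = \Aut(\breve{E}/F)$. The finite group $\msf{M}$ acting is the image of $M(F)_x$. First we check that $\msf{Z}_{\msf{M}}$ is a torus over $\ov{k}$. Since $Z_M^\circ$ splits over $E$ (it lies in $T$), its standard integral model over $O_{\breve{E}}$ is a split torus, so the special fiber is a split torus. Symplecticity of $\ov{V}$ holds by $r$-genericity (\Cref{defi:r_generic}).

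For condition (1), the preceding lemma already gives that the $\msf{Z}_{\msf{M}}$-action preserves $\langle -, - \rangle_E$. Applying $\phi_E$, it preserves $\langle -, - \rangle_{E,\phi}$. The action commutes with $M(F)_x$ because $M(F)_x$ centralizes $Z_M^\circ$. Hence the conjugation by $m \in M(F)_x$ on $H_{E,x,r}$ commutes with the adjoint action of $\cl{\breve{Z}}_{M,E}$ on each root subgroup, and passes to the graded quotient.

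For condition (2), $\Gamma$ acts semilinearly on $(L\breve{G}_E)_{x,r,r/2}$ through its action on $W_{O_{\breve{E}}}(R)$. This descends to the quotient $H_{E,x,r}$, hence to $\msf{m}^\perp_{E,x,r/2}$ and $\msf{m}_{E,x,r}$. Likewise $\Gamma$ acts on $\cl{\breve{Z}}_{M,E}$ and on its special fiber. The identities $\gamma(t\cdot v) = \gamma(t)\cdot\gamma(v)$ and $\gamma[v,w] = [\gamma v, \gamma w]$ follow by functoriality, since the action and the commutator are defined over $F$. The main point needing care is the compatibility $\langle \gamma v, \gamma w\rangle_{E,\phi} = \gamma\langle v,w\rangle_{E,\phi}$. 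For this, note that $\Av_\tau$ commutes with $\Gamma$ because $\langle\tau\rangle$ is normal in $\Gamma$, and that $\ov{\phi}$ is the base change of the $k$-rational $\phi$, so it is Frobenius-semilinear-equivariant. Then
\[
  \phi_E(\gamma c) = \ov{\phi}(\Av_\tau(\gamma c)) = \ov{\phi}(\gamma \Av_\tau c) = \bar\gamma\, \phi_E(c),
\]
where $\bar\gamma$ is the image of $\gamma$ in $\Gal(\ov{k}/k)$. The $\Gamma$-action commutes with $M(F)_x \subset M(F)$ since $M(F)_x$ consists of $F$-rational points.

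For condition (3), smoothness on $X^*(\msf{Z}_{\msf{M}})$ holds because $\Gamma$ acts through the finite quotient $\Gal(E/F)$, since $Z_M^\circ$ splits over $E$. Smoothness on $\ov{V}$ holds because the positive loop groups and the filtrations are defined over $k_E$. Every vector of $\ov{V}$ is therefore defined over a finite extension and is fixed by an open subgroup, as already used in the descent of $H_{x,r}$.

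I expect the main obstacle to be the semilinear compatibility of the pairing with $\phi_E$ discussed in condition (2). The remaining checks are bookkeeping from functoriality of the Moy-Prasad models.
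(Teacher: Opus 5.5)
Your proposal is correct and follows essentially the same route as the paper: the $\Gamma$-action and its smoothness come from the positive loop groups being defined over $k_E$, compatibility with the toral action and the commutator pairing follows by functoriality, and $\Gamma$-compatibility of $\langle -, - \rangle_{E,\phi}$ follows from $\phi_E = \ov{\phi} \circ \Av_\tau$; you simply spell out more details, such as $\Av_\tau$ commuting with $\Gamma$ because $\langle \tau \rangle$ is normal. One small caveat: if you take the finite group to be the image of $M(F)_x$, then $\ov{V}$ being a \emph{symplectic} representation of it requires $\langle -, - \rangle_{E,\phi}$ to be $M(F)_x$-invariant, which holds once $\phi$ is $M$-stable, an assumption the paper only imposes later (its own proof leaves the finite group unspecified at this stage).
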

\begin{proof}
    The natural semilinear $\Gamma$-action on positive loop groups induces one on the quotient $\msf{m}^\perp_{E, x, r/2}$, and it is smooth since positive loop groups can be defined over $k_E$. Moreover, since $M$ is defined over $F$, there is a natural semilinear smooth $\Gamma$-action on $\cl{\breve{Z}}_{M, E}$, and it induces one on $\msf{Z}_{\msf{M}}$. By construction, the adjoint $\msf{Z}_{\msf{M}}$-action on $\msf{m}^\perp_{E, x, r/2}$ and the commutator pairing $\langle -, - \rangle_E$ are compatible with the $\Gamma$-action. Since $\phi_E = \phi \circ \Av_\tau$, $\phi_E$ is equivariant under $\Gamma$, so the $\Gamma$-action is compatible with $\langle - , -\rangle_{E, \phi}$. Thus, $(\msf{Z}_{\msf{M}}, \msf{m}^\perp_{E, x, r/2}, \Gamma)$ is a tame toral action. 
\end{proof}

From now on, we make the following assumption. 

\begin{ass} \label{ass:without_symmetric_ramification}
    The tame toral action $(\msf{Z}_{\msf{M}}, \msf{m}^\perp_{E, x, r/2}, \Gamma)$ is \textit{without symmetric ramification}. By \Cref{lem:criterion_without_symmetric_ramification}, this holds if $e$ is odd. 
\end{ass}


\begin{defi} \label{defi:Phi_e_and_u_I}
    Let $\Phi_e^M$ be the set of $\tau$-orbits in the set of $\msf{Z}_{\msf{M}}$-weights of $\msf{m}^\perp_{E, x, r/2}$. For each subset $I \subset \Phi_e^M$, let $I_E \subset \Phi^M$ be the inverse image of $I$ along $\Phi^M \to \Phi_e^M$ and let 
    \[
        \msf{u}_{E, I, r/2} = \bigoplus_{\alpha \in I_E} \msf{u}_{\alpha, x, r/2} \subset \msf{m}^\perp_{E, x, r/2}, \quad 
        \msf{u}_{I, r/2} = \msf{u}_{E, I, r/2}^\tau \subset \msf{m}^\perp_{x, r/2, \ov{k}}. 
    \]
    Let $\pi_I \colon \msf{m}^\perp_{x, r/2, \ov{k}} \to \msf{u}_{I, r/2}$ denote the natural projection. When $I = \{ \alpha \}$ is a singleton, we use the subscript $\alpha$ instead of $\{\alpha\}$. 
\end{defi}
\begin{lem} \label{prop:lifting_to_Heisenberg}
    Let $I \subset \Phi_e^M$ be a subset such that $I \cap (-I) = \phi$. Then, there is a canonical group homomorphism $\msf{u}_{I, r/2} \to H_{x, r, \ov{k}}$ lifting $\msf{u}_{I, r/2} \subset \msf{m}^\perp_{x, r/2, \ov{k}}$. 
\end{lem}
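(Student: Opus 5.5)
We construct the lift over $E$ first and then descend along $\tau$ and Frobenius. For each $\alpha \in I_E$ the root subgroup gives a morphism $L^+\cl{U}_{\alpha, x, r/2} \to (L\breve{G}_E)_{x, r, r/2} \to H_{E, x, r}$. By \Cref{lem:piecewise_quotient} it factors through $\msf{u}_{\alpha, x, r/2}$, because $L^+\cl{U}_{\alpha, x, r/2+} \subset (L\breve{G}_E)_{x, r+, (r/2)+}$ when $\alpha \in \Phi^M$. This is the same lift as in the proof of \Cref{prop:Heisenberg_over_E}.

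Next we assemble these into a single homomorphism $\msf{u}_{E, I, r/2} \to H_{E, x, r}$, defined as the ordered product of the individual lifts. The key point is that the images of $\msf{u}_{\alpha, x, r/2}$ and $\msf{u}_{\beta, x, r/2}$ commute in $H_{E, x, r}$ for all $\alpha, \beta \in I_E$. Their commutator lies in the central subgroup $\msf{m}_{E, x, r}$. By the commutator formula for root groups (\cite[Proposition 7.3.12]{KP23}), this commutator is a product of elements of $U_{i\alpha + j\beta, x, r+}$ for $i, j \geq 1$, together with a possible torus contribution when $\beta = -\alpha$. Its image in $\msf{m}_{E, x, r} \cong \msf{t}_{E, x, r} \oplus \bigoplus_{\gamma \in \Phi_M} \msf{u}_{\gamma, x, r}$ is $\msf{Z}_{\msf{M}}$-equivariant of weight $\alpha + \beta$. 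Since $\msf{Z}_{\msf{M}}$ acts trivially on $\msf{m}_{E, x, r}$, the commutator can be nonzero only if $\alpha + \beta$ restricts trivially to $\msf{Z}_{\msf{M}}$. This happens exactly when $\beta = -\alpha$ as $\msf{Z}_{\msf{M}}$-weights. The $\tau$-orbit of $-\alpha$ is $-(\tau\text{-orbit of }\alpha)$, which lies in $-I$, so the condition $I \cap (-I) = \emptyset$ rules this out. Hence the commutators vanish and the ordered product is a group homomorphism, independent of the chosen order. Each factor is additive because $H_{E,x,r} \to \msf{m}^\perp_{E,x,r/2}$ restricted to the image is injective and the image is abelian. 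Equivalently, one can say that $\msf{u}_{E, I, r/2}$ is isotropic for $\langle -, - \rangle_E$ with values in $\msf{m}_{E,x,r}$ (before applying $\phi$), and an abelian subgroup is obtained from the product decomposition \Cref{lem:product_description_loop}.

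For descent, the map is canonical: it is built from root subgroup inclusions alone, without choices. The semilinear $\Gamma$-action permutes root subgroups compatibly, $\gamma(U_{\alpha, x, r/2}) = U_{\gamma\alpha, x, r/2}$ since $x$ is $\Gamma$-fixed, and $I_E$ is $\tau$-stable. The homomorphism is therefore $\tau$-equivariant. Restricting to $\tau$-invariants gives $\msf{u}_{I, r/2} = \msf{u}_{E,I,r/2}^\tau \to H_{E,x,r}^\tau = H_{x, r, \ov{k}}$. Composing with the projection recovers the inclusion $\msf{u}_{I,r/2} \subset \msf{m}^\perp_{x,r/2,\ov{k}}$.

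The main obstacle is the commutation step. One must be careful that commutators in the Moy–Prasad group of elements of $U_{\alpha,x,r/2}$ and $U_{\beta,x,r/2}$ really land in $(L\breve{G}_E)_{x,r+,(r/2)+}$ modulo the weight-$(\alpha+\beta)$ part of depth $r$. Non-reduced or ramified root-group subtleties could also intervene. I would handle this through the weight argument on the central quotient $\msf{m}_{E,x,r}$, rather than through explicit commutator formulas.
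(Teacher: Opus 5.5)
Your proposal is correct and is essentially the paper's proof. You lift each $\msf{u}_{\alpha, x, r/2}$ through its root subgroup, use that $\msf{u}_{E, I, r/2}$ is isotropic for the $\msf{m}_{E, x, r}$-valued commutator pairing so the lifts commute, and take $\tau$-invariants of the resulting $\tau$-equivariant homomorphism. Your $\msf{Z}_{\msf{M}}$-weight argument supplies exactly the isotropy the paper asserts without comment.

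There are two harmless slips. First, the $i = j = 1$ commutator term lies in $U_{\alpha+\beta, x, r}$, not at depth $r+$; this is why the weight argument is genuinely needed when $\alpha + \beta \in \Phi_M$. Second, no Frobenius descent is needed or available, since $\msf{u}_{I, r/2}$ is generally not $\sigma$-stable.
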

\begin{proof}
    It is enough to construct a $\tau$-equivariant group homomorphism $\msf{u}_{E, I, r/2} \to H_{E, x, r}$ lifting $\msf{u}_{E, I, r/2} \subset \msf{m}^\perp_{E, x, r/2}$. For each $\alpha \in I_E$, $L^+\cl{U}_{\alpha, x, r/2} \to (L\breve{G}_E)_{x, r, r/2}$ induces $\msf{u}_{\alpha, x, r/2} \to H_{E, x, r/2}$ by \Cref{lem:piecewise_quotient}. Since $\msf{u}_{E, I}$ is isotropic under the commutator pairing $\langle -, - \rangle_E$, any two lifts commute. The resulting homomorphism $\msf{u}_{E, I, r/2} \to H_{E, x, r}$ is $\tau$-stable since the collection of lifts $\msf{u}_{\alpha, x, r/2} \to H_{E, x, r/2}$ is $\tau$-stable by construction. 
\end{proof}

For such an $I$, we will fix an inclusion $\msf{u}_{I, r/2} \subset H_{x, r, \ov{k}}$ as above. It also induces an inclusion $\msf{u}_{I, r/2} \subset H_{x, r, \phi, \ov{k}}$. Under \Cref{ass:without_symmetric_ramification}, we get $\msf{u}_{\alpha, r/2} \subset H_{x, r, \phi, \ov{k}}$ for every $\alpha \in \Phi_e^M$. 

To introduce Heisenberg-Weil groups, we will make additional assumptions on $\phi$. The adjoint action of $M(F)_x$ on $G(F)$ stabilizes $G(F)_{x, r, s}$ for $0 < \tfrac{r}{2} \leq s \leq r$. Thus, $M(F)_x$ acts on $\msf{m}_{x, r}$, $\msf{m}^\perp_{x, r/2}$ and $H_{x, r}$. 

\begin{defi} \label{defi:M_stable}
    We say that a $k$-linear map $\phi \colon \msf{m}_{x, r} \to k$ is $M$-stable if the natural adjoint action of $M(F)_x$ on $\msf{m}_{x, r}$ stabilizes $\phi$. 
\end{defi}

\begin{lem} \label{lem:adjoint_M_xrphi}
    Suppose that an $r$-generic map $\phi$ is $M$-stable. Then, the adjoint action of $M(F)_x$ on $\msf{m}^\perp_{x, r/2}$ preserves the symplectic form $\langle -, - \rangle_\phi$. Let 
    \[
        \msf{M}_{x, r, \phi} \subset \Sp(\msf{m}^\perp_{x, r/2}, \langle - , -\rangle_{\phi})
    \]
    be the image of $M(F)_x$. Then, the adjoint action of $M(F)_x$ on $H_{x, r, \phi}$ factors through $\msf{M}_{x, r, \phi}$. 
\end{lem}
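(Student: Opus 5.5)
The plan is to first establish the equivariance of the commutator pairing, and then to identify the kernel of the $M(F)_x$-action on $H_{x, r, \phi}$ with the kernel of its action on $\msf{m}^\perp_{x, r/2}$.

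For the first claim, the adjoint action of $m \in M(F)_x$ on $G_{x, r, r/2}$ is by group automorphisms. It stabilizes $G_{x, r+, (r/2)+}$, so it induces an automorphism of $\msf{H}_{x,r} = H_{x, r}(k)$ (via \Cref{lem:pointset_Hxr}), and in fact of the group scheme $H_{x,r}$ by functoriality of positive loop groups. This automorphism preserves the central extension of \Cref{lem:pointset_Hxr}, so the commutator pairing is equivariant:
\[
\langle \Ad(m)v, \Ad(m)w\rangle = \Ad(m)\langle v, w\rangle .
\]
Since $\phi$ is $M$-stable, $\phi\circ \Ad(m) = \phi$ on $\msf{m}_{x,r}$, and hence $\langle -, -\rangle_\phi$ is preserved. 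Non-degeneracy of $\langle -, - \rangle_\phi$ on $\msf{m}^\perp_{x,r/2}$ follows from $r$-genericity: restrict $\langle -, -\rangle_{E,\phi}$ to $\tau$-invariants (as in \Cref{prop:Vtau_decomposition}) and descend. Therefore $\msf{M}_{x,r,\phi}\subset \Sp$ makes sense. Moreover, $\Ker(\phi)$ is $M(F)_x$-stable, so $M(F)_x$ acts on $H_{x,r,\phi}$.

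For the factorization, I need to show that if $m$ acts trivially on $\msf{m}^\perp_{x,r/2}$, then it acts trivially on $H_{x,r,\phi}$. Let $\theta = \Ad(m)$ on $H_{x,r,\phi}$. The map $h\mapsto \theta(h)h^{-1}$ lands in the center $k = \msf{m}_{x,r}/\Ker\phi$, because $\theta$ is trivial modulo the center. Since the center is central, this map is a group homomorphism $\chi\colon H_{x,r,\phi}\to k$. It is trivial on the center, because $m$ fixes $\phi$, and hence acts trivially on $\msf{m}_{x,r}/\Ker\phi \cong k$ via $\phi$. So $\chi$ factors through a homomorphism $\msf{m}^\perp_{x,r/2}\to k$.

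The main obstacle is showing that $\chi = 0$: an automorphism of a Heisenberg group acting trivially on both center and quotient can be nontrivial (an inner automorphism, for instance). I would argue in two steps.

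First, I would show that $\chi$ is $k$-linear, working at the level of group schemes. Since $\theta$ comes from $m$ acting on $H_{x,r}$, it is a morphism of $k$-group schemes, so $\chi\colon \msf{m}^\perp_{x,r/2}\to \bb{G}_a$ is a homomorphism of perfect group schemes, i.e.\ additive but a priori only $p$-polynomial. I would then use equivariance under the $\msf{Z}_{\msf{M}}$-torus action over $\ov{k}$. Since $m\in M(F)_x$ commutes with $Z_M^\circ$, the map $\chi$ is $\msf{Z}_{\msf{M}}$-equivariant, and $\msf{Z}_{\msf{M}}$ acts trivially on $\bb{G}_a$ (the center $\msf{m}_{x,r}$). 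Hence $\chi$ kills each weight space, because $(\msf{m}^\perp_{E,x,r/2})^{\msf{Z}_{\msf{M}}} = 0$. To make this precise, restrict $\chi$ to each weight subspace $\msf{u}_{\alpha, r/2}$, which lifts to $H$ by \Cref{prop:lifting_to_Heisenberg}. Then $\chi(t\cdot u) = \chi(u)$ for all $t$ in the torus, with $t\cdot u = \alpha(t)u$ ranging over all nonzero scalar multiples, while $\chi$ is additive and polynomial. It follows that $\chi|_{\msf{u}_{\alpha,r/2}}$ is constant on lines and therefore zero, since $\chi(0) = 0$. Summing over weights gives $\chi = 0$ over $\ov{k}$, hence on $k$-points. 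Thus $\theta = \id$, and the action factors through $\msf{M}_{x,r,\phi}$.
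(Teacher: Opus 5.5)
Your treatment of the first claim matches the paper: the commutator pairing is equivariant, $\phi$ is $M$-stable, and $\langle -,-\rangle_\phi$ is non-degenerate by descent from $\langle -,-\rangle_{E,\phi}$. Your reduction of the factorization to the vanishing of the additive map $\chi\colon\msf{m}^\perp_{x,r/2}\to\bb{G}_a$ is also fine.

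The gap is in the torus step whenever $e>1$. The torus $\msf{Z}_{\msf{M}}$ acts on $H_{E,x,r}$ and on $\msf{m}^\perp_{E,x,r/2}$. Your $\chi$, however, lives on $H_{x,r,\phi,\ov{k}}$, a quotient of the $\tau$-fixed locus $H_{E,x,r}^\tau$, and only $\msf{Z}_{\msf{M}}^\tau$ acts there.

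An element $\alpha\in\Phi^M_e$ is a $\tau$-orbit of weights. So $\msf{Z}_{\msf{M}}$ acts on $\msf{u}_{E,\alpha,r/2}$ through several different characters and does not preserve $\msf{u}_{\alpha,r/2}=\msf{u}_{E,\alpha,r/2}^\tau$. Hence ``$t\cdot u=\alpha(t)u$ ranging over all nonzero scalars'' is false on $\msf{u}_{\alpha,r/2}$.

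Passing to $\msf{Z}_{\msf{M}}^\tau$ does not rescue the argument. Its character group is $X^*(\msf{Z}_{\msf{M}})/(\tau-1)X^*(\msf{Z}_{\msf{M}})$, and it can act trivially on all of $\msf{m}^\perp_{x,r/2,\ov{k}}$. For example, take $G=\GL_3$ and $M=T=\mathrm{Res}_{E/F}\bb{G}_m$ with $E/F$ totally tamely ramified cubic, so $e=3$ and the standing assumption holds. Then $\tau$ permutes the factors of $\msf{Z}_{\msf{M}}\cong\bb{G}_m^3$ cyclically, and $\msf{Z}_{\msf{M}}^\tau$ is the center of $\GL_3$.

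Nor can you simply run your argument on $H_{E,x,r}$. An $m$ acting trivially on $\msf{m}^\perp_{x,r/2}$ need not act trivially on $\msf{m}^\perp_{E,x,r/2}$, for instance on root spaces $\msf{u}_{\beta,x,r/2}$ whose $\tau$-invariants vanish. So $\mathrm{Ad}(m)$ is not trivial modulo the center there, and $\chi$ is undefined at that level.

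The missing idea is to apply your scaling argument to the canonical lifts rather than to $\chi$; this is what underlies the paper's key step. The paper's proof uses that $M(F)_x$ stabilizes each lifted subgroup $\msf{u}_{\alpha,r/2}\subset H_{x,r,\phi,\ov{k}}$, and then concludes from the $M(F)_x$-equivariant isomorphism $\bb{A}^1\times\prod_{\alpha\in\Phi^M_e}\msf{u}_{\alpha,r/2}\cong H_{x,r,\phi,\ov{k}}$.

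To justify that stability, work at the $E$-level with each single $\msf{Z}_{\msf{M}}$-weight $a$, not with a $\tau$-orbit:
\begin{enumerate}
\item The lift $s_a\colon\msf{u}_{E,a,r/2}\to H_{E,x,r}$ built from root subgroups is a $\msf{Z}_{\msf{M}}$-equivariant homomorphism.
\item $\mathrm{Ad}(m)\circ s_a\circ\mathrm{Ad}(m)^{-1}$ is another such lift, because $m$ centralizes $Z_M^\circ$ and preserves $\msf{Z}_{\msf{M}}$-weight spaces.
\item The two lifts differ by a $\msf{Z}_{\msf{M}}$-equivariant additive map $\msf{u}_{E,a,r/2}\to\msf{m}_{E,x,r}$. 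On the source the full torus acts through the genuinely nontrivial, hence surjective, character $a$, so your polynomial scaling argument applies and the difference is zero.
\item Taking $\tau$-invariants, $M(F)_x$ stabilizes each $\msf{u}_{\alpha,r/2}$.
\end{enumerate}
In your framework, this is exactly what forces $\chi|_{\msf{u}_{\alpha,r/2}}=0$. As written, your argument is correct only when $E/F$ is unramified.
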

\begin{proof}
    By construction, the commutator pairing $\langle -, -\rangle$ is equivariant under $M(F)_x$. Thus, $\langle -, - \rangle_\phi$ is stable under $M(F)_x$ if $\phi$ is $M$-stable. 

    Since the adjoint action of $M(F)_x$ on $\msf{m}^\perp_{E, x, r/2}$ preserves $\msf{Z}_{\msf{M}}$-weights, $\msf{u}_{\alpha, r/2} \subset H_{x, r, \phi, \ov{k}}$ is stable under $M(F)_x$ for each $\alpha \in \Phi_e^M$. Thus, an isomorphism 
    \[
        \bb{A}^1 \times \prod_{\alpha \in \Phi_e^M} \msf{u}_{\alpha, r/2} \cong H_{x, r, \phi, \ov{k}}
    \]
    for an arbitrary order of the product factors is equivariant under $M(F)_x$. In particular, the adjoint action of $M(F)_x$ on $H_{x, r, \phi}$ factors through $\msf{M}_{x, r, \phi}$. 
\end{proof}

\begin{defi}
    For an $M$-stable $r$-generic map $\phi \colon \msf{m}_{x, r} \to k$, we say that 
    \[  
        \msf{HW}_{x, r, \phi} = \msf{H}_{x, r, \phi} \rtimes \msf{M}_{x, r, \phi}
    \]
    associated to the adjoint action is the Heisenberg-Weil group associated to $\phi$. 
\end{defi}

From now on, we will assume that $\phi$ is $M$-stable and $r$-generic. First, we decompose $\msf{H}_{x, r, \phi}$ along an orthogonal decomposition of $\msf{m}^\perp_{x, r/2}$. 

\begin{defi}
    Let $\Phi_\Sigma^M$ be the set of orbits in $\Phi_e^M$ under the $\Gal(\ov{k} / k) \times \{ \pm 1\}$-action. For each $C \in \Phi_\Sigma^M$, let $\msf{m}^{\perp, C}_{x, r/2} \subset \msf{m}^{\perp}_{x, r/2}$ be as in \Cref{defi:C_component}. Let $H_C \subset H_{x, r, \phi}$ be the preimage of $\msf{m}^{\perp, C}_{x, r/2} \subset \msf{m}^{\perp}_{x, r/2}$ and let $\msf{H}_C = H_C(k)$. 
\end{defi}

\begin{lem} \label{lem:Heisenberg_decomposition}
    Let $\msf{Z}^0 = \{ (z_C) \in k^{\oplus \Phi_\Sigma^M} \mid \sum_C z_C = 0\}$. Then, we have an isomorphism 
    \[
        \msf{H}_{x, r, \phi} \cong \bigl(\prod_{C \in \Phi_\Sigma^M} \msf{H}_C\bigr) / \msf{Z}^0. 
    \]
\end{lem}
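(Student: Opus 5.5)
The natural map is the product of the inclusions $\msf{H}_C \hookrightarrow \msf{H}_{x, r, \phi}$,
\[
    \mu \colon \prod_{C \in \Phi_\Sigma^M} \msf{H}_C \to \msf{H}_{x, r, \phi}, \quad (h_C)_C \mapsto \prod_C h_C .
\]
I will show that $\mu$ is a surjective homomorphism with kernel $\msf{Z}^0$. Throughout, the center of $\msf{H}_{x, r, \phi}$ is identified with $k$ via $\phi$. Each $\msf{H}_C$ contains this central $k$ and is a central extension $0 \to k \to \msf{H}_C \to \msf{m}^{\perp, C}_{x, r/2} \to 0$. The subgroup $\msf{Z}^0 \subset \prod_C k \subset \prod_C \msf{H}_C$ is embedded via the central copies.

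Since $H_{x, r, \phi}$ is a central extension of $\msf{m}^\perp_{x, r/2}$ by $\bb{G}_a$ (after taking $\phi$), its $k$-points fit into
\[
0 \to k \to \msf{H}_{x, r, \phi} \to \msf{m}^\perp_{x, r/2} \to 0 .
\]
This sequence is exact on $k$-points because $H^1(k, \bb{G}_a) = 0$, or directly from \Cref{lem:pointset_Hxr}. Consequently $\msf{H}_C$ is the preimage of $\msf{m}^{\perp, C}_{x, r/2}$ in $\msf{H}_{x, r, \phi}$.

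\textbf{$\mu$ is a homomorphism.} It suffices to show that $\msf{H}_C$ and $\msf{H}_{C'}$ commute for $C \neq C'$. The commutator of lifts of $v \in \msf{m}^{\perp, C}_{x, r/2}$ and $w \in \msf{m}^{\perp, C'}_{x, r/2}$ is $\langle v, w \rangle_\phi$, so I need $\langle v, w \rangle_\phi = 0$. After base change to $\ov{k}$, $\ov{V}^C$ and $\ov{V}^{C'}$ are sums of weight spaces indexed by disjoint $\Sigma$-stable sets of $\tau$-orbits. By \Cref{prop:Vtau_decomposition}, the decomposition indexed by $\Phi_e^M$ is a genuine weight decomposition, so condition (2) of \Cref{defi:weight_decomposition} makes $\ov{V}^s$ orthogonal to $\ov{V}^t$ unless $t = -s$. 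Since $C'$ contains no $-s$ for $s \in C$, these summands are orthogonal for $\langle -, - \rangle_\phi$. Central elements commute with everything, so the lifts commute.

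\textbf{Surjectivity and kernel.} By \Cref{prop:orbit_description} and the remark following it, $\msf{m}^\perp_{x, r/2} = \bigoplus_C \msf{m}^{\perp, C}_{x, r/2}$. Hence any $h \in \msf{H}_{x, r, \phi}$ with image $\sum_C v_C$ can be written as $\prod_C h_C$ times a central element, where each $h_C$ lifts $v_C$. Absorbing the central element into any one factor gives surjectivity.

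For the kernel, suppose $\prod_C h_C = 1$. Projecting to $\msf{m}^\perp_{x, r/2}$ and using directness of the sum gives each $v_C = 0$. So each $h_C = z_C$ lies in the center $k$, and the condition becomes $\sum_C z_C = 0$, i.e.\ $(z_C) \in \msf{Z}^0$. Conversely, every element of $\msf{Z}^0$ maps to $1$.

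The step needing the most care is the orthogonality in the homomorphism step, namely that the commutator pairing on $\msf{H}_{x, r, \phi}$ agrees with $\langle -, - \rangle_\phi$ restricted from the $\ov{k}$-decomposition. Here I rely on \Cref{prop:Vtau_decomposition} applied to $(\msf{Z}_{\msf{M}}, \msf{m}^\perp_{E, x, r/2}, \Gamma)$ with the pairing $\langle -, - \rangle_{E, \phi}$, whose restriction to $\tau$-invariants is $\langle -, - \rangle_\phi$ since $\phi_E = \ov{\phi} \circ \Av_\tau$.
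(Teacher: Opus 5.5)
Your proof is correct and follows the paper's approach. The paper simply cites the general fact that the orthogonal decomposition $\msf{m}^\perp_{x, r/2} = \bigoplus_C \msf{m}^{\perp, C}_{x, r/2}$ exhibits $\msf{H}_{x,r,\phi}$ as the central product of the $\msf{H}_C$; you have unpacked exactly that, with pairwise commutation coming from orthogonality and surjectivity and the kernel $\msf{Z}^0$ coming from the directness of the sum. The only implicit hypothesis is $\Phi^M_\Sigma \neq \emptyset$, which you use when you absorb the central element into one factor; the statement itself needs this too, since for empty $\Phi^M_\Sigma$ the right-hand side is trivial while $\msf{H}_{x,r,\phi} \cong k$.
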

\begin{proof}
    This is a general fact since $\msf{m}^\perp_{x, r, \phi} = \bigoplus_C \msf{m}^{\perp, C}_{x, r/2}$ is an orthogonal decomposition. 
\end{proof}

Since the adjoint action of $M(F)_x$ preserves $\msf{Z}_{\msf{M}}$-weights, the orthogonal decomposition $\msf{m}^\perp_{x, r, \phi} = \bigoplus_C \msf{m}^{\perp, C}_{x, r/2}$ is stable under $\msf{M}_{x, r, \phi}$. To give an explicit description of $\msf{H}_C \rtimes \msf{M}_{x, r, \phi}$, we first recall Heisenberg-Weil groups associated to polarized or unitary symplectic forms. Our version is a central quotient of usual Heisenberg-Weil groups along the trace function.

\begin{defi}
    For a finite extension $l / k$ and an $l$-vector space $V$, let 
    $
        \msf{H}(V) = k \times (V \oplus V^*)
    $
    be the Heisenberg group equipped with a group law
    \[
        (z, (a, a^*)) \cdot (w, (b, b^*)) = (z + w + \Tr_{l/k} \langle a, b^* \rangle, (a + b, a^* + b^*)). 
    \]
    Let $\msf{HW}(V) = \msf{H}(V) \rtimes \GL(V)$ be the Heisenberg-Weil group associated to the adjoint action 
    \[
        g \cdot (z, (a, a^*)) = (z, (ga, ga^*)). 
    \]
\end{defi}

\begin{defi}
    Let $l^+ / k$ be a finite extension and let $l / l^+$ be a quadratic extension. Let $(V, \langle - , - \rangle_V)$ be an $l$-vector space equipped with a non-degenerate skew-Hermitian form relative to $l/ l^+$. Let 
    \[
        \msf{H}(V) = \{(z, v) \in \ov{k} \times V \mid z^q - z = - \langle v, v \rangle_V \}
    \] 
    be the Heisenberg group equipped with a group law
    \[
        (z, a) \cdot (w, b) = (z + w + \sum_{0 \leq i < [l^+ : k]} \langle a, b \rangle_V^{q^i}, a + b). 
    \]
    Let $\msf{HW}(V) = \msf{H}(V) \rtimes \msf{U}(V)$ be the Heisenberg-Weil group associated to the adjoint action 
    $
        g \cdot (z, a) = (z, ga). 
    $
    Here, $\msf{U}(V)$ is the unitary group. 
\end{defi}

By \Cref{prop:Vtau_decomposition}, we may apply \Cref{prop:orbit_description} to $\msf{m}^\perp_{x, r/2}$ to get a description for each $\msf{m}^{\perp, C}_{x ,r/2}$ and $\msf{H}_C$. Now, recall the notation in \Cref{defi:rational_form}. 

\begin{prop} \label{prop:HW_decomposition}
    Let $C \in \Phi_\Sigma^M$ and let $\alpha \in C \subset \Phi_e^M$ be a representative. 
    \begin{enumerate}
        \item When $C \neq \Gal(\ov{k} / k) \cdot \alpha$, there is a canonical homomorphism
        \begin{equation} \label{eq:HW_polarized}
            \msf{H}_C \rtimes \msf{M}_{x, r, \phi} \to \msf{HW}(\msf{m}^{\perp, \alpha}_{x, r/2})
        \end{equation}
        that induces an isomorphism $\msf{H}_C \cong \msf{H}(\msf{m}^{\perp, \alpha}_{x, r/2})$ and sends $\msf{M}_{x, r, \phi}$ to $\GL(\msf{m}^{\perp, \alpha}_{x, r/2})$ via the natural action $\msf{M}_{x, r, \phi} \circlearrowright \msf{m}^{\perp, \alpha}_{x, r/2}$. 
        \item When $C = \Gal(\ov{k} / k) \cdot \alpha$, there is a canonical homomorphism
        \[
            \msf{H}_C \rtimes \msf{M}_{x, r, \phi} \to \msf{HW}(\msf{m}^{\perp, \alpha}_{x, r/2})
        \]
        that induces an isomorphism $\msf{H}_C \cong \msf{H}(\msf{m}^{\perp, \alpha}_{x, r/2})$ and sends $\msf{M}_{x, r, \phi}$ to $\msf{U}(\msf{m}^{\perp, \alpha}_{x, r/2})$ via the natural action $\msf{M}_{x, r, \phi} \circlearrowright \msf{m}^{\perp, \alpha}_{x, r/2}$ preserving the skew-Hermitian form $B$. 
    \end{enumerate}
\end{prop}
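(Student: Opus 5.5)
The strategy is to build the homomorphisms from the geometric lifts of weight spaces into the Heisenberg group scheme (\Cref{prop:lifting_to_Heisenberg}), and then compute the commutator pairing using \Cref{prop:orbit_description}.

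By \Cref{prop:Vtau_decomposition}, the decomposition $\msf{m}^\perp_{x, r/2, \ov{k}} = \bigoplus_{\beta \in \Phi_e^M} \msf{u}_{\beta, r/2}$ is a genuine weight decomposition for $\langle -, - \rangle_\phi$. So \Cref{prop:orbit_description} applies with $V^\alpha = \msf{m}^{\perp, \alpha}_{x, r/2} = (\msf{u}_{\alpha, r/2})^{\sigma^{d_\alpha}}$.

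\textbf{Case (1).} Set $I = \Gal(\ov{k}/k)\cdot\alpha$, so that $-I = \Gal(\ov{k}/k)\cdot(-\alpha)$ and $I \cap (-I) = \emptyset$.
\begin{enumerate}
\item \Cref{prop:lifting_to_Heisenberg} gives canonical homomorphisms $\msf{u}_{I, r/2} \to H_{x,r,\phi,\ov{k}}$ and $\msf{u}_{-I, r/2} \to H_{x,r,\phi,\ov{k}}$. These are $\sigma$-equivariant, since the family of root-group lifts is $\Gamma$-stable. Taking $k$-points gives injective homomorphisms $V^\alpha \to \msf{H}_C$ and $V^{-\alpha} \to \msf{H}_C$ via the sum-of-conjugates isomorphisms of \Cref{prop:orbit_description}.
\item Identify $V^{-\alpha}$ with $(V^\alpha)^*$ using the pairing $B$ of \Cref{prop:orbit_description}(1).
\item Define $\msf{H}(V^\alpha) \to \msf{H}_C$ by $(z, (a, a^*)) \mapsto z \cdot \tilde{a}^* \cdot \tilde{a}$, where $\tilde{(-)}$ denotes the lifts and $z$ is viewed in the center $k \cong \msf{m}_{x,r}/\Ker\phi$. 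Fix the order of the factors and the sign convention so that the cocycle matches the formula $\Tr_{k_s/k}(B(a, b^*))$.
\item The homomorphism property reduces to the commutator relation $[\tilde a, \tilde b^*] = \langle a, b^*\rangle_\phi$, together with the fact that each lift is abelian by isotropy. The commutator formula in \Cref{prop:orbit_description}(1) gives exactly the required $\Tr_{k_s/k}$ term.
\item Bijectivity follows from the central extension of \Cref{lem:pointset_Hxr}, restricted to $C$.
\end{enumerate}

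\textbf{Case (2).} Now $C = -C$, and no isotropic lift of all of $\msf{m}^{\perp,C}$ exists. Instead:
\begin{enumerate}
\item Choose a polarization of $C$ over $\ov{k}$. Since $d_\alpha$ is even, take $I = \{\sigma^i\alpha : 0 \le i < d_\alpha/2\}$, which satisfies $I \cap (-I) = \emptyset$.
\item Over $\ov{k}$, define a map $\msf{H}_{C,\ov{k}} \to \ov{k} \times V^\alpha_{\ov{k}}$ by writing each element as $z \cdot \tilde{u} \cdot \tilde{u}'$ with $u \in \msf{u}_{I}$ and $u' \in \msf{u}_{-I}$ lifted. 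Normalize the central coordinate $z$ by a correction term built from the pairing of $u$ with $u'$.
\item The condition for $\sigma$-rationality then becomes the Artin--Schreier equation $z^q - z = -\langle v, v\rangle_V$. This is the only genuinely computational step: one must check that the Frobenius twisting the polarization $I$ to $\sigma(I)$ produces exactly the term $\sum_{i<[l^+:k]}\langle a,b\rangle_V^{q^i}$ in the group law. I expect this step to be the main obstacle.
\end{enumerate}

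\textbf{Equivariance (both cases).} $\msf{M}_{x,r,\phi}$ stabilizes each $\msf{u}_{\beta, r/2}$. The lifts are canonical, coming from root subgroups normalized by $M(F)_x$, so the maps are $\msf{M}_{x,r,\phi}$-equivariant. The action on $V^\alpha$ preserves $B$, hence lands in $\GL$ in case (1) and in $\msf{U}$ in case (2), because $\langle -, - \rangle_\phi$ is invariant by \Cref{lem:adjoint_M_xrphi}. This yields the semidirect product homomorphisms.
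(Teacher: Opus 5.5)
Your plan follows the paper's proof. You lift $\msf{u}_{C^\pm, r/2}$ by \Cref{prop:lifting_to_Heisenberg}, write elements of $H_C$ as $z\cdot a^-\cdot a^+$, and read the group law off the one commutator between the two factors. In case (2) you use the half-orbit polarization $C^+=\{\sigma^i(\alpha)\mid 0\le i<d_\alpha/2\}$ and test $\sigma$-rationality.

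The step you flag as the main obstacle is a one-line computation. Note also that the trace term in the group law does not come from Frobenius. Put $d=d_\alpha$, and for $a\in\msf{m}^{\perp,\alpha}_{x,r/2}$ set $a^-=\sum_{d/2\le i<d}\sigma^i(a)$ and $a^+=\sum_{0\le i<d/2}\sigma^i(a)$. It is cleaner to map $(z,a)\mapsto z\cdot a^-\cdot a^+\in H_C(\ov{k})$ than to map out of $H_C$. With the $C^-$-factor placed first, as in the paper, no correction term is needed.

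The group law comes from the commutator, exactly as in case (1):
\[
\langle a^+,b^-\rangle_\phi=\sum_{0\le i<d/2}\langle\sigma^i(a),\sigma^{i+d/2}(b)\rangle_\phi=\sum_{0\le i<d/2}B(a,b)^{q^i}.
\]
Frobenius only enters the rationality condition. Applying $\sigma$ moves the $\sigma^{d/2}(a)$-component from the $C^+$-factor into the $C^-$-factor, and moves the $\sigma^{d}(a)=a$-component the other way. Restoring the order costs the single commutator $\langle a,\sigma^{d/2}(a)\rangle_\phi=B(a,a)$, so
\[
\sigma(z\cdot a^-\cdot a^+)=(z^q+B(a,a))\cdot a^-\cdot a^+.
\]
The $\sigma$-fixed points are therefore exactly the elements with $z^q-z=-B(a,a)$, which is the image of $\msf{H}(\msf{m}^{\perp,\alpha}_{x,r/2})$.

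One justification needs fixing. Individual root subgroups of $T$ are not normalized by $M(F)_x$ in general. What you actually need is that the lifted weight spaces $\msf{u}_{\beta,r/2}\subset H_{x,r,\phi,\ov{k}}$ are $M(F)_x$-stable, which is what the proof of \Cref{lem:adjoint_M_xrphi} uses. This holds because a homomorphic lift of a sum of nontrivial $\msf{Z}_{\msf{M}}$-weight spaces is unique among $\msf{Z}_{\msf{M}}$-equivariant lifts, and conjugation by $M(F)_x$ commutes with the $\msf{Z}_{\msf{M}}$-action.
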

\begin{proof}
    For (1), let $C^+ = \Gal(\ov{k} / k) \cdot \alpha$ and $C^- = - C^+$. Then, we have
    \begin{equation} \label{eq:polarization_HC}
        \bb{A}^1 \times \msf{u}_{C^-, r/2} \times \msf{u}_{C^+, r/2} \cong H_C, \quad (z, a^*, a) \mapsto z \cdot a^* \cdot a. 
    \end{equation}
    Since $\msf{m}^{\perp, \alpha}_{x, r/2} \cong \msf{u}_{C^+, r/2}(k)$ is given by $v \mapsto \sum_{0 \leq i < d_\alpha} \sigma^i(v)$ and there is a natural identification $\msf{m}^{\perp, -\alpha}_{x, r/2} \cong (\msf{m}^{\perp, \alpha}_{x, r/2})^*$, \eqref{eq:polarization_HC} induces $\msf{H}_C \cong \msf{H}(\msf{m}^{\perp, x, \alpha}_{x, r/2})$. Moreover, \eqref{eq:polarization_HC} is equivariant under $\msf{M}_{x, r, \phi}$ and the action factors through $\msf{M}_{x, r, \phi} \to \GL(\msf{m}^{\perp, \alpha})$, so it extends to \eqref{eq:HW_polarized}.
    
    For (2), let 
    \[
        C^+ = \{ \sigma^i(\alpha) \mid 0 \leq i < d_\alpha / 2 \}, \quad 
        C^- =\{ \sigma^i(\alpha) \mid d_\alpha / 2 \leq i < d_\alpha \}. 
    \]
    Then we similarly have an $\msf{M}_{x, r, \phi}$-equivariant isomorphism
    \[
        \bb{A}^1 \times \msf{u}_{C^-, r/2} \times \msf{u}_{C^+, r/2} \cong H_{C, \ov{k}}, \quad (z, a^*, a) \mapsto z \cdot a^* \cdot a. 
    \]
    Then, the map $\msf{H}(\msf{m}^{\perp, \alpha}_{x, r/2}) \to H_C(\ov{k})$ given by 
    \[
        (z, a) \mapsto \biggl(z, \sum_{d_\alpha / 2 \leq i < d_\alpha} \sigma^i(a), \sum_{0 \leq i < d_\alpha / 2} \sigma^i(a)\biggr)
    \]
    provides $\msf{H}(\msf{m}^{\perp, x, \alpha}_{x, r/2}) \cong \msf{H}_C$ since 
    \[
        \sigma\biggl(z, \sum_{d_\alpha / 2 \leq i < d_\alpha} \sigma^i(a), \sum_{0 \leq i < d_\alpha / 2} \sigma^i(a)\biggr) 
        = \biggl(z^q + \langle a, \sigma^{d_s / 2}(a) \rangle, \sum_{d_\alpha / 2 \leq i < d_\alpha} \sigma^i(a), \sum_{0 \leq i < d_\alpha / 2} \sigma^i(a)\biggr). 
    \] 
    By construction, the adjoint action of $\msf{M}_{x, r, \phi}$ factors through $\msf{U}(\msf{m}^{\perp, \alpha}_{x, r/2})$, so we get the claim. 
\end{proof}

\subsection{Construction of Heisenberg-Weil representations}

In this section, we construct the Heisenberg-Weil representation of $\msf{HW}_{x, r, \phi}$ using the description in \Cref{prop:HW_decomposition}. 

\begin{defi}
    Let $\Phi_\Sigma^M = \Phi_{\Sigma, \asym}^M \sqcup \Phi_{\Sigma, \sym}^M$ be the decomposition such that $C \in \Phi_{\Sigma, \sym}^M$ if and only if $C = \Gal(\ov{k} / k) \cdot \alpha$ for a representative $\alpha \in C$. 
\end{defi}

\begin{defi} \label{defi:d_gamma_sign_factor}
    For each $\gamma \in  \msf{M}_{x, r, \phi}$, let 
    \[
        d_\gamma = \sum_{C \in \Phi_{\Sigma, \sym}^M} (\dim_{k_\alpha} \msf{m}^{\perp, \alpha}_{x, r/2} - \dim_{k_\alpha} \msf{m}^{\perp, \alpha, \gamma}_{x, r/2}). 
    \]
    Here, the superscript $\gamma$ denotes the $\gamma$-fixed locus and $\alpha \in C$ is a representative. 
\end{defi}

\begin{thm} \label{thm:Heisenberg_Weil_representations}
    There is a unique irreducible representation $\kappa_{x, r, \phi}$ of $\msf{HW}_{x, r, \phi}$ such that
    \begin{enumerate}
        \item $\kappa_{x, r, \phi} \vert_{\msf{H}_{x, r, \phi}}$ is the Heisenberg representation with central character $\psi$, and
        \item $\tr(\gamma \mid \kappa_{x, r, \phi}) = (-1)^{d_\gamma} \sqrt{\lvert \msf{m}^{\perp, \gamma}_{x, r/2} \rvert}$ for every $\gamma \in \msf{M}_{x, r, \phi}$. 
    \end{enumerate}
    Moreover, the trace function of $\kappa_{x, r, \phi}$ is supported on the elements conjugate to $k \cdot \msf{M}_{x, r, \phi}$. 
\end{thm}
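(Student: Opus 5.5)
The plan is to build $\kappa_{x, r, \phi}$ factor by factor along the decomposition of \Cref{lem:Heisenberg_decomposition} and \Cref{prop:HW_decomposition}, and then take an external tensor product.

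\emph{Construction of the factors.} For each $C \in \Phi_\Sigma^M$ with representative $\alpha$, we pull back a Heisenberg--Weil representation $\kappa_C$ of $\msf{HW}(\msf{m}^{\perp, \alpha}_{x, r/2})$ along the canonical homomorphism $\msf{H}_C \rtimes \msf{M}_{x, r, \phi} \to \msf{HW}(\msf{m}^{\perp, \alpha}_{x, r/2})$.
\begin{enumerate}
\item In the polarized case $C \in \Phi^M_{\Sigma, \asym}$, take the Schr\"odinger model: functions on $V^* = \msf{m}^{\perp, -\alpha}_{x, r/2}$. Here $k$ acts through $\psi$, the group $V$ acts by $\psi(\Tr\langle a, -\rangle)$, the group $V^*$ acts by translation, and $\GL(V)$ acts geometrically. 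This is a genuine, characteristic-free representation with central character $\psi$. Its restriction to $\msf{H}_C$ is the unique Heisenberg representation by Stone--von Neumann. The trace of $g \in \GL(V)$ is the number of fixed points $\lvert (V^*)^g \rvert$, which equals $\lvert V^g \rvert = \sqrt{\lvert (V \oplus V^*)^g \rvert}$, with sign $+1$.
\item In the unitary case $C \in \Phi^M_{\Sigma, \sym}$, use Gérardin's construction \cite{Ger77} of the Weil representation of $\msf{H}(V) \rtimes \msf{U}(V)$, valid in all characteristics. It gives a representation whose restriction to $\msf{H}(V)$ is the Heisenberg representation with central character $\psi$ and whose character is $\tr(g) = (-1)^{\dim_l V - \dim_l V^g} \lvert V^g \rvert^{1/2}$. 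Here $\lvert V^g \rvert^{1/2}$ with $V^g$ computed over $l = k_\alpha$ matches $\sqrt{\lvert \msf{m}^{\perp, C, \gamma}_{x, r/2} \rvert}$ under $V^C \cong V^s$ of \Cref{prop:orbit_description}.
\end{enumerate}
Concretely, I would quote Gérardin's character formula: the $l^+$-linear structure and the trace normalization in our central quotient only replace the central character by $\psi \circ \Tr$, which is harmless.

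\emph{Assembling the tensor product.} Set $\kappa = \bigotimes_C \kappa_C$ as a representation of $\prod_C \msf{H}_C \rtimes \msf{M}_{x, r, \phi}$, with $\msf{M}_{x, r, \phi}$ acting diagonally. Since every $\kappa_C$ has central character $\psi$ on its copy of $k$, the subgroup $\msf{Z}^0$ acts trivially. Hence $\kappa$ descends to $\msf{HW}_{x, r, \phi}$ by \Cref{lem:Heisenberg_decomposition}, and it is compatible with the semidirect structures since the homomorphisms of \Cref{prop:HW_decomposition} are equivariant. The restriction of $\kappa$ to $\msf{H}_{x, r, \phi}$ is a tensor product of Heisenberg representations of an orthogonal decomposition, so it is the Heisenberg representation with central character $\psi$; in particular $\kappa$ is irreducible. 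Its trace on $\gamma$ is the product of the factor traces. This gives $(-1)^{d_\gamma} \sqrt{\lvert \msf{m}^{\perp, \gamma}_{x, r/2}\rvert}$, using $\msf{m}^{\perp,\gamma}_{x,r/2} = \bigoplus_C \msf{m}^{\perp, C, \gamma}_{x, r/2}$ and summing the unitary exponents to obtain $d_\gamma$.

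\emph{Uniqueness.} Any two representations with property (1) differ by a character $\chi$ of $\msf{M}_{x, r, \phi}$, because the Heisenberg representation extends and its extensions form a torsor under characters of the quotient. Property (2) at $\gamma$ gives $\chi(\gamma)\cdot \tr(\gamma \mid \kappa) = \tr(\gamma \mid \kappa)$, and $\tr(\gamma \mid \kappa)$ is nonzero. Hence $\chi = 1$.

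\emph{Support of the trace.} For $(h, \gamma) \in \msf{HW}_{x, r, \phi}$, the standard computation writes $\tr(h\gamma)$ as a sum over the $\msf{H}$-conjugates of $h\gamma$. It vanishes unless the image $\bar h \in \msf{m}^\perp_{x, r/2}$ lies in $\Img(\gamma - 1)$. When $\bar h = (\gamma - 1)v$, conjugating by a lift of $v$ moves $h\gamma$ into $k \cdot \gamma$. To get the vanishing I would reduce factorwise to the models above, or use the general argument that $\kappa(h\gamma)$ commutes appropriately with $\kappa(\ker(\gamma - 1))$, which forces the trace to vanish if $\bar h \notin (\ker(\gamma-1))^\perp = \Img(\gamma-1)$.

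\emph{Main obstacle.} I expect the hardest step to be the unitary character formula in characteristic $2$ (and small $q$), including the sign $(-1)^{\dim V - \dim V^g}$ for non-semisimple $g$. I would rely on \cite{Ger77}, which treats unitary groups uniformly. Failing that, I would compute via the model on functions of a maximal isotropic subspace of the restriction of scalars after base change to $l$, checking the sign on generators.
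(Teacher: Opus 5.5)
Your proposal is correct. The construction is the same as the paper's: you pull back G\'{e}rardin's polarized and unitary representations along \Cref{prop:HW_decomposition}, take the tensor product, and descend through $\msf{Z}^0$ via \Cref{lem:Heisenberg_decomposition}. Your Schr\"odinger model with trace $\lvert (V^*)^g\rvert$ is just a self-contained substitute for \cite[Corollary 1.4]{Ger77}.

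The genuine differences are in the last two claims.

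\textbf{The paper's route.} The paper proves the support statement first, by counting. Irreducibility gives $\lvert \msf{HW}_{x,r,\phi}\rvert=\sum_{h,\gamma}\lvert\tr(h\gamma)\rvert^2$. This is at least $\sum_\gamma \lvert \msf{H}_\gamma\rvert\,\lvert\tr(\gamma)\rvert^2$, which in turn is at least $q\lvert\msf{M}_{x,r,\phi}\rvert\,\lvert\msf{m}^\perp_{x,r/2}\rvert=\lvert\msf{HW}_{x,r,\phi}\rvert$ once the exact absolute values from (2) are inserted. So every inequality is an equality. Uniqueness is then deduced because the whole character is determined.

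\textbf{Your route.} You prove uniqueness directly by Clifford theory, independently of the support claim. Two extensions of the Heisenberg representation differ by a character $\chi$ of $\msf{M}_{x,r,\phi}$, and $\tr(\gamma)\neq 0$ forces $\chi=1$.

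For the support, you conjugate $h\gamma$ by lifts $\tilde v\in\msf{H}_{x,r,\phi}$ of vectors $v\in\ker(\gamma-1)$. This multiplies $h\gamma$ by the central element $\langle v,\bar h\rangle_\phi$, up to sign. Hence $\tr(h\gamma)=0$ unless the $k$-linear form $\langle -,\bar h\rangle_\phi$ vanishes on $\ker(\gamma-1)$, that is, unless $\bar h\in\Img(\gamma-1)$.

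To make ``commutes appropriately'' precise you need $\gamma(\tilde v)=\tilde v$. This holds because the decomposition $\bb{A}^1\times\prod_\alpha\msf{u}_{\alpha,r/2}\cong H_{x,r,\phi,\ov{k}}$ in \Cref{lem:adjoint_M_xrphi} is $M(F)_x$-equivariant. Alternatively, without that, the central defect $\gamma(\tilde v)\tilde v^{-1}$ is killed by $\psi$ because $\tr(\gamma)\neq 0$.

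\textbf{Comparison.} Your route uses only the Heisenberg commutation relations and applies to every extension of the Heisenberg representation. The paper's route needs the precise trace values in (2), but obtains support and uniqueness in one stroke.
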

\begin{proof}
    Let $C \in \Phi^M_\Sigma$ and $\alpha \in C$. If $C \in \Phi^M_{\Sigma, \asym}$, the Heisenberg-Weil representation for $\msf{m}^{\perp, \alpha}_{x, r / 2}$ with central character $\psi \circ \Tr_{k_\alpha / k}$, as in \cite[Proposition 1.4]{Ger77}, can be regarded as an irreducible representation $\kappa_C$ of $\msf{HW}(\msf{m}^{\perp, \alpha}_{x, r/2})$. In this case, the character of $\kappa_C$ is given by 
    \begin{equation} \label{eq:trace_polarized}
        \tr( \gamma \mid \kappa_C) = \sqrt{\lvert \msf{m}^{\perp, \alpha, \gamma}_{x, r/2} \rvert}
    \end{equation}
    for $\gamma \in \GL(\msf{m}^{\perp, \alpha}_{x, r/2})$ by \cite[Corollary 1.4]{Ger77}. 
    
    If $C \in \Phi^M_{\Sigma, \sym}$, the Heisenberg-Weil representation for $\msf{m}^{\perp, \alpha}_{x, r / 2}$ with central character $\psi \circ \Tr_{k_{\alpha / 2} / k}$, as in \cite[Theorem 3.3]{Ger77}, can be regarded as an irreducible representation $\kappa_C$ of $\msf{HW}(\msf{m}^{\perp, \alpha}_{x, r/2})$. In this case, the character of $\kappa_C$ is given by 
    \begin{equation} \label{eq:trace_unitary}
        \tr( \gamma \mid \kappa_C) = (-1)^{(\dim_{k_\alpha} \msf{m}^{\perp, \alpha}_{x, r/2} - \dim_{k_\alpha} \msf{m}^{\perp, \alpha, \gamma}_{x, r/2})} \sqrt{\lvert \msf{m}^{\perp, \alpha, \gamma}_{x, r/2} \rvert}
    \end{equation}
    for $\gamma \in \msf{U}(\msf{m}^{\perp, \alpha}_{x, r/2})$ by \cite[Theorem 4.9.2]{Ger77}. 

    Let $\kappa_{x, r, \phi} = \bigotimes_C \kappa_C$ and regard it as an irreducible representation of $(\prod_C \msf{H}_C) \rtimes \msf{M}_{x, r, \phi}$ by pulling back along \Cref{prop:HW_decomposition}. Since the central character of $\kappa_{x, r, \phi}$ is trivial on $\msf{Z}^0$, it descends to an irreducible representation of $\msf{HW}_{x, r, \phi}$ by \Cref{lem:Heisenberg_decomposition}. By construction, $\kappa_{x, r, \phi} \vert_{\msf{H}_{x, r, \phi}}$ is the Heisenberg representation with central character $\psi$, and $\tr(\gamma \mid \kappa_{x, r, \phi}) = (-1)^{d_\gamma} \sqrt{\lvert \msf{m}^{\perp, \gamma}_{x, r/2} \rvert}$ for every $\gamma \in \msf{M}_{x, r, \phi}$ by combining \eqref{eq:trace_polarized} and \eqref{eq:trace_unitary}. 

    The last claim is a general fact for Heisenberg-Weil representations and the uniqueness of $\kappa_{x, r, \phi}$ follows from it. For completeness, we recall an argument here. For each $\gamma \in \msf{M}_{x, r, \phi}$, let
    \[
        \msf{H}_{\gamma} = \{ h \in \msf{H}_{x, r, \phi} \mid h \cdot \gamma \; \text{is conjugate to} \; z\gamma \; \text{for some} \; z \in k\}. 
    \]
    Since $\kappa_{x, r, \phi}$ is irreducible, we have 
    \[
        \lvert \msf{HW}_{x ,r, \phi} \rvert = \sum_{h \in \msf{H}_{x, r, \phi}} \sum_{\gamma \in \msf{M}_{x ,r, \phi}} \lvert \tr(h \cdot \gamma \mid \kappa_{x, r, \phi}) \rvert^2 \geq \sum_{\gamma \in \msf{M}_{x, r, \phi}} \lvert H_\gamma \rvert \cdot \lvert \tr(\gamma \mid \kappa_{x, r, \phi}) \rvert^2
    \]
    For $h \in \msf{H}_{x, r, \phi}$, let $\ov{h} \in \msf{m}^\perp_{x, r/2}$ be the image of the natural projection. Then, the projection of $h \cdot \gamma h^{-1} \gamma^{-1}$ equals $(1- \gamma) \ov{h}$, so $\lvert H_\gamma \rvert \geq q \lvert \Img(\gamma-1 \mid \msf{m}^\perp_{x, r/2}) \rvert$. In particular, we have 
    \[
       \sum_{\gamma \in \msf{M}_{x, r, \phi}} \lvert H_\gamma \rvert \cdot \lvert \tr(\gamma \mid \kappa_{x, r, \phi}) \rvert^2 \geq \sum_{\gamma \in \msf{M}_{x, r, \phi}} \tfrac{\lvert \msf{m}^\perp_{x, r/2} \rvert}{\lvert \msf{m}^{\perp, \gamma}_{x, r/2} \rvert} \cdot q \lvert \msf{m}^{\perp, \gamma}_{x, r/2} \rvert = q \lvert \msf{M}_{x, r, \phi} \rvert \cdot \lvert \msf{m}^\perp_{x, r/2} \rvert. 
    \]
    Since $\lvert \msf{HW}_{x ,r, \phi} \rvert = q \lvert \msf{M}_{x, r, \phi} \rvert \cdot \lvert \msf{m}^\perp_{x, r/2} \rvert$, the above inequalities are all equalities. In particular, $\lvert H_\gamma \rvert = q \lvert \Img(\gamma-1 \mid \msf{m}^\perp_{x, r/2}) \rvert$ and $\tr(h \cdot \gamma \mid \kappa_{x, r, \phi}) = 0$ unless $h \in H_\gamma$. 
\end{proof}

\subsection{Comparison with symplectic Heisenberg-Weil representations}

In this section, we compare $\kappa_{x, r, \phi}$ with the symplectic Heisenberg-Weil representation $\kappa_\Yu$ originally used in \cite{Yu01}. In fact, they differ exactly by the quadratic character $\epsilon_{x}^{G / M} \colon M(F)_{x} \to \{\pm 1\}$ introduced in \cite{FKS23}. Throughout this section, we will assume $p \neq 2$. 

First, we review the description of $\epsilon_{x}^{G / M}$. A root $\alpha \in \Phi^M$ is symmetric if $ - \alpha \in \Gamma \cdot \alpha$, and otherwise $\alpha$ is asymmetric. A symmetric root $\alpha \in \Phi^M$ is ramified if $- \alpha \in \bb{Z} / e \bb{Z} \cdot \alpha$, and otherwise $\alpha$ is unramified. Under \Cref{ass:without_symmetric_ramification}, there are no symmetric and ramified roots in $\Phi^M$, so the description in \cite[Definition 3.1, Theorem 3.4]{FKS23} becomes simple. 

For each $\alpha \in \Phi^M$, let $\Gamma_\alpha = \Stab_\Gamma(\alpha)$ and let $F_\alpha = E^{\Gamma_\alpha}$ be the defining field of $\alpha$ with a residue field $k_\alpha$. If $\alpha$ is symmetric and unramified, $[k_\alpha \colon k]$ is even and there is a unique subextension $k_{\alpha / 2} \subset k_\alpha$ with $[k_\alpha \colon k_{\alpha / 2}] = 2$. Let $k_\alpha^1 \subset k_\alpha^\times$ be the set of norm $1$ elements relative to $k_{\alpha} / k_{\alpha / 2}$. Since $p \neq 2$, there are unique nontrivial quadratic characters on $k_\alpha^\times$ and $k_\alpha^1$, which are denoted by $\sgn_{k_\alpha}$ and $\sgn_{k_\alpha^1}$. 

Let $\Phi^M = \Phi^M_\asym \sqcup \Phi^M_{\sym}$ be the decomposition into asymmetric roots and symmetric and unramified roots. For every bounded element $\gamma \in T(F)_b$, we have
\begin{equation} \label{eq:epsilon_factor}
    \epsilon_{x}^{G / M}(\gamma) = \prod_{\substack{[\alpha] \in \Phi^M_\asym / \Sigma \\ \msf{u}_{[\alpha], r / 2}^\Gamma \neq 0}} \sgn_{k_\alpha} \alpha(\gamma) \cdot 
    \prod_{\substack{[\alpha] \in \Phi^M_\sym / \Sigma \\ \msf{u}_{[\alpha], r / 2}^\Gamma \neq 0}} \sgn_{k^1_\alpha} \alpha(\gamma)
\end{equation}
Here, $[\alpha] \subset \Phi^M$ runs through $\Sigma$-orbits and
$
    \msf{u}_{[\alpha], r/2} = \bigoplus_{\alpha \in [\alpha]} \msf{u}_{\alpha, r/2}. 
$
Note that $\sgn_{k_\alpha} \alpha(\gamma)$ is independent of a representative $\alpha \in [\alpha]$ and it can be seen that $\alpha(\gamma) \in k_\alpha^1$ when $\alpha$ is symmetric. First, we give a different characterization of $\epsilon_{x}^{G / M}$. 

\begin{defi} \label{defi:MC}
    Let $C \in \Phi^M_{\Sigma}$ and let $\alpha \in C$ be a representative. If $C \in \Phi^M_{\Sigma, \asym}$, let 
    \[
        \msf{M}_C = \GL(\msf{m}^{\perp, \alpha}_{x, r/2}), \quad 
        \dt_C \colon \msf{M}_C \to k_{\alpha}^\times, \quad 
        \sgn_C = \sgn_{k_\alpha} \circ \dt_C
    \] 
    and if $C \in \Phi^M_{\Sigma, \sym}$, let 
    \[
        \msf{M}_C = \msf{U}(\msf{m}^{\perp, \alpha}_{x, r/2}), \quad 
        \dt_C \colon \msf{M}_C \to k_{\alpha}^1, \quad 
        \sgn_C = \sgn_{k_\alpha^1} \circ \dt_C. 
    \] 
    Let $\wtd{\msf{M}}_{x, r, \phi} = \prod_C \msf{M}_C$ and let $\sgn_{x, r, \phi} = \prod_C \sgn_C$ be a quadratic character of $\wtd{\msf{M}}_{x, r, \phi}$. 
\end{defi}
\begin{prop} \label{prop:interpretation_epsilon_twist}
    There is a natural inclusion
    \[
        \msf{M}_{x, r, \phi} \subset \msf{\wtd{M}}_{x, r, \phi} \subset \Sp(\msf{m}^\perp_{x, r/2}). 
    \]
    Moreover, $\epsilon_{x}^{G / M}$ equals the inflation of $\sgn_{x, r, \phi}$ to $M(F)_x$. 
\end{prop}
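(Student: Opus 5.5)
The plan has two parts: first build the inclusions, then compare the quadratic characters on generators of $M(F)_x$.

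For the inclusions, \Cref{prop:Vtau_decomposition} and \Cref{prop:orbit_description} give an orthogonal decomposition $\msf{m}^\perp_{x, r/2} = \bigoplus_C \msf{m}^{\perp, C}_{x, r/2}$. Each summand is identified with $V^\alpha \oplus (V^\alpha)^*$ (asymmetric case) or with a skew-Hermitian $V^\alpha$ (symmetric case), and the symplectic form is expressed through $\Tr_{k_\alpha/k}$ of $B$. The $k_\alpha$-linear maps preserving the polarization, namely $g \mapsto (g, {}^t g^{-1})$, lie in $\Sp$, and so do the unitary maps of $B$. This gives $\wtd{\msf{M}}_{x, r, \phi} = \prod_C \msf{M}_C \hookrightarrow \Sp(\msf{m}^\perp_{x, r/2})$. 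The adjoint action of $M(F)_x$ preserves $\msf{Z}_{\msf{M}}$-weights and commutes with $\Gamma$, so it preserves each $\ov{V}^s$ and the rational forms $V^s$. Hence it acts $k_\alpha$-linearly on $\msf{m}^{\perp,\alpha}_{x,r/2}$, preserving $B$ in the unitary case. This is already contained in \Cref{prop:HW_decomposition}, and it shows $\msf{M}_{x, r, \phi} \subset \wtd{\msf{M}}_{x, r, \phi}$.

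For $\epsilon_x^{G/M} = \sgn_{x,r,\phi}$, both sides are quadratic characters of $M(F)_x$. I would first reduce to checking equality on $T(F)_b \cap M(F)_x$ for all tame maximal tori $T$ with $x \in \cl{A}(T,F)$. The reason is that $M(F)_x$ is generated by such tori together with $M(F)_{x,0+}$. The pro-$p$ part $M(F)_{x,0+}$ is killed by both characters because $p \neq 2$. The parahoric quotient is generated by rational points of maximal tori, by the usual argument that a finite reductive group is generated by its maximal tori; equivalently, I would use the characterization in \cite{FKS23} that $\epsilon_x^{G/M}$ is determined by its restriction to such tori. On $T(F)_b$, the element $\gamma$ acts on $\msf{u}_{\beta, r/2}$ (over $\ov{k}$) by the scalar $\ov{\beta(\gamma)}$ for each $\beta \in \Phi^M$. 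So on $V^\alpha = \msf{m}^{\perp,\alpha}_{x,r/2}$, the image of $\gamma$ is diagonalizable over $\ov{k}$, with eigenvalues the reductions $\beta(\gamma)$ for $\beta$ running over the roots in the $\tau$-orbit $\alpha$ that contribute to $\msf{u}^\tau$. Taking $k_\alpha$-determinants then gives $\dt_C(\gamma) = \prod \beta(\gamma)$ over the roots $\beta \in \Phi^M$ with $\msf{u}_{\beta,r/2}$ appearing, modulo the $\tau$-averaging. Here I use that the $\tau$-invariants of $\bigoplus_{\beta \in \alpha} \msf{u}_{\beta,r/2}$ have dimension equal to the number of $\Gamma_\alpha$-orbits contributing, a count which \eqref{eq:epsilon_factor} encodes through the condition $\msf{u}^\Gamma_{[\alpha],r/2} \neq 0$.

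The main obstacle is this bookkeeping. I need to match $\sgn_{k_\alpha}(\dt_C \gamma)$ with the product of $\sgn_{k_\beta}\beta(\gamma)$ over the $\Sigma$-orbits $[\beta] \subset \Phi^M$ lying over $C$ with nonzero $\Gamma$-invariants, and similarly with $\sgn_{k^1}$ in the symmetric case. My plan is to decompose $\msf{m}^{\perp,C}$ further according to $\Gamma$-orbits of roots (rather than $\tau$-orbits of $\msf{Z}_{\msf{M}}$-weights). Each root orbit $[\beta]$ with $\msf{u}^\Gamma_{[\beta]} \neq 0$ contributes a one-dimensional $k_\beta$-line, on which $\gamma$ acts by $\beta(\gamma)$. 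Then $\sgn_{k_\alpha}(\operatorname{Nm}_{k_\beta/k_\alpha}\beta(\gamma)) = \sgn_{k_\beta}(\beta(\gamma))$, because the norm induces an isomorphism on the quadratic-character quotients of $k^\times$ (both are cyclic of even order, and the norm is surjective). For the unitary case I would use the analogous statement for $k^1$, where the norm is again surjective. Multiplicativity of $\sgn$ over the factors then gives the equality.
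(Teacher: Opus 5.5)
Your construction of the inclusions, your reduction to $\gamma \in T(F)_b$ via \cite[Lemma 3.5]{FKS23}, and your treatment of $C \in \Phi^M_{\Sigma,\asym}$ coincide with the paper's argument. That argument decomposes $\msf{m}^{\perp,\alpha}_{x,r/2}$ into $k_\beta$-lines indexed by $\alpha_E/\Gamma_\alpha$, takes $k_\alpha$-norms, and uses $\sgn_{k_\alpha}\circ\Nm_{k_\beta/k_\alpha}=\sgn_{k_\beta}$. You should drop the alternative justification that a finite reductive group is generated by the rational points of its maximal tori. It is false for small $q$: in $\mathrm{SL}_2(\mathbb{F}_3)$ the tori generate only $Q_8$. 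It also does not address the non-parahoric part of $M(F)_x$. The lemma of \cite{FKS23} is what you actually need.

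The genuine gap is the unitary case $C \in \Phi^M_{\Sigma,\sym}$, which you treat as ``analogous''. It is not, because a root lying over a symmetric $\msf{Z}_{\msf{M}}$-weight orbit need not itself be symmetric. Take $\mathrm{Res}_{L/F}\GL_m \subset \GL_{mn}$ with $L/F$ unramified, $n$ even and $m \geq 2$. The weight $\chi_0-\chi_{n/2}$ is symmetric. With respect to the torus $\mathrm{Res}_{L/F}\mathbb{G}_m^m$, however, the roots $e_{0,a}-e_{n/2,b}$ with $a \neq b$ are asymmetric. For such $\beta$ the eigenvalue $\beta(\gamma)$ is in general not in $k_\beta^1$, so ``$\sgn_{k^1}$ of its norm'' is meaningless. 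You also cannot fall back on $\sgn_{k_\alpha}$: $\dt_C$ takes values in $k_\alpha^1$, and $\sgn_{k_\alpha}$ is trivial on $k_\alpha^1$.

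The missing idea is the pairing used in the paper. The class $[\beta]$ comes with a second class $[\beta'] \in \alpha_E/\Gamma_\alpha$ satisfying $-\beta \in \Gamma\cdot\beta'$. On it $\gamma$ acts by $\beta(\gamma)^{-1}$ through a $\sigma^{d_\alpha/2}$-semilinear identification. Hence the pair contributes $(\Nm_{k_\beta/k_\alpha}\beta(\gamma))^{1-q^{d_\alpha/2}} \in k^1_\alpha$ to $\dt_C(\gamma)$. You then need $\sgn_{k_\alpha^1}(y^{1-q^{d_\alpha/2}}) = \sgn_{k_\alpha}(y)$, which holds because $y \mapsto y^{1-q^{d_\alpha/2}}$ maps $k_\alpha^\times$ onto $k_\alpha^1$ by Hilbert 90. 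This recovers the \emph{asymmetric} factor $\sgn_{k_\beta}\beta(\gamma)$ of \eqref{eq:epsilon_factor}, with each such $\Sigma$-orbit counted once.

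Even for genuinely symmetric $\beta$ over $C$, ``the norm is surjective on $k^1$'' needs justification. $\Nm_{k_\beta/k_\alpha}$ carries $k^1_\beta$ onto $k^1_\alpha$ only when $[k_\beta:k_\alpha]$ is odd; if $k_\alpha \subset k_{\beta/2}$, the norm kills $k^1_\beta$. The paper obtains oddness from $-\beta \notin \alpha_E$, which holds because there is no symmetric ramification. This forces $k_\alpha \not\subset k_{\beta/2}$.
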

\begin{proof}
    By \Cref{prop:orbit_description}, the natural action $\msf{M}_C \circlearrowright \msf{m}^{\perp, C}_{x, r/2}$ preserves $\langle -, - \rangle_\phi$, so we have $\msf{\wtd{M}}_{x, r, \phi} \subset \Sp(\msf{m}^{\perp}_{x, r/2})$. As in \Cref{prop:HW_decomposition}, $\msf{M}_{x, r, \phi} \circlearrowright \msf{m}^{\perp, C}_{x, r/2}$ factors through $\msf{M}_C$, so we get $\msf{M}_{x, r, \phi} \subset \wtd{\msf{M}}_{x, r, \phi}$. To prove the second claim, it is enough to check
    \[
        \epsilon_{x}^{G / M}(\gamma) = \sgn_{x, r, \phi}(\gamma)
    \]
    for every tame maximal torus $T \subset M$ and $\gamma \in T(F)_b$ by \cite[Lemma 3.5]{FKS23}. Let $C \in \Phi_\Sigma^M$ with a representative $\alpha \in C$. We decompose $\sgn_C(\gamma)$ into product factors of \eqref{eq:epsilon_factor}. 

    First, suppose $C \in \Phi^M_{\Sigma, \asym}$ and let $\Gamma_\alpha = \Stab_\Gamma(\alpha)$. Then, we have
    \[
        \msf{m}^{\perp, \alpha}_{x, r/2} \cong \bigoplus_{[\beta] \in \alpha_E / \Gamma_\alpha} \msf{u}_{[\beta], r/2}^{\Gamma_\alpha}.  
    \]
    Here, recall the notation in \Cref{defi:Phi_e_and_u_I}. Take a representative $\beta \in [\beta]$ for each $[\beta]$. Since $\msf{u}_{[\beta], r/2}^{\Gamma_\alpha} \cong \msf{u}_{\beta, r/2}^{\Gamma_\beta}$, it is at most one-dimensional over $k_\beta$ and $\gamma$ acts by scalar $\beta(\gamma)$ under this isomorphism. Thus, we have
    \[
        \sgn_C(\gamma) = \prod_{\substack{[\beta] \in \alpha_E / \Gamma_\alpha \\ \msf{u}_{[\beta], r/2}^{\Gamma_\alpha} \neq 0}} \sgn_{k_\alpha} (\Nm_{k_\beta / k_\alpha} \beta(\gamma)) =  \prod_{\substack{[\beta] \in C_E / \Sigma \\ \msf{u}_{[\beta], r/2}^\Gamma \neq 0}} \sgn_{k_\beta} \beta(\gamma). 
    \]

    Next, suppose $C \in \Phi^M_{\Sigma, \sym}$ and let $\Gamma_\alpha = \Stab_\Gamma(\alpha)$. Then, we have
    \[
        \msf{m}^{\perp, \alpha}_{x, r/2} \cong \bigoplus_{[\beta] \in \alpha_E / \Gamma_\alpha} \msf{u}_{[\beta], r/2}^{\Gamma_\alpha}.  
    \]
    Take a representative $\beta \in [\beta]$ for each $[\beta]$. As previously, $\gamma$ acts on $\msf{u}_{[\beta], r/2}^{\Gamma_\alpha} \cong \msf{u}_{\beta, r/2}^{\Gamma_\beta}$ by scalar $\beta(\gamma)$. If $\beta \in \Phi^M_\sym$, then $\beta(\gamma) \in k_\beta^1$. Otherwise, there is another class $[\beta'] \in \alpha_E / \Gamma_\alpha$ such that $-\beta \in \Gamma \cdot \beta'$ and $\gamma$ acts on $\msf{u}_{\beta', r/2}^{\Gamma_{\beta'}} \cong \msf{u}_{-\beta, r/2}^{\Gamma_\beta}$ by scalar $\beta(\gamma)^{-1}$. Since $\msf{u}_{\beta', r/2}^{\Gamma_{\beta'}} \cong \msf{u}_{-\beta, r/2}^{\Gamma_\beta}$ is semilinear with respect to $k_\alpha / k_{\alpha / 2}$, we have 
    \[
        \dt_C(\gamma) =  \prod_{\substack{[\beta] \in (C_E \cap \Phi^M_{\sym}) / \Sigma \\ \msf{u}_{[\beta], r/2}^\Gamma \neq 0}} \Nm_{k_\beta / k_\alpha} \beta(\gamma) \cdot \prod_{\substack{[\beta] \in (C_E \cap \Phi^M_{\asym}) / \Sigma \\ \msf{u}_{[\beta], r/2}^\Gamma \neq 0}} (\Nm_{k_\beta / k_\alpha} \beta(\gamma))^{1 - q^{d_\alpha / 2}}. 
    \]
    When $\beta \in \Phi^M_\sym$, $k_\alpha \not \subset k_{\beta / 2}$ since $- \beta \not \in \alpha_E$. In particular, $[k_\beta \colon k_\alpha]$ is odd, so
    \[
        \sgn_{k_\alpha^1}(\Nm_{k_\beta / k_\alpha} \beta(\gamma)) = \sgn_{k_\beta^1} \beta(\gamma). 
    \]
    When $\beta \in \Phi^M_\asym$, $\sgn_{k^1_\alpha}(\Nm_{k_\beta / k_\alpha} \beta(\gamma))^{1 - q^{d_\alpha / 2}} = \sgn_{k_\beta} \beta(\gamma)$. As a result, we have
    \[
        \sgn_C(\gamma) =  \prod_{\substack{[\beta] \in (C_E \cap \Phi^M_{\sym}) / \Sigma \\ \msf{u}_{[\beta], r/2}^\Gamma \neq 0}} \sgn_{k_\beta^1} \beta(\gamma) \cdot \prod_{\substack{[\beta] \in (C_E \cap \Phi^M_{\asym}) / \Sigma \\ \msf{u}_{[\beta], r/2}^\Gamma \neq 0}} \sgn_{k_\beta} \beta(\gamma). 
    \]
    By taking a product over all $C \in \Phi^M_{\Sigma}$, we get $\epsilon_{x}^{G / M}(\gamma) = \sgn_{x, r, \phi}(\gamma)$. 
\end{proof}

Let $\kappa_\Yu$ be the Heisenberg-Weil representation of $\msf{H}_{x, r, \phi} \rtimes M(F)_x$ with central character $\psi$ that is originally used in \cite{Yu01}. It is constructed as the pullback of the symplectic Weil representation (see \cite[Section 2]{Ger77}) along a special group homomorphism 
\[
    \msf{H}_{x, r, \phi} \rtimes M(F)_x \to (\msf{m}^\perp_{x, r/2})^\sharp \rtimes \Sp(\msf{m}^\perp_{x, r/2})
\]
constructed in \cite[Proposition 11.4]{Yu01}. It induces a special isomorphism $\msf{H}_{x, r, \phi} \cong (\msf{m}^\perp_{x, r/2})^\sharp$ and sends $M(F)_x$ to $\Sp(\msf{m}^\perp_{x, r/2})$ via the natural conjugation action. In particular, it factors through $\msf{HW}_{x, r, \phi}$. By \Cref{prop:interpretation_epsilon_twist}, $\epsilon_{x}^{G / M}$ also factors through $\msf{M}_{x, r, \phi}$. 

\begin{prop} \label{prop:comparison_epsilon_twist}
    In the above setting, we have $\kappa_{x, r, \phi} \cong \kappa_\Yu \otimes \epsilon_{x}^{G / M}$. 
\end{prop}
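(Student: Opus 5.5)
We reduce the comparison to a character identity on $\msf{M}_{x, r, \phi}$ and then argue factor by factor along the decomposition of \Cref{prop:HW_decomposition}.

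\emph{Reduction.} Both $\kappa_{x, r, \phi}$ and $\kappa_\Yu \otimes \epsilon_x^{G/M}$ are representations of $\msf{HW}_{x, r, \phi}$ whose restriction to $\msf{H}_{x, r, \phi}$ is the Heisenberg representation with central character $\psi$. Here we use that $\kappa_\Yu$ factors through $\msf{HW}_{x,r,\phi}$, and that $\epsilon_x^{G/M}$ factors through $\msf{M}_{x,r,\phi}$ by \Cref{prop:interpretation_epsilon_twist}. By Schur's lemma, two such extensions differ by a character $\chi$ of $\msf{M}_{x, r, \phi}$. It therefore suffices to compare traces on $\gamma \in \msf{M}_{x, r, \phi}$, and only on those $\gamma$ where the trace is nonzero, i.e. all of them, since $\lvert\tr\rvert = \sqrt{\lvert \msf{m}^{\perp,\gamma}_{x,r/2}\rvert} \neq 0$. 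By \Cref{thm:Heisenberg_Weil_representations}, it is then enough to show
\[
\tr(\gamma \mid \kappa_\Yu)\,\epsilon_x^{G/M}(\gamma) = (-1)^{d_\gamma}\sqrt{\lvert \msf{m}^{\perp,\gamma}_{x,r/2}\rvert}.
\]

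\emph{Factorization.} The symplectic Weil representation restricted to $\wtd{\msf{M}}_{x,r,\phi} = \prod_C \msf{M}_C \subset \Sp(\msf{m}^\perp_{x,r/2})$ decomposes as a tensor product over the orthogonal summands $\msf{m}^{\perp,C}_{x,r/2}$. This holds because the Weil representation of an orthogonal sum is the tensor product of Weil representations, which is valid for $p \neq 2$ by uniqueness of the Weil representation for $\Sp$ when $q > 3$, with small cases handled by the explicit character formula. It thus suffices to treat a single $C$ and $\gamma \in \msf{M}_C$, using \Cref{prop:interpretation_epsilon_twist} to write $\epsilon_x^{G/M} = \prod_C \sgn_C$.

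\emph{The two cases.} The needed inputs are Gérardin's restriction formulas. For $C$ asymmetric, the Weil representation of $\Sp(V\oplus V^*)$ restricted to $\GL(V)$, with $V = \msf{m}^{\perp,\alpha}_{x,r/2}$ over $k_\alpha$ and the trace form, is the polarized Weil representation twisted by the quadratic character $\sgn_{k_\alpha}\circ\det$. Concretely, its character is $\sgn_{k_\alpha}(\det\gamma)\sqrt{\lvert V^\gamma\rvert}$ \cite[Theorem 2.4]{Ger77}, which compares to \eqref{eq:trace_polarized} by the factor $\sgn_C$. For $C$ symmetric, the restriction to $\msf{U}(V)$ differs from the unitary Weil representation of \cite[Theorem 3.3]{Ger77} by $\sgn_{k_\alpha^1}\circ\det$. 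Here we use the explicit character values from \cite[Theorem 4.9.2]{Ger77}, which compare to \eqref{eq:trace_unitary} with the sign $(-1)^{\dim V - \dim V^\gamma}$ matching $d_\gamma$.

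The main obstacle is matching normalizations: Gérardin's symplectic Weil representation has to be pulled back along Yu's special isomorphism, while our $\msf{H}_C \cong \msf{H}(\msf{m}^{\perp,\alpha}_{x,r/2})$ comes from \Cref{prop:HW_decomposition}. One must check that these differ by an automorphism acting trivially on the center, hence preserving the Heisenberg representation up to isomorphism, so that the restriction formulas apply verbatim. If the direct trace comparison at some $\gamma$ proves awkward, for instance when $\gamma$ is not semisimple, we instead compare the characters $\chi$ on generators: $\chi$ is a character of $\prod_C \msf{M}_C$, and it suffices to evaluate it on a maximal torus of each $\msf{M}_C$, where both formulas are explicit.
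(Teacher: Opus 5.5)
Your proposal is correct and is essentially the paper's argument: you split the symplectic Weil representation over the orthogonal summands $\msf{m}^{\perp,C}_{x,r/2}$ via \cite[Corollary 2.5]{Ger77}, identify $\epsilon_x^{G/M}$ with $\prod_C \sgn_C$ by \Cref{prop:interpretation_epsilon_twist}, and apply \cite[Theorem 2.4 (c)]{Ger77} and \cite[Theorem 3.3 (a'')]{Ger77} factor by factor, and your Schur/trace reduction and normalization remark only make explicit what the paper leaves implicit. (One harmless slip: in the asymmetric case the character on $\GL(V)$ is $\sgn_{k_\alpha}(\det\gamma)\lvert V^\gamma\rvert = \sgn_{k_\alpha}(\det\gamma)\sqrt{\lvert \msf{m}^{\perp,C,\gamma}_{x,r/2}\rvert}$, not $\sgn_{k_\alpha}(\det\gamma)\sqrt{\lvert V^\gamma\rvert}$.)
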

\begin{proof}
    By \cite[Corollary 2.5]{Ger77}, it is enough to show that for every $C \in \Phi_\Sigma^M$, the polarized or unitary Heisenberg-Weil representation $\kappa_C$ equals the twist of the symplectic Heisenberg-Weil representation $\kappa_{\Yu, C}$ by $\sgn_C$. Since there is an explicit relation between $B$ and $\langle -, - \rangle_\phi$ in \Cref{prop:orbit_description}, this is proved in \cite[Theorem 2.4 (c)]{Ger77} when $C \in \Phi^M_{\Sigma, \asym}$, and so is in \cite[Theorem 3.3 (a'')]{Ger77} when $C \in \Phi^M_{\Sigma, \sym}$. 
\end{proof}

\subsection{Restriction to parabolic subgroups} \label{ssec:parabolic_rest}

In this section, we describe the restriction of $\kappa_{x, r, \phi}$ to a parabolic-like subgroup of $\msf{HW}_{x, r, \phi}$ as a parabolic-like induction. This is a counterpart of \cite[Theorem 2.4 (b)]{Ger77}, which originally contains a typo, for $\kappa_{x, r, \phi}$. 

Here, we fix a character $\lambda \colon \bb{G}_m \to T$. We will consider an associated parabolic-like subgroup of $\msf{HW}_{x, r, \phi}$. Take a decomposition $\Phi^M = \Phi^M_- \sqcup \Phi^M_0 \sqcup \Phi^M_+$ with respect to the sign of $\langle \alpha, \lambda \rangle$ for $\alpha \in \Phi^M$. As it is stable under $\Gal(E / F)$, it induces an orthogonal decomposition 
\begin{equation} \label{eq:decomposition_lambda}
    \msf{m}^\perp_{x, r/2} = \msf{u}^- \oplus \msf{u}^0 \oplus \msf{u}^+.
\end{equation}
As in \Cref{prop:lifting_to_Heisenberg}, there are canonical liftings $\msf{u}^-, \msf{u}^+ \subset \msf{H}_{x, r, \phi}$. 

Let $G^0 \subset G$ be the centralizer of $\lambda$ and let $M^0 = M \cap G^0$. Our construction can be applied to $M^0 \subset G^0$ and we use the superscript $(-)^0$ to denote the counterparts for $M^0 \subset G^0$. As a result, we have a Heisenberg-Weil representation $\kappa^0_{x, r, \phi}$ of $\msf{HW}^0_{x, r, \phi}$, where
\[
    (\msf{m}^0)^\perp_{x, r/2} = \msf{u}^0 ,\quad 
    \msf{HW}^0_{x, r, \phi} = \msf{H}^0_{x, r, \phi} \rtimes \msf{M}^0_{x, r, \phi}. 
\]
Next, we take the associated parabolic subgroup
\[
    P = \{ m \in M \mid \lim_{t \to 0} \lambda(t) m \lambda(t)^{-1} \; \text{exists}\} ,\quad 
    \msf{P}_{x, r, \phi} = \Img(P(F)_x \to \Sp(\msf{m}^\perp_{x, r/2})). 
\]
The action of $\msf{P}_{x, r, \phi}$ stabilizes $\msf{u}^+$ and $\msf{u}^0 \oplus \msf{u}^+$ and it induces a quotient map $\msf{P}_{x, r, \phi} \twoheadrightarrow \msf{M}^0_{x, r, \phi} \subset \Sp(\msf{u}^0)$. Moreover, $U(F)_x \subset P(F)_x$ maps into the kernel of $\msf{P}_{x, r, \phi} \twoheadrightarrow \msf{M}^0_{x, r, \phi}$. Let
\[
    \msf{HW}^{\geq 0}_{x, r, \phi} = (\msf{H}^0_{x, r, \phi} \times \msf{u}^+) \rtimes \msf{P}_{x, r, \phi} \subset \msf{HW}_{x, r, \phi}. 
\]

\begin{prop} \label{prop:restrction_as_induction}
    The restriction of $\kappa_{x, r, \phi}$ to $\msf{H}_{x, r, \phi} \rtimes \msf{P}_{x, r, \phi}$ is isomorphic to 
    \[  
        \Ind^{\msf{H}_{x, r, \phi} \rtimes \msf{P}_{x, r, \phi}}_{\msf{HW}^{\geq 0}} \kappa^0_{x, r, \phi}. 
    \]
    Here, $\kappa^0_{x, r, \phi}$ is inflated via the natural quotient $\msf{HW}^{\geq 0}_{x, r, \phi} \to \msf{HW}^0_{x, r, \phi}$. 
\end{prop}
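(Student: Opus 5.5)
The plan is to reduce to a comparison of two representations that agree on $\msf{H}_{x, r, \phi}$, and then to pin down the remaining twist by a trace computation.

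\emph{Step 1: restriction to $\msf{H}_{x, r, \phi}$.} Write $\Pi$ for the induced representation in the statement. I first restrict both sides to $\msf{H}_{x, r, \phi}$. The left side restricts to the Heisenberg representation with central character $\psi$ by \Cref{thm:Heisenberg_Weil_representations}. For $\Pi$, I apply Mackey's formula to $\msf{H}_{x, r, \phi} \rtimes \msf{P}_{x, r, \phi}$ and the subgroup $\msf{HW}^{\geq 0}_{x, r, \phi}$:
\begin{itemize}
\item the double coset space is a single point, and the relevant restriction is $\Ind^{\msf{H}_{x, r, \phi}}_{\msf{H}^0_{x, r, \phi} \times \msf{u}^+} (\kappa^0_{x, r, \phi} \vert_{\msf{H}^0} \boxtimes 1)$;
\item since \eqref{eq:decomposition_lambda} is orthogonal and $\msf{u}^{\pm}$ are isotropic (lifted canonically by \Cref{prop:lifting_to_Heisenberg}), $\msf{u}^+$ is a maximal isotropic complement to $\msf{u}^0$ in $(\msf{u}^0)^\perp = \msf{u}^0 \oplus \msf{u}^+$, and the coset space is in bijection with $\msf{u}^-$;
\item the standard mixed Schrödinger model argument then shows this is the Heisenberg representation of $\msf{H}_{x, r, \phi}$ with central character $\psi$. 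Concretely, $\msf{H}^0 \times \msf{u}^+$ has index $\lvert \msf{u}^- \rvert$, so the dimensions match, and the central character is $\psi$, so by Stone–von Neumann it is the unique irreducible one.
\end{itemize}
Since the Heisenberg representation is irreducible, Schur's lemma gives $\kappa_{x, r, \phi}\vert_{\msf{H} \rtimes \msf{P}} \cong \Pi \otimes \chi$ for some character $\chi$ of $\msf{P}_{x, r, \phi}$, inflated to $\msf{H}_{x, r, \phi} \rtimes \msf{P}_{x, r, \phi}$.

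\emph{Step 2: killing $\chi$.} The key observation is that $\tr(\gamma \mid \kappa_{x, r, \phi}) = \pm \sqrt{\lvert \msf{m}^{\perp, \gamma}_{x, r/2} \rvert}$ is nonzero for every $\gamma \in \msf{M}_{x, r, \phi}$. So it suffices to show $\tr(\gamma \mid \Pi) = \tr(\gamma \mid \kappa_{x, r, \phi})$ for every $\gamma \in \msf{P}_{x, r, \phi}$; then $\chi(\gamma) = 1$. To compute $\tr(\gamma \mid \Pi)$, I use the induced character formula with coset representatives $v \in \msf{u}^- \subset \msf{H}_{x, r, \phi}$.
\begin{itemize}
\item The element $v^{-1}\gamma v = v^{-1}\gamma(v) \cdot \gamma$ lies in $\msf{HW}^{\geq 0}_{x, r, \phi}$ if and only if the $\msf{u}^-$-component of $(\gamma^{-1} - 1)v$ vanishes. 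Here $\gamma$ acts on $\msf{u}^- \cong \msf{m}^\perp_{x, r/2} / (\msf{u}^0 \oplus \msf{u}^+)$ through its induced action. So the sum runs over the fixed space $(\msf{u}^-)^\gamma$ of this induced action.
\item For such $v$, the element $v^{-1}\gamma v$ projects in $\msf{HW}^0_{x, r, \phi}$ to $h\bar\gamma$, where $\bar\gamma$ is the image of $\gamma$ in $\msf{M}^0_{x, r, \phi}$. The element $h \in \msf{H}^0$ comes from the $\msf{u}^0$-component of $\gamma(v) - v$ together with a central term. Using the support statement of \Cref{thm:Heisenberg_Weil_representations} for $\kappa^0_{x, r, \phi}$, together with a direct conjugation by an element of $\msf{u}^+$, I expect each such term to equal $\tr(\bar\gamma \mid \kappa^0_{x, r, \phi})$.
\item This gives
\[
    \tr(\gamma \mid \Pi) = \lvert (\msf{u}^-)^\gamma \rvert \cdot (-1)^{d^0_{\bar\gamma}} \sqrt{\lvert (\msf{u}^0)^{\bar\gamma} \rvert}.
\]
\end{itemize}

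\emph{Step 3: the comparison.} I must check $\lvert \msf{m}^{\perp, \gamma}_{x, r/2} \rvert = \lvert (\msf{u}^-)^\gamma \rvert^2 \lvert (\msf{u}^0)^{\bar\gamma} \rvert$ and $d_\gamma \equiv d^0_{\bar\gamma} \pmod 2$.
\begin{itemize}
\item For the sizes, use the $\gamma$-stable filtration $\msf{u}^+ \subset \msf{u}^0 \oplus \msf{u}^+ \subset \msf{m}^\perp_{x, r/2}$. The pairing identifies $\msf{u}^-$ (as a quotient) with the dual of $\msf{u}^+$ equivariantly, so the fixed spaces on these two pieces have equal dimension.
\item For the signs, decompose by the orbits $C \in \Phi^M_{\Sigma, \sym}$. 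Each symmetric orbit meets $\Phi^M_+$ and $\Phi^M_-$ in parts exchanged by $\sigma^{d_\alpha/2}$, so the codimension contributions from $\msf{u}^{\pm}$ to $d_\gamma$ come in pairs and cancel mod $2$.
\end{itemize}

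\emph{Main obstacle.} This is Steps 2–3 for non-semisimple $\gamma$. The fixed-space counts on the graded pieces of the filtration need not simply add, so I would instead work with the $\gamma$-stable filtration and Euler-characteristic-style counts over $k_\alpha$ on each $C$-component. If that becomes unwieldy, the fallback is to reduce factor by factor: use \Cref{prop:HW_decomposition} to restrict to each $\msf{m}^{\perp, C}_{x, r/2}$, where the statement becomes \cite[Theorem 2.4 (b)]{Ger77} or its unitary analogue.
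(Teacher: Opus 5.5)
Your Step 1 is sound. Mackey plus Stone--von Neumann gives $\kappa_{x,r,\phi}\vert_{\msf{H}_{x,r,\phi}\rtimes\msf{P}_{x,r,\phi}}\cong\Pi\otimes\chi$ for a character $\chi$ of $\msf{P}_{x,r,\phi}$, and since $\tr(\gamma\mid\kappa_{x,r,\phi})\neq0$, comparing traces on $\msf{P}_{x,r,\phi}$ would suffice.

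\textbf{The gap is in Step 2.} It is not a bookkeeping issue about filtrations. Your claim that every term of the induced-character sum equals $\tr(\bar\gamma\mid\kappa^0_{x,r,\phi})$ is false as soon as $\gamma$ moves $\msf{u}^-$ into $\msf{u}^0\oplus\msf{u}^+$. With it, the identity $\lvert\msf{m}^{\perp,\gamma}_{x,r/2}\rvert=\lvert(\msf{u}^-)^\gamma\rvert^2\lvert(\msf{u}^0)^{\bar\gamma}\rvert$ of Step 3 also fails.

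Here is a counterexample. Take a GL-type factor $V\oplus V^*$ with $V=V_-\oplus V_0\oplus V_+$, so that $\msf{u}^+=V_+\oplus V_-^*$ and $\msf{u}^-=V_-\oplus V_+^*$. Let $\gamma=1+n\in\msf{P}_C$ with $0\neq n\colon V_-\to V_+$. Then $\gamma$ acts trivially on $\msf{u}^-$ modulo $\msf{u}^0\oplus\msf{u}^+$ and $\bar\gamma=1$, so your formula predicts $\lvert V\rvert$. But for $v=(a,a^*)$, the group law of $\msf{H}(V)$ gives
\[
v^{-1}\gamma(v)=\bigl(\Tr\langle na,a^*\rangle,\,0\bigr)\cdot\bigl(0,(na,-n^*a^*)\bigr).
\]
So the term attached to $v$ is $\psi(\Tr\langle na,a^*\rangle)\,\lvert V_0\rvert$. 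Summing gives $\lvert\ker(n\vert_{V_-})\rvert\,\lvert V_+\rvert\,\lvert V_0\rvert=\lvert V\rvert/\lvert\Img n\rvert$, which is the correct value $\tr(\gamma\mid\kappa_C)$.

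Similarly, a component $V_-\to V_0$ produces terms $h\bar\gamma$ with $h\in\msf{H}^0_{x,r,\phi}$ noncentral and $\bar\gamma=1$. The support statement kills these terms rather than making them equal to $\tr(\bar\gamma\mid\kappa^0_{x,r,\phi})$. The central phases you discarded are exactly what makes the traces match.

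You also cannot avoid this by testing $\chi$ only on Levi or $p'$-order elements, where your formula is correct. For small residue fields — precisely the $q=2$ situation this paper cares about — parabolic subgroups have characters that are nontrivial on the unipotent radical; for example, the Borel subgroup of $\GL_2(\bb{F}_2)$ is $\bb{Z}/2\bb{Z}$. So $\chi$ must be controlled on non-Levi elements.

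\textbf{How the paper handles this.} It never computes those traces exactly. It writes $\kappa_{x,r,\phi}=\bigotimes_C\kappa_C$, proves the statement for each $C$ on $\msf{H}_C\rtimes\msf{P}_C$, and then restricts along $\msf{P}_{x,r,\phi}\subset\prod_C\msf{P}_C$.
\begin{itemize}
\item For a GL-type factor, your Step 1 reduction gives $\kappa_C\cong\Pi_C\otimes\chi_C$. Both characters take nonnegative values on $\msf{P}_C$, while $\tr(\gamma\mid\kappa_C)>0$. Hence $\chi_C(\gamma)$ is a positive root of unity, i.e.\ $1$.
\item For a unitary factor, where signs genuinely occur, the paper cites \cite[Theorem 3.3 (b)]{Ger77}.
\end{itemize}
Your fallback is close to this. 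However, for GL-type factors \cite[Theorem 2.4 (b)]{Ger77} is a statement about the symplectic Weil representation. That representation is not available uniformly when $p=2$, and for $p\neq2$ it differs from $\kappa_C$ by $\sgn_C$; the paper also notes that the theorem contains a typo. The positivity argument is the right substitute there.
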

\begin{proof}
    Recall the construction $\kappa_{x, r, \phi} = \bigotimes_{C} \kappa_C$ in the proof of \Cref{thm:Heisenberg_Weil_representations}. For each $C \in \Phi_\Sigma^M$, \eqref{eq:decomposition_lambda} induces
    \[
        \msf{m}^{\perp, C}_{x, r/2} = \msf{u}_C^- \oplus \msf{u}_C^0 \oplus \msf{u}_C^+. 
    \]
    Let $\msf{P}_C \subset \msf{M}_C$ be the stabilizer of $\msf{u}_C^+ \subset \msf{m}^{\perp, C}_{x, r/2}$ and let $\msf{M}_C^0$ be its Levi subgroup. Let $\kappa_C^0$ be the Heisenberg-Weil representation for $\msf{H}_C \rtimes \msf{M}_C^0$ with central character $\psi$. We show
    \[
        \kappa_C \vert_{\msf{H}_C \rtimes \msf{P}_C} \cong \Ind^{\msf{H}_C \rtimes \msf{P}_C}_{(\msf{H}^0_C \times \msf{u}_C^+) \rtimes \msf{P}_C} \kappa_C^0. 
    \]
    Here, $\kappa_C^0$ is inflated via the quotient $(\msf{H}^0_C \times \msf{u}_C^+) \rtimes \msf{P}_C \to \msf{H}_C \rtimes \msf{M}_C^0$. 

    First, when $\msf{M}_C$ is a unitary group, the claim is proved in \cite[Theorem 3.3 (b)]{Ger77}. When $\msf{M}_C$ is a general linear group, the characters of both representations take only nonnegative values. It is enough for the claim since both sides only differ by a character of $\msf{P}_C$. 

    Now, by taking a product over $C \in \Phi_\Sigma^M$, we get 
    \[
        \kappa_{x, r, \phi}\vert_{\msf{H}_{x, r, \phi} \rtimes \prod \msf{P}_C} \cong \Ind_{(\msf{H}^0_{x, r, \phi} \times \msf{u}^+) \rtimes \prod \msf{P}_C}^{\msf{H}_{x, r, \phi} \rtimes \prod \msf{P}_C} \bigotimes_C \kappa_C^0. 
    \]
    We get the claim by restricting along $\msf{P}_{x, r, \phi} \subset \prod \msf{P}_C \subset \Sp(\msf{m}^{\perp}_{x, r/2})$. 
\end{proof}

\begin{cor} \label{cor:U_fixed_locus}
    We have $\kappa_{x, r, \phi}^{\msf{u}^+} \cong \kappa_{x, r, \phi}^0$ as a representation of $\msf{HW}^{\geq 0}_{x, r, \phi}$. 
\end{cor}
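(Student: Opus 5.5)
The plan is to take $\msf{u}^+$-invariants in the induced model of \Cref{prop:restrction_as_induction} and identify the result with $\kappa^0_{x, r, \phi}$ by a Mackey argument. Write $\msf{Q} = \msf{HW}^{\geq 0}_{x, r, \phi}$ and $\msf{G}' = \msf{H}_{x, r, \phi} \rtimes \msf{P}_{x, r, \phi}$. Then $\kappa_{x, r, \phi}\vert_{\msf{G}'} \cong \Ind_{\msf{Q}}^{\msf{G}'} \kappa^0_{x, r, \phi}$, with $\kappa^0$ inflated so that $\msf{u}^+$ acts trivially.

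The first step is to find coset representatives. Since $\msf{H}_{x, r, \phi} = k \cdot \msf{u}^- \cdot \msf{u}^0 \cdot \msf{u}^+$ (using the lifts of \Cref{prop:lifting_to_Heisenberg}), the lift of $\msf{u}^-$ is a set of representatives for $\msf{G}'/\msf{Q}$. The function model of the induced representation therefore consists of functions $f$ on $\msf{u}^-$ with values in $\kappa^0$, and $\kappa_{x, r, \phi}$ decomposes as $\bigoplus_{v \in \msf{u}^-} v \cdot \kappa^0$.

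Next I compute the $\msf{u}^+$-action in this model. For $u \in \msf{u}^+$ and $v \in \msf{u}^-$ we have $u v = v u [u^{-1}, v^{-1}]$, where the commutator $[u^{-1}, v^{-1}]$ is central and equals $\psi(\pm\langle u, v\rangle_\phi)$ on $\kappa^0$. Because $u$ acts trivially on $\kappa^0$, $u$ acts on the summand $v\cdot \kappa^0$ by the scalar $\psi(\langle u, v \rangle_\phi)$ up to sign convention. The pairing $\msf{u}^+ \times \msf{u}^- \to k$ is non-degenerate, since $\msf{u}^0$ is orthogonal to both $\msf{u}^\pm$ and $\msf{u}^\pm$ are isotropic. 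Hence the characters $u \mapsto \psi(\langle u, v\rangle_\phi)$ are pairwise distinct as $v$ varies, and the character is trivial only for $v = 0$. It follows that $\kappa_{x, r, \phi}^{\msf{u}^+}$ is exactly the summand at $v = 0$, namely the functions supported on $\msf{Q}$.

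Finally, the summand at $v = 0$ is $\msf{Q}$-stable, because $\msf{Q}$ normalizes $\msf{u}^+$. Evaluation at the identity identifies it $\msf{Q}$-equivariantly with $\kappa^0_{x, r, \phi}$, which is the standard fact that the functions in an induced representation supported on the subgroup realize the inducing representation.

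The main obstacle is bookkeeping rather than conceptual. One must check that the commutator of the lifts of $\msf{u}^+$ and $\msf{u}^-$ in $H_{x, r, \phi}$ is really given by $\langle -, - \rangle_\phi$, which follows from the definition of the commutator pairing. One must also check that $\msf{u}^+$ acts trivially on the inflated $\kappa^0$, which holds by the construction of the quotient $\msf{Q} \to \msf{HW}^0$. An alternative route, which could replace the Mackey computation, is a Frobenius reciprocity count: $\dim \kappa^{\msf{u}^+} = \sqrt{|\msf{m}^\perp|}/|\msf{u}^+| = \dim \kappa^0$, combined with the inclusion of the $v = 0$ summand.
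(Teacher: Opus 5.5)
Your proposal is correct and is essentially the paper's own argument: \Cref{prop:restrction_as_induction} gives $\kappa_{x, r, \phi} = \bigoplus_{v \in \msf{u}^-} v \cdot \kappa^0_{x, r, \phi}$. On each $v$-summand $\msf{u}^+$ acts by the character $\psi(\langle -, v \rangle_\phi)$, and non-degeneracy of $\langle -, - \rangle_\phi$ on $\msf{u}^+ \times \msf{u}^-$ isolates the $v = 0$ summand. Your extra checks (the commutator computation, triviality of $\msf{u}^+$ on the inflated $\kappa^0_{x, r, \phi}$, and $\msf{HW}^{\geq 0}_{x, r, \phi}$-stability of the invariants) spell out what the paper leaves implicit.
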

\begin{proof}
    By \Cref{prop:restrction_as_induction}, $\kappa_{x, r, \phi} = \bigoplus_{u \in \msf{u}^-} u \cdot \kappa_{x, r, \phi}^0 $. An element $v \in \msf{u}^+$ acts on $u \cdot \kappa_{x, r, \phi}^0$ by scalar $\psi(\langle v, u \rangle_\phi)$. Since $\langle -, - \rangle_\phi$ is non-degenerate on $\msf{u}^+ \times \msf{u}^-$, we get $\kappa_{x, r, \phi}^{\msf{u}^+} \cong \kappa_{x, r, \phi}^0$. 
\end{proof}

\begin{rmk}
    The above properties hold in general when $p \neq 2$. In this case, we just set
    \[
        \kappa_{x, r, \phi} = \kappa_\Yu \otimes \epsilon_x^{G / M}. 
    \]
    It is consistent with the previous notation by \Cref{prop:comparison_epsilon_twist}. By \cite[Lemma 4.1.8]{FKS23} and the following lemma, \Cref{prop:restrction_as_induction} and \Cref{cor:U_fixed_locus} hold without any assumptions. 
\end{rmk}

\begin{lem}
    When $p \neq 2$, $\epsilon_x^{G_0 / M_0} \chi^{\msf{u}^+} = \epsilon_x^{G / M}$ as a character on $\msf{P}_{x, r, \phi}$. 
\end{lem}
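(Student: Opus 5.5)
The lemma says that $\epsilon_x^{G/M}$ restricted to $\msf{P}_{x,r,\phi}$ equals $\epsilon_x^{G^0/M^0}$ (inflated along $\msf{P}_{x,r,\phi}\twoheadrightarrow \msf{M}^0_{x,r,\phi}$) times $\chi^{\msf{u}^+}$. Here I read $\chi^{\msf{u}^+}$ as the quadratic character of \cite[Lemma 4.1.8]{FKS23}: the sign of the determinant of the action on $\msf{u}^+$, i.e.\ $\sgn_k\circ\det(-\mid \msf{u}^+)$ on the $k$-vector space $\msf{u}^+$.

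Both sides are quadratic characters of $P(F)_x$ that factor through $\msf{P}_{x,r,\phi}$. By \cite[Lemma 3.5]{FKS23}, as in the proof of \Cref{prop:interpretation_epsilon_twist}, it suffices to compare them on $T'(F)_b$ for tame maximal tori $T'\subset P$. One may take $T'\subset M^0$ containing the image of $\lambda$, after conjugating by $U(F)_x$. This is harmless because $U(F)_x$ maps into the kernel of $\msf{P}_{x,r,\phi}\to\msf{M}^0_{x,r,\phi}$ and acts unipotently on $\msf{u}^+$, so all three characters are trivial on it. The caveat is that $\epsilon$ is not obviously trivial on unipotent elements of $\msf{P}_{x,r,\phi}$ itself. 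What makes this work is that a quadratic character kills the $p$-group image of $U(F)_x$, since $p\neq 2$.

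For $\gamma\in T'(F)_b$, I would write all three characters via root formulas. The formula \eqref{eq:epsilon_factor} for $\epsilon^{G/M}$ is a product over $\Sigma$-orbits $[\alpha]\subset\Phi^M$ with $\msf{u}^\Gamma_{[\alpha],r/2}\neq 0$. Split $\Phi^M=\Phi^M_-\sqcup\Phi^M_0\sqcup\Phi^M_+$ and note that $\Phi^M_0$ is exactly the set of roots of $G^0$ outside $M^0$. Since $\Gamma$ fixes $\lambda$, both $\Gamma$ and $-1$ preserve $\Phi^M_0$. Hence the factors with $[\alpha]\subset\Phi^M_0$ give exactly $\epsilon_x^{G^0/M^0}(\gamma)$, using the same formula for $(G^0,M^0)$ with the same depth and with ramification conditions unchanged.

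The remaining $\Sigma$-orbits meet $\Phi^M_+$ and $\Phi^M_-$, and each such orbit is $\Gamma\alpha\sqcup\Gamma(-\alpha)$ with $\alpha\in\Phi^M_+$. These orbits are therefore all asymmetric, and their factor is $\sgn_{k_\alpha}\alpha(\gamma)$. On the other side, $\msf{u}^+=\bigoplus_{[\alpha]\in\Phi^M_+/\Gamma}\msf{u}^\Gamma_{[\alpha],r/2}$, and each nonzero summand is a one-dimensional $k_\alpha$-space on which $\gamma$ acts by $\alpha(\gamma)$. So $\det_k(\gamma\mid\msf{u}^+)=\prod \Nm_{k_\alpha/k}\alpha(\gamma)$, and $\sgn_k\circ\Nm_{k_\alpha/k}=\sgn_{k_\alpha}$. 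The two products therefore agree, with the nonvanishing conditions matching term by term.

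The main obstacle is bookkeeping. First, one must check that the normalization of $\chi^{\msf{u}^+}$ in \cite{FKS23} is the determinant sign above and not its inverse or a variant over $k_\alpha$; this does not matter for quadratic characters. Second, one must justify restricting to tori inside $M^0$: every tame torus of $P$ is $P(F)_x$-conjugate into $M^0$ modulo $U$, and conjugation does not change the characters. If the torus reduction is delicate, I would instead compare directly on $\msf{P}_{x,r,\phi}\subset\prod_C\msf{P}_C$ via \Cref{prop:interpretation_epsilon_twist}. There $\sgn_C$ restricted to the parabolic $\msf{P}_C$ factors as $\sgn_C^0\cdot\sgn(\det|_{\msf{u}_C^+})$, which is the standard fact that the determinant on a Levi of a parabolic of a unitary or general linear group splits off the contribution of $\msf{u}^+_C$ and its dual.
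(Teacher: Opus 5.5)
\textbf{Gap: your argument only covers the case with no symmetric ramified roots.} Your reduction and your comparison are the paper's argument. You kill the $p$-group image of $U(F)_x$ using $p\neq 2$ and reduce to $T(F)_b$ for tame tori $T\subset M^0$ via \cite[Lemma 3.5]{FKS23}. You then observe that only the asymmetric orbits $\Gamma\alpha\sqcup\Gamma(-\alpha)$ with $\alpha\in\Phi^M_+$ differ between the two sides, and match them using $\sgn_k\circ\Nm_{k_\alpha/k}=\sgn_{k_\alpha}$.

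The problem is the formula you start from. \eqref{eq:epsilon_factor}, and \Cref{prop:interpretation_epsilon_twist} used in your fallback route, are valid only under \Cref{ass:without_symmetric_ramification}. This lemma is stated for arbitrary $p\neq 2$ precisely so that \Cref{prop:restrction_as_induction} and \Cref{cor:U_fixed_locus} hold without that assumption, as needed in \Cref{sec:Yu_construction}. In that generality, $\epsilon_x^{G/M}$ in \cite[Definition 3.1]{FKS23} also contains a factor attached to symmetric ramified roots, built from toral invariants. Your product never accounts for it. So as written you prove the lemma only when no symmetric ramified roots exist, which is the case where \Cref{prop:restrction_as_induction} was already proved directly.

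\textbf{The fix.} Work from the general definition in \cite[Definition 3.1]{FKS23}, as the paper does, and add two observations.
\begin{enumerate}
\item Every symmetric root lies in $\Phi^M_0$. If $-\alpha\in\Gamma\alpha$, the $\Gamma$-invariance of $\lambda$ gives $\langle\alpha,\lambda\rangle=-\langle\alpha,\lambda\rangle$, so $\langle\alpha,\lambda\rangle=0$.
\item The factor that definition attaches to a symmetric orbit, ramified or unramified, together with its inclusion condition, depends only on $\gamma$, $x$, $r$, $\alpha$ and the root spaces at $\pm\alpha$. The group $G^0$ shares all of these with $G$.
\end{enumerate}
Hence all symmetric contributions coincide in $\epsilon_x^{G/M}(\gamma)$ and $\epsilon_x^{G^0/M^0}(\gamma)$. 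The only discrepancy is then the asymmetric $\sharp$-term over roots in $\msf{u}^+$, which your determinant computation matches with $\chi^{\msf{u}^+}$.
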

\begin{proof}
    Since $p \neq 2$, any quadratic character on $\msf{P}_{x, r, \phi}$ factors through $\msf{M}^0_{x, r, \phi}$. Then, it is enough to check evaluations at $\gamma \in T(F)_b$ for every tame maximal torus $T \subset M^0$ by \cite[Lemma 3.5]{FKS23}. By \cite[Definition 3.1]{FKS23}, the difference between $\epsilon_x^{G_0 / M_0}(\gamma)$ and $\epsilon_x^{G / M}(\gamma)$ stems only from the asymmetric term of $\epsilon^{G / M}_{\sharp, x}$ associated to roots in $\msf{u}^+$. Then, the claim follows from the analysis in the asymmetric part of \Cref{prop:comparison_epsilon_twist}. 
\end{proof}

\section{Twisted Yu's construction as successive inductions} \label{sec:Yu_construction}

In this section, we show that twisted Yu's construction works in residual characteristic $2$ by replacing symplectic Heisenberg-Weil representations $\kappa_\Yu$ with $\kappa_{x, r, \phi}$. Since \Cref{cor:U_fixed_locus} is essentially equivalent to the intertwining property \cite[Proposition 4.1.6]{FKS23}, the existing proofs in \cite{Yu01}, \cite{Fin21} and \cite{FKS23} work. We follow the same ideas, but present a slightly different exposition with a view toward geometric realization.

In this section, we do not impose \Cref{ass:without_symmetric_ramification} and set $\kappa_{x, r, \phi} = \kappa_\Yu \otimes \epsilon^{G / M}_x$ when $p \neq 2$. This notation is consistent with the previous one by \Cref{prop:comparison_epsilon_twist}. 

\subsection{Positive-depth induction}

Here, we keep the notation in \Cref{sec:twisted_Heisenberg_Weil}. In this section, we introduce an induction functor from $M(F)_x$ to $G(F)_x$ when $Z_M / Z_G$ is anisotropic. It is an induction step in Yu's construction. First, let us recall some genericity conditions. We adopt the following terminology reflecting the analogy with \cite[{}10.3]{BR03}. 


\begin{defi} \textup{(\cite[Section 9]{Yu01}, \cite[Definition 4.1.1]{FS25})} \label{defi:genericity}
    \begin{enumerate}
        \item We say that $X \in (\Lie^*(M)^M)(F)$ is $(G, M)$-generic of depth $r$ if 
        \begin{quote}
        \begin{itemize}
            \item[{\bf GE0}] $X \in \Lie^*(M)_{-r} \backslash  \Lie^*(M)_{(-r)+}$, and
            \item[{\bf GE1}] $\nu(X(H_\alpha)) = r$ for every $\alpha \in \Phi^M$, where $H_\alpha = d\alpha^\vee(1) \in \mfr{g}_E$. 
        \end{itemize}
        \end{quote}
        If $X$ additionally satisfies Yu's genericity condition {\bf GE2} (see \cite[Section 8]{Yu01} and \cite[Section 4.1]{FS25}), we say that $X$ is $(G, M)$-supergeneric of depth $r$. 
        \item A character $\phi \colon M(F)_x \to \Qlax$ is $(G, M)$-generic (resp.\ supergeneric) of depth $r$ if $\phi\vert_{M(F)_{x, r+}} = 1$ and $\phi\vert_{\msf{m}_{x, r}} = \psi \circ X$ for some $(G, M)$-generic (resp.\ supergeneric) element $X \in (\Lie^*(M)^M)(F)$ of depth $r$. 
    \end{enumerate}
\end{defi}

The notation $\kappa_{x, r, \phi}$ for the Heisenberg-Weil representation is justified by the following. 

\begin{lem} \label{lem:lift_X}
    Let $X\in (\Lie^*(M)^M)(F)$ be a $(G, M)$-generic element of depth $r$. Then, $X \vert_{\msf{m}_{x, r}}$ is $r$-generic and $M$-stable, and $\kappa_{x, r, X}$ only depends on $\psi \circ X$ as a representation of
    \[   
        (G(F)_{x, r, r/2} / G(F)_{x, r+ , r/2+}) \rtimes (M(F)_x / M(F)_{x, 0+}). 
    \]
\end{lem}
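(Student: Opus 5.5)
The argument has three parts, taken in order: $M$-stability, $r$-genericity, and the dependence statement for $\kappa_{x,r,X}$.

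\emph{$M$-stability.} Since $X \in (\Lie^*(M)^M)(F)$ is fixed by the coadjoint action of $M$, it is in particular fixed by $M(F)_x$. The adjoint action of $M(F)_x$ on $\msf{m}_{x,r}$ is induced from the adjoint action on $\mfr{m}_{x,r}$. By {\bf GE0}, $X$ maps $\mfr{m}_{x,r}$ into $O_F$ and $\mfr{m}_{x,r+}$ into the maximal ideal, so it induces a $k$-linear map $X\vert_{\msf{m}_{x,r}} \colon \msf{m}_{x,r} \to k$. This induced map is then $M(F)_x$-invariant, which is exactly \Cref{defi:M_stable}.

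\emph{$r$-genericity.} Nonvanishing follows from {\bf GE1}, since $X(H_\alpha)$ has valuation exactly $r$ whenever $\Phi^M \neq \emptyset$; if $\Phi^M$ is empty, use {\bf GE0}. For non-degeneracy of $\langle -,-\rangle_{E,X}$, work over $\breve{E}$. By \Cref{lem:product_description_loop}, $\msf{m}^\perp_{E,x,r/2} = \bigoplus_{\alpha \in \Phi^M} \msf{u}_{\alpha,x,r/2}$. The commutator pairing sends $\msf{u}_{\alpha,x,r/2} \times \msf{u}_{\beta,x,r/2}$ to zero in $\msf{m}_{E,x,r}$ unless $\beta = -\alpha$, since $[\mfr{u}_\alpha,\mfr{u}_\beta] \subset \mfr{u}_{\alpha+\beta}$ and $\alpha+\beta \notin \Phi_M \cup \{0\}$ in the other cases. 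For $\beta = -\alpha$, with $e_\alpha \in \mfr{u}_{\alpha,x,r/2}$ and $e_{-\alpha}\in\mfr{u}_{-\alpha,x,r/2}$ chosen so that $\nu(e_\alpha)+\nu(e_{-\alpha})=r$, the commutator has image $[e_\alpha,e_{-\alpha}] = c\,H_\alpha$ in $\msf{t}_{E,x,r}$ for a unit $c$; here I will cite the group-level commutator formula for $U_\alpha, U_{-\alpha}$ from \cite{KP23}. Since $X$ is $\tau$-invariant, $\phi_E = \ov{X} \circ \Av_\tau$ agrees with $X$ on $\msf{m}_{E,x,r}$. Hence the pairing restricted to $\msf{u}_\alpha \times \msf{u}_{-\alpha}$ is multiplication by the reduction of $c\,X(H_\alpha)$, and this is nonzero by {\bf GE1}. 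I expect this identification to be the main technical step: the commutator in $H_{E,x,r}$ must be matched with the Lie bracket in $\msf{m}_{E,x,r}$ via the Moy--Prasad isomorphism, which needs care at $p=2$ and for non-reduced or non-simply-laced situations where $H_\alpha$ may be divisible. I would try to handle it through the rank-one subgroup generated by $U_{\pm\alpha}$, reducing to $\mathrm{SL}_2$ or $\mathrm{PGL}_2$.

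\emph{Dependence only on $\psi \circ X$.} The group $\msf{H}_{x,r,X}$ is the quotient by $\Ker(X\vert_{\msf{m}_{x,r}})$, and the central character of $\kappa$ is $\psi$ composed with $X$. Replacing $X$ by $X'$ with $\psi\circ X' = \psi \circ X$ on $\msf{m}_{x,r}$ gives the same quotient of $G(F)_{x,r,r/2}/G(F)_{x,r+,r/2+}$ and the same central character. The action of $M(F)_x$ is the same adjoint action. On $M(F)_x$ the data factor through $\msf{M}_{x,r,\phi}$, which is a quotient of $M(F)_x/M(F)_{x,0+}$: elements of $M(F)_{x,0+}$ act trivially on the graded piece $\msf{m}^\perp_{x,r/2}$, and also on $H_{x,r,\phi}$, by the equivariant product decomposition in \Cref{lem:adjoint_M_xrphi}. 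The uniqueness in \Cref{thm:Heisenberg_Weil_representations} (or, when $p\neq 2$, the definition $\kappa_\Yu\otimes\epsilon_x^{G/M}$ together with the uniqueness of Weil representations and of $\epsilon$) then shows that $\kappa$ is determined by the central character and the symplectic action. Its inflation to the displayed group therefore depends only on $\psi\circ X$.
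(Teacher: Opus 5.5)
Your plan follows the paper's proof step for step. $M$-stability comes from the invariance of $X$. Non-degeneracy comes from three facts: the commutator of $\msf{u}_{\alpha,x,r/2}$ and $\msf{u}_{-\alpha,x,r/2}$ is $\pi_E^{er}H_\alpha$ up to a unit, $\phi_E=\ov{X}\circ\Av_\tau$ agrees with $X$, and {\bf GE1} holds. The last claim comes from the characterization in \Cref{thm:Heisenberg_Weil_representations}.

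The one step that is false as written is the orthogonality of distinct root lines. You claim the commutator pairing into $\msf{m}_{E,x,r}$ vanishes on $\msf{u}_{\alpha,x,r/2}\times\msf{u}_{\beta,x,r/2}$ for $\beta\neq-\alpha$, because $\alpha+\beta\notin\Phi_M\cup\{0\}$. But two roots outside $M$ can sum to a root of $M$. Take $G=\GL_3$, $M=\GL_2\times\GL_1$ block-diagonal, $\alpha=e_1-e_3$, $\beta=e_3-e_2$. Then $\alpha+\beta=e_1-e_2\in\Phi_M$. At the standard vertex with $r=2$, the commutator of $x_\alpha(\pi a)$ and $x_\beta(\pi b)$ is $x_{\alpha+\beta}(\pm\pi^2ab)$. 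For units $a,b$ this is a nonzero element of $\msf{u}_{\alpha+\beta,x,r}\subset\msf{m}_{E,x,r}$. So $\langle-,-\rangle_E$ itself pairs different root lines. Neither your argument nor the $\msf{Z}_{\msf{M}}$-equivariance of $\langle-,-\rangle_E$ excludes this, since here $\beta\vert_{Z_M}=-\alpha\vert_{Z_M}$.

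The fix is a second use of the invariance of $X$: orthogonality holds only after applying $X$. Since $X$ is fixed by the coadjoint action of $T\subset M$, for $Y\in\mfr{u}_\gamma$ with $\gamma\in\Phi_M$ and $t\in T(\breve{E})$ we get $X(Y)=X(\mathrm{Ad}(t)Y)=\gamma(t)X(Y)$. Choosing $t$ with $\gamma(t)\neq1$ gives $X(Y)=0$. Hence $X\vert_{\msf{m}_{E,x,r}}$ factors through the projection to $\msf{t}_{E,x,r}$.

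By the Chevalley commutator formula, the $\msf{t}_{E,x,r}$-component of the commutator of $\msf{u}_{\alpha,x,r/2}$ and $\msf{u}_{\beta,x,r/2}$ vanishes unless $\beta=-\alpha$. Terms in $\Phi^M$ of depth $r$ die in $H_{E,x,r}$, and terms $i\alpha+j\beta$ with $i+j\geq 3$ have depth at least $3r/2$. So $\langle\msf{u}_{\alpha,x,r/2},\msf{u}_{\beta,x,r/2}\rangle_{E,X}=0$ for $\beta\neq-\alpha$, and with this repair $\{du_\alpha\}$ is a symplectic basis for $\langle-,-\rangle_{E,X}$ as in the paper.

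Your worry about $p=2$ is not a real obstacle. $\breve{G}_E$ is split, so $\alpha$ gives a homomorphism from $\mathrm{SL}_2$. There $[x_\alpha(a),x_{-\alpha}(b)]$ equals $\alpha^\vee(1+ab)$ up to elements of $U_{\pm\alpha}$ of depth $3r/2$ and torus elements of depth $2r$. That is $ab\,H_\alpha$ in $\msf{t}_{E,x,r}$ in every characteristic. Non-reduced root systems do not arise, since $\Phi$ is an absolute root system. The remaining parts of your proposal (nonvanishing via {\bf GE0}, factorization through $M(F)_x/M(F)_{x,0+}$, dependence only on $\psi\circ X$) are fine.
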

\begin{proof}
    Since $X \in \Lie^*(M)^M$, $X$ is clearly $M$-stable. We show that {\bf GE1} implies that $X$ is $r$-generic. For each $\alpha \in \Phi^M$ with $\msf{u}_{\alpha, r/2} \neq 0$, fix a nonzero element $du_\alpha \in \msf{u}_{\alpha, r/2}$. Then, the commutator pairing $[du_\alpha, du_{-\alpha}]$ in $\msf{m}_{E, x, r}$ equals $\pi_E^{er} H_\alpha$ up to a unit. Since $X\vert_{\msf{m}_{E, x, r}}$ is $\tau$-equivariant, $X\vert_{\msf{m}_{E, x, r}} = X\vert_{\msf{m}_{x, r}} \circ \Av_\tau$ and {\bf GE1} implies $\langle du_\alpha, du_{-\alpha} \rangle_{E, X} \neq 0$. Thus, $\{du_\alpha\}$ is a symplectic basis for $\langle -, - \rangle_{E, X}$, so in particular, $X$ is $r$-generic. The last claim is immediate from the characterization in \Cref{thm:Heisenberg_Weil_representations}. 
\end{proof}

For any $X$ as in \Cref{defi:genericity} (2), $\psi\circ X = \phi\vert_{\msf{m}_{x, r}}$ is independent of the choice of $X$, so $\kappa_{x, r, \phi} = \kappa_{x, r, X}$ is well-defined as a representation of the above semidirect product. Note that we take $\kappa_{x, r, \phi}$ to be the restriction of $\phi$ when $G = M$. 

From now on, we will assume that $Z_M / Z_G$ is anisotropic. It is necessary to have $M(F)_x \subset G(F)_x$ for the full stabilizers of $x$. The following induction functor is a representation-theoretic analogue of positive-depth Deligne-Lusztig induction (see \Cref{thm:positive_depth_DL_induction}). 

\begin{defi} \label{defi:twisted_positive_depth_induction}
    Let $\phi \colon M(F)_x \to \Qlax$ be a $(G, M)$-generic character of depth $r$ and let $\rho$ be an $M(F)_x / M(F)_{x, r}$-representation. If $Z_M / Z_G$ is anisotropic, we say that a representation
    \[
        \Ind_{r,\phi}(\rho) = \Ind_{G(F)_{x, r/2} M(F)_x}^{G(F)_x} (\kappa_{x, r, \phi} \otimes \rho). 
    \]
    is the positive-depth induction of $\rho$ twisted by $\phi$. Here, $M(F)_x$ and $G(F)_{x, r, r/2}$ acts by 
    \[
        m(x \otimes v) = mx \otimes \phi(m) mv, \quad 
        g(x \otimes v) = gx \otimes v \quad 
        (x \in \kappa_{x, r, \phi}, \; v \in \rho)
    \]
    for $m \in M(F)_x$ and $g \in G(F)_{x, r, r / 2}$. In particular, $\Ind_{r,\phi}(\rho)\vert_{G(F)_{x, r+}}$ is trivial. 
\end{defi}

An important consequence of {\bf GE2} is the following. 

\begin{lem} \label{lem:GE2_intertwiner}
    Let $\hat{\phi} \colon G(F)_{x, r, r/2+} \to \Qlax$ be the character such that $\hat{\phi} \vert_{G(F)_{x, r+, r/2+}} = 1$ and $\hat{\phi}\vert_{M(F)_{x, r}} = \phi\vert_{M(F)_{x, r}}$. If $\phi$ is $(G, M)$-supergeneric and $g \in G(F)$ intertwines $\hat{\phi}$, then $g \in G(F)_{x, r/2} M(F) G(F)_{x, r/2}$. 
\end{lem}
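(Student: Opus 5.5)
The plan is to reduce the claim to Yu's intertwining result for generic characters (\cite[Theorem 9.4]{Yu01}), in the characteristic-free form of \cite[Section 4]{FS25}. The first step identifies $\hat{\phi}$ with a character of the form $\psi \circ X$. Choose a $(G, M)$-supergeneric element $X \in (\Lie^*(M)^M)(F)$ of depth $r$ with $\phi\vert_{\msf{m}_{x, r}} = \psi \circ X$. Extend $X$ to $\Lie^*(G)$ by zero on $\mfr{m}^\perp$. Then $\hat{\phi}$ is the character $g \mapsto \psi(X(\log g))$, read through the Moy--Prasad isomorphism
\[
    G(F)_{x, r, r/2+}/G(F)_{x, r+, r/2+} \cong \msf{m}_{x, r} \oplus \msf{m}^{\perp}_{x, r/2+}/\msf{m}^\perp_{x, r+}.
\]
Here the second summand is killed by $X$. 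This uses only the abelian quotient description from \Cref{lem:product_description_loop} and \Cref{lem:piecewise_quotient}, so it needs no restriction on $p$.

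The second step is to reduce to an affine Weyl group computation. Every $g \in G(F)$ can be written as $g = k_1 n k_2$ with $k_i \in G(F)_{x, 0+}$, or with $k_i$ in the parahoric at $x$, and $n$ normalizing a maximal split torus whose apartment contains $x$. Yu's argument then runs a descending induction on depth, starting from $s = 0+$ and going up to $s = r/2$. At each stage one shows that if $g$ intertwines $\hat{\phi}$, then $g$ can be modified by elements of $G(F)_{x, s}$ so that $\Ad(g)^{-1}X$ agrees with $X$ modulo a deeper filtration. The key input is that {\bf GE1} and {\bf GE2} together force any element intertwining $X + \mfr{g}^*_{x, -(r/2)+}$ with $\Ad(g)(X + \mfr{g}^*_{x, -(r/2)+})$ to lie in $G(F)_{x, r/2} M(F) G(F)_{x, r/2}$. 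This is \cite[Lemma 8.3, Theorem 9.4]{Yu01}.

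The main obstacle is checking that this genericity lemma survives when $p = 2$ or $p$ is small, since Yu's proof of ``{\bf GE1} implies {\bf GE2}'' and parts of the coset analysis use $p$ large. For this I would invoke \cite[Section 4.1]{FS25}. There {\bf GE2} is built into the definition of supergeneric, precisely so that the statement ``the stabilizer of the residual image of $X$ in the relevant graded quotient is the image of $M$'' holds without restriction on $p$. With that input, the remaining steps of Yu's argument are formal: each step of the induction uses only Moy--Prasad isomorphisms and commutator estimates $[G_{x,a},G_{x,b}]\subset G_{x,a+b}$, both valid in all residual characteristics. The conclusion $g \in G(F)_{x, r/2} M(F) G(F)_{x, r/2}$ then follows.
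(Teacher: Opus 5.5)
Your proposal matches the paper's proof. The paper simply cites \cite[Theorem 9.4]{Yu01} (see also \cite[Lemma 3.4]{Fin21}) and remarks that, since {\bf GE2} is built into supergenericity, the same proof works in residual characteristic $2$. Two harmless slips in your sketch: $G(F)_{x, r, r/2+}/G(F)_{x, r+, r/2+}$ is just $\msf{m}_{x, r}$, since the $\mfr{m}^\perp$-parts of depth $r/2+$ are the same in numerator and denominator; and the decomposition $g = k_1 n k_2$ needs $k_i$ in the parahoric $G(F)_{x,0}$, not in $G(F)_{x,0+}$.
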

\begin{proof}
    This is (part of) \cite[Theorem 9.4]{Yu01} (see e.g. \cite[Lemma 3.4]{Fin21}). Note that the same proof works for $p = 2$. 
\end{proof}


\begin{thm} \label{thm:intertwining_induction}
    Let $\rho$ be an irreducible representation of $M(F)_x / M(F)_{x, r}$ and suppose that $\phi$ is $(G, M)$-supergeneric. 
    \begin{enumerate}
        \item $\Ind_{r, \phi}(\rho)$ is an irreducible representation of $G(F)_x / G(F)_{x, r+}$. 
        \item If every intertwiner of $\rho$ in $M(F)$ lies in $M(F)_x$, then every intertwiner of $\Ind_{r, \phi}(\rho)$ in $G(F)$ lies in $G(F)_x$. 
    \end{enumerate}
    In particular, if $\cInd_{M(F)_x}^{M(F)} \rho$ is irreducible and supercuspidal, then so is $\cInd_{G(F)_x}^{G(F)} \Ind_{r, \phi}(\rho)$. 
\end{thm}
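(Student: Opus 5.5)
We will prove both parts with one Mackey-theoretic computation, following Yu's original argument (as streamlined in \cite{Fin21}, \cite{FKS23}). Put $J = G(F)_{x, r/2} M(F)_x$ and $\Lambda = \kappa_{x, r, \phi} \otimes \rho$, viewed as a representation of $J$; it is trivial on $G(F)_{x, r+}$. By Mackey's formula, $\Hom_{G(F)}(\cInd_J^{G(F)} \Lambda, \cInd_J^{G(F)} \Lambda)$ decomposes into a sum over double cosets $JgJ$ of the spaces $I_g(\Lambda) = \Hom_{J \cap {}^gJ}({}^g\Lambda, \Lambda)$. Both claims then follow from the single assertion
\[
    I_g(\Lambda) \neq 0 \implies g \in J \cdot \{ m \in M(F) \mid I_m(\rho) \neq 0 \} \cdot J .
\]
Indeed, for (1), applying this to $g \in G(F)_x$, where only $m \in M(F)_x$ (which lie in $J$) can occur, gives $\dim \operatorname{End}_{G(F)_x}(\Ind_{r,\phi}(\rho)) = \dim \operatorname{End}_J(\Lambda) = 1$, because $\Lambda$ is irreducible: $\kappa_{x,r,\phi}$ is already irreducible on the Heisenberg part. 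For (2), an intertwiner of $\Ind_{r,\phi}(\rho)$ intertwines $\Lambda$, so under the hypothesis on $\rho$ it lies in $J M(F)_x J \subset G(F)_x$. The final assertion is the standard deduction: an irreducible compactly induced representation from an open compact-mod-center subgroup is supercuspidal, and $\cInd_{G(F)_x}^{G(F)}$ preserves irreducibility once all intertwiners lie in $G(F)_x$.

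To prove the assertion, first restrict to $G(F)_{x, r, r/2+}$, on which $\Lambda$ acts through the character $\hat{\phi}$ (times multiplicity), since $\kappa_{x,r,\phi}$ has central character $\psi$ and is trivial on the image of $\msf{m}^\perp_{x,(r/2)+}$. Hence $g$ intertwines $\hat\phi$, and \Cref{lem:GE2_intertwiner}, using supergenericity, gives $g \in G(F)_{x, r/2} M(F) G(F)_{x, r/2} \subset J M(F) J$. We may therefore assume $g = m \in M(F)$. Next, choose a cocharacter $\lambda$ of a maximal split torus of $M$ whose apartment contains $x$ and $m x$, chosen so that $m$ lies in $M(F)_x \lambda(\pi)^{\pm} M(F)_x$ (Cartan decomposition). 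For this $\lambda$, the intersection $J \cap {}^mJ$ contains the groups $\msf{u}^+$ (or $\msf{u}^-$) of \Cref{ssec:parabolic_rest} on one side. Using \Cref{cor:U_fixed_locus}, the $\msf{u}^+$-fixed vectors of $\kappa_{x,r,\phi}$ are $\kappa^0_{x,r,\phi}$, so we reduce $I_m(\Lambda)$ to an intertwining space involving $\kappa^0$ for the pair $M^0 \subset G^0$. Since $m$ centralizes the relevant Levi parts, the Heisenberg factor then contributes at most a one-dimensional space. This is the argument of \cite[Proposition 4.1.6]{FKS23}; the consequence is $I_m(\Lambda) \cong I_m(\rho \otimes \phi) \otimes (\text{line})$, and $\phi$ extends to $M(F)$ on which it is invariant, so $I_m(\Lambda) \neq 0$ forces $I_m(\rho) \neq 0$.

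The main obstacle is this last step: carefully identifying $J \cap {}^mJ$ and showing that the Heisenberg part of $I_m(\Lambda)$ is exactly one-dimensional, in a way that does not use $p \neq 2$. In particular, the twist by $\epsilon$ must not break the compatibility. Here the characteristic-free inputs are \Cref{prop:restrction_as_induction} and \Cref{cor:U_fixed_locus}, which replace the use of \cite[Theorem 2.4]{Ger77} and the special isomorphism in \cite{Yu01}. I would handle this step by writing $\kappa_{x,r,\phi}$ as an induced representation from $\msf{HW}^{\geq 0}$ and then computing the intertwiner explicitly.
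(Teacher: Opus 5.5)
Your outline is the paper's: Mackey theory, then reduction to $g\in M(F)$ via \Cref{lem:GE2_intertwiner}, then \Cref{cor:U_fixed_locus} for a $\lambda$ with $gx=x+\lambda$. Part (1) is fine as written. It only needs the GE2 step, because $G(F)_{x,r/2}M(F)G(F)_{x,r/2}\cap G(F)_x=J$ and $\Lambda$ is irreducible. Your reduction from intertwiners of $\Ind_{r,\phi}(\rho)$ to those of $\Lambda$ holds only up to $G(F)_x$ on both sides, but that is harmless.

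The gap is in (2), at exactly the step you leave open, and your proposed shortcut for it does not work.
\begin{itemize}
\item The Cartan decomposition $M(F)=\bigcup_\lambda M(F)_x\lambda(\pi)M(F)_x$ holds only when $x$ is special. For a general vertex, $M(F)_x$-double cosets are indexed by $W_x\backslash\wtd{W}/W_x$, where $\wtd{W}$ is the Iwahori--Weyl group. Any element whose linear part lies outside the image of $W_x$ is not of the form $k\lambda(\pi)k'$. So you cannot assume $m$ centralizes $G^0$, and identifying the Heisenberg--Weil pieces on the two sides is not automatic.
\item Taking $\msf{u}^+$-invariants only constrains the \emph{target} of $f\colon{}^g\Lambda\to\Lambda$. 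Your claim that the Heisenberg factor contributes a line also needs control of the source, which you do not supply.
\end{itemize}

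The paper closes this without any explicit computation. Let $g\in M(F)$ be arbitrary with $gx\neq x$, choose a tame $T\subset M$ with $x,gx\in\cl{A}(T,E)$, and set $\lambda=gx-x$, which is $F$-rational. Then:
\begin{itemize}
\item The image of $K_{r/2}=G(F)_{x,r,r/2}\cap G(F)_{gx,r,r/2}$ in $\msf{H}_{x,r,\phi}$ is $\msf{H}^0_{x,r,\phi}\times\msf{u}^+$.
\item $\msf{u}^+$ already comes from $G(F)_{x,r,r/2}\cap G(F)_{gx,r+,r/2+}$, on which ${}^g\Lambda$ is trivial. Hence $f$ lands in $\Lambda^{\msf{u}^+}\cong\kappa^0_{x,r,\phi}\otimes\rho$.
\item The same argument for $g^{-1}$ (torus ${}^{g^{-1}}T$, direction $-g^{-1}\lambda$) shows that $f$ factors through the coinvariants $\Lambda_{\msf{u}'^+}\cong\kappa'^0_{x,r,\phi}\otimes\rho$.
\item The missing idea is ${}^g\kappa'^0_{x,r,\phi}\cong\kappa^0_{x,r,\phi}$ as representations of $\msf{HW}^0_{x,r,\phi}$. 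This follows from the trace characterization and uniqueness in \Cref{thm:Heisenberg_Weil_representations}, applied to $M^0\subset G^0$. It is the characteristic-free replacement for \cite[Corollary 4.1.11]{FKS23}.
\item Since $\kappa^0_{x,r,\phi}$ is irreducible on $K_{r/2}$, Schur's lemma gives a nonzero $(M(F)_x\cap M(F)_{gx})$-map ${}^g\rho\to\rho$. By hypothesis $g\in M(F)_x$, contradicting $gx\neq x$.
\end{itemize}
So you only need $I_g(\Lambda)\neq0\Rightarrow I_g(\rho)\neq0$, not your sharper $I_m(\Lambda)\cong I_m(\rho\otimes\phi)\otimes(\text{line})$. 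The explicit computation through induction from $\msf{HW}^{\geq 0}_{x,r,\phi}$ that you plan is unnecessary once uniqueness of $\kappa^0_{x,r,\phi}$ is available.
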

\begin{proof}
    For (1), it is enough to show that an intertwiner $g \in G(F)_x$ of $\kappa_{x, r, \phi} \otimes \rho$ lies in $G(F)_{x, r/2} M(F)_x$. Since $(\kappa_{x, r, \phi} \otimes \rho) \vert_{G(F)_{x, r, r/2+}} = \hat{\phi} \cdot \id$, $g$ intertwines $\hat{\phi}$. Thus, $g \in G(F)_{x, r/2} M(F) G(F)_{x, r/2}$ by \Cref{lem:GE2_intertwiner}. Since $g \in G(F)_x$, $g \in G(F)_{x, r/2} M(F)_x$.

    For (2), let $g \in G(F)$ be an intertwiner of $\kappa_{x, r, \phi} \otimes \rho$. It is enough to show that $g \in G(F)_{x, r/2} M(F)_x$. First, $g \in G(F)_{x, r/2} M(F) G(F)_{x, r/2}$ by \Cref{lem:GE2_intertwiner}, so we may assume $g \in M(F)$. We deduce a contradiction by assuming $gx \neq x$. 

    Let $T \subset M$ be a tame maximal torus such that $x, gx \in \cl{A}(T, E)$ and take $\lambda \in X_*(T) \otimes \bb{R}$ so that $gx = x + \lambda$. Since $\lambda$ is $F$-rational, we may apply \Cref{ssec:parabolic_rest} to (a positive multiple of) $\lambda$. From now on, we will use the notation in loc. cit. 

    Let $K = G(F)_{x, r/2} M(F)_x \cap G(F)_{gx, r/2} M(F)_{gx}$ and take a nonzero intertwiner 
    \[
        f \colon {}^g(\kappa_{x, r, \phi} \otimes \rho) \vert_K \to (\kappa_{x, r, \phi} \otimes \rho)\vert_K. 
    \]
    Let $K_{r/2} = G(F)_{x, r, r/2} \cap G(F)_{gx, r, r/2}$ and $K_M = M(F)_x \cap M(F)_{gx}$. By taking the untwisted diagonal action of $K_M$, $\kappa_{x, r, \phi} \otimes \rho$ can be regarded as a representation of $K_{r/2} \rtimes K_M$. Since $g \in M(F)$, ${}^g \phi = \phi$, so $f$ is equivariant under $K_{r/2} \rtimes K_M$. Fix $X$ as in \Cref{defi:genericity}. When we use the subscript $(x, r, \phi)$ in the following, we use the choice of $X$ as in \Cref{lem:lift_X}. 
    
    Now, the images of $K_{r/2}$ under $G(F)_{x, r, r/2} \to \msf{H}_{x, r, \phi}$ is $\msf{H}^0_{x, r, \phi} \times \msf{u}^+$. However, $\msf{u}^+$ lies in the image of $G(F)_{x, r, r/2} \cap G(F)_{gx, r+, r/2+}$, so $f$ factors through
    \[
        (\kappa_{x, r, \phi} \otimes \rho)^{\msf{u}^+} \cong \kappa^0_{x, r, \phi} \otimes \rho
    \]
    by \Cref{cor:U_fixed_locus}. Here, $\kappa^0_{x, r, \phi} \otimes \rho$ is a representation of $\msf{H}^0_{x, r, \phi} \rtimes K_M$. Here, such a semidirect product can be taken since the image of $K_M$ under $M(F)_x \to \msf{M}_{x, r, \phi}$ lies in $\msf{P}_{x, r, \phi}$. 
    
    On the other hand, consider the image of ${}^{g^{-1}}K_{r/2} = G(F)_{x, r, r/2} \cap G(F)_{g^{-1}x, r, r/2}$ in $\msf{H}_{x, r, \phi}$. Since $x, g^{-1}x \in \cl{A}({}^{g^{-1}}T, E)$ and $g^{-1}x = x - g^{-1}\lambda$, we may apply \Cref{ssec:parabolic_rest} to ${}^{g^{-1}} T$ with respect to $- g^{-1}\lambda$. We denote the associated decomposition in loc. cit. by the superscript $0$ and the dash. Then, the image of ${}^{g^{-1}} K_{r/2}$ in $\msf{H}_{x, r, \phi}$ is written as $\msf{H}'^{0}_{x, r, \phi} \times \msf{u}'^{+}$. Since $\msf{u}'^+$ lies in the image of $G(F)_{x, r, r/2} \cap G(F)_{g^{-1}x, r+, r/2+}$, $f$ factors through the coinvariant 
    \[
        (\kappa_{x, r, \phi} \otimes \rho)_{\msf{u}'^+} \cong \kappa'^0_{x, r, \phi} \otimes \rho 
    \]
    by \Cref{cor:U_fixed_locus}. Here, $\kappa'^0_{x, r, \phi} \otimes \rho$ is a representation of $\msf{H}'^0_{x, r, \phi} \rtimes {}^{g^{-1}}K_M$. 
    
    Now, it is easy to see from the construction that
    ${}^g \msf{HW}'^0_{x, r, \phi} = \msf{HW}^0_{x, r, \phi}$
    as quotients of $K_{r/2} \rtimes K_M$ and ${}^g \kappa'^0_{x, r, \phi} \cong \kappa^0_{x, r, \phi}$ as irreducible representations of $\msf{HW}^0_{x, r, \phi}$ by the trace characterization in \Cref{thm:Heisenberg_Weil_representations} (and by \cite[Corollary 4.1.11]{FKS23} when $p \neq 2$). Since $f$ induces a $(K_{r/2} \rtimes K_M)$-equivariant nonzero map
    \[
        {}^g \kappa'^0_{x, r, \phi} \otimes {}^g \rho \to \kappa^0_{x, r, \phi} \otimes \rho,
    \]
    we get a nonzero $K_M$-equivariant map ${}^g \rho \to \rho$ by taking the $\kappa^0_{x, r, \phi}$-isotypic part. That is, the adjoint map is a nonzero $(K_{r/2} \rtimes K_M)$-equivariant map
    \[
        {}^g \rho \to (({}^g \kappa'^0_{x, r, \phi})^* \otimes \kappa^0_{x, r, \phi})^{K_{r/2}} \otimes \rho \cong \rho
    \]
    by Schur's lemma since $\kappa^0_{x, r, \phi}\vert_{K_{r/2}}$ is already irreducible. 
\end{proof}

The property ${}^g \kappa'^0_{x, r, \phi} \cong \kappa^0_{x, r, \phi}$ used in the above proof is exactly the intertwining property of Heisenberg-Weil representations in arbitrary residual characteristic. 

\begin{rmk} \label{rmk:behavior_without_GE2}
    When $\phi$ is $(G, M)$-generic but does not satisfy {\bf GE2}, $\Ind_{r, \phi}(\rho)$ is not necessarily irreducible due to the appearance of disconnected groups in the intertwiner of $\phi$ (see \cite[Lemma 4.6.3]{FS25}). Nevertheless, considering the geometric analogy with the Deligne-Lusztig theory \cite[Theorem 8.3]{DL76} (see \Cref{thm:positive_depth_DL_induction}), it would be interesting to find when $\cInd_{G(F)_x}^{G(F)} \Ind_{r, \phi}(\rho)$ is supercuspidal, or when $\cInd_{G(F)_x}^{G(F)} \sigma$ is irreducible and supercuspidal for each irreducible subrepresentation $\sigma$ of $\Ind_{r, \phi}(\rho)$. For example, \cite[Theorem 4.6.8]{FS25} seems to suggest that it is true if $\rho$ arises from relaxed Yu's construction (working without {\bf GE2}). 
\end{rmk}

\subsection{Twisted Yu's construction} \label{ssec:Yu_construction}

In this section, we show that Yu's construction works in residual characteristic $2$ by replacing symplectic Heisenberg-Weil representations $\kappa_\Yu$ with $\kappa_{x, r, \phi}$. In fact, it is obtained just as an iteration of \Cref{thm:intertwining_induction}. 

We follow \cite[Section 2]{Fin21} to review Yu's construction. Consider a tuple 
\[
    ((G_i)_{1 \leq i \leq n + 1}, x, (r_i)_{1\leq i \leq n}, \rho, (\phi)_{1 \leq i \leq n}),
\] 
called a generic datum, where
\begin{enumerate}
    \item $G_{n + 1} \subsetneq G_n\subsetneq \cdots \subsetneq G_2 \subset G_1 = G$ is a sequence of twisted Levi subgroups of $G$ that split over a tamely ramified extension of $F$. 
    \item $x \in \cl{B}(G_{n + 1}, F)$, 
    \item $r_1 > r_2 > \cdots > r_n > 0$ are real numbers, 
    \item $\rho$ is an irreducible representation of $G_{n + 1}(F)_x$ that is trivial on $G_{n + 1}(F)_{x, 0+}$, and
    \item $\phi_i$ is a character of $G_{i + 1}(F)$ of depth $r_i$ that is trivial on $G_{i + 1}(F)_{x, r+}$, 
\end{enumerate}
satisfying the following conditions:
\begin{enumerate}
    \item $Z_{G_{n + 1}} / Z_G$ is anisotropic, 
    \item The image of $x$ in $\cl{B}(G_{n + 1}^\ad, F)$ is a vertex, 
    \item $\rho\vert_{G_{n + 1}(F)_{x, 0}}$ is a cuspidal irreducible representation of $G_{n + 1}(F)_{x, 0} / G_{n + 1}(F)_{x, 0+}$, and 
    \item $\phi_i$ is $(G_i, G_{i+1})$-supergeneric of depth $r_i$ (relative to $x$). 
\end{enumerate}

For each $1 \leq i \leq n$, $G_i(F)_{x, r, s} \subset G_i(F)$ denotes the Moy-Prasad subgroup in the notation of \Cref{defi:MP_subgroup} applied to $G_{i + 1} \subset G_i$. Then, we set
\begin{equation} \label{eq:K_tilde}
    \wtd{K} = G_1(F)_{x, r_1, r_1 / 2} \cdots G_n(F)_{x, r_n, r_n / 2} G_{n+1}(F)_x. 
\end{equation}
We construct an irreducible representation of $\wtd{K}$ as $\wtd{\rho} = \rho \otimes \kappa$. Here, $\rho$ is inflated to $\wtd{K}$, so that it is trivial on the positive depth part $G_1(F)_{x, r_1, r_1 / 2} \cdots G_n(F)_{x, r_n, r_n / 2}$. We define the underlying space of $\kappa$ as 
\[
    V_\kappa = \bigotimes_{1 \leq i \leq n} \kappa_{x, r_i, \phi_i}. 
\]
To describe the $\wtd{K}$-action on $V_\kappa$, take the character $\hat{\phi}_i$ of $G(F)_{x, r_i/2+} G_{i + 1}(F)_{x}$ as in \cite[Lemma 4.2.1]{FS25} so that 
$
    \hat{\phi}_i\vert_{G_{i + 1}(F)_{x}} = \phi_i\vert_{G_{i + 1}(F)_{x}}
$
and $\hat{\phi}_i\vert_{G(F)_{x, r_i / 2 +}}$ is an inflation of $\phi_i\vert_{G_i(F)_{x, r_i / 2 +}}$ via the Moy-Prasad isomorphism. 

\begin{lem} \label{lem:kappa_product}
    There is a unique smooth representation $(\kappa, V_\kappa)$ of $\wtd{K}$ such that 
    \begin{enumerate}
        \item $G_i(F)_{x, r_i, r_i / 2}$ acts on $\kappa_{x, r_i, \phi_i}$ by the natural action, and on $\kappa_{x, r_j, \phi_j}$ by $\hat{\phi}_j$ for $j \neq i$, and 
        \item $G_{n + 1}(F)_x$ acts on $\kappa_{x, r_i, \phi_i}$ by $\phi_i$ times the natural action via $G_{n + 1}(F)_x \subset G_{i + 1}(F)_x$. 
    \end{enumerate}
\end{lem}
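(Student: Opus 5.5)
Uniqueness is immediate once existence is settled. By \eqref{eq:K_tilde}, $\wtd{K}$ is generated by the subgroups $G_i(F)_{x, r_i, r_i/2}$ for $1 \leq i \leq n$ together with $G_{n+1}(F)_x$. Conditions (1) and (2) prescribe the action of each generator on $V_\kappa$, so at most one such representation exists. The work is therefore in existence. The plan is to build $\kappa$ as a tensor product of representations $\kappa_i$ of $\wtd{K}$ on the factors $\kappa_{x, r_i, \phi_i}$, one for each $i$, and then check (1) and (2) factor by factor.

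For a fixed $i$, I would factor $\wtd{K}$ as a product $\wtd{K} = K^{<i} \cdot K_i$. Here
\[
K^{<i} = G_1(F)_{x, r_1, r_1/2} \cdots G_{i-1}(F)_{x, r_{i-1}, r_{i-1}/2},
\qquad
K_i = G_i(F)_{x, r_i, r_i/2} \cdots G_n(F)_{x, r_n, r_n/2}\, G_{n+1}(F)_x .
\]
The group $K^{<i}$ is normalized by $K_i$ and is contained in $G(F)_{x, r_{i-1}/2} \subset G(F)_{x, r_i/2+}$. The group $K_i$ is contained in $G_i(F)_{x, r_i, r_i/2}\, G_{i+1}(F)_x$.

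On $G_i(F)_{x, r_i, r_i/2}\, G_{i+1}(F)_x$ I would define the action on $\kappa_{x, r_i, \phi_i}$ as the twisted positive-depth action of \Cref{defi:twisted_positive_depth_induction}, applied to the pair $G_{i+1} \subset G_i$ with $\rho$ trivial. Concretely, $G_i(F)_{x, r_i, r_i/2}$ acts through $\msf{H}_{x, r_i, \phi_i}$, and $G_{i+1}(F)_x$ acts through $\msf{M}_{x, r_i, \phi_i}$ times $\phi_i$. On $K^{<i}$ the action is the scalar $\hat{\phi}_i$.

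The first check is that these two prescriptions agree on the overlap $K^{<i} \cap K_i$, which lies in $G(F)_{x, r_i/2+} \cap G_i(F)_{x, r_i, r_i/2} G_{i+1}(F)_x$. On this overlap, \Cref{thm:Heisenberg_Weil_representations}(1) says that $\kappa_{x, r_i, \phi_i}$ restricted to $G_i(F)_{x, r_i, r_i/2+}$ is the scalar $\psi \circ \phi_i = \hat{\phi}_i$, so the two actions coincide. The second check is that the combined map is a homomorphism. This needs $\hat{\phi}_i$ to be invariant under conjugation by $K_i$, which is the defining property of $\hat{\phi}_i$ from \cite[Lemma 4.2.1]{FS25}: $\hat{\phi}_i$ is $G_{i+1}(F)_x$-invariant, and it is trivial on commutators of $G(F)_{x, r_i/2+}$ with the positive-depth groups of level $\geq r_i/2$, since those commutators lie in $G(F)_{x, r_i+}$. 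Once both checks are done, each $\kappa_i$ is a representation of $\wtd{K}$. Setting $\kappa = \bigotimes_i \kappa_i$, conditions (1) and (2) hold by construction. For $j < i$, the group $G_j(F)_{x, r_j, r_j/2}$ does not lie in $K^{<i}$ but in $K_i$, and there it acts on $\kappa_{x, r_i, \phi_i}$ by $\hat{\phi}_i$, because it sits inside $G_{i+1}(F)_x G(F)_{x, r_i/2+}$. So the prescription of (1) is met for all pairs $j \neq i$.

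I expect the main obstacle to be making the $j > i$ case of (1) rigorous. For $j > i$, the group $G_j(F)_{x, r_j, r_j/2}$ lies in $G_{i+1}(F)_x$, where the construction above gives the action $\phi_i$ times the adjoint action on the Heisenberg–Weil factor. Condition (1) instead demands the scalar $\hat{\phi}_i$, so I need the adjoint action of $G_{i+1}(F)_{x, 0+}$ on $\kappa_{x, r_i, \phi_i}$ to be trivial. I would deduce this from \Cref{lem:lift_X}, which says $\kappa_{x, r_i, \phi_i}$ factors through $M(F)_x / M(F)_{x, 0+}$. The image of $G_{i+1}(F)_{x, 0+}$ in $\msf{M}_{x, r_i, \phi_i}$ is a $p$-group acting unipotently, and the trace formula in \Cref{thm:Heisenberg_Weil_representations}(2) forces this action to be trivial. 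I would also need to check that this is compatible with $\hat{\phi}_i = \phi_i$ on $G_{i+1}(F)_x$. This is the one point where the residual-characteristic-$2$ construction actually enters; everything else is formal bookkeeping, as in \cite{Fin21}.
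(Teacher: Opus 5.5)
Your route is the paper's: uniqueness because \eqref{eq:K_tilde} shows $\wtd{K}$ is generated by the groups in (1) and (2), and existence by checking that the prescribed actions agree on intersections and respect conjugation. The paper only asserts this is easy; writing it as a tensor product of representations $\kappa_i$ of $\wtd{K}=K^{<i}K_i$ is a reasonable way to carry it out. Two corrections are needed.

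First, your justification that $G_{i+1}(F)_{x,0+}$ acts trivially on $\kappa_{x,r_i,\phi_i}$ would fail as stated. The trace formula in \Cref{thm:Heisenberg_Weil_representations}(2) cannot make a nontrivial unipotent $\gamma\in\msf{M}_{x,r_i,\phi_i}$ act trivially. Indeed, $\msf{m}^{\perp,\gamma}_{x,r_i/2}\subsetneq\msf{m}^{\perp}_{x,r_i/2}$ forces $\lvert\tr(\gamma\mid\kappa_{x,r_i,\phi_i})\rvert<\dim\kappa_{x,r_i,\phi_i}$.

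The actual reason is that the image of $G_{i+1}(F)_{x,0+}$ in $\msf{M}_{x,r_i,\phi_i}\subset\Sp(\msf{m}^\perp_{x,r_i/2})$ is trivial. This holds because commutators of $G_{i+1}(F)_{x,0+}$ with $G_i(F)_{x,r_i,r_i/2}$ lie in $G_i(F)_{x,r_i,r_i/2+}$, and it is exactly what \Cref{lem:lift_X} records. The fact is purely formal, so residual characteristic $2$ plays no role at this step. You also need it to finish your overlap check: the bound you give for $K^{<i}\cap K_i$ contains $G_{i+1}(F)_{x,r_i/2+}$, which in general is not inside $G_i(F)_{x,r_i,r_i/2+}$, where you invoked the central character.

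Second, for $j<i$ the group $G_j(F)_{x,r_j,r_j/2}$ lies in $K^{<i}$, not in $K_i$. It acts there by $\hat{\phi}_i$ simply by definition.
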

\begin{proof}
    By \eqref{eq:K_tilde}, conditions (1) and (2) pin down the action of $\wtd{K}$ at most uniquely. It is easy to see that these actions are compatible on intersections and with conjugation. 
\end{proof}

\begin{thm} \label{thm:twisted_Yu_construction}
    For every generic datum $((G_i)_{1 \leq i \leq n + 1}, x, (r_i)_{1\leq i \leq n}, \rho, (\phi)_{1 \leq i \leq n})$, 
    $\cInd_{\wtd{K}}^{G(F)} \wtd{\rho}$ is irreducible and supercuspidal. 
\end{thm}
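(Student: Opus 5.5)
We argue by induction on $n$, iterating \Cref{thm:intertwining_induction}. Put $K_{n+1} = G_{n+1}(F)_x$ and $\rho_{n+1} = \rho$. For $1 \le i \le n$, define inductively
\[
    K_i = G_i(F)_{x, r_i, r_i/2} \cdots G_n(F)_{x, r_n, r_n/2} G_{n+1}(F)_x, \qquad \rho_i = \rho \otimes \textstyle\bigotimes_{j \ge i} \kappa_{x, r_j, \phi_j},
\]
with the action prescribed as in \Cref{lem:kappa_product} for the subdatum $(G_j)_{j \ge i}$. Then $\wtd K = K_1$ and $\wtd\rho = \rho_1$. The plan is to compare $\cInd_{K_i}^{G_i(F)} \rho_i$ with $\cInd_{G_i(F)_x}^{G_i(F)}$ of a positive-depth induction, and to propagate irreducibility and supercuspidality from $i+1$ to $i$.

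\emph{Base case.} Since $x$ is a vertex in $\cl{B}(G_{n+1}^\ad, F)$ and $\rho\vert_{G_{n+1}(F)_{x,0}}$ is cuspidal, the standard depth-zero result of Moy--Prasad/Morris shows that $\cInd_{G_{n+1}(F)_x}^{G_{n+1}(F)} \rho$ is irreducible and supercuspidal, and that every intertwiner of $\rho$ lies in $G_{n+1}(F)_x$. This uses that $Z_{G_{n+1}}/Z_G$ is anisotropic, so $G_{n+1}(F)_x$ is compact modulo centre.

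\emph{Inductive step.} Work with the pair $G_{i+1} \subset G_i$, taking $M = G_{i+1}$, $r = r_i$, and the twisting character $\phi_i\vert_{G_{i+1}(F)_x}$, which is $(G_i, G_{i+1})$-supergeneric of depth $r_i$. The key reduction is to produce, after replacing $\rho_{i+1}$ by a representation $\sigma_{i+1}$ of $G_{i+1}(F)_x$ with $\cInd_{K_{i+1}}^{G_{i+1}(F)}\rho_{i+1} \cong \cInd_{G_{i+1}(F)_x}^{G_{i+1}(F)}\sigma_{i+1}$, an isomorphism
\[
    \cInd_{K_i}^{G_i(F)} \rho_i \cong \cInd_{G_i(F)_x}^{G_i(F)} \Ind_{r_i, \phi_i}(\sigma_{i+1} \otimes \phi_i^{-1}),
\]
where $\sigma_{i+1} \otimes \phi_i^{-1}$ is trivial on $G_{i+1}(F)_{x, r_i}$ because $r_{i+1} < r_i$. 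This follows by transitivity of induction, once one checks that
\[
    G_i(F)_{x, r_i, r_i/2} K_{i+1} = G_i(F)_{x, r_i, r_i/2}\bigl(\text{stuff in } G_{i+1}(F)_{x}\bigr)
\]
and that, on this group, $\rho_i$ agrees with $\kappa_{x, r_i, \phi_i} \otimes \phi_i \otimes (\text{restriction of } \rho_{i+1})$. Here $\phi_i$ extends trivially, respectively by $\hat\phi_i$, on the deeper groups. One then applies \Cref{thm:intertwining_induction}: part (1) gives irreducibility of the finite induction, and part (2) transfers the property that all intertwiners lie in $G_{i+1}(F)_x$ to the property that they lie in $G_i(F)_x$. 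The final clause of \Cref{thm:intertwining_induction} then yields irreducibility and supercuspidality at level $i$.

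\emph{Main obstacle.} The hard part is that the rearrangement above is not literally an iteration of the induction from $G_{i+1}(F)_x$. Instead, $\rho_{i+1}$ lives on $K_{i+1}$, which is not all of $G_{i+1}(F)_x$. I would therefore strengthen the inductive hypothesis so that it concerns the pair $(K_{i+1}, \rho_{i+1})$ directly: irreducibility of $\cInd_{K_{i+1}}^{G_{i+1}(F)}\rho_{i+1}$ together with the fact that intertwiners of $\rho_{i+1}$ lie in $K_{i+1}$.

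I would then rerun the proof of \Cref{thm:intertwining_induction} with $G_{i+1}(F)_x$ replaced by $K_{i+1}$. \Cref{lem:GE2_intertwiner} reduces any intertwiner to $G_i(F)_{x, r_i/2}\, G_{i+1}(F)\, G_i(F)_{x, r_i/2}$. Then \Cref{cor:U_fixed_locus}, applied to $\kappa_{x, r_i, \phi_i}$ alone, strips off the Heisenberg factor, while the other tensor factors act by characters $\hat\phi_j$ and are intertwined trivially. This leaves an intertwiner of $\rho_{i+1}$ in $G_{i+1}(F)$, which by hypothesis lies in $K_{i+1}$. The compatibility of $\hat\phi_j$ with the parabolic decomposition is the delicate bookkeeping point, as in \cite{Fin21}.
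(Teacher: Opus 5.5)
Your inductive step is the paper's proof. The paper identifies $\cInd_{\wtd K}^{G(F)_x}\wtd\rho$ with $\Ind_{r_1,\phi_1}\circ\cdots\circ\Ind_{r_n,\phi_n}(\rho)$ by transitivity of induction, and then iterates \Cref{thm:intertwining_induction} starting from the depth-zero input.

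The ``main obstacle'' you raise afterwards is not real, and your own $\sigma_{i+1}$ already removes it. Put $\sigma_{i+1} = \Ind_{K_{i+1}}^{G_{i+1}(F)_x}\rho_{i+1}$ and $J_i = G_i(F)_{x,r_i,r_i/2}\,G_{i+1}(F)_x$. The subgroup $G_i(F)_{x,r_i,r_i/2}$ has three relevant properties:
\begin{enumerate}
\item it is normal in $J_i$;
\item it is contained in $K_i = G_i(F)_{x,r_i,r_i/2}K_{i+1}$;
\item it meets $G_{i+1}(F)_x$ in $G_{i+1}(F)_{x,r_i}$, which lies in the kernel of $\rho_{i+1}$.
\end{enumerate}
So the projection formula gives $\Ind_{K_i}^{J_i}\rho_i\cong\kappa_{x,r_i,\phi_i}\otimes\sigma_{i+1}$, with $m\in G_{i+1}(F)_x$ acting by $\phi_i(m)\,(m\otimes m)$ as in \Cref{defi:twisted_positive_depth_induction}. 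Hence $\Ind_{K_i}^{G_i(F)_x}\rho_i\cong\Ind_{r_i,\phi_i}(\sigma_{i+1})$, which is a literal iteration.

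The only thing to check is that \Cref{lem:kappa_product} agrees with letting $G_i(F)_{x,r_i,r_i/2}$ act trivially on the factors $\kappa_{x,r_j,\phi_j}$ for $j>i$. That is, $\hat\phi_j$ must be trivial on $G_i(F)_{x,r_i,r_i/2}$. This holds because the group lies in $G(F)_{x,r_j/2+}$, and its image in $\mfr g_{x,r_j/2+}/\mfr g_{x,r_j+}$ has no $\mfr g_{j+1}$-component. Indeed, the $G_{i+1}(F)_{x,r_i}$ part already lies in $G(F)_{x,r_j+}$, and the remaining root directions lie outside $G_{i+1}\supseteq G_{j+1}$.

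So there is no need to strengthen the hypothesis to the pair $(K_{i+1},\rho_{i+1})$. The fallback you sketch is Yu's original intertwining argument, and it would also go through, using $K_{i+1}\subset G_{i+1}(F)_x$ and $G_i(F)_{x,r_i/2}\subset K_i$. But it just re-proves \Cref{thm:intertwining_induction} with $K_{i+1}\cap{}^gK_{i+1}$ in place of $M(F)_x\cap M(F)_{gx}$.

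One correction: the twist by $\phi_i^{-1}$ is wrong. \Cref{defi:twisted_positive_depth_induction} already multiplies the $G_{i+1}(F)_x$-action by $\phi_i$, so the correct identity uses $\Ind_{r_i,\phi_i}(\sigma_{i+1})$. Your justification also fails as stated. Since $\phi_i$ has depth exactly $r_i$, $\sigma_{i+1}\otimes\phi_i^{-1}$ is not trivial on $G_{i+1}(F)_{x,r_i}$. By contrast, $\sigma_{i+1}$ itself is trivial there, being trivial on $G_{i+1}(F)_{x,r_{i+1}+}$, and that is exactly what the definition requires of its input.
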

\begin{proof}
    By commutativity of inductions, it can be directly seen from the construction that 
    \[
        \cInd_{\wtd{K}}^{G(F)_x} \wtd{\rho} = \Ind_{r_1, \phi_1} \circ \cdots \circ \Ind_{r_n, \phi_n}(\rho). 
    \]
    By condition (3), $\cInd^{G_{n+1}(F)}_{G_{n+1}(F)_x} \rho$ is irreducible and supercuspidal. Thus, we may apply \Cref{thm:intertwining_induction} to $\rho$ and inductively see that 
    \[
        \cInd_{G_i(F)_x}^{G_i(F)} \Ind_{r_i, \phi_i} \circ \cdots \circ \Ind_{r_n, \phi_n}(\rho)
    \]
    is irreducible and supercuspidal. We recover the desired claim by setting $i = 1$.  
\end{proof}

\begin{rmk}
    As seen from the proof, one may start from any irreducible representation $\rho$ of $G_{n + 1}(F)_x$ such that $\cInd^{G_{n+1}(F)}_{G_{n+1}(F)_x} \rho$ is irreducible and supercuspidal, and $\rho\vert_{G_{n + 1}(F)_{x, r_{n+1}}}$ is trivial for some $0 \leq r_{n + 1} < r_n$. 
\end{rmk}

\section{Cohomology of certain Artin-Schreier sheaves} \label{sec:cohAS}

In this section, we compute the \'{e}tale cohomology of certain explicit Artin-Schreier sheaves on affine spaces. Then, we apply it to generic parts of the \'{e}tale cohomology of certain Lang torsors of affine spaces. 

\subsection{General formulation}

For each $d \geq 1$, let $k_d \subset \ov{k}$ be the subextension of degree $d$ over $k$. Let
\[
    \Tr_{k_d / k}\colon k_d \to k ,\quad t \mapsto \sum_{0 \leq i < d} t^{q^i} 
\]
be the trace map and let $\psi_{d} = \psi \circ \Tr_{k_d / k}$. Let $\bb{A}^n = \Spec(k[x_1,\ldots,x_n])$. For $d \geq 1$, let
\[
    L_{k_d} \colon \bb{A}^1 \to \bb{A}^1 ,\quad t \mapsto t^{q^d} - t
\]
be the Lang $k_d$-torsor. Here, $k_d$ acts from right by addition. The rank $1$ \'{e}tale local system 
\[
    \cl{L}_\psi = L_{k} \times^{\psi} \ov{\bb{Q}}_\ell
\]
over $\bb{A}^1$ is called the Artin-Schreier sheaf associated to $\psi$. Here, 
$
    \cl{L}_\psi \cong L_{k_d} \times^{\psi_d} \ov{\bb{Q}}_\ell
$
and the trace of the geometric Frobenius of $\cl{L}_\psi$ at $x \in \bb{A}^1(k_d)$ is $\psi_d(x)$. The main theorem of this section is the following. 

\begin{thm} \label{thm:cohAS}
    Let
    \[
        f_n = \sum_{1\leq i \leq n} a_i x_i x_{i+1}^{q^{d_i}} \in \ov{k}[x_1,\ldots,x_n] ,\quad a_i \in \ov{k}^\times ,\quad d_i \geq 1
    \]
    and regard it as a morphism $\bb{A}^n_{\ov{k}} \to \bb{A}^1_{\ov{k}}$. Here, $x_{n+1} = x_1$. Then, the following hold. 
    \begin{enumerate}
        \item $H_c^i(\bb{A}^n_{\ov{k}}, f_n^*\cl{L}_{\psi}) = 0$ if $i \neq n$, and $\dim H_c^n(\bb{A}^n_{\ov{k}}, f_n^*\cl{L}_{\psi}) = q^{\sum_{1\leq i \leq n} d_i}$.
        \item $ H_c^n(\bb{A}^n_{\ov{k}}, f_n^*\cl{L}_{\psi}) \to  H^n(\bb{A}^n_{\ov{k}}, f_n^*\cl{L}_{\psi})$ is an isomorphism. 
    \end{enumerate}
\end{thm}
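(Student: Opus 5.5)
The plan is to reduce the cyclic form $f_n$ step by step to a cyclic form in fewer variables, using the Fourier-type vanishing of Artin--Schreier sheaves in a variable that appears linearly. We first pass to perfections: étale cohomology does not change under perfection, so on the perfect affine space $x\mapsto x^{q^{-d}}$ is an automorphism. Moreover $\cl{L}_\psi$ pulled back along $t\mapsto t^{q}$ is isomorphic to $\cl{L}_\psi$, because $t^q-t$ is the Lang map. Hence $\cl{L}_\psi(a\,y\,x^{q^{d}})\cong\cl{L}_\psi((a y)^{q^{-d}}x)$. Applying this to the term $a_nx_nx_1^{q^{d_n}}$ rewrites $f_n^*\cl{L}_\psi$, up to isomorphism, as
\[
    \cl{L}_\psi\bigl(x_1\cdot g(x_2,x_n)\bigr)\otimes\cl{L}_\psi\Bigl(\sum_{2\le i\le n-1}a_ix_ix_{i+1}^{q^{d_i}}\Bigr),\qquad g=a_1x_2^{q^{d_1}}+(a_nx_n)^{q^{-d_n}}.
\]
Here $x_1$ enters linearly. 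Pushing forward along the projection forgetting $x_1$, the fiber cohomology $R\Gamma_c(\bb{A}^1,\cl{L}_\psi(cx_1))$ vanishes for $c\neq0$ and equals $\Qla[-2](-1)$ for $c=0$. By proper base change, $H^i_c(\bb{A}^n,f_n^*\cl{L}_\psi)\cong H^{i-2}_c(Z,\cl{L}_\psi(\cdots))(-1)$, where $Z=\{g=0\}$.

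For $n\ge3$, the locus $Z$ is the graph $x_n=-a_n^{-1}(a_1x_2^{q^{d_1}})^{q^{d_n}}$ (a universal homeomorphism onto $\bb{A}^{n-2}$ in the coordinates $x_2,\dots,x_{n-1}$). Substituting, the remaining term $a_{n-1}x_{n-1}x_n^{q^{d_{n-1}}}$ becomes $c\,x_{n-1}x_2^{q^{d_{n-1}+d_n+d_1}}$ with $c\in\ov{k}^\times$. So the restricted sheaf is again $f_{n-2}^*\cl{L}_\psi$ for a cyclic form of the same shape in $n-2$ variables, with exponents $d_2,\dots,d_{n-2},\,d_{n-1}+d_n+d_1$. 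In particular the sum $\sum d_i$ is preserved. Induction on $n$ in steps of $2$ then gives concentration in degree $n$ and the dimension $q^{\sum d_i}$, provided we settle the base cases $n=1,2$.

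For $n=2$ the same elimination leaves $Z=\{a_1x_2^{q^{d_1}}+(a_2x_2)^{q^{-d_2}}=0\}$. Raising to the $q^{d_2}$-th power, this is the zero set of the additive separable polynomial $a_1^{q^{d_2}}x_2^{q^{d_1+d_2}}+a_2x_2$. It is therefore a finite set of $q^{d_1+d_2}$ points, and $H_c^2$ has exactly that dimension, in degree $2$ only. For $n=1$, $f_1=a_1x^{q^{d_1}+1}$ is a polynomial of degree prime to $p$. The classical computation (Weil/Deligne) gives $H_c^i=0$ for $i\neq1$ and $\dim H^1_c=q^{d_1}$, since the Swan conductor at $\infty$ is $q^{d_1}+1$. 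This proves (1). The induction also shows that Frobenius over a finite field of definition acts on $H^n_c$ through Tate twists of point permutations (for $n$ even) or of the pure weight-one Gauss-sum space (for $n$ odd). Hence $H^n_c$ is pure of weight $n$.

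For (2) we cannot simply repeat the argument with $R\pr_*$: the ordinary fiber cohomology at $c=0$ is $\Qla[0]$ rather than $\Qla[-2](-1)$, so the forget-supports map is zero fiberwise. This is the main obstacle. The first thing to try is weights. By Poincaré duality, $H^n(\bb{A}^n,f_n^*\cl{L}_\psi)\cong H^n_c(\bb{A}^n,f_n^*\cl{L}_{\psi^{-1}})^\vee(-n)$. Applying (1) and purity to $\psi^{-1}$ shows that $H^n$ has the same dimension and is pure of weight $n$, and that all other $H^i$ vanish. Next, choose a compactification $j\colon\bb{A}^n\hookrightarrow\ov{X}$ (e.g.\ a weighted projective compactification adapted to the quasi-homogeneous form $f_n$). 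The map $H^n_c\to H^n$ factors through $H^n(\ov{X},j_{!*})$. Its kernel and cokernel are controlled by the cohomology of the boundary stalks $i^*Rj_*$, which carry weights $\le n-1$, resp.\ $\ge n+1$ in the relevant degrees. Since source and target are pure of weight $n$ and of equal dimension, this forces an isomorphism. If the weight bookkeeping at the boundary proves delicate, the fallback is an explicit check that $f_n^*\cl{L}_\psi$ is totally wild along each boundary stratum of such a compactification (a Katz-style cleanness criterion), which gives $j_!=Rj_*$ directly.
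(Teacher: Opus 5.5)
Your argument for (1) is correct, and more direct than the paper's. After the Frobenius twist, $x_1$ enters linearly. Fourier vanishing then cuts $\bb{A}^n$ down to a graph $Z\cong\bb{A}^{n-2}$ carrying a cyclic form with exponents $d_2,\dots,d_{n-2},d_{n-1}+d_n+d_1$, and your base cases are correct. The paper instead deduces (1) from (2) via Artin vanishing, the Gauss-sum recursion and purity.

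The gap is in (2). First, your reason for abandoning the relative argument is mistaken. The stalk of $R\pr_*$ at a point is not the cohomology of the fibre, since $R\pr_*$ does not commute with non-smooth base change. In fact the relative forget-supports map in the elimination step \emph{is} an isomorphism, and this is exactly the key step of the paper (\Cref{prop:computege3} and its $n=2$ analogue). There one compactifies the eliminated coordinate to $\bb{P}^1$ and shows $i^*j_*g^*\cl{L}_\psi=0$ at $\infty$. This is done by passing to a finite model $Y$ whose boundary is a smooth divisor over which the Artin--Schreier cover is radicial, so SGA~4$\tfrac12$ (1.3.3.2) applies. After that, $H_c^\bullet$ and $H^\bullet$ reduce compatibly to the $(n-2)$-variable form, down to $n=2$ (finite support) and $n=1$ (Deligne). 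In your own linearized coordinates you could get the same relative isomorphism another way: smooth base change along the perfectly smooth map $g$, plus the standard fact that forgetting supports is an isomorphism for the Fourier--Deligne kernel $\cl{L}_\psi(x_1t)$.

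What you propose instead does not establish (2). Purity of weight $n$ and equal dimensions on both sides do not force the forget-supports map to be an isomorphism. For example, for $\Qla$ on $\bb{A}^1\times\bb{P}^1$ one has $H^2_c\cong H^2\cong\Qla(-1)$, yet the map is zero. The kernel of $H^n_c\to H^n$ is the image of $H^{n-1}(\partial,i^*Rj_*f_n^*\cl{L}_\psi)$. Nothing in weight theory bounds the weights of $i^*Rj_*$ from above, so your claimed bound $\le n-1$ is unjustified. If you route through $j_{!*}$, Gabber's purity theorem together with your purity statements gives only $H^n_c\hookrightarrow IH^n\twoheadrightarrow H^n$. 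You would still need $\dim IH^n=q^{\sum d_i}$, which is a statement about the boundary of the chosen compactification. That boundary control is the entire content of (2). Your fallback, cleanness of $f_n^*\cl{L}_\psi$ along every boundary stratum of a weighted projective compactification, is announced but not carried out. That check would have to cover the points at infinity on $\{f_n=0\}$ and the quotient singularities.
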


\subsection{Computation of Gauss sums} \label{ssec:Gauss}

Take $d \geq 1$ so that $f_n \in k_d[x_1,\ldots,x_n]$. Then, $f_n$ defines $\bb{A}^n_{k_d} \to \bb{A}^1_{k_d}$ and $f_n^*\cl{L}_\psi$ is defined over $\bb{A}^n_{k_d}$. In this section, we compute the following Gauss sum. 

\begin{defi}
    Let $S_d(a_\bullet; d_\bullet) = \sum_{x_\bullet \in \bb{A}^n(k_d)} \psi_d(f_n(x_\bullet))$ for $d \geq 1$ such that $a_i \in k_d^\times$. 
\end{defi}

\begin{lem} \label{lem:redalg}
    Let $n \geq 2$ and $x_\bullet \in \bb{A}^{n-1}(k_d)$. Then, 
    \[
        \sum_{x_n \in k_d} \psi_d(a_{n-1}x_{n-1}x_n^{q^{d_{n-1}}} + a_nx_nx_1^{q^{d_n}}) = 0
    \]
    if $x_{n-1} \neq - a_{n-1}^{-1}a_n^{q^{d_{n-1}}}x_1^{q^{d_{n-1}+d_n}}$. Otherwise, the left hand side is equal to $q$. 
\end{lem}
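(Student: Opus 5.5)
The idea is to make the sum over $x_n$ a sum of an additive character over a linear function of $x_n$, and then use orthogonality of additive characters of $k_d$.

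First, rewrite the exponent so it is $k_d$-linear in $x_n$. The term $a_{n-1}x_{n-1}x_n^{q^{d_{n-1}}}$ is only additive in $x_n$, not linear, so it has to be transported inside the trace. Since $\psi_d = \psi\circ\Tr_{k_d/k}$ and $\Tr_{k_d/k}(y) = \Tr_{k_d/k}(y^{q^{-e}})$ for $y\in k_d$ and any $e$ (Frobenius permutes the conjugates), apply this with $e = d_{n-1}$:
\[
    \psi_d\bigl(a_{n-1}x_{n-1}x_n^{q^{d_{n-1}}}\bigr) = \psi_d\bigl((a_{n-1}x_{n-1})^{q^{-d_{n-1}}}x_n\bigr).
\]
Here $q^{-d_{n-1}}$-th roots are taken in $k_d$, where Frobenius is bijective. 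Hence the summand becomes $\psi_d(c\,x_n)$ with
\[
    c = (a_{n-1}x_{n-1})^{q^{-d_{n-1}}} + a_n x_1^{q^{d_n}} \in k_d .
\]

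Next, apply orthogonality. Since $\psi$ is nontrivial and $\Tr_{k_d/k}$ is surjective, $\psi_d$ is a nontrivial character of $k_d$. Consequently $x_n\mapsto\psi_d(cx_n)$ is trivial if and only if $c=0$. The sum over $x_n\in k_d$ is therefore $|k_d|$ when $c=0$ and $0$ otherwise.

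Finally, solve $c=0$. Raising to the $q^{d_{n-1}}$-th power, which is bijective, turns $c=0$ into
\[
    a_{n-1}x_{n-1} = -a_n^{q^{d_{n-1}}}x_1^{q^{d_{n-1}+d_n}},
\]
which is exactly the condition in the statement after dividing by $a_{n-1}\neq 0$.

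The only point needing care is the value of the nonzero case. Orthogonality gives $|k_d| = q^d$, not $q$. So the "$q$" in the statement should presumably be read as $|k_d|$, i.e. with $q$ standing for the cardinality of the field of summation, and I would record the value as $q^d$. There is no real obstacle beyond the Frobenius-twisting step and keeping this normalization straight.
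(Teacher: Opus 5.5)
Your proof is correct and is essentially the paper's argument. Both regard $x_n \mapsto \psi_d(\cdots)$ as an additive character of $k_d$ and show it is trivial exactly when $(a_{n-1}x_{n-1})^{q^{-d_{n-1}}} + a_n x_1^{q^{d_n}} = 0$: the paper expands the trace as a $q$-polynomial in $x_n$ of degree less than $q^d$ and compares coefficients, while you move the Frobenius inside the trace, which is slightly more direct. You are also right that the nonzero value is $\lvert k_d \rvert = q^d$ rather than $q$, and this is the value the paper actually uses in \Cref{cor:redformula}.
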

\begin{proof}
    Since the map 
    \[
        k_d \to \Qlax, \quad x_n \mapsto \psi_d(a_{n-1}x_{n-1}x_n^{q^{d_{n-1}}} + a_nx_nx_1^{q^{d_n}})
    \]
    is a character, it is enough to show that it is trivial if and only if $x_{n-1} = - a_{n-1}^{-1}a_n^{q^{d_{n-1}}}x_1^{q^{d_{n-1}+d_n}}$. This character is trivial if and only if
    \begin{equation} \label{eq:Trace_x_n-1}
        \Tr_{k_d / k}(a_{n-1}x_{n-1}x_n^{q^{d_{n-1}}} + a_nx_nx_1^{q^{d_n}}) = \sum_{0\leq i < d} (a_n^{q^i}x_1^{q^{i+d_n}} + (a_{n-1}x_{n-1})^{q^{i-d_{n-1}}})x_n^{q^i}
    \end{equation}
    lies in $\Ker(\psi)$ for every $x_n \in k_d$. Since the image of \eqref{eq:Trace_x_n-1} is $0$ or $k$ and $\psi$ is nontrivial, the right hand side of \eqref{eq:Trace_x_n-1} should be $0$ for all $x_n \in k_d$. Since it is a polynomial of degree at most $q^{d-1}$ and $\lvert k_d \rvert > q^{d-1}$, the condition is equivalent to 
    \[
        a_n^{q^i}x_1^{q^{i+d_n}} + (a_{n-1}x_{n-1})^{q^{i-d_{n-1}}} = 0
    \]
    for every $i$. It is equivalent to $x_{n-1} = - a_{n-1}^{-1} a_n^{q^{d_{n-1}}} x_1^{q^{d_{n-1}+d_n}}$. 
\end{proof}

\begin{cor} \label{cor:redformula}
    If $n\geq 3$, we have 
    \[
        S_d(a_\bullet;d_\bullet) = q^d S_d(a_1,\ldots,a_{n-3}, -a_{n-2}a_{n-1}^{-q^{d_{n-2}}}a_n^{q^{d_{n-2}+d_{n-1}}}; d_1,\ldots,d_{n-3}, d_{n-2} + d_{n-1} + d_n).
    \] 
    If $n=2$, $S_d(a_\bullet; d_\bullet)$ is $q^d$ times the number of solutions $x_{1} = - a_{n-1}^{-1}a_n^{q^{d_{1}}}x_1^{q^{d_{1}+d_2}}$ in $x_1 \in k_d$. 
\end{cor}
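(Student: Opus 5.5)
Summing over the last variable $x_n$ with Lemma \ref{lem:redalg}, and then eliminating one more variable via the resulting linear constraint, will give both claims.

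\emph{Case $n\geq 3$.} Write $S_d(a_\bullet;d_\bullet)=\sum_{x_1,\ldots,x_{n-1}\in k_d}\psi_d(f_{n-1}'(x_\bullet))\sum_{x_n\in k_d}\psi_d(a_{n-1}x_{n-1}x_n^{q^{d_{n-1}}}+a_nx_nx_1^{q^{d_n}})$, where $f'_{n-1}=\sum_{i\leq n-2}a_ix_ix_{i+1}^{q^{d_i}}$ collects the terms not involving $x_n$. Because $n\geq 3$, $x_n$ occurs only in the last two monomials. The inner sum is evaluated by Lemma \ref{lem:redalg}. One caution: the lemma says ``equal to $q$'', but the sum has $q^d$ terms and the character is trivial on the constraint locus, so the correct value is $q^d$. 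This matches the factor in the corollary.

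The inner sum therefore forces $x_{n-1}=-a_{n-1}^{-1}a_n^{q^{d_{n-1}}}x_1^{q^{d_{n-1}+d_n}}$. The only remaining term containing $x_{n-1}$ is $a_{n-2}x_{n-2}x_{n-1}^{q^{d_{n-2}}}$. Substituting gives
\[
a_{n-2}x_{n-2}\bigl(-a_{n-1}^{-1}a_n^{q^{d_{n-1}}}\bigr)^{q^{d_{n-2}}}x_1^{q^{d_{n-2}+d_{n-1}+d_n}}.
\]
Since $q$-powers are additive in characteristic $p$, $(-1)^{q^{d_{n-2}}}=-1$. The new coefficient is therefore $-a_{n-2}a_{n-1}^{-q^{d_{n-2}}}a_n^{q^{d_{n-2}+d_{n-1}}}$, with exponent $d_{n-2}+d_{n-1}+d_n$, and the closing term now has the form $a'x_{n-2}x_1^{q^{d'}}$. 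The remaining sum over $x_1,\ldots,x_{n-2}$ is the cyclic sum $S_d$ with $n-2$ variables and exactly the stated parameters.

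The substituted coefficient should read with index $n-2$. The corollary statement lists $a_1,\ldots,a_{n-3}$ and then the new coefficient, so the new coefficient occupies the $(n-2)$-nd slot. This is consistent.

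\emph{Case $n=2$.} Here $f_2=a_1x_1x_2^{q^{d_1}}+a_2x_2x_1^{q^{d_2}}$. Apply Lemma \ref{lem:redalg} with $n=2$, so that $x_{n-1}=x_1$ plays both roles. The sum over $x_2$ equals $q^d$ exactly when $x_1=-a_1^{-1}a_2^{q^{d_1}}x_1^{q^{d_1+d_2}}$, and equals $0$ otherwise. Summing over $x_1\in k_d$ gives $q^d$ times the number of solutions.

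The main point to check is the bookkeeping of the indices in the substitution for $n\geq 3$. One must also confirm that Lemma \ref{lem:redalg} holds verbatim when $n=2$ and $x_{n-1}=x_1$; it does, because its proof treats $x_{n-1}$ and $x_1$ as fixed parameters.
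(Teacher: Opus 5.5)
Your proposal is correct and matches the paper's own argument: evaluate the $x_n$-sum with \Cref{lem:redalg}, then substitute the forced value of $x_{n-1}$ into the remaining cyclic sum; for $n=2$, count the solutions in $x_1$ instead. You are also right that the inner sum equals $q^d=\lvert k_d\rvert$ on the constraint locus, not the $q$ written in \Cref{lem:redalg}, and this $q^d$ is the factor that appears in the corollary.
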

\begin{proof}
    The claim follows from \Cref{lem:redalg} by substituting $x_{n-1} = - a_{n-1}^{-1}a_n^{q^{d_{n-1}}}x_1^{q^{d_{n-1}+d_n}}$ into $S_d(a_\bullet;d_\bullet)$. 
\end{proof}

\begin{cor} \label{cor:comparison_Hc_vs_H}
    If the natural map $ H_c^\bullet(\bb{A}^n_{\ov{k}}, f_n^*\cl{L}_{\psi}) \to  H^\bullet(\bb{A}^n_{\ov{k}}, f_n^*\cl{L}_{\psi})$ is an isomorphism, then \Cref{thm:cohAS} holds. 
\end{cor}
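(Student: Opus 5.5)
Assume $H_c^\bullet \to H^\bullet$ is an isomorphism for $f_n^*\cl{L}_\psi$ on $\bb{A}^n_{\ov{k}}$. Part (2) is then immediate, so only (1) needs proof. The plan is to combine three ingredients: Poincaré duality, the weight bounds of Deligne, and the Gauss sum computation of \Cref{cor:redformula}.

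\emph{Step 1: vanishing outside degree $n$.} Since $\bb{A}^n$ is smooth of dimension $n$, Poincaré duality gives
\[
H^i(\bb{A}^n_{\ov{k}}, f_n^*\cl{L}_\psi) \cong H_c^{2n-i}(\bb{A}^n_{\ov{k}}, f_n^*\cl{L}_{\psi^{-1}})^\vee(-n).
\]
The group $H_c^i$ vanishes for $i > 2n$. By Artin vanishing on the affine variety $\bb{A}^n$, $H^i$ vanishes for $i > n$. Via the assumed isomorphism this gives $H_c^i = 0$ for $i > n$. By duality, $H^i(f_n^*\cl{L}_\psi) = 0$ for $i < n$, because it is dual to $H_c^{2n-i}(f_n^*\cl{L}_{\psi^{-1}})$ with $2n - i > n$. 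The hypothesis holds for $\psi^{-1}$ as well: one can see this by replacing $a_i$ with $-a_i$, since $\cl{L}_{\psi^{-1}} = [-1]^*\cl{L}_\psi$, so the whole argument applies uniformly to $-f_n$. Combining, $H_c^i = 0$ for all $i \neq n$.

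\emph{Step 2: purity.} The sheaf $f_n^*\cl{L}_\psi$ is pure of weight $0$. By Deligne, $H_c^n$ is mixed of weights $\leq n$ and $H^n$ is mixed of weights $\geq n$. Their isomorphism therefore forces $H_c^n$ to be pure of weight exactly $n$.

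\emph{Step 3: computing the dimension.} Choose $d$ with $f_n$ defined over $k_d$. The Grothendieck–Lefschetz trace formula gives
\[
S_{dm}(a_\bullet; d_\bullet) = (-1)^n \sum_j \lambda_j^m,
\]
where the $\lambda_j$ are the Frobenius eigenvalues on $H_c^n$ and each satisfies $\lvert\lambda_j\rvert = q^{dn/2}$. I would then evaluate $S_{dm}$ by iterating \Cref{cor:redformula}, which reduces $n$ by $2$ at each step and multiplies by $q^{dm}$:
\begin{itemize}
\item If $n$ is even, the reduction ends at $n = 2$, leaving a count of solutions to an equation $x = c\,x^{q^{D}}$ with $D = \sum d_i$.
\item If $n$ is odd, it ends at $n = 1$ with the single-variable sum $\sum_{x}\psi(a x^{1+q^{D}})$.
\end{itemize}

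\emph{Main obstacle.} The difficulty is the final count, which must yield $\lvert S_{dm}\rvert = q^{dmn/2} \cdot q^{D}$ for suitable $m$. In the even case, the equation $x = c\,x^{q^D}$ over $k_{dm}$ has exactly $q^D$ solutions when $m$ is divisible enough: the solutions form a $q^D$-element set, namely $0$ together with the roots of $c\,x^{q^D - 1} = 1$, all lying in some finite field. This gives $S_{dm} = q^{dm(n/2)} q^D$. In the odd case I would treat the $n = 1$ term directly. One route is to square the sum, which leads back to an even-type equation. Alternatively, one can compute directly that $\lvert\sum_x \psi(a x^{1+q^D})\rvert^2 = q^{dm} \cdot q^{D}$ for suitable $m$.

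\emph{Conclusion.} Once the trace formula identity holds for infinitely many $m$, the facts that every $\lvert\lambda_j\rvert = q^{dn/2}$ and that $\lvert\sum_j \lambda_j^m\rvert$ equals $q^{dmn/2}$ times the constant $q^D$ let me conclude $\dim H_c^n = q^D$. The argument is to choose $m$ so that all the ratios $\lambda_j / q^{dn/2}$ are close to $1$ simultaneously, by Dirichlet approximation. Then $\lvert\sum_j \lambda_j^m\rvert$ is close to the number of eigenvalues $\dim H_c^n$, and comparing with $q^{dmn/2} q^D$ gives $\dim H_c^n = q^D$.
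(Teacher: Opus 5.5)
Your skeleton matches the paper's: concentration in degree $n$ from Artin vanishing plus the hypothesis, purity from $H_c^n\cong H^n$ and Weil II, reduction via \Cref{cor:redformula} to $n=2$ or $n=1$, and simultaneous approximation.

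There are two small fixes. First, in Step 1 you appeal to the hypothesis for $-f_n$, which the corollary does not grant, since it concerns one fixed $f_n$. The detour is unnecessary: $H_c^i(\bb{A}^n_{\ov{k}}, f_n^*\cl{L}_\psi)\cong H^{2n-i}(\bb{A}^n_{\ov{k}}, f_n^*\cl{L}_{\psi^{-1}})^\vee(-n)$ already vanishes for $i<n$ by Artin vanishing alone. Second, in the conclusion $m$ must be both sufficiently divisible, so that the Gauss sum attains its extremal value, and good for the approximation. Apply Dirichlet to the $N$-th powers of $\lambda_j/q^{dn/2}$, or argue with $\limsup_m q^{-dmn/2}\lvert S_{dm}\rvert$ as the paper does.

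The genuine gap is the odd base case $n=1$. Your ``direct'' value $\lvert\sum_x\psi_{dm}(ax^{1+q^D})\rvert^2=q^{dm}q^{D}$ is off by a factor $q^D$. It contradicts your own target $\lvert S_{dm}\rvert=q^{dm/2}q^D$, and it would give $\dim H_c^n=q^{D/2}$ for odd $n$. Write $Q=q^D$, substitute $x=y+z$, and note that the sum over $y$ is a complete additive character sum. This gives
\[
\Bigl\lvert\sum_{x\in k_{dm}}\psi_{dm}(ax^{Q+1})\Bigr\rvert^2 = q^{dm}\sum_{z\in V\cap k_{dm}}\psi_{dm}(az^{Q+1}), \qquad V=\{z\in\ov{k} \mid az+a^{Q}z^{Q^2}=0\},
\]
where $V$ has $Q^2$ elements, not $Q$. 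So squaring does not lead back to a plain solution count as in the case $n=2$. You get a set of size $q^{2D}$, weighted by $z\mapsto\psi_{dm}(az^{Q+1})$, and you must still check that this weight is trivial for suitable $m$. It is: $b=az^{Q+1}$ satisfies $b^Q=-b$, hence $\Tr_{k_{dm}/k}(b)=0$ once $V\subset k_{dm}$ and $k_{2D}\subset k_{dm}$. Only then does $\lvert S_{dm}\rvert=q^{dm/2}q^D$ follow.

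The paper avoids this computation entirely by quoting \cite[(8.11)]{Del74} (see also \cite[Lemma B.2]{Mie16}) for $n=1$. That result gives $\dim H^1_c=q^D$ with all eigenvalues of weight $1$, which feeds the $\limsup$ induction. Grothendieck--Ogg--Shafarevich would also work: $\deg(ax^{Q+1})=Q+1$ is prime to $p$, so the Swan conductor at $\infty$ is $Q+1$ and $\dim H^1_c=Q$.
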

\begin{proof}
    By the Artin vanishing, $H_c^i(\bb{A}^n_{\ov{k}}, f_n^*\cl{L}_\psi)=0$ for $i<n$ and $H^i(\bb{A}^n_{\ov{k}}, f_n^*\cl{L}_\psi)=0$ for $i\geq n$. Thus, the assumption implies $H_c^i(\bb{A}^n_{\ov{k}}, f_n^*\cl{L}_\psi)=0$ for $i \neq n$. Now, the Grothendieck-Lefschetz trace formula implies that for every $t\geq 1$, we may write
    \[
        S_{dt}(a_\bullet; d_\bullet) = (-1)^n \sum_{\lambda_i\in I} \lambda_i^{t}
    \]
    where $I$ is the multiset of eigenvalues of $\Frob_{q^d}$ in $H_c^n(\bb{A}^n_{\ov{k}}, f_n^*\cl{L}_\psi)$. Since $H_c^n(\bb{A}^n_{\ov{k}}, f_n^*\cl{L}_{\psi}) \cong  H^n(\bb{A}^n_{\ov{k}}, f_n^*\cl{L}_{\psi})$, $\lvert \lambda_i \rvert = q^{nd/2}$ by the purity of the \'{e}tale cohomology. By the pigeonhole principle, for every $\varepsilon>0$, there are infinitely many $t\geq 1$ such that $\lvert 1-(q^{-nd/2}\lambda_i)^t \rvert < \varepsilon$ for every $\lambda_i \in I$. Thus, the claim follows from \Cref{cor:redformula} since 
    \[
        \dim H_c^n(\bb{A}^n_{\ov{k}}, f_n^*\cl{L}_\psi) = \lvert I \rvert = \limsup_{t \to \infty} q^{-ndt/2}\lvert S_{dt}(a_\bullet; d_\bullet) \rvert = q^{\sum_{1\leq i \leq n} d_i}. 
    \]
    Note that the last equality follows by induction on $n$. The case $n = 1$ follows from \cite[(8.11)]{Del74} (see also \cite[Lemma B.2]{Mie16}). 
\end{proof}

\subsection{Proof of \Cref{thm:cohAS}}
In this section, we complete the proof of \Cref{thm:cohAS} by verifying the assumption  $ H_c^\bullet(\bb{A}^n_{\ov{k}}, f_n^*\cl{L}_{\psi}) \cong  H^\bullet(\bb{A}^n_{\ov{k}}, f_n^*\cl{L}_{\psi})$ in \Cref{cor:comparison_Hc_vs_H}. When $n = 1$, the claim follows from \cite[(8.11)]{Del74} (see also \cite[Lemma B.2]{Mie16}). For $n \geq 2$, we prove by a geometric analogue of the induction in \Cref{lem:redalg}. 
Take $d \geq 1$ as in \Cref{ssec:Gauss}. 

\begin{lem} \label{lem:supppr!}
    Let $\pr_{n-1}\colon \bb{A}^n \to \bb{A}^{n-1}$ be the projection to the first $n-1$ coordinates. Then, $\pr_{n-1!} f_n^*\cl{L}_\psi$ is supported on the locus $x_{n-1} = - a_{n-1}^{-1}a_n^{q^{d_{n-1}}}x_1^{q^{d_{n-1}+d_n}}$. 
\end{lem}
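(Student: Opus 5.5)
We compute the stalks of $\pr_{n-1!} f_n^*\cl{L}_\psi$ via proper base change and show they vanish off the stated locus. Fix a geometric point $y = (x_1, \ldots, x_{n-1}) \in \bb{A}^{n-1}(\ov{k})$. The fiber of $\pr_{n-1}$ over $y$ is $\bb{A}^1_{\ov{k}}$ with coordinate $x_n$. On this fiber we write $f_n = c + g(x_n)$, where $c = \sum_{i \leq n-2} a_i x_i x_{i+1}^{q^{d_i}}$ is constant and
\[
    g(x_n) = a_{n-1}x_{n-1}x_n^{q^{d_{n-1}}} + a_n x_1^{q^{d_n}} x_n
\]
is an additive ($q$-)polynomial in $x_n$. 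Hence
\[
    (\pr_{n-1!} f_n^*\cl{L}_\psi)_y = R\Gamma_c(\bb{A}^1_{\ov{k}}, g^*\cl{L}_\psi) \otimes (\cl{L}_\psi)_c ,
\]
and it suffices to show that $R\Gamma_c(\bb{A}^1_{\ov{k}}, g^*\cl{L}_\psi) = 0$ whenever $x_{n-1} \neq -a_{n-1}^{-1}a_n^{q^{d_{n-1}}}x_1^{q^{d_{n-1}+d_n}}$.

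The key step is to replace $g$ by a linear function modulo Artin--Schreier equivalence. For any $h \in \ov{k}[x]$, the sheaf $\cl{L}_\psi$ pulled back along $h^q - h$ is trivial, since this map factors through the Lang torsor $L_k$. Consequently $(b x^{q^j})^*\cl{L}_\psi \cong (b^{q^{-j}} x)^*\cl{L}_\psi$ on $\bb{A}^1_{\ov{k}}$, because $b x^{q^j}$ and $b^{q^{-j}}x$ differ by a sum of expressions $h^q - h$. Since $\cl{L}_\psi$ is a character sheaf, pullback along a sum of additive maps is the tensor product of the pullbacks. Therefore
\[
    g^*\cl{L}_\psi \cong \bigl((a_{n-1}x_{n-1})^{q^{-d_{n-1}}} + a_n x_1^{q^{d_n}}\bigr)x_n^{\,*}\,\cl{L}_\psi .
\]
This is the geometric counterpart of the trace computation \eqref{eq:Trace_x_n-1} in \Cref{lem:redalg}.

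It remains to treat a linear pullback. If $\lambda \in \ov{k}^\times$, then $(\lambda x)^*\cl{L}_\psi$ is a nontrivial Artin--Schreier sheaf on $\bb{A}^1_{\ov{k}}$, and the standard vanishing $R\Gamma_c(\bb{A}^1_{\ov{k}}, \cl{L}_\psi) = 0$ applies to it. The coefficient $\lambda = (a_{n-1}x_{n-1})^{q^{-d_{n-1}}} + a_n x_1^{q^{d_n}}$ vanishes exactly when, after raising to the $q^{d_{n-1}}$-th power,
\[
    a_{n-1}x_{n-1} = -a_n^{q^{d_{n-1}}}x_1^{q^{d_{n-1}+d_n}} ,
\]
which is the stated locus. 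This proves that $\pr_{n-1!} f_n^*\cl{L}_\psi$ is supported there.

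The only delicate point is the Artin--Schreier equivalence used in the key step. I expect to justify it either by an explicit telescoping sum $h = \sum_{0 \le i < j} (b^{q^{-j}}x)^{q^i}$, or by a Frobenius-twist argument on the perfect site. Both are routine, so I expect no serious obstacle.
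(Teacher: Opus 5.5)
Your argument is correct, but it takes a different route from the paper's at the key step.

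The paper also passes to stalks by proper base change. It then argues arithmetically: it reduces to points $x \in \bb{A}^{n-1}(k_{dt})$, notes that the Frobenius traces on the stalk are the character sums shown to vanish in \Cref{lem:redalg}, and concludes by the Grothendieck--Lefschetz trace formula. You instead identify the sheaf on the fiber geometrically. Using the character-sheaf property and the triviality of $(h^q-h)^*\cl{L}_\psi$, you replace $g$ by the linear form $\lambda x_n$ and then invoke $R\Gamma_c(\bb{A}^1_{\ov{k}}, \cl{L}_\psi)=0$. Your computation of when $\lambda=0$, including the sign after raising to the $q^{d_{n-1}}$-th power, is right in every characteristic.

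Your route gives vanishing of the stalk as a complex at every geometric point, without the trace formula. The trace-formula route by itself only yields vanishing of the alternating sum of Frobenius traces. To conclude that the stalk vanishes, it implicitly needs an extra input ruling out cancellation between $H^1_c$ and $H^2_c$, for instance weights: $H^1_c$ has weights $\le 1$ while $H^2_c$ is pure of weight $2$. The paper's route has the advantage of reusing the point count of \Cref{lem:redalg}, which keeps the arithmetic computation and the geometric induction visibly parallel.
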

\begin{proof}
    Since $\pr_{n-1!} f_n^*\cl{L}_\psi$ is defined over $\bb{A}^{n-1}_{k_d}$, it is enough to show that for every $t\geq 1$, the stalk $(\pr_{n-1!} f_n^*\cl{L}_\psi)_x$ vanishes for every $x\in \bb{A}^{n-1}(k_{dt})$ with $x_{n-1} \neq - a_{n-1}^{-1}a_n^{q^{d_{n-1}}}x_1^{q^{d_{n-1}+d_n}}$. By the proper base change, $(\pr_{n-1!} f_n^*\cl{L}_\psi)_x$ computes the cohomology of a certain Artin-Schreier sheaf on $\bb{A}^1$, and the corresponding Gauss sum vanishes by \Cref{lem:redalg}. Thus, $(\pr_{n-1!} f_n^*\cl{L}_\psi)_x = 0$ by the Grothendieck-Lefschetz trace formula. 
\end{proof}

\subsubsection{Induction steps for $n\geq 3$} \label{sssec:indge3}

In this section, we assume $n\geq 3$. We will show that the claim for $n$ follows from that for $n-2$. In this section, 
by abuse of notation, let
\[
    \bb{A}^n = \Spec(\ov{k}[x_1, \ldots, x_n])
\]
and every scheme is considered over $\ov{k}$. The following is essential in our induction. 

\begin{prop} \label{prop:computege3}
    Let $g = a_2x_2x_3^{q^{d_{2}}} + a_3x_3x_1^{q^{d_3}}$ with $a_2,a_3\in \ov{k}^\times$ and regard it as a morphism $\bb{A}^3 \to \bb{A}^1$. The map $\pr_{2!} g^{*}\cl{L}_\psi \to \pr_{2*} g^{*}\cl{L}_\psi$ on $\bb{A}^2$ is an isomorphism. 
\end{prop}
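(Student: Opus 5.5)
The plan is to compute both $\pr_{2!} g^*\cl{L}_\psi$ and $\pr_{2*} g^*\cl{L}_\psi$ explicitly and check that the forget-supports map between them is an isomorphism. Here $\pr_2 \colon \bb{A}^3 \to \bb{A}^2$ forgets $x_3$. Since $g$ is additive in $x_3$ fiberwise, the sheaf $g^*\cl{L}_\psi$ on each fiber $\{(x_1,x_2)\} \times \bb{A}^1$ is the pullback of $\cl{L}_\psi$ along an additive polynomial
\[
    \ell_{x_1,x_2}(x_3) = a_2 x_2 x_3^{q^{d_2}} + a_3 x_1^{q^{d_3}} x_3 .
\]
Modulo Artin-Schreier equivalence, $\ell_{x_1,x_2}$ is equivalent to the linear form $c(x_1,x_2)\, x_3$, where
\[
    c(x_1,x_2) = a_3 x_1^{q^{d_3}} + (a_2 x_2)^{q^{-d_2}} .
\]
Indeed, $\cl{L}_\psi(y^{q^{m}}) \cong \cl{L}_\psi(y)$ on $\bb{A}^1$ after composing with the inverse of Frobenius. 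Since we work with perfect schemes, or up to universal homeomorphism, this is harmless for étale cohomology.

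The first step is to make this uniform. Precompose with the universal homeomorphism of $\bb{A}^3$ over $\bb{A}^2$ given by $x_2 \mapsto x_2^{q^{d_2}}$. This reduces to the situation where $g^*\cl{L}_\psi \cong \cl{L}_\psi(c\, x_3)$ with $c = c(x_1,x_2)$ a function on $\bb{A}^2$ whose zero locus is the graph $Z$ appearing in \Cref{lem:supppr!}, namely
\[
    x_2 = -a_2^{-1}a_3^{q^{d_2}}x_1^{q^{d_2+d_3}} .
\]
Next, factor $\pr_2$ through the map $\bb{A}^2 \times \bb{A}^1 \to \bb{A}^1 \times \bb{A}^1$, $(x_1,x_2,x_3)\mapsto (c,x_3)$, which is a trivial $\bb{A}^1$-bundle after a change of coordinates ($c$ is a coordinate up to a universal homeomorphism in $x_2$). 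Base change then reduces the problem to the Fourier-Deligne kernel on $\bb{A}^1_c \times \bb{A}^1_{x_3}$.

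The core computation is the classical one for $\cl{L}_\psi(c x_3)$ on $\bb{A}^1_c\times\bb{A}^1_{x_3}$ pushed forward to $\bb{A}^1_c$. For compact supports, $\pr_! \cl{L}_\psi(c x_3) \cong i_{0*}\Qla[-2](-1)$; this is the Fourier transform of the constant sheaf, and it is consistent with \Cref{lem:supppr!}. For ordinary pushforward, use Verdier duality: $\pr_* = D\,\pr_!\,D$ and $D\cl{L}_\psi(cx_3) = \cl{L}_{\psi^{-1}}(cx_3)[4](2)$. This gives $\pr_*\cl{L}_\psi(c x_3) \cong D(i_{0*}\Qla[-2](-1))[\ldots] \cong i_{0*}\Qla[-2](-1)$, again a skyscraper along $c=0$ in the same degree. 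It then remains to check that the canonical map $\pr_!\to\pr_*$ is nonzero, hence an isomorphism since both sides are rank one on $Z$. For this, restrict to the fiber over $c=0$ and use base change for $\pr_!$. The fiber of $\pr_*$ at a point of $Z$ is harder, since $\pr_*$ does not commute with base change. Instead, compute $i_Z^!$ or $i_Z^*$ via the localization triangle: on the open complement $c\neq0$ both sides vanish, because $H^*(\bb{A}^1,\cl{L}_\psi(cx))=0$ for $c \neq 0$ (dual to compact support). So both are supported on $Z$. The map agrees, after restricting to $Z$, with $H^2_c(\bb{A}^1,\Qla)\to H^2_{\{\infty\}\text{-completed}}$, and this can be identified with the forget-support isomorphism for the constant sheaf on the fiber over $c=0$, via the standard Fourier-transform computation (e.g. \cite{Del74} or Katz–Laumon).

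I expect the main obstacle to be the compatibility of the forget-supports map with the coordinate change $(x_1,x_2)\mapsto(x_1,c)$, together with the non-base-change of $\pr_*$. The cleanest route I would try first is to work globally on $\bb{A}^1_c$, where the statement "$\pr_!\cl{L}_\psi(cx)\to\pr_*\cl{L}_\psi(cx)$ is an isomorphism" is a known property of the Fourier–Deligne kernel. This is equivalent to Fourier transform commuting with $D$ up to shift, i.e. $\mathrm{Four}_!=\mathrm{Four}_*$ (Laumon). The statement for $g$ then follows by smooth base change along the smooth map $\bb{A}^2\to\bb{A}^1_c$, $(x_1,x_2)\mapsto c$, composed with universal homeomorphisms. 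Smooth base change is valid for both $\pr_!$ and $\pr_*$, which sidesteps any fiberwise computation of $\pr_*$.
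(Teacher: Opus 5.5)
Your final route is correct, and it differs genuinely from the paper's.

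You linearize $g$ in $x_3$. First base change along the universal homeomorphism $(x_1,x_2)\mapsto(x_1,x_2^{q^{d_2}})$ of the base $\bb{A}^2$; pullback along it is an equivalence of \'{e}tale sites, so it commutes with both $\pr_{2!}$ and $\pr_{2*}$ and detects isomorphisms. The identity $\cl{L}_\psi(h^q)\cong\cl{L}_\psi(h)$ then turns $g^*\cl{L}_\psi$ into $\cl{L}_\psi(c\,x_3)$ with $c=a_2^{q^{-d_2}}x_2+a_3x_1^{q^{d_3}}$, which is an honest coordinate complementary to $x_1$. Smooth base change along $\bb{A}^1_{x_1}\times\bb{A}^1_c\to\bb{A}^1_c$ reduces the claim to the Katz--Laumon theorem $\mathrm{Four}_!\cong\mathrm{Four}_*$, applied to the constant sheaf.

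The paper instead proves directly that the cone $i^*j_*g^*\cl{L}_\psi$ along $x_3=\infty$ vanishes. It writes down an explicit finite model $Y\cong\bb{A}^3$ of the Artin--Schreier cover over $\bb{A}^2\times(\bb{P}^1-\{0\})$. In this model $Y_\infty$ is a smooth divisor mapping universally homeomorphically onto $\bb{A}^2\times\{\infty\}$. Purity \cite[(1.3.3.2)]{SGA41/2} then shows that $i^*j_*$ of the whole cover is already accounted for by the trivial isotypic part, so every nontrivial $\psi$-part dies.

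What each buys: your argument is shorter and conceptual, since the statement is the Fourier kernel in disguise. The same reduction also handles the base case $n=2$, where $c$ becomes an \'{e}tale function of $x_1$. But it outsources the real content to a nontrivial theorem. The paper's argument is self-contained and characteristic-free, and amounts to a hands-on proof of exactly the needed case of Katz--Laumon, including the delicate point $(c,x_3)=(0,\infty)$.

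You should drop the direct verification in your middle paragraph, because it does not work. The stalk of $\pr_*$ at $c=0$ is not the cohomology of the fibre. Moreover, forget-supports for the constant sheaf on the fibre is $H^2_c(\bb{A}^1,\Qla)\to H^2(\bb{A}^1,\Qla)=0$, which is the zero map, so that identification cannot show the map is nonzero. Nonvanishing of $\pr_!\to\pr_*$ is equivalent to $i^*j_*\cl{L}_\psi(cx_3)=0$ at infinity. That is precisely what the citation, or the paper's local model, supplies.

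A minor point: the universal homeomorphism should be one of the base $\bb{A}^2$, base-changed to $\bb{A}^3$, not one ``of $\bb{A}^3$ over $\bb{A}^2$''.
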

\begin{proof}
    Let $\pi_X \colon X \to \bb{A}^3$ be the pullback of the Lang torsor $L_{k}$ along $g$. Then, $X \subset \bb{A}^4$ is a closed subscheme defined by the equation $t^q - t = a_2x_2x_3^{q^{d_{2}}} + a_3x_3x_1^{q^{d_3}}$. Now, consider the following diagram. 
    \begin{center}
        \begin{tikzcd}
            & X \ar[ld, "\pi_X"'] \ar[d]  & \\
            \bb{A}^3 \ar[r, "j", hook] \ar[rd, "\pr_2"']& \bb{A}^2 \times \bb{P}^1 \ar[d, "\pr_1"] & \bb{A}^2 \times \{\infty\} \ar[l, "i"', hook'] \ar[dl, "\pr_1"] \\
            & \bb{A}^2 &
        \end{tikzcd}
    \end{center}
    The cone of $\pr_{2!} g^{*}\cl{L}_\psi \to \pr_{2*} g^{*}\cl{L}_\psi$ is $i^*j_*g^*\cl{L}_\psi$. Thus, it is enough to show 
    \[  
        i^*j_*g^*\cl{L}_\psi = 0. 
    \]  
    Let $y = x_3^{-1}$ be a coordinate of $\bb{P}^1 - \{0\}$ and let $X^\circ = X \cap \{x_3 \neq 0\}$. Since the defining equation of $X^\circ$ can be written as
    \[
        (y^{q^{d_2-1}}t)^q - y^{q^{d_2}-q^{d_2-1}}(y^{q^{d_2-1}}t) = a_2x_2 + a_3 y^{q^{d_2}-1}x_1^{q^{d_3}}
    \]
    over $\bb{A}^2 \times (\bb{P}^1 - \{0, \infty\})$, the closed subscheme $Y \subset \bb{A}^4$ defined by
    \[
        t^q -  y^{q^{d_2}-q^{d_2-1}} t = a_2x_2 + a_3 y^{q^{d_2}-1}x_1^{q^{d_3}}
    \]
    is finite over $\bb{A}^2 \times (\bb{P}^1 - \{0\})$ and its fiber over $\bb{A}^2 \times (\bb{P}^1 - \{0, \infty\})$ equals $X^\circ$. Let $\pi_Y \colon Y \to \bb{A}^2 \times (\bb{P}^1 - \{0\})$ be the finite map and let $Y_\infty = Y \times_{\bb{A}^2 \times (\bb{P}^1 - \{0\})} \{\infty\}$. Then, we have the following diagram. 
    \begin{center}
        \begin{tikzcd}
            X^\circ \ar[d, "\pi_{X^\circ}"] \ar[r, "j_{X^\circ}", hook] & Y \ar[d, "\pi_Y"] & Y_\infty \ar[l, "i_Y"', hook'] \ar[d, "\pi_{Y_\infty}"]\\
            \bb{A}^2 \times (\bb{P}^1 - \{0, \infty\}) \ar[r, "j^\circ" ,hook] & \bb{A}^2 \times (\bb{P}^1 - \{0\}) & \bb{A}^2 \times \{\infty\} \ar[l, "i"', hook']
        \end{tikzcd}
    \end{center}
    By the projection formula, we have
    \[
        i^*j^\circ_{*} (\pi_{X^\circ})_* \ov{\bb{Q}}_\ell \cong i^* (\pi_Y)_* (j_{X^\circ})_* \ov{\bb{Q}}_\ell \cong (\pi_{Y_\infty})_* i_Y^* (j_{X^\circ})_* \ov{\bb{Q}}_\ell. 
    \]
    Now, $Y$ is isomorphic to $\bb{A}^3$ with coordinates $(x_1,y,t)$, so $Y_\infty$ is a normal crossing prime divisor of $Y$. Moreover, $\pi_{Y_\infty}$ sends $(x_1,t)$ to $(x_1,a_2^{-1}t^p)$, so it is a universal homeomorphism. Thus, the split monomorphism
    \[
        i^*j^\circ_* \ov{\bb{Q}}_\ell \to i^*j^\circ_{*} (\pi_{X^\circ})_* \ov{\bb{Q}}_\ell \cong (\pi_{Y_\infty})_* i_Y^* (j_{X^\circ})_* \ov{\bb{Q}}_\ell 
    \]
    is an isomorphism by \cite[(1.3.3.2) p.255]{SGA41/2}. Since $(g^* \cl{L}_\psi)\vert_{X^\circ}$ is a direct summand of $(\pi_{X^\circ})_* \ov{\bb{Q}}_\ell$ orthogonal to $\ov{\bb{Q}}_\ell$, we have $i^*j^\circ_* (g^* \cl{L}_\psi)\vert_{X^\circ}=0$. In particular, $i^*j_*g^*\cl{L}_\psi = 0$. 
\end{proof}

Let $\pr^3\colon \bb{A}^n \to \bb{A}^3$ be the projection sending $(x_1,\ldots,x_n)$ to $(x_1,x_{n-1}, x_n)$. Let $\pr^2 \colon \bb{A}^{n-1} \to \bb{A}^2$ be the projection sending $(x_1,\ldots,x_{n-1})$ to $(x_1,x_{n-1})$. Then, we have the following Cartesian square. 
\begin{center}
    \begin{tikzcd}
        \bb{A}^n \ar[r, "\pr^3"] \ar[d, "\pr_{n-1}"'] & \bb{A}^3 \ar[d, "\pr_2"] \\
        \bb{A}^{n-1} \ar[r, "\pr^2"] & \bb{A}^2
    \end{tikzcd}
\end{center}
Let $g_n = a_{n-1}x_{n-1}x_n^{q^{d_{n-1}}} + a_nx_nx_1^{q^{d_n}}$ and $h_{n-1} = f_n - g_n$. We regard $g_n$ (resp.\ $h_{n-1}$) as a morphism $\bb{A}^3 \to \bb{A}^1$ (resp.\ $\bb{A}^{n-1} \to \bb{A}^1$). By the additivity of Artin-Schreier sheaves,
\[
    f_n^*\cl{L}_\psi \cong \pr^{3*}g_n^{*}\cl{L}_\psi \otimes \pr_{n-1}^*h_{n-1}^*\cl{L}_\psi. 
\]
Since $h_{n-1}^*\cl{L}_\psi$ is invertible and $\pr^2$ is smooth, we have
\[
    \pr_{n-1*} f_n^*\cl{L}_\psi \cong h_{n-1}^*\cl{L}_\psi \otimes \pr_{n-1*} \pr^{3*}g_n^{*}\cl{L}_\psi \cong h_{n-1}^*\cl{L}_\psi \otimes \pr^{2*}\pr_{2*} g_n^{*}\cl{L}_\psi
\]
by the smooth base change. Similarly, we have
\[
    \pr_{n-1!} f_n^*\cl{L}_\psi \cong h_{n-1}^*\cl{L}_\psi \otimes \pr_{n-1!} \pr^{3*}g_n^{*}\cl{L}_\psi \cong h_{n-1}^*\cl{L}_\psi \otimes \pr^{2*}\pr_{2!} g_n^{*}\cl{L}_\psi
\]
by the projection formula and the proper base change. These isomorphisms are compatible so that \Cref{prop:computege3} implies that $\pr_{n-1!} f_n^*\cl{L}_\psi \to \pr_{n-1*} f_n^*\cl{L}_\psi$ is an isomorphism. 

Let $s\colon \bb{A}^{n-2} \to \bb{A}^{n-1}$ be the section of $\pr_{n-2}$ with $x_{n-1} = - a_{n-1}^{-1}a_n^{q^{d_{n-1}}}x_1^{q^{d_{n-1}+d_n}}$. By \Cref{lem:supppr!}, $\pr_{n-1!} f_n^*\cl{L}_\psi \cong \pr_{n-1*} f_n^*\cl{L}_\psi$ is supported on the image of $s$. Then, 
\[
    (\pr_{n-2}\circ \pr_{n-1})_!f_n^*\cl{L}_\psi \cong s^*\pr_{n-1!} f_n^*\cl{L}_\psi \cong (\pr_{n-2}\circ \pr_{n-1})_*f_n^*\cl{L}_\psi. 
\]
Moreover, 
\[
     s^*\pr_{n-1!} f_n^*\cl{L}_\psi \cong (h_{n-1}\circ s)^*\cl{L}_\psi \otimes (\pr^{2}\circ s)^*\pr_{2!} g_n^{*}\cl{L}_\psi
\]
We will compute $(\pr^{2}\circ s)^*\pr_{2!} g_n^{*}\cl{L}_\psi$. Let $\pr^1 \colon \bb{A}^{n-2} \to \bb{A}^1$ be the projection to the coordinate $x_1$. Let $s_1 \colon \bb{A}^1 \to \bb{A}^2$ (resp.\ $s_2 \colon \bb{A}^2 \to \bb{A}^3$) be a section of $\pr_1$ (resp.\ $\pr_{1,3}$) with $x_2 = - a_{n-1}^{-1}a_n^{q^{d_{n-1}}}x_1^{q^{d_{n-1}+d_n}}$. We have commutative diagrams
\begin{center}
    \begin{tikzcd}
        \bb{A}^{n-2} \ar[r, "s"] \ar[d, "\pr^1"] & \bb{A}^{n-1} \ar[d, "\pr^2"] &  & \bb{A}^2 \ar[r, "s_2"] \ar[d, "\pr_1"] & \bb{A}^3 \ar[d, "\pr_2"] \\
        \bb{A}^{1} \ar[r, "s_1"] & \bb{A}^2, &  & \bb{A}^1 \ar[r, "s_1"] & \bb{A}^2. 
    \end{tikzcd}
\end{center}
Thus, 
\[
    (\pr^{2}\circ s)^*\pr_{2!} g_n^{*}\cl{L}_\psi \cong \pr^{1*} s_1^*\pr_{2!} g_n^{*}\cl{L}_\psi \cong \pr^{1*} \pr_{1!} (g_n\circ s_2)^{*}\cl{L}_\psi 
\]
Now, $g_n \circ s_2$ corresponds to a polynomial
\[
    a_{n-1} (- a_{n-1}^{-1}a_n^{q^{d_{n-1}}}x_1^{q^{d_{n-1}+d_n}}) x_2^{q^{d_{n-1}}} + a_nx_2x_1^{q^{d_n}} = a_nx_2x_1^{q^{d_n}} - (a_nx_2x_1^{q^{d_n}})^{q^{d_{n-1}}}, 
\]
so it factors through the Lang $\bb{F}_{q^{d_{n-1}}}$-torsor over $\bb{A}^1$. Thus, $(g_n\circ s_2)^{*}\cl{L}_\psi \cong \ov{\bb{Q}}_\ell$ and
\[
    (\pr^{2}\circ s)^*\pr_{2!} g_n^{*}\cl{L}_\psi \cong \pr^{1*} \pr_{1!}  \ov{\bb{Q}}_\ell \cong \ov{\bb{Q}}_\ell(-1)[-2]. 
\]
Summarizing, we get the following commutative diagram. 
\begin{center}
    \begin{tikzcd}
        H_c^\bullet(\bb{A}^n, f_n^*\cl{L}_\psi) \ar[r] \ar[d, "\cong"] & H^\bullet(\bb{A}^n, f_n^*\cl{L}_\psi) \ar[d, "\cong"] \\
        H_c^\bullet(\bb{A}^{n-2}, (\pr_{n-2}\circ \pr_{n-1})_!f_n^*\cl{L}_\psi) \ar[r] \ar[d, "\cong"] & H^\bullet(\bb{A}^{n-2}, (\pr_{n-2}\circ \pr_{n-1})_*f_n^*\cl{L}_\psi) \ar[d, "\cong"] \\
        H_c^{\bullet-2}(\bb{A}^{n-2}, (h_{n-1}\circ s)^*\cl{L}_\psi)(-1) \ar[r] & H^{\bullet-2}(\bb{A}^{n-2}, (h_{n-1}\circ s)^*\cl{L}_\psi)(-1) 
    \end{tikzcd}
\end{center}
Now, $h_{n-1}\circ s$ corresponds to a polynomial in $n-2$ variables associated to the parameters 
\[
    (a_1,\ldots,a_{n-3}, -a_{n-2}a_{n-1}^{-q^{d_{n-2}}}a_n^{q^{d_{n-2}+d_{n-1}}}; d_1,\ldots,d_{n-3}, d_{n-2} + d_{n-1} + d_n)
\]
(cf. \Cref{cor:redformula}). Thus, the claim follows from the induction hypothesis for $n-2$. 

\subsubsection{Base step for $n=2$}

In this section, we will show the case $n=2$. Our argument goes along \Cref{sssec:indge3} with a slight modification.

\begin{prop} \label{prop:compute=2}
    Let $f_2 = a_1x_1x_2^{q^{d_{1}}} + a_2x_2x_1^{q^{d_2}}$ with $a_1,a_2\in \ov{k}^\times$ and regard it as a morphism $\bb{A}^2 \to \bb{A}^1$. Then, $\pr_{1!} f_2^{*}\cl{L}_\psi \to \pr_{1*} f_2^{*}\cl{L}_\psi$ is an isomorphism. 
\end{prop}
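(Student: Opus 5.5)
We follow the proof of \Cref{prop:computege3}, with the only change that $x_1$ now enters both monomials of $f_2$. The cone of $\pr_{1!} f_2^*\cl{L}_\psi \to \pr_{1*} f_2^*\cl{L}_\psi$ is $i^*j_*f_2^*\cl{L}_\psi$. Here $j \colon \bb{A}^2 \hookrightarrow \bb{A}^1 \times \bb{P}^1$ compactifies the $x_2$-direction and $i \colon \bb{A}^1 \times \{\infty\} \hookrightarrow \bb{A}^1 \times \bb{P}^1$ is the inclusion of the boundary. It therefore suffices to show $i^*j_*f_2^*\cl{L}_\psi = 0$.

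Let $X \subset \bb{A}^3$ be the Artin-Schreier cover $t^q - t = a_1x_1x_2^{q^{d_1}} + a_2x_2x_1^{q^{d_2}}$. Put $y = x_2^{-1}$ and $X^\circ = X \cap \{x_2 \neq 0\}$. Substitute $t = y^{-q^{d_1-1}}t'$ and multiply the equation by $y^{q^{d_1}}$. The equation for $X^\circ$ becomes
\[
    t'^q - y^{q^{d_1}-q^{d_1-1}} t' = a_1x_1 + a_2 y^{q^{d_1}-1}x_1^{q^{d_2}}.
\]
Define $Y \subset \bb{A}^3$ (coordinates $x_1, y, t'$) by this equation, viewed over $\bb{A}^1 \times (\bb{P}^1 - \{0\})$. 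It is finite over the base, because the equation is monic in $t'$. Over $y \neq 0$ it restricts to $X^\circ$.

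Although $x_1$ appears on the right-hand side, the equation is linear in $x_1$ with the unit coefficient $a_1$, modulo the term $a_2y^{q^{d_1}-1}x_1^{q^{d_2}}$. We must check that $Y$ is still smooth and $Y_\infty$ is a smooth divisor. Set $u = a_1x_1 + a_2 y^{q^{d_1}-1}x_1^{q^{d_2}}$. Its derivative in $x_1$ is $a_1 \neq 0$, so $(u, y, t')$ are étale coordinates. Even more simply, $\partial/\partial x_1$ of the defining equation is the unit $-a_1$, so $Y$ is smooth. Since the equation can be solved étale-locally for $x_1$, the projection $Y \to \bb{A}^2_{(y,t')}$ is étale. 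Hence $Y_\infty = \{y = 0\}$ is a smooth prime divisor, and $Y \cup$ boundary is a normal crossing situation.

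At $y = 0$ the equation reads $t'^q = a_1 x_1$. So $\pi_{Y_\infty} \colon Y_\infty \to \bb{A}^1$, $t' \mapsto a_1^{-1}t'^q$, is a universal homeomorphism. Proceed as before using $\pi_{X^\circ}$, $j^\circ$, $i_Y$ and $j_{X^\circ}$. Proper base change for the finite map $\pi_Y$ identifies $i^*j^\circ_*(\pi_{X^\circ})_*\Qla$ with $(\pi_{Y_\infty})_* i_Y^*(j_{X^\circ})_*\Qla$. By \cite[(1.3.3.2) p.255]{SGA41/2} (purity for a smooth divisor), $i_Y^*(j_{X^\circ})_*\Qla \cong \Qla \oplus \Qla(-1)[-1]$ on $Y_\infty$. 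The base has the same form, and $\pi_{Y_\infty}$ is a universal homeomorphism. Hence the split injection $i^*j^\circ_*\Qla \to i^*j^\circ_*(\pi_{X^\circ})_*\Qla$ is an isomorphism.

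The sheaf $f_2^*\cl{L}_\psi$ is a direct summand of $(\pi_{X^\circ})_*\Qla$ complementary to $\Qla$, namely the $\psi$-isotypic part under the $k$-action. Therefore $i^*j^\circ_*(f_2^*\cl{L}_\psi|_{X^\circ}) = 0$, and so $i^*j_*f_2^*\cl{L}_\psi = 0$.

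The main point requiring care is the smoothness and normal-crossing check for $Y$ in the presence of the extra $x_1^{q^{d_2}}$ term. As shown above, this is handled by the unit partial derivative $a_1$ in $x_1$.
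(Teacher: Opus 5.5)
Your proof is correct and follows the paper's strategy: compactify the $x_2$-direction, rescale $t$ to get a cover $Y$ that is finite over $\bb{A}^1 \times (\bb{P}^1 - \{0\})$, use purity for the smooth pair $(Y, Y_\infty)$, and use that $Y_\infty \to \bb{A}^1$ is a universal homeomorphism. The one difference is that the paper first makes the Artin--Schreier substitution $t \mapsto t + \sum_{0 \leq i < d_1}(a_2x_2x_1^{q^{d_2}})^{q^i}$, which turns the right-hand side into $x_2^{q^{d_1}}(a_1x_1 + a_2^{q^{d_1}}x_1^{q^{d_1+d_2}})$ and makes $Y$ finite \'etale over $\bb{A}^2_{(y,t)}$; you keep the term $a_2y^{q^{d_1}-1}x_1^{q^{d_2}}$ and get smoothness of $Y$, and of $Y_\infty = \{t'^q = a_1x_1\}$, from the $x_1$-derivative of the defining equation being the unit $-a_1$, which is equally valid since finiteness over $\bb{A}^2_{(y,t')}$ is never used.
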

\begin{proof}
    Let $\pi_X \colon X \to \bb{A}^2$ be the pullback of the Lang torsor $L_{k}$ along $f_2$. Then, $X \subset \bb{A}^3$ is a closed subscheme defined by the equation $t^q - t = a_1x_1x_2^{q^{d_{1}}} + a_2x_2x_1^{q^{d_2}}$. By replacing $t$ with $t + \sum_{0 \leq i < d_1} (a_2x_2x_1^{q^{d_2}})^{q^i}$, the equation can be rewritten as $t^q - t = x_2^{q^{d_{1}}}(a_1x_1 + a_2^{q^{d_1}}x_1^{q^{d_1+d_2}})$. Now, consider the following diagram. 
    \begin{center}
        \begin{tikzcd}
            & X \ar[ld, "\pi_X"'] \ar[d]  & \\
            \bb{A}^2 \ar[r, "j", hook] \ar[rd, "\pr_1"']& \bb{A}^1 \times \bb{P}^1 \ar[d, "\pr_1"] & \bb{A}^1 \times \{\infty\} \ar[l, "i"', hook'] \ar[dl, "\pr_1"] \\
            & \bb{A}^1 &
        \end{tikzcd}
    \end{center}
    The cone of $\pr_{1!} f_2^{*}\cl{L}_\psi \to \pr_{1*} f_2^{*}\cl{L}_\psi$ is $i^*j_*f_2^*\cl{L}_\psi$. Thus, it is enough to show 
    \[
        i^*j_*f_2^*\cl{L}_\psi = 0. 
    \]
    Let $y = x_2^{-1}$ be a coordinate of $\bb{P}^1 - \{0\}$ and let $X^\circ = X \cap \{x_2 \neq 0\}$. Since the defining equation of $X^\circ$ can be written as
    \[
        (y^{q^{d_1-1}}t)^q - y^{q^{d_1}-q^{d_1-1}}(y^{q^{d_1-1}}t) = a_1x_1 + a_2^{q^{d_1}}x_1^{q^{d_1+d_2}}
    \]
    over $\bb{A}^1 \times (\bb{P}^1 - \{0, \infty\})$, the closed subscheme $Y \subset \bb{A}^3$ defined by
    \[
        t^q -  y^{q^{d_1}-q^{d_1-1}} t = a_1x_1 + a_2^{q^{d_1}}x_1^{q^{d_1+d_2}}
    \]
    is finite over $\bb{A}^1 \times (\bb{P}^1 - \{0\})$ and its fiber over $\bb{A}^1 \times (\bb{P}^1 - \{0, \infty\})$ equals $X^\circ$. Let $\pi_Y \colon Y \to \bb{A}^1 \times (\bb{P}^1 - \{0\})$ be the finite map and let $Y_\infty = Y \times_{\bb{A}^1 \times (\bb{P}^1 - \{0\})} \{\infty\}$. Then, we have the following diagram. 
    \begin{center}
        \begin{tikzcd}
            X^\circ \ar[d, "\pi_{X^\circ}"] \ar[r, "j_{X^\circ}", hook] & Y \ar[d, "\pi_Y"] & Y_\infty \ar[l, "i_Y"', hook'] \ar[d, "\pi_{Y_\infty}"]\\
            \bb{A}^1 \times (\bb{P}^1 - \{0, \infty\}) \ar[r, "j^\circ" ,hook] & \bb{A}^1 \times (\bb{P}^1 - \{0\}) & \bb{A}^1 \times \{\infty\} \ar[l, "i"', hook']
        \end{tikzcd}
    \end{center}
    By the projection formula, we have
    \[
        i^*j^\circ_{*} (\pi_{X^\circ})_* \ov{\bb{Q}}_\ell \cong i^* (\pi_Y)_* (j_{X^\circ})_* \ov{\bb{Q}}_\ell \cong (\pi_{Y_\infty})_* i_Y^* (j_{X^\circ})_* \ov{\bb{Q}}_\ell. 
    \]
    Now, the defining equation of $Y$ is separable with respect to $x_1$, so $Y \to \bb{A}^2$ sending $(x_1,y,t)$ to $(y,t)$ is finite \'{e}tale. In particular, $Y$ is smooth and $Y_\infty$ is a normal crossing prime divisor of $Y$. Moreover, the defining equation of $Y_\infty$ is $t^p =  a_1x_1 + a_2^{q^{d_1}}x_1^{q^{d_1+d_2}}$, so $\pi_{Y_\infty}$ is a universal homeomorphism. Thus, the split monomorphism
    \[
        i^*j^\circ_* \ov{\bb{Q}}_\ell \to i^*j^\circ_{*} (\pi_{X^\circ})_* \ov{\bb{Q}}_\ell \cong (\pi_{Y_\infty})_* i_Y^* (j_{X^\circ})_* \ov{\bb{Q}}_\ell 
    \]
    is an isomorphism by \cite[(1.3.3.2) p.255]{SGA41/2}. Since $(g^* \cl{L}_\psi)\vert_{X^\circ}$ is a direct summand of $(\pi_{X^\circ})_* \ov{\bb{Q}}_\ell$ orthogonal to $\ov{\bb{Q}}_\ell$, we have $i^*j^\circ_* (g^* \cl{L}_\psi)\vert_{X^\circ}=0$. In particular, $i^*j_*g^*\cl{L}_\psi = 0$. 
\end{proof}

By \Cref{lem:supppr!}, the support of $\pr_{1!} f_2^{*}\cl{L}_\psi$ is zero-dimensional. Thus, the natural map 
\[
    H_c^\bullet(\bb{A}^1, \pr_{1!} f_2^{*}\cl{L}_\psi)  \to H^\bullet(\bb{A}^1, \pr_{1!} f_2^{*}\cl{L}_\psi) \to H^\bullet(\bb{A}^1, \pr_{1*} f_2^{*}\cl{L}_\psi) 
\]
is an isomorphism. Thus, we get the claim.

\subsection{Lang torsors attached to polarization} \label{ssec:Lang_torsor}

Let $\msf{M}$ be a finite group and let $(V, \langle - , -\rangle)$ be a symplectic $\msf{M}$-representation over $k$ equipped with a genuine weight decomposition indexed by $S$ (see \Cref{ssec:weight_decomposition}). In this section, we introduce a Lang torsor over an affine space attached to a polarization of $S$, which plays an important role in the geometric realization of Heisenberg-Weil representations. 

\begin{defi}
    Let $S$ be a set equipped with a genuine smooth action of $\Gal(\ov{k} / k) \times \{\pm 1\}$. We say that a decomposition $S = S^+ \sqcup S^-$ is a \textit{polarization} if $S^+ = - S^-$. 
\end{defi}

From now on, we fix a polarization $S = S^+ \sqcup S^-$. 

\begin{defi}
    Let $I_0 = \{ s \in S^+ \mid \Gal(\ov{k} / k) \cdot s \subset S^+ \}$ and $I_1 = S^+ \cap \sigma(S^-)$. We say that $I = I_0 \sqcup I_1$ is the \textit{characteristic index set} of the polarization $S = S^+ \sqcup S^-$.  
\end{defi}

For a vector space $U$ over $\ov{k}$, we denote the associated algebraic variety over $\ov{k}$ by the sans-serif type $\msf{U}$. This notation also applies to $\ov{V}$. 

\begin{defi} \label{defi:fin_et_torsor}
    Let $U_I = \bigoplus_{s\in I} \ov{V}^s$ and let $\msf{\wtd{U}}_I \to \msf{U}_I$ be the finite \'{e}tale $V$-torsor defined by the following Cartesian diagram
    \begin{center}
        \begin{tikzcd}
            \msf{\wtd{U}}_I \ar[r] \ar[d] & \ov{\msf{V}} \ar[d, "\sigma - \id"]\\
            \msf{U}_I \ar[r] & \ov{\msf{V}}. 
        \end{tikzcd}
    \end{center}
\end{defi}

We study the geometry of $\msf{\wtd{U}}_I$ and apply \Cref{thm:cohAS} to compute the cohomology of an Artin-Schreier local system on it. First, we take a suitable basis of $\ov{V}$. Recall the notation in \Cref{ssec:weight_decomposition}. 

\begin{lem} \label{lem:suitable_basis}
    We can take a set of basis $\{v_\lambda\}_{\lambda \in \Lambda}$ of $\ov{V}$ so that the following hold. 
    \begin{enumerate}
        \item For every $\lambda \in \Lambda$, there is a unique $s_\lambda \in S$ such that $v_\lambda \in V^{s_\lambda}$
        \item For every $\lambda \in \Lambda$, there is a unique $\sigma(\lambda) \in \Lambda$ such that $\sigma(v_\lambda) \in \ov{k}^\times \cdot v_{\sigma(\lambda)}$. Moreover, $\sigma(v_\lambda) = v_{\sigma(\lambda)}$ for every $\lambda \in \Lambda$ such that $s_\lambda \in I_0 \sqcup (-I_0)$. 
        \item For every $s \in S$, let $V_s = \{ v_\lambda \mid s_\lambda = s\}$. Then, $V_{-s}$ is the dual basis of $V_s$. 
    \end{enumerate}
\end{lem}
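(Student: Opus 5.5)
We build the basis one orbit at a time. By the orthogonality condition in \Cref{defi:weight_decomposition}, $\ov{V}$ is the direct sum of the pieces $\ov{V}^C$, where $C$ runs over $\Gal(\ov{k}/k)\times\{\pm 1\}$-orbits in $S$. It therefore suffices to construct a basis of each $\ov{V}^C$ with properties (1)--(3). Fix such an orbit $C$ and a representative $s \in C$. Put $d = d_s$ and let $V^s = (\ov{V}^s)^{\sigma^{d}}$ be the $k_s$-rational form from \Cref{defi:rational_form}. Since $\ov{V}^s = V^s \otimes_{k_s} \ov{k}$, any $k_s$-basis of $V^s$ is a $\ov{k}$-basis of $\ov{V}^s$. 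Here $V^s$ is regarded inside $\ov{V}^s$, which is presumably what ``$v_\lambda \in V^{s_\lambda}$'' in (1) means, up to rescaling by scalars.

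\emph{Asymmetric case} ($C \neq \Gal(\ov{k}/k)\cdot s$). Choose a $k_s$-basis $\{e_j\}$ of $V^s$. Let $\{e_j^*\}$ be the dual basis of $V^{-s}$ with respect to the non-degenerate $k_s$-bilinear pairing $B$ of \Cref{prop:orbit_description}(1). For $0 \le i < d$, set $v_{i,j} = \sigma^i(e_j) \in \ov{V}^{\sigma^i s}$ and $v^*_{i,j} = \sigma^i(e_j^*) \in \ov{V}^{-\sigma^i s}$.
\begin{itemize}
\item Because $\langle -,-\rangle$ is defined over $k$, we get $\langle \sigma^i e_j, \sigma^i e_l^*\rangle = \sigma^i(\delta_{jl}) = \delta_{jl}$, which gives (3).
\item Because $\sigma^d$ fixes $e_j$, the map $\sigma$ permutes these vectors exactly, with $\sigma(v_{d-1,j}) = v_{0,j}$. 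So (2) holds with all scalars equal to $1$.
\end{itemize}
In particular the requirement $\sigma(v_\lambda)=v_{\sigma(\lambda)}$ holds on this orbit whether or not the orbit meets $I_0$.

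\emph{Symmetric case} ($C = \Gal(\ov{k}/k)\cdot s$). By \Cref{prop:orbit_description}(2), $d$ is even and $-s = \sigma^{d/2}(s)$. The orbit $C$ has $s \in S^+$ or $s \in S^-$, and $C$ meets both $S^+$ and $S^-$. Since $\Gal(\ov{k}/k)\cdot s = C$ is not contained in $S^+$, no $s' \in C$ lies in $I_0$. Hence (2) only requires $\sigma(v_\lambda) \in \ov{k}^\times v_{\sigma(\lambda)}$ on $C$.
\begin{itemize}
\item Choose a $k_s$-basis $\{e_j\}$ of $V^s$ that is orthogonal for the skew-Hermitian form $B$. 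This is possible: for $p$ odd, diagonalize the Hermitian form $\epsilon B$ with $\epsilon$ a trace-zero unit; for $p=2$, skew-Hermitian equals Hermitian. We can further arrange $B(e_j,e_l) = c_j \delta_{jl}$ with $c_j \in k_s^\times$.
\item For $0\le i<d/2$, set $v_{i,j} = \sigma^i(e_j) \in \ov{V}^{\sigma^i s}$. Take $v_{i+d/2,j}$ to be the dual vector $\sigma^{i+d/2}(e_j)/\sigma^i(c_j)$, using $\langle \sigma^i e_j, \sigma^{i+d/2} e_l\rangle = \sigma^i(c_j)\delta_{jl}$.
\item The vectors of $\ov{V}^{\sigma^i s}$ and $\ov{V}^{-\sigma^i s}$ are then dual, which gives (3). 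Frobenius sends each basis vector to a nonzero scalar multiple of another, which gives (2).
\end{itemize}

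\emph{Main obstacle.} The delicate point is the exactness $\sigma(v_\lambda)=v_{\sigma(\lambda)}$ demanded on $I_0\sqcup(-I_0)$. This needs every orbit meeting $I_0$ to be asymmetric, and the argument above shows that. The other subtle step is the existence of an orthogonal basis for skew-Hermitian forms over finite fields in characteristic $2$. For that we would argue by anisotropic vectors: $B(v,v)$ lies in the trace-zero line, and a nonzero non-degenerate form has some $v$ with $B(v,v)\neq 0$ by polarization. Then induct on dimension by splitting off $v$ and its orthogonal complement.
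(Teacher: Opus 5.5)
Your proposal is correct and is essentially the paper's proof. Both work orbit by orbit. In the asymmetric case (the only case that can meet $I_0 \sqcup (-I_0)$), you transport a basis of $V^s$ by powers of $\sigma$ and take dual bases. In the symmetric case, you use a $B$-orthogonal basis of $V^s$, so that the duals of the first $d_s/2$ translates are scalar multiples of the $\sigma^{d_s/2}$-translates.

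Your explicit scalars $\sigma^i(c_j)$ and the polarization argument for an orthogonal basis in characteristic $2$ fill in details the paper leaves implicit.
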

\begin{proof}
    Fix a $\sigma$-orbit $C \subset S$ and $s \in C$. We construct $V_t$ simultaneously for $t \in C \cup -C$. 
    
    First, suppose $-s \not \in C$, which holds if $s \in I_0 \sqcup (-I_0)$. We take a basis $V_s$ of $V^s$ and set $V_{\sigma^i(s)} = \sigma^i(V_s)$ for each $i$. This determines $V_t$ for every $t \in C$, and we may set $V_{-t}$ as the dual basis of $V_t$. 
    
    Next, suppose $-s \in C$. Then $C = -C$, so $d_s$ is even and $-s = \sigma^{d_s/2}(s)$. Then, $V^s$ admits an orthogonal basis $V_s$ with respect to a skew-Hermitian form $B$ as in \Cref{prop:orbit_description}. For $0 \leq i < d_s/2$, we set $V_{\sigma^i(s)} = \sigma^i(V_s)$, and for $d_s/2 \leq i < d_s$, we set $V_{\sigma^i(s)}$ as the dual basis of $I_{\sigma^{i-d_s/2}(s)}$. Since $V_s$ is orthogonal with respect to $B$, $V_{\sigma^{d_s/2}(s)}$ is equal to $\sigma^{d_s}(V_s)$ up to scalar, so we get a desired basis $V_t$ for every $t \in C$. 
\end{proof}

From now on, we will fix this basis $\{ v_\lambda \}$ of $\ov{V}$. 

\begin{lem}
    For each $ i \in \{0, 1\}$, let $\Lambda_i = \{ \lambda \in \Lambda \mid s_\lambda \in I_i \}$. For each $\lambda \in \Lambda_1$, there is a positive integer $n_\lambda$ such that 
    \[
        \sigma^{-n_\lambda}(s_\lambda) \in S^- \cap \sigma(S^+), \quad 
        \sigma^{-k}(s_\lambda) \in S^- \quad (1\leq k < n_\lambda). 
    \]
    Let $\beta_\lambda \in \Lambda_1$ be the element such that $v_{\beta_\lambda}$ is dual to $v_{\sigma^{-n_\lambda}(\lambda)}$.  
\end{lem}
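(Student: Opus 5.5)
The plan is to follow the $\sigma$-orbit of $s_\lambda$ backwards and stop at the first index that lands in $S^- \cap \sigma(S^+)$. Fix $\lambda \in \Lambda_1$, so $s := s_\lambda \in I_1 = S^+ \cap \sigma(S^-)$. Then $\sigma^{-1}(s) \in S^-$. Since the action of $\Gal(\ov{k}/k)$ on $S$ is smooth, the orbit of $s$ is finite of size $d_s$, and $\sigma^{-d_s}(s) = s \in S^+$.

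Hence the set of $k \geq 1$ with $\sigma^{-k}(s) \in S^+$ is nonempty; it contains $d_s$. Let $m$ be its minimum. Then $m \geq 2$, because $\sigma^{-1}(s) \in S^-$. Set $n_\lambda = m - 1 \geq 1$. By minimality, $\sigma^{-k}(s) \in S^-$ for all $1 \leq k \leq n_\lambda$, which in particular gives the second condition for $1 \leq k < n_\lambda$. Moreover $\sigma(\sigma^{-n_\lambda}(s)) = \sigma^{-(m-1)+1}(s)$ should be read via $\sigma^{-n_\lambda}(s) = \sigma(\sigma^{-m}(s))$, and $\sigma^{-m}(s) \in S^+$. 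Together with $\sigma^{-n_\lambda}(s) \in S^-$, this gives $\sigma^{-n_\lambda}(s) \in S^- \cap \sigma(S^+)$. Here I use that the polarization is a set decomposition and that $\sigma$ acts bijectively.

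For the definition of $\beta_\lambda$, I use \Cref{lem:suitable_basis}(2): $\sigma$ permutes $\Lambda$ compatibly with $s_\bullet$, so $\sigma^{-n_\lambda}(\lambda) \in \Lambda$ is defined and $s_{\sigma^{-n_\lambda}(\lambda)} = \sigma^{-n_\lambda}(s)$. By (3), the basis $V_{-t}$ is dual to $V_t$, so there is a unique element $\beta_\lambda$ with $v_{\beta_\lambda}$ dual to $v_{\sigma^{-n_\lambda}(\lambda)}$, and $s_{\beta_\lambda} = -\sigma^{-n_\lambda}(s)$.

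It remains to check $\beta_\lambda \in \Lambda_1$. Since $S^- = -S^+$ and negation commutes with $\sigma$, applying negation to $\sigma^{-n_\lambda}(s) \in S^- \cap \sigma(S^+)$ gives $-\sigma^{-n_\lambda}(s) \in S^+ \cap \sigma(S^-) = I_1$. This is the only point needing care, and it is immediate.
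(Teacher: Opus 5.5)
Your proposal is correct and takes the same approach as the paper. The paper says the existence of $n_\lambda$ "follows formally" and then argues $s_{\beta_\lambda} = -\sigma^{-n_\lambda}(s_\lambda) \in S^+ \cap \sigma(S^-) = I_1$; you additionally write out the omitted minimality argument, taking $n_\lambda = m-1$ for the least $m \geq 1$ with $\sigma^{-m}(s_\lambda) \in S^+$. One sentence in your proof is garbled and should simply read $\sigma^{-n_\lambda}(s) = \sigma(\sigma^{-m}(s)) \in \sigma(S^+)$.
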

\begin{proof}
    Since $s_\lambda \in S^+$ and $\sigma^{-1}(s_\lambda) \in S^-$ for $\lambda \in \Lambda_1$, the existence of $n_\lambda$ follows formally. By \Cref{lem:suitable_basis} (3), we can take $\beta_\lambda \in \Lambda$ so that $v_{\beta_\lambda}$ is dual to $v_{\sigma^{-n_\lambda}(\lambda)}$. Since $s_{\beta_\lambda} = - \sigma^{-n_\lambda}(s_\lambda)$, we have $\beta_\lambda \in \Lambda_1$. 
\end{proof}

\begin{lem} \label{lem:pi_U_isom}
    Let $V^{-I_0} \subset V$ be the $k$-rational form of $\bigoplus_{s \in I_0} V^{-s}$. Then, there is a natural isomorphism $\msf{\wtd{U}}_I \cong \msf{U}_I \times V^{-I_0}$. 
\end{lem}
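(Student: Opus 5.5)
The plan is to write out the defining equation of $\msf{\wtd{U}}_I$ in the basis of \Cref{lem:suitable_basis} and split the coordinates according to the type of weight. A point of $\msf{\wtd{U}}_I$ is a pair $(u, v)$ with $u \in \msf{U}_I$, $v \in \ov{\msf{V}}$ and $\sigma(v) - v = u$. Write $v = \sum_{s \in S} v^s$ with $v^s \in \ov{V}^s$. Since $\sigma(\ov{V}^s) = \ov{V}^{\sigma(s)}$, the equation splits into components:
\[
    \sigma(v^{\sigma^{-1}(t)}) - v^t = u^t \qquad (t \in S),
\]
where $u^t = 0$ for $t \notin I$. I will solve these equations orbit by orbit along $\sigma$-orbits in $S$ and show that they determine $v$ as a polynomial function of $u$, except for a free rational parameter in $V^{-I_0}$.

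\textbf{Orbits through $I_0$ or $-I_0$.} Let $C$ be a $\sigma$-orbit contained in $S^+$; then $C \subset I_0$ and $-C \subset S^-$. On $-C$ we have $u^t = 0$, so $v^{-C} = \sum_{t \in -C} v^t$ is $\sigma$-fixed. Hence $v^{-C}$ lies in the $k$-rational form $V^{-C}$, and this is where the free factor $V^{-I_0}$ comes from. On $C$, all points lie in $I_0 \subset I$. Because $C$ is a single finite $\sigma$-orbit, I can solve for $v^t$ recursively around the cycle. Concretely, given $u^C$, the equation $\sigma(w) - w = u^C$ in $\ov{V}^C$ is a Lang equation, and it is not uniquely solvable. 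I therefore need to be careful here: the honest count is that the fiber over $u^C$ is a $V^C$-torsor, while $v^{-C}$ also ranges over $V^{-C}$. So the naive splitting gives $V^C \times V^{-C}$-torsors, and something must cut this down.

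\textbf{Orbits meeting both $S^+$ and $S^-$.} Here $I_1 = S^+ \cap \sigma(S^-)$ marks the entry points into $S^+$. Going along the orbit from an element of $I_1$ through the $S^+$ stretch, only the first point of each maximal $S^+$ run lies in $I$. Each $v^t$ is then determined by $v^{\sigma^{-1}(t)}$ and $u^t$, so the whole orbit is pinned down once a single value is fixed. This matches the bookkeeping with $n_\lambda$ and $\beta_\lambda$ set up just before the statement.

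Reconciling the torsor group $V$ with the stated answer $\msf{U}_I \times V^{-I_0}$ requires an explicit section. The dimensions match: $\msf{\wtd{U}}_I$ is finite étale over $\msf{U}_I$, and $\lvert V \rvert$ points per fiber must equal $\lvert V^{-I_0}\rvert$ times the number of connected components. So I expect the precise form of the isomorphism to be $\msf{\wtd{U}}_I \cong \msf{W} \times V^{-I_0}$, where the projection to $V^{-I_0}$ is $v \mapsto v^{-I_0}$. The complementary factor should map isomorphically onto $\msf{U}_I$ via $(u, v) \mapsto$ the component $\sigma(v) - v$, inverted by an explicit polynomial recursion of the form $v^{t} = \sum_{j} \sigma^{j}(u^{\cdot})$ along orbits.

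\textbf{Main obstacle.} The hard step is showing that the remaining coordinates are uniquely polynomial in $u$. For orbits in $I_0$ this is exactly where the Lang equation seems to obstruct uniqueness. I would first re-examine the definition of $U_I$ and the diagram: most likely only the $V^{-I_0}$ freedom survives because $\ov{V}^{I_0}$-coordinates of $v$ are themselves read off from $u$ via $\sigma$ composed with the dual pairing. Failing that, I would construct the map $\msf{U}_I \times V^{-I_0} \to \msf{\wtd{U}}_I$ directly and check bijectivity on $\ov{k}$-points. Since both sides are finite étale over $\msf{U}_I$, this suffices.
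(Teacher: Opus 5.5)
\textbf{There is a genuine gap.} The map you commit to is the wrong one. You try to parametrize $v$ by $u=\sigma(v)-v$. That is, you want the isomorphism to be compatible with the torsor map $\msf{\wtd{U}}_I\to\msf{U}_I$, with a component $\msf{W}$ mapping isomorphically onto $\msf{U}_I$ via $\sigma(v)-v$.

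Your own count already rules this out. Fibers of the torsor have $\lvert V\rvert$ points, fibers of $\msf{U}_I\times V^{-I_0}\to\msf{U}_I$ have $\lvert V^{-I_0}\rvert$ points, and nothing cuts the former down. Two concrete cases:
\begin{itemize}
\item On a $\sigma$-orbit $C\subset I_0$, your map is the Lang isogeny $v^C\mapsto\sigma(v^C)-v^C$, of degree $\lvert V^C\rvert$.
\item On a length-$2$ orbit $\{t_0,\sigma(t_0)\}$ with $t_0\in I_1$, your equations give $v^{\sigma(t_0)}=\sigma(v^{t_0})$ and $u^{t_0}=\sigma^2(v^{t_0})-v^{t_0}$, again a Lang map.
\end{itemize}
So the approach fails even when $I_0$ is empty. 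Your guess that the $\ov{V}^{I_0}$-components of $v$ are read off from $u$ via the pairing is also wrong: they are unconstrained. The fallback of checking bijectivity on $\ov{k}$-points over $\msf{U}_I$ runs into the same degree mismatch.

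\textbf{The missing idea is to project $v$ itself, not $\sigma(v)-v$.} Regard $\msf{\wtd{U}}_I\subset\ov{\msf{V}}$ via the top arrow of the Cartesian square. The isomorphism is $v\mapsto(\pi_I(v),\pi_{-I_0}(v))$, where $\pi_I(v)=(v^t)_{t\in I}\in U_I$.

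Your component equations give this at once. For $t\notin I$ they say $v^t=\sigma(v^{\sigma^{-1}(t)})$; for $t\in I$ they impose nothing on $v$. Now sort the $\sigma$-orbits:
\begin{itemize}
\item On orbits in $I_0$, the components of $v$ are free.
\item On orbits in $-I_0$, $v$ is $\sigma$-fixed, which produces the factor $V^{-I_0}$.
\item On an orbit meeting both $S^+$ and $S^-$, the orbit meets $I$ only in $I_1$, and $I_1$ is nonempty there. For $t\notin I$, take $d\geq 1$ minimal with $\sigma^{-d}(t)\in I_1$. Then $\sigma^{-i}(t)\notin I$ for $0\leq i<d$, so $v^t=\sigma^{d}(v^{\sigma^{-d}(t)})$.
\end{itemize}
Hence $v$ is a polynomial function of $(\pi_I(v),\pi_{-I_0}(v))$. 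Conversely, any pair in $U_I\times V^{-I_0}$ extended by this recursion satisfies $\sigma(v)-v\in U_I$. This gives an explicit polynomial inverse, and no \'etaleness or point-counting is needed.

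This is the paper's argument, written there in the basis of \Cref{lem:suitable_basis} with $d_\lambda$ as above. The resulting isomorphism is not over $\msf{U}_I$ for the torsor structure, which is why your degree count looked contradictory.
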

\begin{proof}
    Fix $\bb{A}^\Lambda \cong \msf{\ov{V}}$ associated to $\{v_\lambda\}_{\lambda \in \Lambda}$ and let $u_\lambda$ denote the $\lambda$-th coordinate. Let $c_\lambda \in \ov{k}^\times$ be the element such that $v_\lambda = c_\lambda \sigma(v_{\sigma^{-1}(\lambda)})$. Then, $\msf{\wtd{U}}$ is the closed subspace of $\bb{A}^\Lambda$ given by 
    \[
        u_{\sigma^{-1}(\lambda)}^q = c_\lambda u_\lambda
    \]
    for every $\lambda$ such that $s_\lambda \not \in I$. If $s_\lambda \in - I_0$, then $c_\lambda = 1$ and the condition is equivalent to $u_\lambda \in k_{s_\lambda}$ and $u_{\sigma(\lambda)} = \sigma(u_\lambda)$. Otherwise, let $d_\lambda$ be the minimum positive integer such that $\sigma^{-d_\lambda}(\lambda) \in \Lambda_1$. Then, the defining equation of $\msf{\wtd{U}}$ for $u_\lambda$ is written as 
    \[
        u_\lambda = \prod_{0 \leq i < d_\lambda} c_{\sigma^{-i}(\lambda)}^{-q^i} \cdot u^{q^{d_\lambda}}_{\sigma^{-d_\lambda}(\lambda)}. 
    \]
    Thus, the natural projection
    \[
        \wtd{\msf{U}}_I \to \msf{U}_I \times V^{-I_0} ,\quad 
        (u_\lambda) \mapsto \Bigl((u_\lambda)_{\lambda \in \Lambda_1}, \sum_{\substack{\lambda \in \Lambda \\ s_\lambda \in - I_0}} u_\lambda v_\lambda\Bigr) 
    \]
    is an isomorphism. 
\end{proof}

Finally, we can apply \Cref{thm:cohAS} to the following Artin-Schreier local system on $\wtd{\msf{U}}$. Let $U_{I_1} = \bigoplus_{s \in I_1} \ov{V}^s$ and let $\pi_{I_1} \colon \ov{V} \to U_{I_1}$ be the natural projection. 

\begin{prop} \label{prop:cohmology_application}
    Let $f \colon \msf{\wtd{U}}_I \to \bb{A}^1$ be the restriction of the map associated to the polynomial map $v \mapsto \langle \sigma(v), \pi_{I_1}(v) \rangle$ on $\ov{V}$. Let $n = 2\lvert \Lambda_0 \rvert + \lvert \Lambda_1 \rvert$. Then, we have 
    \[
        H_c^i(\msf{\wtd{U}}_I, f^*\cl{L}_{\psi}) = 0 \quad (i \neq n), \quad 
        H_c^n(\msf{\wtd{U}}_I, f^*\cl{L}_{\psi}) = \sqrt{\lvert V \rvert}. 
    \]
    Moreover, if $I_0 = \phi$, the natural map $H_c^n(\msf{\wtd{U}}_I, f^*\cl{L}_{\psi}) \to H^n(\msf{\wtd{U}}_I, f^*\cl{L}_{\psi})$ is an isomorphism. 
\end{prop}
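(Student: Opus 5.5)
Via \Cref{lem:pi_U_isom}, we identify $\msf{\wtd{U}}_I \cong \msf{U}_I \times V^{-I_0}$. The $V^{-I_0}$ factor is a finite set of $\lvert V^{-I_0}\rvert = q^{\sum_{s\in I_0/\sigma} d_s\dim V^s}$ points, i.e. $\lvert\Lambda_0\rvert$ in $q$-exponent. Hence the cohomology is a direct sum over these points of the cohomology of $\msf{U}_I = \msf{U}_{I_0}\times \msf{U}_{I_1}$ with the restricted Artin--Schreier sheaf. The next step is to express $f$ in the coordinates $u_\lambda$ of \Cref{lem:suitable_basis}. On $\msf{\wtd{U}}_I$, every coordinate $u_\mu$ with $s_\mu\notin I$ is determined: either it is a constant from $V^{-I_0}$, or it equals a scalar times $u_{\sigma^{-d}(\mu)}^{q^{d}}$ with $\sigma^{-d}(\mu)\in\Lambda_1$, by the formula in the proof of \Cref{lem:pi_U_isom}. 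Since $\pi_{I_1}$ only retains the coordinates $u_\lambda$ with $\lambda\in\Lambda_1$, and $\langle-,-\rangle$ pairs $V_s$ with its dual basis $V_{-s}$, we get
\[
f=\sum_{\lambda\in\Lambda_1} u_\lambda\cdot(\text{coefficient of } v_{\lambda^\vee} \text{ in } \sigma(v)).
\]
The coordinate of $\sigma(v)$ on $v_{\lambda^\vee}$ is $c\,u_{\sigma^{-1}(\lambda^\vee)}^q$, where $s_{\lambda^\vee}=-s_\lambda$. Chasing back along $\sigma^{-1}$ through $S^-$ (using $n_\lambda$ and $\beta_\lambda$) shows that this term is a nonzero scalar times $u_{\beta}^{q^{m}}$ for some $\beta\in\Lambda_1$ and $m\ge 1$. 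The $I_0$ coordinates enter only through $u_{\mu}$ with $s_\mu\in -I_0$, which are constants, contributing linear terms in the $u_\lambda$, $\lambda\in\Lambda_1$, not depending on the $\msf{U}_{I_0}$ coordinates.

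Therefore, for a fixed point of $V^{-I_0}$, $f$ is a function on $\msf{U}_{I_1}$ plus a linear form, and it is constant along $\msf{U}_{I_0}$. The $\msf{U}_{I_0}$ factor is an affine space of dimension $\lvert\Lambda_0\rvert$ carrying the constant sheaf, which contributes a shift $[-2\lvert\Lambda_0\rvert]$ and a Tate twist. The map $\lambda\mapsto\beta_\lambda$ is a permutation of $\Lambda_1$, and the quadratic part is $\sum_\lambda a_\lambda u_\lambda u_{\beta_\lambda}^{q^{m_\lambda}}$. Decomposing $\Lambda_1$ into cycles of this permutation, each cycle gives exactly a polynomial $f_n$ of the form in \Cref{thm:cohAS}, with $x_{i+1}=u_{\beta(x_i)}$. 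The linear terms are removed by an affine translation: solving $a u x^{q^m}$-type linear equations is possible because the relevant linear map, an additive polynomial composed with the cyclic structure, is surjective up to Artin--Schreier adjustments. Alternatively, the trace computation in \Cref{lem:redalg} shows that the sum over points of $V^{-I_0}$ yields exactly one nonvanishing summand up to isomorphism.

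Applying the Künneth formula and \Cref{thm:cohAS}(1) to each cycle then gives concentration in degree $\lvert\Lambda_1\rvert$ on $\msf{U}_{I_1}$, hence in degree $n$ overall. The dimension is $\lvert V^{-I_0}\rvert\cdot q^{\sum d_i}$, where the total $\sum d_i=\sum_\lambda m_\lambda$ counts, by the chasing above, the indices $\mu\notin\Lambda_1\sqcup\Lambda_0\sqcup(-\Lambda_0)$ plus $\lvert\Lambda_1\rvert$. Combined with the factor from $V^{-I_0}$, this gives $q^{\lvert\Lambda\rvert/2}=\sqrt{\lvert V\rvert}$. This bookkeeping, namely the exact exponents $m_\lambda$ and the handling of the linear terms, is the step I expect to be most delicate. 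When $I_0=\emptyset$, there is no $V^{-I_0}$ factor and no $\msf{U}_{I_0}$ factor, so the comparison $H_c\to H$ follows directly from \Cref{thm:cohAS}(2) and the Künneth formula.
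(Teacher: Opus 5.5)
Your outline matches the paper's proof. You identify $\msf{\wtd{U}}_I\cong \msf{U}_I\times V^{-I_0}$, write $f$ on a component as $\sum_{\lambda\in\Lambda_1}a_\lambda u_\lambda u_{\beta_\lambda}^{q^{n_\lambda}}$, split $\Lambda_1$ into orbits of $\lambda\mapsto\beta_\lambda$, and apply \Cref{thm:cohAS} with K\"unneth, including the $H_c^n\to H^n$ comparison when $I_0=\emptyset$. However, two steps are wrong as written.

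\emph{The linear terms do not exist.} Since $\pi_{I_1}(v)$ has components only in $I_1$, $f(v)$ only involves the components of $\sigma(v)$ in $-I_1=S^-\cap\sigma(S^+)$. These come from the components of $v$ in $\sigma^{-1}(-I_1)\subset S^+$. Such weights lie in Galois orbits meeting $S^-$, hence in $S^+\setminus I_0$. On $\msf{\wtd{U}}_I$ the corresponding coordinates are either $\Lambda_1$-coordinates or are determined by them through chains inside $S^+\setminus I$, so no $\pm I_0$ coordinate ever enters $f$. The paper states this as follows: for $w\in V^{-I_0}$ one has $\sigma(w)=w$, $\pi_{I_1}(w)=0$ and $\langle w,\pi_{I_1}(v)\rangle=0$ because $I_0\cap I_1=\emptyset$. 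Hence $f(v+w)=f(v)$, and translation by $V^{-I_0}$ permutes the components while preserving $f^*\cl{L}_\psi$. So all $\lvert V^{-I_0}\rvert$ components contribute identically; this is the observation you are missing.

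What you wrote instead does not hold up. Removing linear terms by a translation that is ``surjective up to Artin--Schreier adjustments'' is only asserted. The fallback via \Cref{lem:redalg} is not a valid argument, since that lemma concerns a one-variable character sum. Read as ``only one component contributes'', it would also contradict the factor $\lvert V^{-I_0}\rvert$ you use at the end.

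\emph{The exponent count is off by a factor of two.} The indices $\mu\notin\Lambda_1\sqcup\Lambda_0\sqcup(-\Lambda_0)$, plus $\lvert\Lambda_1\rvert$, number $\lvert\Lambda\rvert-2\lvert\Lambda_0\rvert$. This would give a total exponent $\lvert\Lambda\rvert-\lvert\Lambda_0\rvert$, not $\lvert\Lambda\rvert/2$.

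The correct count goes as follows. Your $m_\lambda$ equals $n_\lambda$. The segments $\sigma^{-1}(\lambda),\dots,\sigma^{-n_\lambda}(\lambda)$ for $\lambda\in\Lambda_1$ partition the indices $\mu$ with $s_\mu\in S^-\setminus(-I_0)$. Dually, they correspond to $\Lambda_1$ together with the indices over $S^+\setminus I$; note that the backward chase from the dual index $\lambda^\vee$ runs through $S^+$, not $S^-$. Hence $\sum_\lambda n_\lambda=\tfrac12\lvert\Lambda\setminus(\Lambda_0\sqcup(-\Lambda_0))\rvert=\tfrac12\lvert\Lambda\rvert-\lvert\Lambda_0\rvert$. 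Combined with $\lvert V^{-I_0}\rvert=q^{\lvert\Lambda_0\rvert}$, this gives $q^{\lvert\Lambda\rvert/2}=\sqrt{\lvert V\rvert}$.

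With these two corrections your argument coincides with the paper's.
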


\begin{proof}
    We keep the notation in the proof of \Cref{lem:pi_U_isom}. There is an isomorphism $\msf{U}_I \times V^{-I_0} \cong \msf{\wtd{U}}_I$ which induces an embedding $\msf{U}_I \subset \msf{\ov{V}}$ given by 
    \[
        u_\lambda = \prod_{0 \leq i < d_\lambda} c_{\sigma^{-i}(\lambda)}^{-q^i} \cdot u^{q^{d_\lambda}}_{\sigma^{-d_\lambda}(\lambda)} \quad (s_\lambda \notin I \sqcup (-I_0)), \quad 
        u_\lambda = 0 \quad (s_\lambda \in - I_0). 
    \]
    By construction, $f$ is stable under the translation by $V^{-I_0}$, so it is enough to compute on the above component $\msf{U}_I \subset \msf{\ov{V}}$. 
    
    For each $\lambda \in \Lambda$, let $-\lambda \in \Lambda$ be the element such that $v_{-\lambda}$ is dual to $v_\lambda$. For $\lambda \in \Lambda_1$, we have $d_{-\lambda} = n_\lambda$ and $\sigma^{-k_{-\lambda}}(-\lambda) = \beta_\lambda$. Thus, $f_I \colon \bb{A}^{\Lambda_0 \sqcup \Lambda_1} \cong \msf{U}_I \xrightarrow{f} \bb{A}^1$ is given by the polynomial
    \[
        \left\langle \sum_{\lambda \in \Lambda} u_i^q \sigma(v_i), \sum_{\lambda \in \Lambda_1} u_\lambda v_\lambda \right\rangle = \sum_{\lambda \in \Lambda_1} a_\lambda u_\lambda u_{\beta_\lambda}^{q^{n_\lambda}} 
    \]
    for some $a_\lambda \in \ov{k}^\times$. 
    
    Now, take a decomposition $\Lambda_1 = \bigsqcup C$ into orbits under the map $\lambda \mapsto \beta_\lambda$. For each $C$, let $f_C\colon \bb{A}^{C} \to \bb{A}^1$ be the map associated to the polynomial $\sum_{\lambda \in C} a_\lambda u_\lambda u_{\beta_\lambda}^{q^{n_\lambda}}$. Then, we have
    \[
        (\bb{A}^{\Lambda_0 \sqcup \Lambda_1}, f_I^*\cl{L}_\psi) \cong \prod_{C} (\bb{A}^{C}, f_C^* \cl{L}_\psi) \times (\bb{A}^{\Lambda_0}, \Qla). 
    \]
    Let $n_C = \sum_{\lambda \in C} n_\lambda$. By \Cref{thm:cohAS} (1), we have 
    \[
        R\Gamma_c(\bb{A}^{C}, f_C^*\cl{L}_\psi) \cong \Qla^{\oplus q^{n_C}}[- \lvert C \rvert] ,\quad 
        R\Gamma_c(\bb{A}^{\Lambda_0}, \Qla) \cong \Qla[- 2 \lvert \Lambda_0 \rvert]. 
    \]
    Since $\sum_C n_C = \tfrac{1}{2} \lvert \Lambda \backslash (\Lambda_0 \sqcup (-\Lambda_0)) \rvert$ and $\dim_k V^{-I_0} = \lvert \Lambda_0 \rvert$, we get 
    \[
        R\Gamma_c(\bb{A}^{\Lambda_0 \sqcup \Lambda_1}, f_I^*\cl{L}_\psi) \cong \Qla^{\oplus \sqrt{\lvert V \rvert} / \lvert V^{-I_0} \rvert}[-n]
    \]
    by the K\"{u}nneth formula. Since $\msf{U}_I \times V^{-I_0} \cong \msf{\wtd{U}}_I$, the first claim follows. The second claim follows from \Cref{thm:cohAS} (2). 
\end{proof}

\begin{rmk} \label{rmk:simple_polarization}
    One simple way to choose a polarization is that for each orbit $C \subset S$ under $\Gal(\ov{k} / k) \times \{ \pm 1\}$, we choose a representative $s \in C$ and if $C \neq \Gal(\ov{k} / k) \cdot s$, we take a polarization
    \[
        C = \Gal(\ov{k} / k) \cdot s \sqcup \Gal(\ov{k} / k) \cdot (-s), 
    \]
    and if $C = \Gal(\ov{k} / k) \cdot s$, we take a polarization
    \[
        C = \{ \sigma^i(s) \mid 0 \leq i < d_s / 2 \} \sqcup \{ \sigma^i(s) \mid d_s / 2 \leq i < s \}. 
    \]
    Then, we only need the case $n = 1$ for \Cref{thm:cohAS}, which essentially reduces to \cite[(8.11)]{Del74}. Nevertheless, the above generality is needed in application to positive-depth Deligne-Lusztig induction and our forthcoming work on the generalization of \cite{BW16}. 
\end{rmk}


\section{Geometric realization of Heisenberg-Weil representations} \label{sec:geometric_realization}

In this section, we realize twisted Heisenberg-Weil representations $\kappa_{x, r, \phi}$ as $\psi$-isotypic parts of the \'{e}tale cohomology of certain Lang torsors of affine spaces. In the unramified setting, such Lang torsors arise from the Deligne-Lusztig construction for Heisenberg groups. 
We keep the notation in \Cref{sec:twisted_Heisenberg_Weil} and let $\phi \colon \msf{m}_{x, r} \to k$ be an $r$-generic $M$-stable map. 

\subsection{Cohomology of Heisenberg torsors}

In this section, we introduce a finite \'{e}tale $\msf{H}_{x, r, \phi}$-torsor of an affine space and compute its cohomology by applying \Cref{prop:cohmology_application}. 

Under \Cref{ass:without_symmetric_ramification}, we have a genuine weight decomposition of $\msf{m}^\perp_{x, r}$ indexed by $\Phi_e^M$. To apply the results of \Cref{ssec:Lang_torsor}, we fix a polarization $\Phi_e^M = \Phi_e^{M+} \sqcup \Phi_e^{M-}$. 

\begin{defi}
    Let $\msf{u}^+_{r/2} = \msf{u}_{\Phi_e^{M+}, r/2}$ and $\msf{u}^-_{r/2} = \msf{u}_{\Phi_e^{M-}, r/2}$. Let
    \[
        i^+ \colon \msf{u}^+_{r/2} \to H_{x, r, \phi, \ov{k}} ,\quad 
        i^- \colon \msf{u}^-_{r/2} \to H_{x, r, \phi, \ov{k}}
    \]
    be homomorphisms lifting the natural inclusions as in \Cref{prop:lifting_to_Heisenberg} and let 
    \[
        i = (i^- \circ \pi^-) \cdot (i^+\circ \pi^+) \colon \msf{m}^\perp_{x, r/2, \ov{k}} \to H_{x, r, \phi, \ov{k}}. 
    \]
    be a section of the quotient $H_{x, r,\phi} / \bb{A}^1 \cong \msf{m}^\perp_{x, r/2}$ over $\ov{k}$. Here, $\pi^+ = \pi_{\Phi_e^{M+}}$ and $\pi^- = \pi_{\Phi_e^{M-}}$. 
\end{defi}

\begin{defi} \label{defi:diag_W}
    Let $L_{x, r, \phi} \colon H_{x, r, \phi} \to H_{x, r, \phi}$ be the Lang torsor of $H_{x, r, \phi}$ sending $h$ to $\sigma(h) h^{-1}$. 
    Let $I \subset \Phi_e^{M+}$ be the characteristic index set of the chosen polarization and let $\msf{W}_{I, r} \to \msf{u}_{I, r}$ be the finite \'{e}tale $\msf{H}_{x, r, \phi}$-torsor given by the Cartesian diagram
    \begin{center}
        \begin{tikzcd}
            \msf{W}_{I, r} \ar[r] \ar[d] & H_{x, r, \phi, \ov{k}} \ar[d, "L_{x,r, \phi}"] \\
            \msf{u}_{I, r/2} \ar[r] & H_{x, r, \phi, \ov{k}}. 
        \end{tikzcd}
    \end{center}
    Here, the bottom map is the map induced by \Cref{prop:lifting_to_Heisenberg}. 
\end{defi}

Note that $L_{x, r, \phi}$ is a finite \'{e}tale $\msf{H}_{x, r, \phi}$-torsor because $H_{x, r, \phi}$ is the perfection of a smooth linear algebraic group over $k$ by \cite[Lemma A.26]{Zhu17}. 

Let $L_{\msf{m}^\perp_{x, r/2}} \colon \msf{m}^\perp_{x, r/2} \to \msf{m}^\perp_{x, r/2}$ be the Lang torsor associated to $\msf{m}^\perp_{x, r/2}$. There is a natural commutative diagram
\begin{center}
    \begin{tikzcd}
        H_{x, r, \phi} \ar[r] \ar[d, "L_{x, r}"']& \msf{m}^\perp_{x, r/2} \ar[d, "L_{\msf{m}^\perp_{x, r/2}}"] \\
        H_{x, r, \phi} \ar[r] & \msf{m}^\perp_{x, r/2}. 
    \end{tikzcd}
\end{center}
There is a natural inclusion $k \subset \msf{H}_{x, r, \phi}$ and we set $\wtd{\msf{u}}_{I, r/2} = \msf{W}_{I, r} / k$. Then, $\wtd{\msf{u}}_{I, r/2} \to \msf{u}_{I, r/2}$ is the pullback of $L_{\msf{m}^\perp_{x, r/2}}$ along $\msf{u}_{I, r/2} \subset \msf{m}^\perp_{x, r/2, \ov{k}}$, so it is compatible with the notation in \Cref{defi:fin_et_torsor}. Let $I_1 = \Phi_e^{M+} \cap \sigma(\Phi_e^{M-})$. 

\begin{prop} \label{prop:identification_local_system}
    Let $f \colon \msf{\wtd{u}}_{I, r/2} \to \bb{A}^1$ be the map associated to the restriction of the polynomial map $v \mapsto \langle \sigma(v), \pi_{I_1}(v) \rangle_{\phi}$ on $\msf{m}^\perp_{x, r/2, \ov{k}}$. Then, we have an isomorphism 
    \[
        f^* \cl{L}_\psi \cong \msf{W}_{I, r} \times^{k, \psi} \Qla
    \]
    of rank $1$ \'{e}tale $\Qla$-local systems on $\msf{\wtd{u}}_{I, r/2}$. 
\end{prop}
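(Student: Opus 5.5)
To prove the isomorphism, I would exhibit the $k$-torsor $\msf{W}_{I, r} \to \wtd{\msf{u}}_{I, r/2}$ as the pullback of the Lang torsor $L_k \colon \bb{A}^1 \to \bb{A}^1$, $t \mapsto t^q - t$, along $f$ (up to a sign convention fixed by the $k$-action). The statement then follows from $\cl{L}_\psi = L_k \times^\psi \Qla$.

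\textbf{Coordinates on the Heisenberg group.} Use the section $i = (i^- \circ \pi^-)\cdot(i^+ \circ \pi^+)$ to write every $\ov{k}$-point of $H_{x, r, \phi, \ov{k}}$ uniquely as $z \cdot i(v)$ with $z \in \bb{A}^1 = \msf{m}_{x,r}/\Ker(\phi)$ and $v \in \msf{m}^\perp_{x, r/2, \ov{k}}$. Because $\msf{u}^\pm_{r/2}$ are isotropic and $i^\pm$ are homomorphisms, the commutator relation gives
\[
i^+(a)\, i^-(b) = \langle a, b\rangle_\phi \cdot i^-(b)\, i^+(a),
\]
up to the sign convention for the commutator pairing.

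\textbf{Computing the Lang map.} Next compute $L_{x,r,\phi}(z\, i(v)) = \sigma(z)\, \sigma(i(v))\, i(v)^{-1}\, z^{-1}$, using that $z$ is central. The Frobenius does not preserve the polarization: $\sigma(i^\pm(\pi^\pm v))$ is again a product of lifts, but it is split according to $\sigma(\Phi^{M\pm}_e)$ rather than $\Phi^{M\pm}_e$. I would reorder $\sigma(i(v))\, i(v)^{-1}$ into the normal form $z' \cdot i(\sigma(v) - v)$. Each transposition of a $\msf{u}^+$-factor past a $\msf{u}^-$-factor contributes a commutator term. The task is to show that all these terms add up to
\[
-\langle \sigma(v), \pi_{I_1}(v)\rangle_\phi,
\]
modulo expressions of the form $c^q - c$ with $c$ a polynomial in $v$.

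\textbf{Why the answer is $f$.} The only pairings that survive are those between the $\Phi^{M+}_e$-part of $\sigma(v)$ and the $\Phi^{M-}_e$-part of $v$, or vice versa. Pairings $\langle \sigma(v)^\alpha, v^{-\alpha}\rangle$ with $\alpha \in \Phi^{M+}_e \cap \sigma(\Phi^{M+}_e)$ can be absorbed into a coboundary $c^q - c$ via the reparametrization $z \mapsto z + c(v)$. This mirrors the substitution $t \mapsto t + \sum (\cdots)^{q^i}$ in \Cref{prop:compute=2}. What remains are the terms indexed by $I_1 = \Phi_e^{M+}\cap \sigma(\Phi_e^{M-})$.

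\textbf{Restricting to the torsor.} On $\msf{W}_{I,r}$ the condition is $L_{x,r,\phi}(z\, i(v)) \in i(\msf{u}_{I,r/2})$, and $i(\msf{u}_{I,r/2})$ has central coordinate $0$ by \Cref{prop:lifting_to_Heisenberg}. So the equation becomes $z^q - z = f(v) + (c^q - c)$. After the change of variables $z \mapsto z + c$, which commutes with the central $k$-action, this identifies $\msf{W}_{I,r}$ with $\wtd{\msf{u}}_{I,r/2} \times_{f, \bb{A}^1, L_k} \bb{A}^1$ as $k$-torsors, and the isomorphism of local systems follows.

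\textbf{Main obstacle.} The hard step is the bookkeeping of the reordering: controlling which commutators appear, and showing that the non-$I_1$ terms are genuinely coboundaries on $\wtd{\msf{u}}_{I,r/2}$. For that I would use the rational-form relations from \Cref{lem:pi_U_isom}, namely that the coordinates outside $I$ are $q$-power functions of those in $I_1$ and $k$-rational on $-I_0$. I would first check the computation in the two model cases of \Cref{rmk:simple_polarization} (polarized and unitary), and then reduce the general case to them orbit by orbit using \Cref{lem:Heisenberg_decomposition}.
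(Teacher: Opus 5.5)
\textbf{Verdict: right strategy, but the decisive computation is left undone, and both of your suggested ways of doing it fail as stated.}

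Your framework is the paper's. You write points of $H_{x,r,\phi,\ov{k}}$ as $a \cdot i(v)$, and over $v \in \wtd{\msf{u}}_{I,r/2}$ the fiber of $\msf{W}_{I,r}$ is cut out by $a^q - a = \sigma(i(v))^{-1}\, i^+(\sigma(v)-v)\, i(v)$. What remains is to identify this central function with $f$. That identification is the entire content of the proof, and you leave it as ``bookkeeping'' with two suggestions that do not work.

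\begin{enumerate}
\item Reducing ``orbit by orbit to the model cases of \Cref{rmk:simple_polarization}'' does not cover the statement. \Cref{lem:Heisenberg_decomposition} only reduces you to a single $\Sigma$-orbit $C$. The polarization induced on $C$ is arbitrary, not one of the two model ones, and the general ones are exactly those needed in \Cref{sec:positive_depth_induction}.
\item The pairings $\langle \sigma(v)_\alpha, v_{-\alpha} \rangle_\phi$ with $\alpha \in \Phi_e^{M+} \cap \sigma(\Phi_e^{M+})$ are not coboundaries individually. For $\alpha \notin I$, such a pairing is, up to sign, a $q$-power of the $\alpha_0$-term of $f$ for some $\alpha_0 \in I_1$. ``Absorbing'' them would therefore change the answer; they disappear for a different reason.
\end{enumerate}

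The idea that makes the computation short is the paper's. Since $\langle \msf{u}_{\alpha,r/2}, \msf{u}_{\beta,r/2}\rangle_\phi = 0$ for $\beta \neq -\alpha$, the lifts $i_\alpha$ and $i_\beta$ commute. So the central element splits as a sum over $\alpha \in \Phi_e^{M+}$ of the same expression built from the $\pm\alpha$ factors alone. Use $\sigma \circ i_\beta = i_{\sigma(\beta)} \circ \sigma$ and $v_\beta = \sigma(v_{\sigma^{-1}(\beta)})$ for $\beta \notin I$. The order of $i_\alpha$ and $i_{-\alpha}$ in $\sigma(i(v))$ then depends on whether $\sigma^{-1}(\alpha)$ lies in $\Phi_e^{M-}$ or $\Phi_e^{M+}$.

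\begin{itemize}
\item If $\sigma^{-1}(\alpha) \in \Phi_e^{M-}$, i.e.\ $\alpha \in I_1$, the block is the commutator $[i_{-\alpha}(-v_{-\alpha}), i_\alpha(-v_\alpha)]$. This is the $\alpha$-term of $f$.
\item If $\sigma^{-1}(\alpha) \in \Phi_e^{M+}$, the block is $[i_{-\alpha}(-v_{-\alpha}), i_\alpha(\sigma(v_{\sigma^{-1}(\alpha)}) - v_\alpha)]$. This vanishes unless $\alpha \in I_0$.
\item For $\alpha \in I_0$ the block does not vanish. The paper's justification ``since $\pm\alpha \notin I$'' only covers $\alpha \notin I_0$. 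Set $c_\beta = [i_{-\beta}(-v_{-\beta}), i_\beta(v_\beta)]$ and use $v_{-\alpha} = \sigma(v_{-\sigma^{-1}(\alpha)})$. Bilinearity gives the block as $c_{\sigma^{-1}(\alpha)}^q - c_\alpha$. Summed over each Frobenius orbit in $I_0$, this telescopes to $C^q - C$.
\end{itemize}

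So your allowance for Artin--Schreier coboundaries is genuinely needed. With it, your final shift of the central coordinate by $C$ completes the argument.
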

\begin{proof}
    Let $A \in \PerfAlg$. The fiber of $\msf{W}_{I, r} \to \wtd{\msf{u}}_{I, r/2}$ at a point $v \in \wtd{\msf{u}}_{I, r/2}(A)$ classifies the set of elements $a \in A$ such that 
    \[
        \sigma(a i(v)) (a i(v))^{-1}= i^+(\sigma(v) - v).  
    \]
    Since $\bb{A}^1 \subset H_{x, r, \phi}$ is central, this equation is equivalent to 
    \begin{equation} \label{eq:rewrite_moduli}
        a^q - a = \sigma(i(v))^{-1} i^+(\sigma(v) - v) i(v). 
    \end{equation}
    It is enough to show that the morphism $\wtd{\msf{u}}_{I, r/2} \to \bb{A}^1$ associated to the right hand side is equal to the map $v \mapsto \langle \sigma(v),  \pi_{I_1}(v) \rangle_{\phi}$. 

    For each $\alpha \in \Phi_e^M$, let $v_\alpha \in \msf{u}_{\alpha, r/2}(A)$ be the projection of $v$. Then, $\sigma(v_{\sigma^{-1}(\alpha)}) = v_\alpha$ if $\alpha \not \in I$. Let $i_\alpha \colon \msf{u}_{\alpha, r/2} \to H_{x, r, \phi, \ov{k}}$ be the canonical homomorphism. Since $i_\alpha$ and $i_\beta$ commute unless $\beta = -\alpha$, the right hand side of \eqref{eq:rewrite_moduli} can be decomposed into a sum over $\alpha \in \Phi_e^{M+}$, where each factor is a product of the terms in $i_\alpha$ and $i_{-\alpha}$. 

    Let $\alpha \in \Phi_e^{M+}$. First, suppose $\sigma^{-1}(\alpha) \in \Phi_e^{M+}$. Then, the involved terms in \eqref{eq:rewrite_moduli} are 
    \[
        \sigma( i_{-\sigma^{-1}(\alpha)}(v_{-\sigma^{-1}(\alpha)}) i_{\sigma^{-1}(\alpha)}(v_{\sigma^{-1}(\alpha)}) )^{-1} \cdot i_\alpha(\sigma(v_{\sigma^{-1}(\alpha)}) - v_\alpha) \cdot i_{-\alpha}(v_{-\alpha}) i_\alpha(v_\alpha). 
    \]
    Since $\pm \alpha \not \in I$ and $i^-$, $i^+$ are homomorphisms, the above product vanishes. 

    Next, suppose $\sigma^{-1}(\alpha) \in \Phi_e^{M-}$. Then, the involved terms in \eqref{eq:rewrite_moduli} are 
    \[
        \sigma( i_{\sigma^{-1}(\alpha)}(v_{\sigma^{-1}(\alpha)}) i_{-\sigma^{-1}(\alpha)}(v_{-\sigma^{-1}(\alpha)}) )^{-1} \cdot i_\alpha(\sigma(v_{\sigma^{-1}(\alpha)}) - v_\alpha) \cdot i_{-\alpha}(v_{-\alpha}) i_\alpha(v_\alpha). 
    \]
    Since $-\alpha \notin I$, this equals the commutator $[i_{-\alpha}(-v_{-\alpha}), i_\alpha(-v_\alpha)]$. Then, \eqref{eq:rewrite_moduli} is rewritten as 
    \[
        a^q - a = \sum_{\alpha \in I_1} \langle \sigma(v_{-\sigma^{-1}(\alpha)}), v_\alpha \rangle_\phi. 
    \]
    Thus, the right hand side equals $\langle \sigma(v), \pi_{I_1}(v) \rangle_\phi$, so we get the claim. 
\end{proof}

\begin{cor} \label{cor:Heisenberg_realization}
    Let
    $
        H_c^\bullet(\msf{W}_{I, r}, \Qla)[\psi] 
    $
    be the $\psi$-isotypic part of $H_c^\bullet(\msf{W}_{I, r}, \Qla)$ under the $k$-action. For some explicit $n$, we have 
    \[
        H_c^i(\msf{W}_{I, r}, \Qla)[\psi] = 0 \quad (i \neq n) ,\quad 
        \dim H_c^n(\msf{W}_{I, r}, \Qla)[\psi] = \lvert \msf{m}^\perp_{x, r/2} \rvert^{1/2}. 
    \]  
    In particular, $H_c^n(\msf{W}_{I, r}, \Qla)[\psi]$ is the Heisenberg representation of $\msf{H}_{x, r, \phi}$ with central character $\psi$. Moreover, $n = \dim \msf{W}_{I, r}$ if and only if $I_0 = \phi$, and in that case, we have 
    \[
        H_c^n(\msf{W}_{I, r}, \Qla)[\psi] \cong H^n(\msf{W}_{I, r}, \Qla)[\psi]. 
    \]  
\end{cor}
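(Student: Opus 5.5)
The plan is to reduce everything to \Cref{prop:cohmology_application} through the quotient map $\msf{W}_{I, r} \to \wtd{\msf{u}}_{I, r/2} = \msf{W}_{I, r}/k$. Since $k$ acts freely on $\msf{W}_{I, r}$ with quotient $\wtd{\msf{u}}_{I, r/2}$, the pushforward $p_*\Qla$ along $p \colon \msf{W}_{I, r} \to \wtd{\msf{u}}_{I, r/2}$ splits as $\bigoplus_{\chi} \msf{W}_{I, r} \times^{k, \chi} \Qla$ over characters $\chi$ of $k$. Taking $\psi$-isotypic parts (up to the usual inverse convention) gives
\[
    H_c^\bullet(\msf{W}_{I, r}, \Qla)[\psi] \cong H_c^\bullet(\wtd{\msf{u}}_{I, r/2}, \msf{W}_{I, r} \times^{k, \psi} \Qla),
\]
and similarly for ordinary cohomology. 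This holds because $p$ is finite, so $p_! = p_*$. By \Cref{prop:identification_local_system}, the coefficient sheaf is $f^*\cl{L}_\psi$, with $f$ the map $v \mapsto \langle \sigma(v), \pi_{I_1}(v)\rangle_\phi$.

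Next, apply \Cref{prop:cohmology_application} to $V = \msf{m}^\perp_{x, r/2}$ with the symplectic form $\langle -, - \rangle_\phi$ and the genuine weight decomposition indexed by $\Phi_e^M$ from \Cref{prop:Vtau_decomposition}; the latter uses \Cref{ass:without_symmetric_ramification}. The identification $\wtd{\msf{u}}_{I, r/2} = \msf{\wtd{U}}_I$ was noted just before \Cref{prop:identification_local_system}. This gives vanishing outside degree $n = 2\lvert\Lambda_0\rvert + \lvert\Lambda_1\rvert$ and dimension $\sqrt{\lvert \msf{m}^\perp_{x, r/2}\rvert}$ in degree $n$. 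It also gives the isomorphism $H_c^n \cong H^n$ when $I_0 = \emptyset$.

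Since the central $k \subset \msf{H}_{x, r, \phi}$ acts on $H_c^n(\msf{W}_{I, r}, \Qla)[\psi]$ by $\psi$, that space is a representation of $\msf{H}_{x, r, \phi}$ with central character $\psi$. By Stone–von Neumann for finite Heisenberg groups, any such representation is a multiple of the unique irreducible one, whose dimension is $\lvert \msf{m}^\perp_{x, r/2}\rvert^{1/2}$. The dimension count therefore forces multiplicity one, so $H_c^n(\msf{W}_{I, r}, \Qla)[\psi]$ is the Heisenberg representation.

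It remains to compare $n$ with $\dim \msf{W}_{I, r} = \dim \msf{u}_{I, r/2} = \lvert\Lambda_0\rvert + \lvert\Lambda_1\rvert$, since $\msf{W}_{I, r} \to \msf{u}_{I, r/2}$ is finite étale. We have $n - \dim = \lvert \Lambda_0\rvert$, and $\Lambda_0$ is empty exactly when $I_0 = \emptyset$: each nonzero weight space contributes basis vectors, and weights with $\ov{V}^s = 0$ can be discarded from $S$ from the start. The main bookkeeping obstacle is this last point, together with keeping track of the sign convention for isotypic parts. If $\psi$ versus $\psi^{-1}$ arises, the argument still goes through by replacing $\psi$ with $\psi^{-1}$ in \Cref{prop:identification_local_system}, because the results of \Cref{sec:cohAS} hold for any nontrivial character.
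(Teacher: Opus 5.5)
Your proposal is correct and follows the paper's own argument. You pass to $\wtd{\msf{u}}_{I, r/2} = \msf{W}_{I,r}/k$, identify the $\psi$-isotypic part with the cohomology of $f^*\cl{L}_{\psi^{\pm 1}}$ via \Cref{prop:identification_local_system}, read off the vanishing, the dimension and the comparison $H_c^n \cong H^n$ from \Cref{prop:cohmology_application}, and conclude the Heisenberg property by Stone--von Neumann (the paper cites \cite[Lemma 1.2]{Ger77}). Your explicit count $n - \dim \msf{W}_{I,r} = \lvert \Lambda_0 \rvert$ fills in a step the paper leaves implicit, as does your caveat that orbits whose $\tau$-invariant weight spaces vanish must be discarded for the ``only if'' direction.
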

\begin{proof}
    By \Cref{prop:identification_local_system}, we have
    \[
        H_c^\bullet(\msf{W}_{I, r}, \Qla)[\psi] \cong H_c^\bullet(\msf{\wtd{u}}_I, \msf{W}_{I, r} \times^{k, \psi^{-1}} \Qla) \cong H_c^\bullet(\msf{\wtd{u}}_I, f^* \cl{L}_{\psi^{-1}}). 
    \]
    Then, we get the dimension of $H_c^\bullet(\msf{W}_{I, r}, \Qla)[\psi]$ by \Cref{prop:cohmology_application}. As in \cite[Lemma 1.2]{Ger77}, it is a general consequence that $\msf{H}_{x, r, \phi} \circlearrowright H_c^n(\msf{W}_{I, r}, \Qla)[\psi]$ is a Heisenberg representation. The second claim is a consequence of the second claim of \Cref{prop:cohmology_application}. 
\end{proof}

By the geometric construction,  it is easy to extend the Heisenberg representation $\msf{H}_{x, r, \phi} \circlearrowright H_c^n(\msf{W}_{I, r}, \Qla)[\psi]$ to a Heisenberg-Weil representation. 

\begin{lem}
    The adjoint action of $\msf{M}_{x, r, \phi}$ on $H_{x, r, \phi}$ stabilizes $\msf{W}_{I, r}$. In particular, there is a natural action $\msf{HW}_{x, r, \phi} \circlearrowright \msf{W}_{I, r}$. 
\end{lem}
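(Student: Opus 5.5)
To prove the lemma, I would show that for every $m \in \msf{M}_{x, r, \phi}$, the automorphism $\ad(m)$ of $H_{x, r, \phi, \ov{k}}$ preserves both the Lang map $L_{x, r, \phi}$ and the bottom embedding $\msf{u}_{I, r/2} \to H_{x, r, \phi, \ov{k}}$. Then $\ad(m)$ induces an automorphism of the fiber product $\msf{W}_{I, r}$.

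The first compatibility holds because $\msf{M}_{x, r, \phi}$ is the image of $M(F)_x$, and $M(F)_x$ acts on $H_{x, r, \phi}$ through $k$-rational automorphisms (\Cref{lem:adjoint_M_xrphi}). Hence $\ad(m)$ commutes with $\sigma$, and therefore
\[
  L_{x,r,\phi}(m h m^{-1}) = \sigma(m h m^{-1}) (m h m^{-1})^{-1} = m\,\sigma(h)h^{-1} m^{-1}.
\]

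For the second compatibility, I would argue as in the proof of \Cref{lem:adjoint_M_xrphi}. The adjoint action of $M(F)_x$ on $\msf{m}^\perp_{E, x, r/2}$ preserves $\msf{Z}_{\msf{M}}$-weights, because $M$ centralizes $Z_M^\circ$. So each $\msf{u}_{\alpha, r/2}$ with $\alpha \in \Phi_e^M$ is $M(F)_x$-stable, and hence so is $\msf{u}_{I, r/2} = \bigoplus_{\alpha \in I} \msf{u}_{\alpha, r/2}$.

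It remains to check that the lift $\msf{u}_{I, r/2} \to H_{x, r, \phi, \ov{k}}$ from \Cref{prop:lifting_to_Heisenberg} is equivariant. This is the only point needing care. I would argue that this lift is canonical: it is the unique homomorphism from $\msf{u}_{I,r/2}$ lifting the inclusion whose image lies in the preimage $H_I$ of $\msf{u}_{I, r/2}$ and that is stable under the $\msf{Z}_{\msf{M}}$-torus action. Equivalently, the image of the lift is the $\msf{Z}_{\msf{M}}$-weight-$\neq 0$ part of the abelian group $H_I$. Since $I \cap (-I) = \emptyset$, $H_I$ is abelian, being a central extension of an isotropic space.

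Concretely, another homomorphic lift differs from this one by a homomorphism $\msf{u}_{I,r/2} \to \bb{A}^1$. Torus equivariance forces such a difference to vanish, because $\msf{Z}_{\msf{M}}$ acts trivially on the center $\bb{A}^1$ and by nontrivial weights on each $\msf{u}_{\alpha, r/2}$. Since $M(F)_x$ commutes with $\msf{Z}_{\msf{M}}$, the conjugate $\ad(m) \circ i_I \circ \ad(m)^{-1}$ is again a torus-equivariant homomorphic lift, so it equals $i_I$. (Alternatively, one can check directly that $M(F)_x$ permutes the root-group lifts $L^+\cl{U}_{\alpha, x, r/2}$ within a weight space compatibly. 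That approach is harder, since $M(F)_x$ does not preserve individual roots; this is why I prefer the uniqueness argument above.)

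Having both compatibilities, $\msf{M}_{x, r, \phi}$ acts on $\msf{W}_{I, r}$ by $(u, h) \mapsto (m u m^{-1}, m h m^{-1})$. The group $\msf{H}_{x, r, \phi}$ acts by right translation, $h \mapsto h h'$ with $h' \in H_{x,r,\phi}(k)$, which preserves $L_{x,r,\phi}$. Finally, I would verify that these two actions satisfy the semidirect product relation $m (h h') m^{-1} = (m h m^{-1})(m h' m^{-1})$. This is immediate, and it gives the action of $\msf{HW}_{x, r, \phi}$.
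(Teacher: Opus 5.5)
Your overall structure is the paper's: its proof only notes that $\ad(m)$ commutes with $\sigma$, so $L_{x,r,\phi}$ is equivariant, and asserts that $\ad(m)$ stabilizes the lifted copy of $\msf{u}_{I,r/2}$ in $H_{x,r,\phi,\ov{k}}$. The extra justification you give for that second point, however, fails as written when $e>1$.

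\textbf{The gap.} The torus $\msf{Z}_{\msf{M}}$ acts on the $E$-level group $H_{E,x,r}$, but not on $H_{x,r,\ov{k}}=H_{E,x,r}^{\tau}$ or on its quotient $H_{x,r,\phi,\ov{k}}$. Since $\tau(z\cdot h)=\tau(z)\cdot\tau(h)$ and $\tau$ acts nontrivially on $\msf{Z}_{\msf{M}}$, only $\msf{Z}_{\msf{M}}^{\tau}$ preserves $\tau$-invariants. Likewise, for $\alpha\in\Phi_e^M$ the space $\msf{u}_{\alpha,r/2}$ is the $\tau$-fixed part of a sum of several $\msf{Z}_{\msf{M}}$-weight spaces, not itself a weight space. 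Moreover, $\msf{Z}_{\msf{M}}^{\tau}$ can act trivially on $\msf{m}^\perp_{x,r/2}$. Take $M=\mathrm{Res}_{E/F}\mathbb{G}_m\subset\GL_3$ with $E/F$ tamely and totally ramified of degree $3$. Then $\tau$ permutes the three coordinates of $\msf{Z}_{\msf{M}}\cong\mathbb{G}_m^3$ cyclically, and $\msf{Z}_{\msf{M}}^{\tau}$ is the diagonal, i.e.\ it comes from $Z_G$. In such a case, homomorphic lifts $\msf{u}_{I,r/2}\to H_I$ differ by arbitrary additive maps to $\bb{A}^1$, and no torus at the $F$-level singles out the canonical one. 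So your uniqueness claim is false there.

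\textbf{The repair.} Run your argument where the torus actually acts.
\begin{enumerate}
\item Let $s_E\colon\msf{u}_{E,I,r/2}\to H_{E,x,r}$ be the lift built from the root groups $L^+\cl{U}_{\alpha,x,r/2}$. It is the unique $\msf{Z}_{\msf{M}}$-equivariant homomorphic lift. Indeed, the difference of two such lifts is a $\msf{Z}_{\msf{M}}$-invariant additive map from a sum of nontrivial weight spaces to the center $\msf{m}_{E,x,r}$, on which $\msf{Z}_{\msf{M}}$ acts trivially. On each line $\ov{k}v$ in a weight space this map is constant on $\mathbb{G}_m v$, hence equal to its value $0$ at $v=0$.
\item Each $m\in M(F)_x$ acts on $H_{E,x,r}$ commuting with $\msf{Z}_{\msf{M}}$, since $m$ centralizes $Z_M$, and with $\tau$, since $m$ is $F$-rational. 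Hence $\ad(m)\circ s_E\circ\ad(m)^{-1}$ is again such a lift, and it equals $s_E$.
\item Take $\tau$-invariants; by \Cref{prop:lifting_to_Heisenberg} this recovers the $F$-level lift. Then pass to the quotient by $\ker\phi$. This gives the needed equivariance of $\msf{u}_{I,r/2}\to H_{x,r,\phi,\ov{k}}$.
\end{enumerate}

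When $e=1$ (e.g.\ in the setting of the last section) your argument works verbatim. With this change, the rest of your proof is fine: the fiber-product argument and the $\msf{HW}_{x,r,\phi}$-action via right translation by $H_{x,r,\phi}(k)$.
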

\begin{proof}
    The adjoint action of $\msf{M}_{x, r, \phi}$ stabilizes $\msf{u}_{I, r} \subset H_{x, r, \phi, \ov{k}}$ and commutes with $\sigma$. Since the Lang torsor $L_{x, r, \phi}$ is equivariant under $\msf{M}_{x, r, \phi}$, $\msf{W}_{I, r} \subset H_{x, r, \phi, \ov{k}}$ is stable under $\msf{M}_{x, r, \phi}$. 
\end{proof}

\begin{prop} \label{prop:crude_geometric_realization}
    Suppose that $(\msf{Z}_{\msf{M}}, \msf{m}^\perp_{E, x, r/2}, \Gamma)$ is a tame toral action without symmetric ramification. For a polarization $\Phi_e^M = \Phi_e^{M+} \sqcup \Phi_e^{M-}$ with characteristic index set $I \subset \Phi_e^{M+}$, there is a unique irreducible representation $\kappa_{I}$ of $\msf{HW}_{x, r, \phi}$ with the following conditions. 
    \begin{enumerate}
        \item $\kappa_I \vert_{\msf{H}_{x, r, \phi}}$ is the Heisenberg representation with central character $\psi$. 
        \item $\kappa_I \cong H_c^n(\msf{W}_{I, r}, \Qla)[\psi]$ for a unique $n \geq 0$. 
    \end{enumerate}
\end{prop}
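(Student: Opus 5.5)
Set $\kappa_I = H_c^n(\msf{W}_{I, r}, \Qla)[\psi]$, where $n$ is the degree given by \Cref{cor:Heisenberg_realization}. By the preceding lemma, $\msf{HW}_{x, r, \phi}$ acts on $\msf{W}_{I, r}$, with $\msf{H}_{x, r, \phi}$ acting by right translation as the torsor structure and $\msf{M}_{x, r, \phi}$ acting by conjugation. This action commutes with the central $k \subset \msf{H}_{x, r, \phi}$, because the adjoint action of $\msf{M}_{x, r, \phi}$ fixes the centre $\bb{A}^1$ pointwise. Hence $\msf{HW}_{x, r, \phi}$ preserves the $\psi$-isotypic part, and $\kappa_I$ is a representation of $\msf{HW}_{x, r, \phi}$.

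Existence then comes directly from \Cref{cor:Heisenberg_realization}. It shows that $H_c^i(\msf{W}_{I, r}, \Qla)[\psi]$ vanishes for $i \neq n$, and that in degree $n$ its restriction to $\msf{H}_{x, r, \phi}$ is the Heisenberg representation with central character $\psi$. Irreducibility as an $\msf{HW}_{x, r, \phi}$-representation is automatic, since the restriction to the subgroup $\msf{H}_{x, r, \phi}$ is already irreducible (Stone--von Neumann). So (1) holds, and (2) holds with this $n$.

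For uniqueness of $n$, suppose $\kappa_I \cong H_c^m(\msf{W}_{I, r}, \Qla)[\psi]$ for some $m$. The left side is nonzero, so $H_c^m(\msf{W}_{I, r}, \Qla)[\psi] \neq 0$. By the concentration statement in \Cref{cor:Heisenberg_realization} this forces $m = n$.

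For uniqueness of $\kappa_I$, note that condition (2) determines $\kappa_I$ up to isomorphism once $n$ is fixed, and we have just shown $n$ is forced. So any representation satisfying both conditions is isomorphic to the one constructed above. I do not expect a real obstacle here; the only point needing care is the commutation of the $\msf{M}_{x, r, \phi}$-action with the central $k$-action, so that the isotypic decomposition is equivariant.

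One could also note, without needing it, that any two extensions of the Heisenberg representation to $\msf{HW}_{x, r, \phi}$ differ by a character of $\msf{M}_{x, r, \phi}$. Condition (2) is what pins down that character, and the substantive identification $\kappa_I \cong \kappa_{x, r, \phi}$ is deferred to \Cref{thm:extended_HW_rep}.
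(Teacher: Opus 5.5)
Your proposal is correct and follows essentially the paper's own proof. You set $\kappa_I = H_c^n(\msf{W}_{I,r},\Qla)[\psi]$ with $n$ as in \Cref{cor:Heisenberg_realization}, after checking that the central $k \subset \msf{H}_{x,r,\phi}$ is central in $\msf{HW}_{x,r,\phi}$; the pointwise triviality of the $\msf{M}_{x,r,\phi}$-action on the centre that you invoke holds precisely because $\phi$ is $M$-stable, which is the reason the paper cites. Your explicit arguments for the uniqueness of $n$ (via concentration in one degree) and of $\kappa_I$ only spell out what the paper leaves implicit.
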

\begin{proof}
    The action $\msf{HW}_{x, r, \phi} \circlearrowright \msf{W}_{I, r}$ induces $\msf{HW}_{x, r, \phi} \circlearrowright H_c^\bullet(\msf{W}_{I, r}, \Qla)$. Since $\phi$ is $M$-stable, the center $k \subset \msf{H}_{x, r, \phi}$ is also central in $\msf{HW}_{x, r, \phi}$. Thus, the above action induces $\msf{HW}_{x, r, \phi} \circlearrowright H_c^\bullet(\msf{W}_{I, r}, \Qla)[\psi]$. Let $n$ be as in \Cref{cor:Heisenberg_realization}. Then, $\kappa_I = H_c^n(\msf{W}_{I, r}, \Qla)[\psi]$ is the desired irreducible representation of $\msf{HW}_{x, r, \phi}$. 
\end{proof}

\subsection{Identification of Heisenberg-Weil representations}

In this section, we compute the trace of $\kappa_I$ to show $\kappa_I \cong \kappa_{x, r, \phi}$. For this, we first extend the group action on $\kappa_I$. 

For each $C \in \Phi_\Sigma^M$, let $\msf{M}_C \subset \Aut(\msf{m}^{\perp, C}_{x, r/2})$ be as in \Cref{defi:MC} and let $\wtd{\msf{M}}_C = \prod_C \msf{M}_C$. As in \Cref{prop:interpretation_epsilon_twist}, we have $\msf{M}_{x, r, \phi} \subset \msf{\wtd{M}}_{x, r, \phi}$ inside $\Sp(\msf{m}^\perp_{x, r/2})$. 

\begin{prop}
    The adjoint action of $\msf{M}_{x, r, \phi}$ on $H_{x, r, \phi}$ extends to $\msf{\wtd{M}}_{x, r, \phi}$ so that 
    \[
        \bb{A}^1 \times \msf{m}^\perp_{x, r/2, \ov{k}} \xrightarrow{\sim} H_{x, r, \phi, \ov{k}}, \quad (a, v) \mapsto a  i(v)
    \]
    is equivariant under $\msf{\wtd{M}}_{x, r, \phi}$. 
\end{prop}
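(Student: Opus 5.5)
The plan is to \emph{define} the extended action by transport of structure through the stated bijection, and then check that it is an action by group automorphisms. Concretely, for $g \in \wtd{\msf{M}}_{x, r, \phi}$ we set $g \cdot (a\, i(v)) = a\, i(gv)$, using the natural action of $g$ on $\msf{m}^\perp_{x, r/2, \ov{k}}$, which preserves each $\msf{m}^{\perp, C}_{x, r/2}$. Two things then remain to be shown: this map respects the group law of $H_{x, r, \phi, \ov{k}}$, and it agrees with the adjoint action on $\msf{M}_{x, r, \phi}$.

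\textbf{Compatibility with the group law.} We use the polarized product description: $\bb{A}^1 \times \msf{u}^-_{r/2} \times \msf{u}^+_{r/2} \cong H_{x, r, \phi, \ov{k}}$ via $(z, a^*, a) \mapsto z\, i^-(a^*)\, i^+(a)$. Since $i^\pm$ are homomorphisms with isotropic images, the group law in these coordinates is
\[
(z, a^*, a)(w, b^*, b) = \bigl(z + w + \langle a, b^* \rangle_\phi,\ a^* + b^*,\ a + b\bigr),
\]
up to the sign convention for the commutator pairing. So the law depends only on the pairing $\langle -, - \rangle_\phi$ restricted to $\msf{u}^+ \times \msf{u}^-$. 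This reduces the whole claim to one condition: each $g \in \wtd{\msf{M}}_{x, r, \phi}$ must preserve $\msf{u}^+_{r/2}$ and $\msf{u}^-_{r/2}$ separately, in addition to preserving $\langle -, - \rangle_\phi$. The preservation of $\langle -, - \rangle_\phi$ is already given by \Cref{prop:interpretation_epsilon_twist}, since $\wtd{\msf{M}}_{x, r, \phi} \subset \Sp$.

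\textbf{Preservation of $\msf{u}^\pm$ (the main obstacle).} This is where I expect the real work. We check that $\msf{M}_C$, acting $\ov{k}$-linearly on $\msf{m}^{\perp, C}_{x, r/2} \otimes \ov{k} = \bigoplus_{s \in C} \ov{V}^s$, preserves each weight space $\ov{V}^s$. For this we use the identification $\ov{V}^C \cong V^s \otimes_{k_s, \iota} \ov{k}$ over the embeddings $\iota$, from \Cref{prop:orbit_description}.
\begin{itemize}
\item In the $\GL$ case, $\GL(V^s)$ acts on $V^s$ and dually on $V^{-s}$. After base change this action is diagonal with respect to the decomposition over the $d_s$ embeddings $k_s \to \ov{k}$, which are exactly the spaces $\ov{V}^{\sigma^i(\pm s)}$.
\item In the unitary case, $\msf{U}(V^s)$ is $k_s$-linear on $V^s$. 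Its base change therefore again preserves each $\ov{V}^{\sigma^i(s)}$.
\end{itemize}
Since $\msf{u}^\pm$ are sums of weight spaces, both are preserved.

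\textbf{Agreement on $\msf{M}_{x, r, \phi}$.} The adjoint action of $M(F)_x$ preserves $\msf{Z}_{\msf{M}}$-weights, so it commutes with the canonical lifts $i_\alpha$, as in the proof of \Cref{lem:adjoint_M_xrphi}. It also fixes the center, since $\phi$ is $M$-stable. Hence it is given in these coordinates by the same formula $a\, i(v) \mapsto a\, i(gv)$. The extension therefore agrees with the original action on $\msf{M}_{x, r, \phi}$, and equivariance of the displayed map holds by construction.
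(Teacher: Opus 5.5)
You correctly define the action by transport of structure. Your reduction of the group-law compatibility to preserving $\msf{u}^\pm_{r/2}$ and $\langle -,-\rangle_\phi$ is also right, as is your argument that each $\msf{M}_C$ preserves the individual weight spaces; this is exactly the first half of the paper's proof. The gap is that you only obtain an action on $H_{x,r,\phi,\ov{k}}$, while the statement asks for an action on the $k$-group scheme $H_{x,r,\phi}$. This does not come for free.

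The section $i=(i^-\circ\pi^-)\cdot(i^+\circ\pi^+)$ is built from the polarization $\Phi_e^{M+}\sqcup\Phi_e^{M-}$, which is in general not $\sigma$-stable; for $C\in\Phi^M_{\Sigma,\sym}$ it never is. So $\sigma(i(v))$ and $i(\sigma v)$ in general differ by a nonzero central commutator term. An automorphism transported through the non-rational isomorphism $(a,v)\mapsto a\,i(v)$ therefore has no a priori reason to commute with Frobenius. Without this you get no action on $\msf{H}_{x,r,\phi}=H_{x,r,\phi}(k)$. Then $\wtd{\msf{HW}}_{x,r,\phi}$ cannot be formed, and the action on $\msf{W}_{I,r}$ is not available, since it needs compatibility with the Lang map $h\mapsto\sigma(h)h^{-1}$.

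The missing step is short, and it is the second half of the paper's proof. Once you know each $\gamma\in\wtd{\msf{M}}_{x,r,\phi}$ acts by group automorphisms, it suffices to check $\gamma\circ\sigma=\sigma\circ\gamma$ on generators of $H_{x,r,\phi,\ov{k}}$. These are the centre $\bb{A}^1$, where $\gamma$ acts trivially, and the elements $i(u_\alpha)$ for $u_\alpha$ in a single weight space $\msf{u}_{\alpha,r/2}$. The canonical lifts are Galois-compatible, so $\sigma(i(u_\alpha))=i(\sigma u_\alpha)$. Combine this with your weight-space preservation and the fact that $\gamma$ acts on $\msf{m}^\perp_{x,r/2,\ov{k}}$ as the base change of a $k$-linear map. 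This gives $\gamma\,\sigma(i(u_\alpha))=i(\gamma\sigma u_\alpha)=i(\sigma\gamma u_\alpha)=\sigma(\gamma\, i(u_\alpha))$.

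Your separate check that the extension agrees with the adjoint action on $\msf{M}_{x,r,\phi}$ is fine; the paper leaves it implicit via \Cref{lem:adjoint_M_xrphi}.
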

\begin{proof}
    The isomorphism in the statement equips a unique $\msf{\wtd{M}}_{x, r, \phi}$-action on $H_{x, r, \phi, \ov{k}}$ as a $\ov{k}$-variety. We show that it is an action as a group scheme. 

    First, the $\wtd{\msf{M}}_{x, r, \phi}$-action preserves each $\msf{\ov{m}}^{\perp, \alpha}_{x, r/2}$ and is additive on $\msf{u}^+_{r / 2}, \msf{u}^-_{r / 2} \subset H_{x, r, \phi, \ov{k}}$. Thus, it is enough to verify that the action commutes with the multiplication between $\msf{u}^+_{r / 2}$ and $\msf{u}^-_{r / 2}$. Let $\alpha \in \Phi_e^{M-}$, $\beta \in \Phi_e^{M+}$ and let $u_\alpha \in \msf{\ov{m}}_{x, r/2}^{\perp, \alpha}$, $u_\beta \in \msf{\ov{m}}_{x, r/2}^{\perp, \beta}$. For every $\gamma \in \wtd{\msf{M}}_{x, r, \phi}$, 
    \[
        \gamma \cdot (i(u_\alpha) \cdot  i(u_\beta)) = \gamma \cdot i(u_\alpha + u_\beta) = i(\gamma u_\alpha + \gamma u_\beta) = i(\gamma u_\alpha) \cdot i(\gamma u_\beta), 
    \]
    and since $[u_\beta, u_\alpha] = [\gamma u_\beta, \gamma u_\alpha]$, we have 
    \[
        \gamma \cdot (i(u_\beta) \cdot  i(u_\alpha)) = \gamma \cdot ([u_\beta, u_\alpha] \cdot i(u_\alpha + u_\beta)) = [u_\beta, u_\alpha] \cdot i(\gamma u_\alpha + \gamma u_\beta) = i(\gamma u_\beta) \cdot i(\gamma u_\alpha). 
    \]
    Thus, the $\wtd{\msf{M}}_{x, r, \phi}$-action on $H_{x, r, \phi, \ov{k}}$ is an action as a group scheme. 

    It is enough to show that this action descends to $H_{x, r, \phi}$. In other words, we show that it commutes with $\sigma$. First, $\sigma(i(u_\alpha)) = i(\sigma u_\alpha)$ for each $\alpha \in \Phi_e^M$ and $u_\alpha \in \msf{\ov{m}}_{x, r/2}^{\perp, \alpha}$. Thus, 
    \[
        \gamma \cdot \sigma(i(u_\alpha)) = i(\gamma \cdot \sigma(u_\alpha)) = i(\sigma(\gamma u_\alpha)) = \sigma(\gamma \cdot i(u_\alpha))
    \] 
    since the $\wtd{\msf{M}}_{x, r, \phi}$-action on $\msf{\ov{m}}^\perp_{x, r/2}$ commutes with $\sigma$. Since the actions of $\sigma$ and $\gamma$ are group actions on $H_{x, r, \phi, \ov{k}}$ and $H_{x, r, \phi, \ov{k}}$ is generated by $\bb{A}^1$ and $i(u_\alpha)$ as above, it follows that $\gamma$ commutes with $\sigma$. 
\end{proof}

\begin{defi}
    We say that the semidirect product 
    \[  
        \msf{\wtd{HW}}_{x, r, \phi} = \msf{H}_{x, r, \phi} \rtimes \msf{\wtd{M}}_{x, r, \phi}
    \]
    is the extended Heisenberg-Weil group. Since $\msf{u}_{I, r/2} \subset H_{x, r, \phi, \ov{k}}$ is stable under $\msf{\wtd{HW}}_{x, r, \phi}$, there is a natural action $\msf{\wtd{HW}}_{x, r, \phi} \circlearrowright \msf{W}_{I, r}$. Let $\wtd{\kappa}_I$ be the irreducible representation $\wtd{\msf{HW}}_{x, r, \phi} \circlearrowright H_c^n(\msf{W}_{I, r}, \Qla)[\psi]$ that is an extension of $\kappa_I$. 
\end{defi}

We will show that $\wtd{\kappa}_I$ is independent of a polarization. Since $\wtd{\kappa}_I$ is irreducible and its restriction to $\msf{H}_{x, r, \phi}$ is independent of $I$, any two $\wtd{\kappa}_I$ differ only by a character of $\wtd{\msf{M}}_{x, r, \phi}$. Thus, it is enough to determine $\wtd{\kappa}_I\vert_{\wtd{\msf{M}}_{x, r, \phi}}$. First, we have the following general computation. 


\begin{defi}(cf.\ \Cref{defi:d_gamma_sign_factor})
    For each $\gamma \in  \msf{\wtd{M}}_{x, r, \phi}$, let 
    \[
        d_\gamma = \sum_{C \in \Phi_{\Sigma, \sym}^M} (\dim_{k_\alpha} \msf{m}^{\perp, \alpha}_{x, r/2} - \dim_{k_\alpha} \msf{m}^{\perp, \alpha, \gamma}_{x, r/2}). 
    \]
    Here, the superscript $\gamma$ denotes the $\gamma$-fixed locus and $\alpha \in C$ is a representative.  
\end{defi}

\begin{prop} \label{prop:tame_trace}
    If $\gamma \in  \msf{\wtd{M}}_{x, r, \phi}$ is of order coprime to $p$, we have
    \[
        \tr(\gamma \mid \wtd{\kappa}_I) = (-1)^{d_\gamma} \sqrt{\lvert \msf{m}^{\perp, \gamma}_{x, r/2} \rvert}. 
    \]
\end{prop}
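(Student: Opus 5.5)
Since $\gamma$ has order coprime to $p$, I will compute $\tr(\gamma \mid \wtd{\kappa}_I)$ with the Deligne--Lusztig fixed-point formula: for an automorphism of finite order prime to $p$ acting on a separated scheme of finite type, $\sum_i(-1)^i\tr(\gamma\mid H^i_c(X))=\chi_c(X^\gamma)$. By \Cref{cor:Heisenberg_realization} the $\psi$-part is concentrated in degree $n$, so I apply this to $\gamma$ composed with the central $k$-action and project to the $\psi$-isotypic part:
\[
(-1)^n\tr(\gamma\mid \wtd{\kappa}_I)=\frac{1}{q}\sum_{z\in k}\psi(z)^{-1}\chi_c\bigl(\msf{W}_{I,r}^{z\gamma}\bigr).
\]
Elements $z\gamma$ with $z\neq 0$ have order divisible by $p$, so I will not use this version. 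Instead I will apply the fixed-point formula to $\gamma$ alone on the quotient situation, replacing $\msf{W}_{I,r}$ by the $\gamma$-fixed locus with its induced $k$-torsor. Concretely, $R\Gamma_c(\msf{W}_{I,r})[\psi]=R\Gamma_c(\wtd{\msf{u}}_{I,r/2},\mathcal{L})$ with $\mathcal{L}=f^*\cl{L}_{\psi^{-1}}$ by \Cref{prop:identification_local_system}. The tame-element version of the formula for sheaves gives that the trace of $\gamma$ equals that of the induced action on $R\Gamma_c(\wtd{\msf{u}}_{I,r/2}^{\gamma},\mathcal{L}|)$, together with the canonical $\gamma$-linearization of $\mathcal{L}$ on the fixed locus, which is trivial because $\gamma$ fixes $f$.

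Next I identify this fixed locus. The group $\wtd{\msf{M}}_{x,r,\phi}=\prod_C\msf{M}_C$ preserves the weight decomposition, so $\gamma$ acts on each $\ov{\msf{m}}^{\perp,s}$. Then $(\msf{m}^{\perp}_{x,r/2})^\gamma$ carries an induced genuine weight decomposition, and it is still symplectic: $\gamma$ is semisimple of order prime to $p$, so the fixed space pairs nondegenerately with itself, exactly as in the proof of \Cref{prop:Vtau_decomposition}. It also carries the same polarization. The fixed locus $\wtd{\msf{u}}_{I,r/2}^\gamma$ is therefore the torsor $\wtd{\msf{U}}_I$ of \Cref{defi:fin_et_torsor} for $V'=(\msf{m}^{\perp}_{x,r/2})^\gamma$, and $f$ restricts to the corresponding quadratic map. \Cref{prop:cohmology_application} applied to $V'$ gives cohomology concentrated in degree $n'=2|\Lambda'_0|+|\Lambda'_1|$ of dimension $\sqrt{|V'|}$. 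It remains to determine how Frobenius-free $\gamma$ acts there. Since $\gamma$ acts trivially on $\wtd{\msf{u}}^\gamma$ and on the sheaf, it acts trivially on the cohomology. This gives
\[
\tr(\gamma\mid\wtd{\kappa}_I)=(-1)^{n-n'}\sqrt{|\msf{m}^{\perp,\gamma}_{x,r/2}|}.
\]

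The main obstacle is the sign, namely showing that $n-n'\equiv d_\gamma \pmod 2$. Since $n=2|\Lambda_0|+|\Lambda_1|$, this amounts to $|\Lambda_1|-|\Lambda'_1|\equiv d_\gamma$. I will count orbit by orbit over $C\in\Phi^M_\Sigma$. For an asymmetric $C$, every $\sigma$-orbit in $C$ lies entirely in $S^+$ or entirely in $S^-$ modulo sign changes. Crossings from $S^-$ to $S^+$ come in equal numbers along $C^+$ and $C^-$, and the multiplicities pair up as $\dim \ov{V}^s=\dim \ov{V}^{-s}$, so the contribution is even; I will verify this using the duality $\beta_\lambda$ of the basis lemma. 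For a symmetric $C$, the $\sigma$-cycle of length $d_s$ satisfies $\sigma^{d_s/2}(s)=-s$, which forces an odd number of sign changes from $-$ to $+$ along the cycle. Hence $|\Lambda_1\cap C|\equiv \dim_{k_s}V^s$, and likewise $|\Lambda'_1\cap C|\equiv\dim V^{s,\gamma}$. Subtracting gives exactly the $C$-term of $d_\gamma$. I expect the bookkeeping in the symmetric case, where the parity of crossings must be checked against each individual $\sigma$-cycle, to be the step that needs care.
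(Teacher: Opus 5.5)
Your proposal is correct and follows the paper's proof. The tame fixed-point formula reduces the trace to the $\gamma$-fixed locus, which is the analogous Lang torsor for the symplectic space $(\msf{m}^\perp_{x,r/2})^\gamma$ with the induced genuine weight decomposition and the same polarization. The sign then comes from $|I_1 \cap C|$ being even for asymmetric $C$ and odd for symmetric $C$.

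The only difference is cosmetic: you apply the fixed-point formula to the sheaf $f^*\cl{L}_{\psi^{-1}}$ on $\wtd{\msf{u}}_{I,r/2}$, whereas the paper applies it to $\msf{W}_{I,r}$ and then uses $\wtd{\msf{u}}^\gamma_{I,r/2} \cong \msf{W}^\gamma_{I,r}/k$. For the triviality of $\gamma$ on the fibres, ``$\gamma$ fixes $f$'' is a bit loose. The clean reason is that $\gamma$ commutes with the central $k$-action, so it acts on each $k$-torsor fibre over a fixed point by a translation of order prime to $p$, hence trivially.
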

\begin{proof}
    Consider the diagram of $\gamma$-fixed loci
    \begin{center}
        \begin{tikzcd}
            \msf{W}_{I, r}^{\gamma} \ar[d] \ar[r] & H_{x, r, \phi, \ov{k}}^{\gamma} \ar[d] \\
            \msf{u}_{I, r / 2}^{\gamma} \ar[r] & H_{x, r, \phi, \ov{k}}^{\gamma}. 
        \end{tikzcd}
    \end{center}
    By construction, this diagram is Cartesian and the vertical arrow $H_{x, r, \phi, \ov{k}}^{\gamma} \to H_{x, r, \phi, \ov{k}}^{\gamma}$ is a Lang torsor associated to $H_{x, r, \phi}^\gamma$. Since $\gamma$ is of order coprime to $p$, we get 
    \[
        \tr(\gamma \mid R\Gamma_c(\msf{W}_{I, r}, \Qla)[\psi]) = \chi(R\Gamma_c(\msf{W}^{\gamma}_{I, r}, \Qla)[\psi]). 
    \]
    Here, $\chi$ denotes the Euler characteristic of a perfect complex. We will compute the right hand side. Since $\gamma$ is of order coprime to $p$, $\langle -, - \rangle_\phi$ is non-degenerate over $\msf{m}^{\perp, \gamma}_{x, r/2}$. Moreover, 
    \[
        \msf{m}^{\perp, \gamma}_{x, r/2, \ov{k}} = \bigoplus_{\alpha \in \Phi_e^M} \msf{u}^\gamma_{\alpha, r/2}
    \]
    is a genuine weight decomposition. As $\wtd{\msf{u}}^\gamma_{I, r/2} \cong \msf{W}^\gamma_{I, r} / k$, we have 
    \[
        \chi(R\Gamma_c(\msf{W}^{\gamma}_{I, r}, \Qla)[\psi]) = \chi(R\Gamma_c(\wtd{\msf{u}}^\gamma_{I, r/2}, \msf{W}^{\gamma}_{I, r} \times^{k, \psi^{-1}} \Qla)). 
    \]
    By the same argument as in \Cref{cor:Heisenberg_realization}, we get 
    \[
        \chi(R\Gamma_c(\wtd{\msf{u}}^\gamma_{I, r/2}, \msf{W}^{\gamma}_{I, r} \times^{k, \psi^{-1}} \Qla)) = (-1)^m \sqrt{\lvert \msf{m}^{\perp, \gamma}_{x, r/2}\rvert}
    \] 
    for some specific $m \geq 0$. Thus, we have 
    \[
        \tr(\gamma \mid \wtd{\kappa}_I) = (-1)^{n - m} \sqrt{\lvert \msf{m}^{\perp, \gamma}_{x, r/2} \rvert}. 
    \]
    We compute the parity of $n - m$. By \Cref{prop:cohmology_application}, $n \equiv \dim \msf{u}_{I_1, r/2}$ and $m \equiv \dim \msf{u}^\gamma_{I_1, r/2}$ modulo $2$. We will show that 
    \[
        \dim \msf{u}_{I_1, r/2} - \dim \msf{u}^\gamma_{I_1, r/2} \equiv d_\gamma \pmod{2}. 
    \]
    We decompose into a sum over each $C \in \Phi_\Sigma^M$. If $C \in \Phi_{\Sigma, \asym}^M$, $\lvert I_1 \cap C \rvert$ is even, so 
    \[
        \dim \msf{u}_{I_1 \cap C, r/2} \equiv \dim \msf{u}_{I_1 \cap C, r/2}^\gamma \equiv 0
    \]
    since $\dim \msf{u}_{\alpha, r/2}$ and $\dim \msf{u}_{\alpha, r/2}^\gamma$ are independent of $\alpha \in C$. By the same reason, if $C \in \Phi^M_{\Sigma, \sym}$, $\lvert I_1 \cap C \rvert$ is odd and we have 
    \[
        \dim \msf{u}_{I_1 \cap C, r/2} - \dim \msf{u}^\gamma_{I_1 \cap C, r/2} \equiv \dim \msf{u}_{\alpha, r/2} - \dim \msf{u}^\gamma_{\alpha, r/2} \pmod{2}.  
    \]
    We get the claim since $\dim_{k_\alpha} \msf{m}^{\perp, \alpha}_{x, r/2} = \dim \msf{u}_{\alpha, r/2}$ and $\dim_{k_\alpha} \msf{m}^{\perp, \alpha, \gamma}_{x, r/2} = \dim \msf{u}^\gamma_{\alpha, r/2}$. 
\end{proof}

For exact computations, we need to decompose $\wtd{\kappa}_I$ as a tensor product over $C \in \Phi_\Sigma^M$.

\begin{lem} \label{lem:Kunneth_decomposition}
    For each $C \in \Phi_\Sigma^M$, let 
    \[
        \msf{\wtd{u}}_{I \cap C, r/2} = \msf{\wtd{u}}_{I, r/2} \cap \msf{u}_{C, r/2}, \quad f_C = f\vert_{\msf{\wtd{u}}_{I \cap C, r/2}}. 
    \]
    Then, we have $(\msf{\wtd{u}}_{I, r/2}, f^* \cl{L}_\psi) \cong \prod_{C} (\msf{\wtd{u}}_{I \cap C, r/2}, f_C^* \cl{L}_\psi)$ and 
    \[
        \wtd{\kappa}_I\vert_{\msf{\wtd{M}}_{x, r, \phi}} \cong \bigotimes_C H_c^{d_C}(\msf{\wtd{u}}_{I \cap C, r/2}, f_C^* \cl{L}_\psi)
    \]
    as a representation of $\msf{\wtd{M}}_{x, r, \phi} \cong \prod_C \msf{M}_C$ for a unique $d_C \geq 0$ for each $C$. 
\end{lem}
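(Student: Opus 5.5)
We first establish the product decomposition of the pair $(\msf{\wtd{u}}_{I, r/2}, f^*\cl{L}_\psi)$, and then apply the Künneth formula equivariantly.

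\textbf{Step 1: the space splits.} The characteristic index set is compatible with the orbit decomposition, $I = \bigsqcup_C (I \cap C)$, because both $I_0$ and $I_1$ are defined orbit-wise under $\Gal(\ov{k}/k) \times \{\pm 1\}$. Hence $\msf{u}_{I, r/2} = \prod_C \msf{u}_{I\cap C, r/2}$. The Lang torsor $L_{\msf{m}^\perp_{x, r/2}}$ of $\msf{m}^\perp_{x, r/2} = \bigoplus_C \msf{m}^{\perp, C}_{x, r/2}$ is the product of the Lang torsors of the summands, since each $\msf{m}^{\perp, C}_{x, r/2}$ is defined over $k$. Therefore $\msf{\wtd{u}}_{I, r/2} = \prod_C \msf{\wtd{u}}_{I\cap C, r/2}$, compatibly with the identification of \Cref{defi:fin_et_torsor}.

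\textbf{Step 2: the function splits.} The function $f(v) = \langle \sigma(v), \pi_{I_1}(v)\rangle_\phi$ is additive over this decomposition: $\sigma$ preserves each $\msf{m}^{\perp, C}_{x, r/2, \ov{k}}$, $\pi_{I_1}$ is the sum of the $\pi_{I_1\cap C}$, and distinct $C$ are orthogonal by \Cref{defi:weight_decomposition}(2). So $f = \sum_C f_C \circ \pr_C$. Since $\cl{L}_\psi$ is a character sheaf, $f^*\cl{L}_\psi \cong \boxtimes_C f_C^*\cl{L}_\psi$, which gives the first claim.

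\textbf{Step 3: equivariance.} The group $\msf{\wtd{M}}_{x, r, \phi} = \prod_C \msf{M}_C$ acts factorwise, with $\msf{M}_C$ acting only on the $C$-factor and preserving $f_C$. To avoid a separate argument that the isomorphism $\msf{W}_{I,r}\times^{k,\psi}\Qla \cong f^*\cl{L}_\psi$ of \Cref{prop:identification_local_system} is $\msf{\wtd{M}}$-equivariant, I would rather work directly with the torsor. The group $H_{x, r, \phi}$ is a central pushout $\bigl(\prod_C H_C\bigr)/Z^0$, as in \Cref{lem:Heisenberg_decomposition}, and the Lang map is compatible with this. Hence
\[
    \msf{W}_{I,r} \cong \Bigl(\prod_C \msf{W}_{I\cap C, r}\Bigr)\Big/\msf{Z}^0,
\]
where $\msf{W}_{I\cap C,r}$ is the analogous torsor for $H_C$. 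This isomorphism is equivariant for $\prod_C (\msf{H}_C \rtimes \msf{M}_C)$, because the extended action of the preceding proposition is defined through the section $i$, which is itself built factorwise. Taking $\psi$-isotypic parts and using that $\msf{Z}^0$ acts trivially on $\bigotimes_C(\cdot)[\psi]$, the Künneth formula gives, equivariantly,
\[
    R\Gamma_c(\msf{W}_{I,r},\Qla)[\psi] \cong \bigotimes_C R\Gamma_c(\msf{W}_{I\cap C,r},\Qla)[\psi] \cong \bigotimes_C R\Gamma_c(\msf{\wtd{u}}_{I\cap C, r/2}, f_C^*\cl{L}_\psi).
\]
(The sign of $\psi$ versus $\psi^{-1}$ is fixed consistently with \Cref{cor:Heisenberg_realization}.) Applying \Cref{prop:cohmology_application} to each $C$, with the genuine weight decomposition restricted to $C$, each factor is concentrated in a single degree $d_C$. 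Hence $n = \sum_C d_C$, each $d_C$ is unique, and the degree-$n$ part is the tensor product of the $H_c^{d_C}$.

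\textbf{Main obstacle.} The delicate point is verifying that the Künneth isomorphism respects the extended $\msf{\wtd{M}}_{x,r,\phi}$-action: one must check that the extended action on $H_{x,r,\phi,\ov{k}}$, defined through $i = (i^-\circ\pi^-)\cdot(i^+\circ\pi^+)$, is the product of the corresponding actions on the $H_{C,\ov{k}}$. I would check this by noting that $i$ restricted to $\msf{m}^{\perp,C}$ lands in $H_{C,\ov{k}}$, and that elements $i(u)$ and $i(u')$ coming from different $C$ commute by orthogonality.
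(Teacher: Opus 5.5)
Your proposal is correct and follows essentially the same route as the paper: split $\msf{\wtd{u}}_{I,r/2}$ as a product over $C$, split $f=\sum_C f_C$ by orthogonality of distinct $C$, and apply K\"unneth. The paper reads the splitting of $\msf{\wtd{u}}_{I,r/2}$ off the explicit equations in the proof of \Cref{lem:pi_U_isom}, whereas you get it from the Lang torsor of the $k$-rational decomposition $\bigoplus_C \msf{m}^{\perp,C}_{x,r/2}$; likewise, your Step~3 detour through $\msf{W}_{I,r}\cong(\prod_C\msf{W}_{I\cap C,r})/\msf{Z}^0$ only makes explicit the $\msf{\wtd{M}}_{x,r,\phi}$-equivariance that the paper leaves implicit, and which is in fact automatic because the extended action sends $a\,i(v)\mapsto a\,i(\gamma v)$ and so acts on the coordinates $(a,v)$ of \Cref{prop:identification_local_system} through $v$ alone.
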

\begin{proof}
    First, we have $\msf{\wtd{u}}_{I, r/2} \cong \prod_C \msf{\wtd{u}}_{I \cap C, r/2}$ by the explicit defining equation of $\msf{\wtd{u}}_{I, r/2} \subset \msf{m}^\perp_{x, r/2, \ov{k}}$ in the proof of \Cref{lem:pi_U_isom}. Moreover, the polynomial associated to $f$ is decomposed into a sum of $f_C$ as in the proof of \Cref{prop:cohmology_application}. Then, the last claim follows from the K\"{u}nneth formula.  
\end{proof}

\begin{lem} \label{lem:compute_polarized}
    Let $C \in \Phi_\Sigma^M$ and $\alpha \in C$. If $I \cap C = \Gal(\ov{k} / k) \cdot \alpha$, then we have 
    \[
        \tr(\gamma \mid H_c^{d_C}(\msf{\wtd{u}}_{I \cap C, r/2}, f_C^* \cl{L}_\psi)) = \sqrt{\lvert \msf{m}^{\perp, C, \gamma}_{x, r/2} \rvert} \quad (\gamma \in \msf{M}_C). 
    \]
\end{lem}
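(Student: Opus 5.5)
Since $I \cap C = \Gal(\ov{k}/k)\cdot\alpha$, the whole Galois orbit of $\alpha$ lies in $\Phi_e^{M+}$. So $I \cap C \subset I_0$ and $I_1 \cap C = \emptyset$. The plan is to show that $f_C$ vanishes identically and that $\msf{\wtd{u}}_{I\cap C, r/2}$ is a disjoint union of affine spaces indexed by a finite set. Then $H_c^{d_C}$ is a permutation representation, and its trace is a count of fixed points.

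\textbf{Step 1: $f_C = 0$.} The polynomial defining $f$ is $v \mapsto \langle \sigma(v), \pi_{I_1}(v)\rangle_\phi$. Its restriction to $\msf{u}_{C,r/2}$ involves only $\pi_{I_1\cap C} = 0$, so $f_C = 0$ and $f_C^*\cl{L}_\psi \cong \Qla$.

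\textbf{Step 2: geometry of $\msf{\wtd{u}}_{I\cap C,r/2}$.} Applying \Cref{lem:pi_U_isom} to the component $C$ (as in \Cref{lem:Kunneth_decomposition}) gives
\[
\msf{\wtd{u}}_{I\cap C, r/2} \cong \msf{u}_{I\cap C, r/2} \times V^{-\alpha}, \qquad V^{-\alpha} \cong \msf{m}^{\perp,-\alpha}_{x,r/2} \cong (\msf{m}^{\perp,\alpha}_{x,r/2})^*.
\]
Here the first factor is an affine space of dimension $N = \dim \msf{u}_{I\cap C,r/2}$, and the second is a finite discrete set. Hence $d_C = 2N$ and
\[
H_c^{2N}(\msf{\wtd{u}}_{I\cap C,r/2},\Qla) \cong \Qla[\pi_0] = \Qla[V^{-\alpha}]
\]
up to a Tate twist. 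Each summand is spanned by the fundamental class of one component.

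\textbf{Step 3: equivariance.} Let $\gamma \in \msf{M}_C = \GL(\msf{m}^{\perp,\alpha}_{x,r/2})$. It acts $\ov{k}$-linearly on $\msf{m}^\perp_{x,r/2,\ov{k}}$, commutes with $\sigma$, and preserves each weight space $\ov{\msf{m}}^{\perp,s}$. Therefore $\gamma$ preserves $\msf{\wtd{u}}_{I\cap C,r/2}$, and both projections (to $\msf{u}_{I\cap C,r/2}$ and to the $-I_0$-coordinates) are $\gamma$-equivariant. On $V^{-\alpha}$, $\gamma$ acts through the natural action on $\msf{m}^{\perp,-\alpha}_{x,r/2}$, which under the pairing $B$ of \Cref{prop:orbit_description} is the contragredient ${}^t\gamma^{-1}$. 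Thus $\gamma$ permutes the components, and maps each fixed component to itself by a linear automorphism of affine space. Such an automorphism fixes the fundamental class in $H_c^{2N}$, since it is an automorphism of an irreducible variety. So $H_c^{d_C}$ is the permutation representation $\Qla[V^{-\alpha}]$.

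\textbf{Step 4: counting.} The trace equals $\lvert (V^{-\alpha})^{\gamma}\rvert$. Since ${}^t\gamma^{-1}-1$ and $\gamma - 1$ have the same rank over $k_\alpha$, this equals $\lvert(\msf{m}^{\perp,\alpha}_{x,r/2})^\gamma\rvert$. On the other hand, $\msf{m}^{\perp,C}_{x,r/2} \cong V^\alpha \oplus V^{-\alpha}$ equivariantly, so
\[
\lvert \msf{m}^{\perp,C,\gamma}_{x,r/2}\rvert = \lvert (V^{\alpha})^\gamma\rvert^2,
\]
and taking square roots gives the claim.

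The main obstacle is Step 3. One must check that the isomorphism of \Cref{lem:pi_U_isom}, which was built from a chosen basis, is compatible with the $\msf{M}_C$-action. I would avoid relying on the basis and describe $\msf{\wtd{u}}_{I\cap C,r/2}$ intrinsically as the locus in $\msf{u}_{C,r/2}$ where the $-C^+$-part is $\sigma$-fixed. Its $-C^+$-component is then $\sigma$-fixed, hence $k$-rational, and it lies in $V^{-\alpha}$. This description is visibly equivariant.
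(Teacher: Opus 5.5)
Your proposal is correct and follows the paper's proof. Both arguments note that $I_1 \cap C = \emptyset$, so $f_C^*\cl{L}_\psi \cong \Qla$; both then use \Cref{lem:pi_U_isom} to write $\msf{\wtd{u}}_{I\cap C, r/2} \cong \msf{u}_{C^+, r/2} \times \msf{m}^{\perp, C^-}_{x, r/2}$ and read off the trace as the number of $\gamma$-fixed points on the finite factor. Your basis-free description of $\msf{\wtd{u}}_{I\cap C, r/2}$ and your rank argument identifying $\lvert \msf{m}^{\perp, C^-, \gamma}_{x, r/2} \rvert$ with $\sqrt{\lvert \msf{m}^{\perp, C, \gamma}_{x, r/2} \rvert}$ spell out two steps the paper leaves implicit.
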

\begin{proof}
    Let $C^+ = C \cap \Phi_e^{M+}$ and $C^- = C \cap \Phi_e^{M-}$. Since $I \cap C = \Gal(\ov{k} / k) \cdot \alpha$, $C^+$ and $C^-$ are $\Gal(\ov{k} / k)$-orbits and we have 
    \[
        \msf{\wtd{u}}_{I \cap C, r/2} \cong \msf{u}_{C^+, r/2} \times \msf{m}^{\perp, C^-}_{x, r/2}, \quad 
        f_C^* \cl{L}_\psi \cong \Qla
    \]
    by \Cref{lem:pi_U_isom} and \Cref{prop:cohmology_application}. Then, $d_C = 2 \dim \msf{u}_{C^+, r/2}$ and the action of $\gamma$ on $H_c^{d_C}(\msf{u}_{C^+, r/2}, \Qla) \cong \Qla$ is trivial. In particular, $\tr(\gamma \mid H_c^{d_C}(\msf{\wtd{u}}_{I \cap C, r/2}, f_C^* \cl{L}_\psi)) = \lvert \msf{m}^{\perp, C^-, \gamma}_{x, r/2} \rvert$. 
\end{proof}

\begin{lem} \label{lem:compute_unitary}
    Let $C \in \Phi_{\Sigma, \sym}^M$ and $\alpha \in C$. If $I \cap C = \{ \alpha \}$, then we have 
    \[
        \tr(\gamma \mid H_c^{d_C}(\msf{\wtd{u}}_{I \cap C, r/2}, f_C^* \cl{L}_\psi)) = (-1)^{(\dim_{k_\alpha} \msf{m}^{\perp, \alpha}_{x, r/2} - \dim_{k_\alpha} \msf{m}^{\perp, \alpha, \gamma}_{x, r/2})} \sqrt{\lvert \msf{m}^{\perp, C, \gamma}_{x, r/2} \rvert} \quad (\gamma \in \msf{M}_C).  
    \]
\end{lem}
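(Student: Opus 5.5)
The plan is to reduce the trace identity to elements of order prime to $p$, where it is already available, and then handle the remaining (wild) part of $\msf{M}_C = \msf{U}(\msf{m}^{\perp,\alpha}_{x,r/2})$ by a character argument.

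\emph{Reduction to a comparison of characters.} Since $C = \Gal(\ov{k}/k)\cdot\alpha$ and $I\cap C=\{\alpha\}$, we have $\alpha\in I_1$ and $I_0\cap C=\emptyset$. By \Cref{lem:pi_U_isom} the variety $\msf{\wtd{u}}_{I\cap C,r/2}$ is then just $\msf{u}_{\alpha,r/2}$, an affine space of dimension $m=\dim_{k_\alpha}\msf{m}^{\perp,\alpha}_{x,r/2}$. Choosing the $B$-orthogonal basis of \Cref{lem:suitable_basis}, $f_C$ becomes a sum of $m$ one-variable Hermitian terms $a_\lambda u_\lambda^{1+q^{d_\alpha/2}}$. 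By \Cref{prop:cohmology_application}, $H_c^{d_C}$ is then concentrated in degree $d_C=m$ and has dimension $\sqrt{\lvert \msf{m}^{\perp,C}_{x,r/2}\rvert}$. Together with the action of $\msf{H}_C$ (the $C$-component of $\msf{W}_{I,r}$, as in \Cref{lem:Kunneth_decomposition}), this is an irreducible representation of $\msf{H}_C\rtimes\msf{M}_C$ whose restriction to $\msf{H}_C$ is the Heisenberg representation. The same holds for Gérardin's unitary Heisenberg-Weil representation $\kappa_C$ of \cite[Theorem 3.3]{Ger77}. Hence the two representations differ by a character $\chi$ of $\msf{M}_C$, and it suffices to show $\chi=1$, because $\kappa_C$ has exactly the claimed trace by \eqref{eq:trace_unitary}.

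\emph{Tame elements.} For $\gamma\in\msf{M}_C$ of order prime to $p$, I would repeat the proof of \Cref{prop:tame_trace} one component at a time: apply the Lefschetz fixed point formula to $\gamma$ acting on the $C$-factor, and recompute the fixed-point Euler characteristic via \Cref{prop:cohmology_application} applied to $\msf{m}^{\perp,C,\gamma}_{x,r/2}$. This gives the trace $(-1)^{m-m^\gamma}\sqrt{\lvert\msf{m}^{\perp,C,\gamma}_{x,r/2}\rvert}$, where $m^\gamma=\dim_{k_\alpha}\msf{m}^{\perp,\alpha,\gamma}_{x,r/2}$. This agrees with the trace of $\kappa_C$ and is nonzero, so $\chi(\gamma)=1$ for every $p'$-element $\gamma$.

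\emph{The wild part.} Consequently $\chi$ is trivial on the normal subgroup generated by $p'$-elements, so $\chi$ has $p$-power order. Characters of $\msf{U}_m$ factor through $\dt$ onto $k^1_\alpha$, which has order prime to $p$, except in the few small cases where $\msf{U}_m(q^{d_\alpha/2})$ is not generated by its $p'$-elements: essentially $(m,q^{d_\alpha/2})=(2,2)$ and $(3,2)$, and possibly $(2,3)$. Outside these cases $\chi=1$ immediately.

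For the exceptional cases, which I expect to be the main obstacle, it suffices to compute the trace of a single unipotent element (a transvection) that generates the remaining abelian quotient. I would first try to reduce to $m=1,2$ by an orthogonal splitting and then compute that trace directly. One option is to use the explicit Hermitian equation and a Frobenius-twisted Lefschetz count. Alternatively, one can identify the variety with the one studied in \cite{IT23}, where the unitary Weil representation is realized for arbitrary $q$, and read off the trace there.
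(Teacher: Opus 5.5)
Your argument is correct in outline but takes a different route from the paper.

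\textbf{The paper's route.} The paper proves the lemma for all $\gamma$ at once. It normalizes a $B$-orthogonal basis so that $B(v_i,v_i)=\xi$ and writes $f_C=-\xi\sum_i u_i^{q^{d_\alpha/2}+1}$. It then identifies the $L_{k_{\alpha/2}}$-cover of $\bb{A}^m$ defined by $f_C$ with the variety of \cite{IT23}, after $z\mapsto\xi^{-1}z$ and $\msf{M}_C$-equivariantly. Finally it reads off the trace of every element, wild or not, from \cite[Lemma 2.2, Theorem 2.5]{IT23}.

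\textbf{Your route.} What you do is essentially the argument of \Cref{thm:extended_HW_rep}: tame traces from \Cref{prop:tame_trace}, then perfectness of $\mrm{SU}$ and generation by $p'$-elements. You redirect it to compare the $C$-factor with G\'erardin's $\kappa_C$ instead of with another polarization. You only use \eqref{eq:trace_unitary} and \Cref{prop:tame_trace}, so there is no circularity, even though the paper uses this very lemma to treat the small fields in \Cref{thm:extended_HW_rep}. The character $\chi$ is most cleanly obtained by comparing $\wtd{\kappa}_I$ with $\bigotimes_C\kappa_C$ on $\msf{H}_{x,r,\phi}\rtimes\wtd{\msf{M}}_{x,r,\phi}$ and restricting to $\msf{M}_C$ via \Cref{lem:Kunneth_decomposition}. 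That avoids building a separate $\msf{H}_C$-torsor on the $C$-factor.

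\textbf{What each buys.} Your route isolates the wild input to a tiny set of cases. But if you close that set by appealing to \cite{IT23}, as your last option does, the whole reduction is redundant and you are back to the paper's proof.

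\textbf{The exceptional set is smaller than you list.} The order-$p$ characters of $\mrm{SU}_2(3)\cong\mrm{SL}_2(3)$ and of $\mrm{SU}_3(2)\cong 3^{1+2}\rtimes Q_8$ are not invariant under $\mrm{U}_2(3)$, resp.\ $\mrm{U}_3(2)$. So these unitary groups have abelianization $\mu_4$, resp.\ $\mu_3$, and have no nontrivial character of $p$-power order. Only $(m,q^{d_\alpha/2})=(2,2)$ remains, where $\msf{M}_C\cong\mu_3\times S_3$.

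\textbf{That case is elementary and needs no \cite{IT23}.} Here $q=2$ and $\xi=1$, so $f_C=u_1^3+u_2^3$. The coordinate swap is a transvection of $\msf{M}_C$ generating the $2$-part of its abelianization. By K\"unneth, $H^2_c=W\otimes W$ with $W=H^1_c(\bb{A}^1,g^*\cl{L}_\psi)$, $g(u)=u^3$, and $\dim W=2$. The swap acts with the Koszul sign, so its trace is $-\dim W=-2=(-1)^{2-1}\sqrt{4}$, which forces $\chi=1$. Your other option, a ``Frobenius-twisted Lefschetz count'', is too vague as stated to count as a proof of this case.
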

\begin{proof}
    Under the assumption, we have 
    \[
        C \cap \Phi_e^{M+} = \{ \sigma^i(\alpha) \mid 0 \leq i < d_\alpha / 2 \} ,\quad 
        C \cap \Phi_e^{M-} = \{ \sigma^i(\alpha) \mid d_\alpha / 2 \leq i < d_\alpha \}. 
    \]
    Let $k = \dim_{k_\alpha} \msf{m}^{\perp,\alpha}_{x, r/2}$ and let $v_1, \ldots, v_k$ be an orthogonal basis of $\msf{m}^{\perp,\alpha}_{x, r/2}$ with respect to $B$ as in \Cref{prop:orbit_description}. Let $k_{\alpha / 2} \subset k_\alpha$ be the extension of degree $d_\alpha / 2$ over $k$. Since $B(v_i, v_i) \in \Ker(\Tr \colon k_\alpha \to k_{\alpha / 2})$ is nonzero and $\Ker(\Tr \colon k_\alpha \to k_{\alpha / 2})$ is one-dimensional over $k_{\alpha / 2}$, we may assume $B(v_i, v_i) = \xi$ for some nonzero $\xi \in \Ker(\Tr \colon k_\alpha \to k_{\alpha / 2})$ for every $i$. 

    By the description in \Cref{prop:cohmology_application}, $\wtd{\msf{u}}_{I \cap C, r/2} \cong \bb{A}^k$ and 
    \[
        f_C = \bigl\langle \sum_{1 \leq i \leq k} \sum_{1 \leq j \leq d_\alpha} \sigma^j(u_i v_i), \sum_{0 \leq i < k} u_i v_i \bigr\rangle = 
        \sum_{1 \leq i \leq k} - B(v_i, v_i)\cdot u_i^{q^{d_\alpha / 2} + 1} = - \xi \cdot \sum_{1\leq i \leq k} u_i^{q^{d_\alpha / 2} + 1}, 
    \]
    where $u_1, \ldots, u_k$ denote the standard coordinate on $\bb{A}^k$. Take the fiber product
    \begin{center}
        \begin{tikzcd}
            X_k \ar[r] \ar[d] & \bb{A}^1 \ar[d, "L_{k_{\alpha/2}}"] \\
            \bb{A}^k \ar[r, "f_C"] & \bb{A}^1. 
        \end{tikzcd}
    \end{center}
    Since $\cl{L}_\psi \cong L_{k_{\alpha/2}} \times^{\psi_{d_{\alpha}/2}} \Qla$, $H_c^{d_C}(\wtd{\msf{u}}_{I \cap C, r/2}, f_C^* \cl{L}_\psi) \cong H_c^{d_C}(X_k, \Qla)[\psi_{d_{\alpha}/2}^{-1}]$. Note that the sign difference from \cite[(2.7)]{IT23} is due to the difference of the sign convention for $\cl{L}_\psi$. 
    
    Here, let $z$ denote the standard coordinate of $\bb{A}^1$. As in \cite[Remark 2.1]{IT23}, the coordinate change $z \mapsto \xi^{-1} z$ identifies $X_k$ as the one introduced in \cite[Section 2.3]{IT23}. Moreover, this identification is equivariant under $\mrm{U}(\msf{m}^{\perp, \alpha}_{x, r/2})$ by the explicit description in loc. cit. Then, it follows from \cite[Lemma 2.2, Theorem 2.5]{IT23} that for every $\gamma \in \mrm{U}(\msf{m}^{\perp, \alpha}_{x, r/2})$, we have 
    \[
        \tr(\gamma \mid H_c^{d_C}(X_k, \Qla)[\psi_{d_{\alpha}/2}^{-1}]) = (-1)^{(\dim_{k_\alpha} \msf{m}^{\perp, \alpha}_{x, r/2} - \dim_{k_\alpha} \msf{m}^{\perp, \alpha, \gamma}_{x, r/2})} \sqrt{\lvert \msf{m}^{\perp, C, \gamma}_{x, r/2} \rvert}. 
    \]
\end{proof}

Now, we can show the desired geometric realization of $\kappa_{x, r, \phi}$ as $\kappa_I$. 

\begin{thm} \label{thm:extended_HW_rep}
    For every polarization with characteristic index set $I$, $\kappa_{x, r, \phi}$ is isomorphic to the Heisenberg-Weil representation $\msf{HW}_{x, r, \phi} \circlearrowright H_c^n(\msf{W}_{I, r}, \Qla)[\psi]$ for a unique $n \geq 0$. 
\end{thm}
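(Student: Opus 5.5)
The plan is to pass to the extended Heisenberg-Weil group $\wtd{\msf{HW}}_{x, r, \phi}$ and to show that $\wtd{\kappa}_I$ restricted to $\msf{HW}_{x, r, \phi}$ is $\kappa_{x, r, \phi}$. By \Cref{thm:Heisenberg_Weil_representations}, it suffices to check that $\tr(\gamma \mid \wtd{\kappa}_I) = (-1)^{d_\gamma}\sqrt{\lvert \msf{m}^{\perp,\gamma}_{x,r/2}\rvert}$ for all $\gamma \in \msf{M}_{x, r, \phi}$. This is because $\wtd{\kappa}_I|_{\msf{H}_{x,r,\phi}}$ is already the Heisenberg representation with central character $\psi$ by \Cref{cor:Heisenberg_realization}. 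The uniqueness of $n$ is part of \Cref{cor:Heisenberg_realization}, since the $\psi$-part is concentrated in a single degree. By \Cref{lem:Kunneth_decomposition}, $\wtd{\kappa}_I|_{\wtd{\msf{M}}_{x,r,\phi}}$ is an external tensor product over $C \in \Phi^M_\Sigma$. Both sides of the trace identity are also products over $C$, with the factors given by \eqref{eq:trace_polarized} and \eqref{eq:trace_unitary}. Hence it is enough to identify each factor $H_c^{d_C}(\msf{\wtd{u}}_{I\cap C, r/2}, f_C^*\cl{L}_\psi)$ as a representation of $\msf{M}_C$.

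First I would treat the simple polarization of \Cref{rmk:simple_polarization}. For $C$ asymmetric, $I\cap C = \Gal(\ov{k}/k)\cdot\alpha$, and \Cref{lem:compute_polarized} gives the trace $\sqrt{\lvert\msf{m}^{\perp,C,\gamma}_{x,r/2}\rvert}$ on all of $\msf{M}_C$. For $C$ symmetric, $I \cap C = \{\alpha\}$, and \Cref{lem:compute_unitary} gives the signed unitary trace. This matches \eqref{eq:trace_polarized} and \eqref{eq:trace_unitary} for every element, wild or not, so the theorem holds for this polarization.

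For an arbitrary polarization I would compare $\wtd{\kappa}_I$ with $\wtd{\kappa}_{I'}$, where $I'$ is the simple one. Both are irreducible extensions of the same Heisenberg representation, so $\wtd{\kappa}_I \cong \wtd{\kappa}_{I'}\otimes\chi$ for a character $\chi = \prod_C \chi_C$ of $\wtd{\msf{M}}_{x,r,\phi} = \prod_C \msf{M}_C$. \Cref{prop:tame_trace} gives the same, nonvanishing, trace for both on every element of order prime to $p$, so $\chi$ is trivial on all $p'$-elements. For $\msf{M}_C = \GL(V)$ or $\msf{U}(V)$, outside small exceptional cases every character factors through $\dt_C$. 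Moreover $\dt_C$ is already surjective on a maximal torus, which consists of $p'$-elements. Hence $\chi_C = 1$ in these cases and we are done.

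The main obstacle is the exceptional small cases: groups such as $\GL_2(\bb{F}_2)$ or small unitary groups over $\bb{F}_2$ carry characters that do not factor through the determinant and are invisible on $p'$-elements. There I would not use tame traces but compute directly, factor by factor. For a general $I\cap C$, the polynomial $f_C$ has the cyclic form of \Cref{thm:cohAS}. I would try to make the reduction of \Cref{sssec:indge3} equivariant for $\msf{M}_C$: each step substitutes along a section defined by a $\sigma$-semilinear relation among weight spaces, which $\msf{M}_C$ preserves. This would iteratively identify $H_c^{d_C}(\msf{\wtd{u}}_{I\cap C,r/2}, f_C^*\cl{L}_\psi)$, up to Tate twist, with the cohomology for a shorter cycle. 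The process ends either at a cycle of length $2$ with trivial local system, which is the polarized case of \Cref{lem:compute_polarized}, or at a length-$1$ cycle $u^{q^{d_\alpha/2}+1}$, which is the unitary case of \Cref{lem:compute_unitary}. If making the reduction equivariant proves too delicate, a fallback is to check in the exceptional cases that $\chi_C$ is trivial on a single wild element, such as a transvection, by a direct Lefschetz computation on its fixed locus in $\msf{\wtd{u}}_{I\cap C, r/2}$.
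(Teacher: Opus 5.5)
Your architecture matches the paper's proof. You compute $\wtd{\kappa}_I$ for the simple polarization of \Cref{rmk:simple_polarization} via \Cref{lem:Kunneth_decomposition}, \Cref{lem:compute_polarized} and \Cref{lem:compute_unitary}. You then prove independence of the polarization by writing $\wtd{\kappa}_I \cong \wtd{\kappa}_{I'} \otimes \chi$ and killing each $\chi_C$ with \Cref{prop:tame_trace} and the fact that $[\msf{M}_C, \msf{M}_C]$ is $\mathrm{SL}$ or $\mathrm{SU}$ away from small fields. Your determinant-on-a-torus argument is an equivalent substitute for the paper's Sylow argument.

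The gap is the exceptional small-field case. You leave it as an unexecuted plan and miss the observation that disposes of it at once. The exceptional cases are $\lvert k_\alpha \rvert = 2$ for asymmetric $C$ and $\lvert k_{\alpha/2} \rvert \in \{2, 3\}$ for symmetric $C$, which force $d_\alpha = 1$, resp.\ $d_\alpha = 2$. Then $C = \{\alpha, -\alpha\}$ with both weights $\sigma$-fixed, resp.\ $C = \{\alpha, \sigma(\alpha) = -\alpha\}$. In both situations every polarization restricted to $C$ is automatically of the simple form:
\begin{itemize}
\item in the asymmetric case, the positive weight lies in $I_0$ and $I \cap C$ is a single Galois orbit;
\item in the symmetric case, the positive weight $\alpha$ satisfies $\sigma^{-1}(\alpha) = -\alpha \in \Phi_e^{M-}$, so $I \cap C = \{\alpha\} \subset I_1$.
\end{itemize}
Hence \Cref{lem:compute_polarized} and \Cref{lem:compute_unitary} apply verbatim to the given $I$ on these $C$. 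They give the full character of the $C$-factor, wild elements included, so $\chi_C = 1$ with no further work.

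The routes you propose instead do not close the gap as stated.
\begin{itemize}
\item \emph{The fallback fails.} The formula $\tr(g \mid H_c^\bullet(X)) = \chi(X^g)$ holds only for $g$ of order prime to $p$, and a transvection is a $p$-element. For instance, the translation $t \mapsto t + a$ on an Artin--Schreier cover $t^q - t = f$ has no fixed points but in general has nonzero trace. This is exactly why the paper needs the explicit trace computations, via \cite{IT23} in the unitary case.
\item \emph{The equivariant version of \Cref{sssec:indge3} is unproven, and its endpoint is misdescribed.} For asymmetric $C$ with $I \cap C \subset I_1$, the reduction terminates at a length-$2$ cyclic polynomial $a_1 x_1 x_2^{q^{d_1}} + a_2 x_2 x_1^{q^{d_2}}$. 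Its local system is nontrivial; the base step only works because $\pr_{1!}$ has zero-dimensional support. This is not the $I_0$-situation with trivial local system treated in \Cref{lem:compute_polarized}, so a further equivariant base-case computation would be required.
\end{itemize}
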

\begin{proof}
    First, we show $\wtd{\kappa}_I \cong \wtd{\kappa}_{I'}$ for any two polarizations of $\Phi_e^M$. Since their restriction to $\msf{H}_{x, r, \phi}$ are irreducible and isomorphic, $\wtd{\kappa}_{I'} \cong \wtd{\kappa}_I \otimes \chi$ for some character $\chi \colon \msf{\wtd{M}}_{x, r, \phi} \to \Qlax$. To show $\chi = 1$, it is enough to show that its restriction $\chi_C \colon \msf{M}_C \to \Qlax$ is trivial. 

    By \Cref{prop:tame_trace}, $\chi_C$ is trivial on elements of order coprime to $p$. Let 
    \[
        \msf{M}_C^\der = \mrm{SL}(\msf{m}^{\perp, \alpha}_{x, r/2}) \quad \text{or} \quad \mrm{SU}(\msf{m}^{\perp, \alpha}_{x, r/2})
    \]
    depending on whether $C \in \Phi_{\Sigma, \asym}^M$ or not. Suppose $\lvert k_\alpha \rvert \geq 3$ in the former case and $\lvert k_{\alpha / 2} \rvert \geq 4$ in the latter case. Then, it is well-known that $\msf{M}_C^\der = [\msf{M}_C, \msf{M}_C]$. For the unitary case, see the proof of \cite[Theorem 3.3]{Ger77}. In particular, $\chi_C$ is trivial on $\msf{M}_C^\der$. Since $\lvert \msf{M}_C / \msf{M}_C^\der \rvert$ is coprime to $p$, it follows from the Sylow theorems that $\msf{M}_C$ is generated by $\msf{M}_C^\der$ and elements of order coprime to $p$. Thus, $\chi_C = \id$. 

    In the remaining cases, $\lvert k_\alpha \rvert = 2$ and $d_\alpha = 1$ in the former case and $\lvert k_{\alpha / 2} \rvert = 2, 3$ and $d_\alpha = 2$ in the latter case. Then, we may apply the trace computation in \Cref{lem:compute_polarized} and \Cref{lem:compute_unitary} to get the independence of the polarization of $C$. 

    Now, we show $\kappa_{x, r, \phi} \cong \wtd{\kappa}_I\vert_{\msf{HW}_{x, r, \phi}}$. For this, we choose a polarization as in \Cref{rmk:simple_polarization}. Then, we may apply \Cref{lem:compute_polarized} or \Cref{lem:compute_unitary} for every $C \in \Phi_\Sigma^M$. Then, we get condition (2) in \Cref{thm:Heisenberg_Weil_representations} from \Cref{lem:Kunneth_decomposition}.
\end{proof}

\section{Positive-depth parahoric Deligne-Lusztig induction} \label{sec:positive_depth_induction}

In this section, we apply the cohomology of $\msf{W}_{I, r}$ to an explicit comparison of positive-depth parahoric Deligne-Lusztig induction with the representation-theoretic counterpart $\Ind_{r, \phi}$. Here, we will assume that $M$ splits over $\breve{F}$ and $Z_M / Z_G$ is anisotropic.

\subsection{Preliminaries} \label{ssec:preliminaries}

In this section, we introduce positive-depth parahoric Deligne-Lusztig varieties. 
Let $P \subset \breve{G}$ be a parabolic subgroup over $\breve{F}$ with a Levi decomposition $P = \breve{M} U$. We choose a polarization of $\Phi^{M}_e$ so that $\Phi^{M+}_e$ consists of roots inside $U$. Let $I = I_0 \sqcup I_1 \subset \Phi^{M+}_e$ be the characteristic index set. 

\begin{lem} \label{lem:ass_meaning}
    In this setting, $I_0 = \phi$ and $\msf{u}_{I, r/2} = \msf{u}^+_{r / 2} \cap \sigma(\msf{u}^-_{r/2})$. In particular, 
    \[
        H_c^n(\msf{W}_{I, r/2}, \Qla)[\psi] \to H^n(\msf{W}_{I, r/2}, \Qla)[\psi]
    \]
    is nonzero and an isomorphism for $n  = \dim \msf{W}_{I, r/2}$. 
\end{lem}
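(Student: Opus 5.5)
The plan is to verify the two combinatorial facts about the index sets and then read off the cohomological statement from \Cref{cor:Heisenberg_realization}. Since $M$ splits over $\breve{F}$, we have $e = 1$. Hence $\Phi_e^M = \Phi^M$ is the set of roots of $\breve{G}$ outside $\breve{M}$. The polarization is $\Phi_e^{M+} = \Phi(U)$ and $\Phi_e^{M-} = \Phi(U^-) = -\Phi(U)$, where $U^-$ is the opposite unipotent radical.

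For $I_0 = \phi$: I would argue by contradiction. Suppose $\alpha \in \Phi(U)$ with $\sigma^i(\alpha) \in \Phi(U)$ for all $i$. The $\sigma$-orbit of $\alpha$ is finite, so $\sum_i \sigma^i(\alpha)$ is a $\sigma$-stable character. Restricted to $Z_M^\circ$ it is $F$-rational and nontrivial. The nontriviality should follow by pairing with a cocharacter $\lambda$ of $Z_M^\circ$ (over $\breve{F}$) defining $P$, chosen so that $\langle \beta, \lambda\rangle > 0$ for all $\beta \in \Phi(U)$. Here I need to be careful: $\sigma$ need not preserve $P$, so instead I would average $\lambda$. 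More cleanly, the orbit sum $\chi = \sum_i \sigma^i(\alpha)|_{Z_M^\circ}$ is a nonzero element of $X^*(Z_M^\circ)^{\sigma}$, i.e.\ an $F$-rational character of $Z_M^\circ$. The remaining point is to see that this forces an $F$-split part of $Z_M/Z_G$: $\chi$ is nontrivial on $Z_M^\circ$ but trivial on $Z_G$ since $\alpha$ is a root. This contradicts the anisotropy of $Z_M/Z_G$.

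The obstacle is showing $\chi \neq 0$. I would take $\lambda \in X_*(Z_M^\circ)\otimes\bb{R}$ in the open chamber corresponding to $P$. All $\sigma^i(\alpha)$ lie in $\Phi(U)$, so each pairs positively with $\lambda$, and hence $\langle\chi,\lambda\rangle > 0$.

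For the second fact, by definition $I_1 = \Phi_e^{M+} \cap \sigma(\Phi_e^{M-})$. Since $I_0 = \phi$, we get $I = I_1$, and therefore $\msf{u}_{I,r/2} = \msf{u}^+_{r/2} \cap \sigma(\msf{u}^-_{r/2})$ as a sum of weight spaces. Finally, \Cref{cor:Heisenberg_realization} (equivalently \Cref{prop:cohmology_application}) applies with $I_0 = \phi$. It gives concentration in the single degree $n$, equality $n = \dim \msf{W}_{I,r}$, and that $H_c^n \to H^n$ on $\psi$-parts is an isomorphism. This map is nonzero because its source has dimension $\lvert \msf{m}^\perp_{x,r/2}\rvert^{1/2} \neq 0$.
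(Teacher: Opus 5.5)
Your proposal is correct and is the paper's argument: the sum over the $\sigma$-orbit of some $\alpha \in I_0$ gives a nonzero $F$-rational (co)character of $Z_M/Z_G$, contradicting anisotropy, and the cohomological claims are read off from \Cref{cor:Heisenberg_realization}; your pairing with a cocharacter $\lambda$ of $Z_M^\circ$ defining $P$ simply makes explicit the nonvanishing that the paper leaves implicit. As a minor correction, $\Phi_e^M$ consists of $Z_M$-weights rather than roots, and $e$ need not be $1$ for the fixed torus $T$; this is harmless, because $Z_M^\circ$ splits over $\breve{F}$, so inertia acts trivially on $X^*(Z_M^\circ)$ and your argument goes through verbatim at the level of weights.
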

\begin{proof}
    Suppose $I_0 \neq \phi$ and let $\alpha \in I_0$. Then, the sum of $\Gal(\ov{k} / k) \cdot \alpha \subset \Phi^{M+}_e$ is a nonzero cocharacter of $Z_M / Z_G$. It is a contradiction since $Z_M / Z_G$ is anisotropic. The remaining claims follow from \Cref{cor:Heisenberg_realization}. 
\end{proof}
\begin{rmk}
    The latter property is important to lift the cohomology of varieties over finite fields to the cohomology of rigid analytic varieties via the nearby cycle functor (see \cite[Theorem 2.8]{Mie16}). In our forthcoming work on the generalization of \cite{BW16}, we will utilize this property to construct Jacquet-Langlands pairs of regular supercuspidal representations in the cohomology of local Shimura varieties. 
\end{rmk}

For $r \geq 0$, let $\msf{G}_r$ be the perfect group scheme over $k$ such that $\msf{G}_r(k) = G(F)_{x} / G(F)_{x, r+}$. Here, the identity component $\msf{G}_r^\circ \subset \msf{G}_r$ is the quotient of Moy-Prasad positive loop groups $L^+\cl{G}_{x, 0} / L^+ \cl{G}_{x, r+}$. Similarly, we define $\msf{M}_r \subset \msf{G}_r$ and let
\[
    \msf{U}_r = L^+\cl{U}_{x, 0} /  L^+\cl{U}_{x, r} \subset \msf{G}_{r, \ov{k}}^\circ. 
\]  

\begin{defi}
    We define the positive-depth parahoric Deligne-Lusztig variety $\msf{Y}_{P, r}$ as
    \[
        \msf{Y}_{P, r} = \{ h \in \msf{U}_r \cap \sigma^{-1}(\msf{U}_r) \backslash \msf{G}_{r, \ov{k}} \mid \sigma(h) h^{-1} \in \msf{U}_r \}. 
    \] 
    It has an action of $\msf{G}_r(k) \times \msf{M}_r(k)$ such that
    \[
        (g, m)\cdot h = m h g^{-1} \quad 
        (g \in \msf{G}_r(k), \; m \in \msf{M}_r(k), \; h \in \msf{U}_r). 
    \]
\end{defi}
\begin{rmk}
    In the literature (e.g. \cite{CO25_JEMS}, \cite{Cha25}, \cite{CO25}), the affine fibration
    \[
        \msf{X}_{P, r} = \{ h \in \msf{G}_{r, \ov{k}} \mid \sigma(h) h^{-1} \in \msf{U}_r \}
    \]
    over $\msf{Y}_{P, r}$ is called a positive-depth parahoric Deligne-Lusztig variety. Here, we adopt $\msf{Y}_{P, r}$ because the associated Deligne-Lusztig induction should satisfy middle concentration in the generic case. In particular, the natural map $H_c^\bullet \to H^\bullet$ should behave better for $\msf{Y}_{P, r}$. 
\end{rmk}

Let $U^-$ be the opposite unipotent radical to $U$. We define $\msf{U}^-_r \subset \msf{G}_{r, \ov{k}}$ as previously. 

\begin{lem} \label{lem:dim_Y_Pr}
    As a perfect $\ov{k}$-scheme, $\msf{Y}_{P, r}$ is perfectly smooth of dimension $\dim \msf{U}_r \cap \sigma(\msf{U}^-_r)$. 
\end{lem}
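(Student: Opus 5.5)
We prove the statement by exhibiting $\msf{Y}_{P, r}$ as the fiber of a Lang-type map over a perfectly smooth base and then computing the relative dimension. Set $\msf{X}_{P, r} = \{ h \in \msf{G}_{r, \ov{k}} \mid \sigma(h) h^{-1} \in \msf{U}_r \}$. The Lang map $L \colon \msf{G}_{r, \ov{k}} \to \msf{G}_{r, \ov{k}}$, $h \mapsto \sigma(h)h^{-1}$, is a finite \'etale $\msf{G}_r(k)$-torsor. This holds because $\msf{G}_r^\circ$ is the perfection of a smooth connected linear algebraic group over $k$ (\cite[Lemma A.26]{Zhu17}, as used before \Cref{defi:diag_W}), and the component group only contributes finitely many translates. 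Hence $\msf{X}_{P, r} = L^{-1}(\msf{U}_r)$ is finite \'etale over $\msf{U}_r$. Since $\msf{U}_r$ is perfectly an affine space, being a successive extension of vector groups by \Cref{lem:product_description_loop} and \Cref{lem:piecewise_quotient}, $\msf{X}_{P, r}$ is perfectly smooth of dimension $\dim \msf{U}_r$.

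Next we analyze the quotient by $\msf{U}_r \cap \sigma^{-1}(\msf{U}_r)$. This group acts on $\msf{X}_{P, r}$ by left multiplication: for $u \in \msf{U}_r \cap \sigma^{-1}(\msf{U}_r)$ we have $\sigma(uh)(uh)^{-1} = \sigma(u)\,\sigma(h)h^{-1}\,u^{-1} \in \msf{U}_r$. The action is free, and $\msf{U}_r \cap \sigma^{-1}(\msf{U}_r)$ is again perfectly an affine space, being a product of the root subgroups $\msf{U}_{\alpha}$ for $\alpha$ in $\Phi(U) \cap \sigma^{-1}\Phi(U)$ with the Moy-Prasad filtration steps. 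We plan to show that $\msf{X}_{P, r} \to \msf{Y}_{P, r}$ is a Zariski (or at least \'etale) locally trivial torsor under this group, so that $\msf{Y}_{P, r}$ is perfectly smooth of dimension $\dim \msf{U}_r - \dim(\msf{U}_r \cap \sigma^{-1}\msf{U}_r)$. For the local triviality, the natural approach is to realize the quotient $(\msf{U}_r \cap \sigma^{-1}\msf{U}_r)\backslash \msf{G}_{r,\ov{k}}$ as a perfected smooth homogeneous space. Because the subgroup is a closed unipotent subgroup of a perfected smooth group, the quotient map admits local sections, and the torsor property restricts to the closed subscheme $\msf{X}_{P,r}$.

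The remaining step is the dimension count. Using the product decomposition of \Cref{lem:product_description_loop}, which holds for the depth-$0$-to-$r$ quotients, the factor $\msf{U}_r$ decomposes as $(\msf{U}_r \cap \sigma^{-1}\msf{U}_r)\cdot(\msf{U}_r \cap \sigma^{-1}\msf{U}^-_r)$ as varieties. This gives
\[
\dim \msf{U}_r - \dim(\msf{U}_r \cap \sigma^{-1}\msf{U}_r) = \dim(\msf{U}_r \cap \sigma^{-1}\msf{U}^-_r).
\]
Applying $\sigma$, which preserves dimensions and carries $\msf{U}_r \cap \sigma^{-1}\msf{U}^-_r$ to $\sigma(\msf{U}_r) \cap \msf{U}^-_r$, we then compare root by root with $\msf{U}_r \cap \sigma(\msf{U}^-_r)$. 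For each root $\alpha$, the filtration pieces on the two sides have the same dimension since $\sigma$ preserves depths at the $\sigma$-stable point $x$, and the involved root sets $\Phi(U) \cap \sigma\Phi(U^-)$ and $\sigma\Phi(U) \cap \Phi(U^-)$ are in bijection via $\alpha \mapsto -\alpha$, while the graded pieces $\msf{u}_{\alpha}$ and $\msf{u}_{-\alpha}$ have equal dimension. The main obstacle is this last matching of the filtration jumps for $\alpha$ and $-\alpha$ at depths between $0$ and $r$; it holds because $M$ splits over $\breve{F}$ and the valuations at $x$ satisfy the standard duality for a root and its negative. We expect that checking this carefully is the only delicate point in the argument.
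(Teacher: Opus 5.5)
Your route is the paper's: $\msf{X}_{P,r}=L^{-1}(\msf{U}_r)$ is finite \'etale over the perfected affine space $\msf{U}_r$, and $\msf{Y}_{P,r}$ is its quotient by the free action of $\msf{U}_r\cap\sigma^{-1}(\msf{U}_r)$. Hence $\dim\msf{Y}_{P,r}=\dim\msf{U}_r-\dim(\msf{U}_r\cap\sigma^{-1}(\msf{U}_r))$. Your treatment of the quotient fills in what the paper leaves implicit: it is a torsor under a split unipotent group, hence Zariski-locally trivial, so smoothness and dimension descend. One small imprecision there: for disconnected $\msf{G}_r$ the Lang map is a $\msf{G}_r(k)$-torsor only over its open and closed image. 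That image contains $\msf{G}_r^\circ\supset\msf{U}_r$, which is all you need.

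The last step is where the proposal goes wrong. You compare $\sigma(\msf{U}_r)\cap\msf{U}^-_r$ with $\msf{U}_r\cap\sigma(\msf{U}^-_r)$ via $\alpha\mapsto-\alpha$. That needs $U_\alpha$ and $U_{-\alpha}$ to have the same number of Moy--Prasad graded pieces between depths $0$ and $r$ at $x$. This is not a general duality. The jumps of $U_{\pm\alpha}$ at $x$ lie in $\pm\alpha(x)+\bb{Z}$, so for $r\notin\bb{Z}$ the counts depend on the fractional part of $\alpha(x)$ and can differ. For example, take $G=G_2$ and $M=T$ an unramified elliptic torus whose fixed point $x$ is the vertex of type $A_2$. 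There the short roots have $\alpha(x)\equiv 1/3$ or $2/3\pmod{\bb{Z}}$. With $r=1/2$, a short root with $\alpha(x)\equiv1/3$ contributes one piece and its negative contributes none. For a Borel $P$, both of your root sets $\sigma\Phi(U)\cap\Phi(U^-)$ and $\Phi(U)\cap\sigma\Phi(U^-)$ contain such roots, so your termwise matching fails. It does hold for $r\in\bb{Z}$, since $T$ splits over $\breve{F}$ and every root then contributes the same number of pieces. That covers the later application, where a generic $\phi$ forces $r\in\bb{Z}$, but the lemma is stated for arbitrary $r$.

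The fix is simpler than the detour. Since $\sigma(U_{\alpha,x,s})=U_{\sigma\alpha,x,s}$ at the $F$-rational point $x$, applying $\sigma$ gives $\dim(\msf{U}_r\cap\sigma^{-1}(\msf{U}_r))=\dim(\sigma(\msf{U}_r)\cap\msf{U}_r)$. Because $\sigma$ preserves $\Phi^M$, you have the partition $\Phi(U)=(\Phi(U)\cap\sigma\Phi(U))\sqcup(\Phi(U)\cap\sigma\Phi(U^-))$. Combined with the product decomposition of $\msf{U}_r$ over roots, this gives $\dim\msf{U}_r-\dim(\msf{U}_r\cap\sigma(\msf{U}_r))=\dim(\msf{U}_r\cap\sigma(\msf{U}^-_r))$ directly, with no comparison between $\alpha$ and $-\alpha$. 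This is the identity the paper's last line uses.
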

\begin{proof}
    By construction, $\msf{X}_{P, r} \to \msf{U}_r$ is finite \'{e}tale, so $\msf{X}_{P, r}$ is perfectly smooth of dimension $\dim \msf{U}_r$. Thus, $\msf{Y}_{P, r}$ is perfectly smooth of dimension 
    $
        \dim \msf{U}_r - \dim \msf{U}_r \cap \sigma^{-1}(\msf{U}_r) = \dim \msf{U}_r \cap \sigma(\msf{U}^-_r). 
    $
\end{proof}

\begin{defi}
    For each $\msf{M}_r(k)$-representation $\rho$, we say that the complex
    \[
        R\Gamma_c(\msf{Y}_{P, r}, \IC_{\msf{Y}, r}) \mathop{\otimes}_{\msf{M}_r(k)} \rho. 
    \]
    of $\msf{G}_r(k)$-representations is the positive-depth parahoric Deligne-Lusztig induction of $\rho$. Here, $\IC_{\msf{Y}, r}$ denotes the intersection cohomology complex of $\msf{Y}_{P, r}$. 
\end{defi}

We will not care Frobenius eigenvalues in this paper, so we will ignore Tate twists in $\IC_{\msf{Y}, r}$. In particular, $\IC_{\msf{Y}, r} = \Qla[\dim \msf{Y}_{P, r}]$. 

From now on, we fix a $(G, M)$-generic character $\phi \colon M(F)_x \to \Qlax$ of depth $r$ and take $X$ as in \Cref{defi:genericity}. By abuse of notation, the $k$-linear map $\msf{m}_{x, r} \to k$ give by $X$ is also denoted by $\phi$. The aim of the following sections is to compare 
\[
    H_c^\bullet(\msf{Y}_{P, r}, \Qla) \mathop{\otimes}_{\msf{M}_r(k)} (\rho \otimes \phi)
\]
with $\Ind_{r, \phi}(\rho)$ for an $\msf{M}_{r-}(k)$-representation $\rho$ (see \Cref{thm:positive_depth_DL_induction}). 

\subsection{Depth filtration}

In this section, we introduce a filtration $\msf{Y}_{P, [s, r]}$ of $\msf{Y}_{P, r}$ and study the contribution of generic positive-depth parahoric Deligne-Lusztig induction in each filtration. 

For $0 < s \leq r$, let 
\[
    \msf{G}_{[s, r]} = L^+\cl{G}_{x, s} /  L^+\cl{G}_{x, r} \subset \msf{G}_r^\circ ,\quad 
    \msf{U}_{[s, r]} = L^+\cl{U}_{x, s} /  L^+\cl{U}_{x, r} \subset \msf{U}_r. 
\]
We will apply the same notation to other groups and also to Lie algebras. 
\begin{defi} \label{defi:MP_DL_context}
    Let
    \[
        \msf{Y}_{P, [s,r]} = \{ h \in \msf{U}_{[s, r]} \cap \sigma^{-1}(\msf{U}_{[s, r]}) \backslash \msf{G}_{r, \ov{k}} \mid \sigma(h) h^{-1} \in \msf{U}_{[s, r]} \}. 
    \] 
    be a perfect $\ov{k}$-scheme with a natural $\msf{G}_r(k) \times \msf{M}_r(k)$-action. 
\end{defi}

\begin{prop} \label{prop:depth_filtration}
    The natural map $\msf{Y}_{P, [s, r]} \to \msf{Y}_{P, r}$ is a closed immersion equivariant under $\msf{G}_r(k) \times \msf{M}_r(k)$. We say that $\{ \msf{Y}_{P, [s, r]} \}$ is the depth filtration of $\msf{Y}_{P, r}$. 
\end{prop}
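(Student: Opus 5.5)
The natural map sends the class of $h$ modulo $\msf{U}_{[s,r]} \cap \sigma^{-1}(\msf{U}_{[s,r]})$ to its class modulo $\msf{U}_r \cap \sigma^{-1}(\msf{U}_r)$. It is well defined because $\msf{U}_{[s,r]} \subset \msf{U}_r$, so the smaller stabilizer group lies in the larger one, and the condition $\sigma(h)h^{-1} \in \msf{U}_{[s,r]}$ implies $\sigma(h)h^{-1} \in \msf{U}_r$. Equivariance is immediate: $\msf{G}_r(k)$ acts by right translation by $\sigma$-fixed elements, and $\msf{M}_r(k)$ acts by left multiplication. Conjugation by $\msf{M}_r$ normalizes $\msf{U}_r$ and $\msf{U}_{[s,r]}$, and rational points commute with $\sigma$. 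So the content of the proposition is that the map is a closed immersion.

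I would work upstairs with the affine fibrations $\msf{X}_{P,[s,r]} = \{h \mid \sigma(h)h^{-1} \in \msf{U}_{[s,r]}\} \subset \msf{X}_{P,r}$. The inclusion $\msf{X}_{P,[s,r]} \subset \msf{X}_{P,r}$ is a closed immersion, since it is the preimage of the closed subgroup $\msf{U}_{[s,r]} \subset \msf{U}_r$ under the Lang map; closedness of $\msf{U}_{[s,r]} \subset \msf{U}_r$ follows from the product decomposition of \Cref{lem:product_description_loop} and \Cref{lem:piecewise_quotient}. I then need two facts.

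\textbf{Injectivity on points.} If $h, h' \in \msf{X}_{P,[s,r]}$ and $h' = uh$ with $u \in \msf{U}_r \cap \sigma^{-1}(\msf{U}_r)$, I claim $u \in \msf{U}_{[s,r]} \cap \sigma^{-1}(\msf{U}_{[s,r]})$. We have $\sigma(h')h'^{-1} = \sigma(u)\,\sigma(h)h^{-1}\,u^{-1}$. Both $\sigma(h')h'^{-1}$ and $\sigma(h)h^{-1}$ lie in $\msf{U}_{[s,r]}$, so reducing modulo the normal subgroup $\msf{U}_{[s,r]}$ of $\msf{U}_r$ gives $\sigma(\bar u) = \bar u$ in $\msf{U}_r/\msf{U}_{[s,r]} = \msf{U}_{s-}$, where $\bar u$ is the image of $u$. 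Thus $\bar u$ is a $\sigma$-fixed point lying in $\bar{\msf{U}} \cap \sigma^{-1}(\bar{\msf{U}})$.

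Here $P$ is only defined over $\breve F$, so $\sigma$ moves $\msf{U}$. The key step is to show that a $\sigma$-fixed element of $\msf{U}_{s-}$ is trivial. Its $\sigma$-orbit stays in $\bigcap_i \sigma^i(\msf{U}_{s-})$, which is the unipotent radical attached to the $\sigma$-stable part of $\Phi_e^{M+}$. Since $Z_M/Z_G$ is anisotropic, the argument of \Cref{lem:ass_meaning} ($I_0 = \phi$) shows that no nonempty $\sigma$-stable subset of roots lies in $\Phi^{M+}_e$. Roots of $M$ are not in $U$. Hence this intersection is trivial and $\bar u = 1$, which gives $u \in \msf{U}_{[s,r]}$. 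Applying the same argument to $\sigma(u)$ gives $\sigma(u) \in \msf{U}_{[s,r]}$.

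\textbf{Local structure.} I expect the main obstacle to be upgrading injectivity on points to a closed immersion of perfect schemes. I would argue that $\msf{Y}_{P,[s,r]} \to \msf{Y}_{P,r}$ is obtained by descending the closed immersion $\msf{X}_{P,[s,r]} \to \msf{X}_{P,r}$ along the free actions of the two unipotent groups. The step just proved shows that the $\msf{U}_r\cap\sigma^{-1}\msf{U}_r$-saturation of $\msf{X}_{P,[s,r]}$ is
\[
(\msf{U}_r\cap\sigma^{-1}\msf{U}_r) \times^{\msf{U}_{[s,r]}\cap\sigma^{-1}\msf{U}_{[s,r]}} \msf{X}_{P,[s,r]}.
\]
This saturation maps isomorphically onto a closed subscheme of $\msf{X}_{P,r}$, namely the preimage of $\msf{U}_{[s,r]}\cdot\{\text{$\sigma$-twisted conjugates}\}$. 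I would check closedness by writing both quotients as affine-space bundles using the product decomposition of \Cref{lem:product_description_loop}, with coordinates chosen root by root. In the perfect setting a universally injective closed embedding of torsor quotients suffices, and fpqc descent then gives the closed immersion on the quotients.
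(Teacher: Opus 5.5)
You handle equivariance and injectivity on points correctly, and your argument matches the paper. The fact that a $\sigma$-fixed point of $\msf{U}_{[0,s-]}\cap\sigma^{-1}(\msf{U}_{[0,s-]})$ is trivial because $I_0=\emptyset$ is exactly the paper's assertion that
\[
\iota\colon \msf{U}_{[0,s-]}\cap\sigma^{-1}(\msf{U}_{[0,s-]})\to\msf{U}_{[0,s-]},\qquad u\mapsto\sigma(u)u^{-1},
\]
is injective.

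The gap is the step you yourself flag as the main obstacle: you never prove closedness. Your saturation is the preimage in $\msf{X}_{P,r}$ of $\Img(\iota)$ under $h\mapsto\sigma(h)h^{-1}\bmod\msf{U}_{[s,r]}$. So the statement reduces to $\Img(\iota)$ being closed, and scheme-theoretically to $\iota$ being a closed immersion. Checking this ``in root-by-root coordinates'' is not an argument, and it is unclear it could become one. $\msf{U}_{[0,s-]}$ has the non-abelian depth-zero quotient $\msf{U}_0$, and the paper points out that the group analogue of \Cref{lem:twisted_isomorphism} fails there without convexity; that is exactly the kind of triangular description your coordinate check would need. Your closing remark also falls short as stated. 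Injectivity on points plus a closed image does not make the saturation map an isomorphism of perfect schemes without a properness input, which you do not supply.

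The missing idea is finiteness. $\iota$ is the restriction of the Lang map $g\mapsto\sigma(g)g^{-1}$ of $\msf{G}_{s-}$ to a closed subscheme, and that Lang map is finite \'etale, so $\iota$ is finite. Your injectivity argument works on $A$-valued points, so $\iota$ is a finite monomorphism and hence a closed immersion.

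This is the paper's proof: $\msf{Y}_{P,[s,r]}$ is identified with the closed locus $\{h : \sigma(h)h^{-1}\bmod\msf{U}_{[s,r]}\in\Img(\iota)\}$ of $\msf{Y}_{P,r}$. To make the identification scheme-theoretic, note that on this locus $h\mapsto\iota^{-1}(\sigma(h)h^{-1}\bmod\msf{U}_{[s,r]})$ is a morphism to $\msf{U}_{[0,s-]}\cap\sigma^{-1}(\msf{U}_{[0,s-]})$. It is equivariant for left translation by $\msf{U}_r\cap\sigma^{-1}(\msf{U}_r)$, which acts on the target through its quotient. Its fibre over $1$ is $\msf{X}_{P,[s,r]}$. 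This yields your saturation description directly, with no descent or universal-homeomorphism argument needed.
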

\begin{proof}
    The $\msf{G}_r(k) \times \msf{M}_r(k)$-equivariance is immediate. Now, the map
    \[
        \iota \colon \msf{U}_{[0, s-]} \cap \sigma^{-1}(\msf{U}_{[0, s-]}) \to \msf{U}_{[0, s-]} ,\quad u \mapsto \sigma(u) u^{-1}
    \]
    is injective since $I_0 = \phi$. Moreover, the Lang torsor under $\msf{G}_{s-}$ is finite \'{e}tale, so $\iota$ is also finite. Thus, $\iota$ is a closed immersion. Then, the claim follows since
    \[
        \msf{Y}_{P, [s, r]} \cong \{ h \in \msf{U}_r \cap \sigma^{-1}(\msf{U}_r) \backslash \msf{G}_{r, \ov{k}} \mid \sigma(h) h^{-1} \bmod \msf{U}_{s} \in \Img(\iota) \}. 
    \]    
\end{proof}

\begin{lem} \label{lem:induction_to_Gr}
    For $0 < s \leq \tfrac{r}{2}$, let 
    $
        \msf{G}_{2s, s} = L^+\cl{G}_{x, 2s, s} / L^+\cl{G}_{x, r+} \subset \msf{G}^\circ_r
    $ and 
    \[
        \msf{Y}_{P, 2s, s} = \{ h \in \msf{U}_{[s, r]} \cap \sigma^{-1}(\msf{U}_{[s, r]}) \backslash \msf{G}_{2s, s, \ov{k}} \mid \sigma(h) h^{-1} \in \msf{U}_{[s, r]} \}.  
    \]
    Then, we have 
    \[
        \msf{Y}_{P, [s, r]} \cong \msf{Y}_{P, 2s, s} \times^{\msf{G}_{2s, s}(k)} \msf{G}_r(k). 
    \]
\end{lem}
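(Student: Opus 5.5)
The plan is to show that every point of $\msf{Y}_{P, [s, r]}$ is represented by an element of the form $h = h_0 g$ with $h_0 \in \msf{G}_{2s, s, \ov{k}}$ and $g \in \msf{G}_r(k)$. Such a decomposition is unique up to the diagonal action of $\msf{G}_{2s, s}(k)$. The isomorphism will come from the natural map
\[
    \msf{Y}_{P, 2s, s} \times^{\msf{G}_{2s, s}(k)} \msf{G}_r(k) \to \msf{Y}_{P, [s, r]}, \quad (h_0, g) \mapsto h_0 g.
\]
It is well-defined: for $g \in \msf{G}_r(k)$ we have $\sigma(h_0 g)(h_0 g)^{-1} = \sigma(h_0) h_0^{-1}$. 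It is also equivariant under $\msf{G}_r(k) \times \msf{M}_r(k)$, since $\msf{M}_r(k)$ normalizes $\msf{G}_{2s, s}$ and $\msf{U}_{[s, r]}$.

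For surjectivity, let $h \in \msf{G}_{r, \ov{k}}$ satisfy $\sigma(h) h^{-1} \in \msf{U}_{[s, r]}$. The element $\sigma(h)h^{-1}$ lies in $\msf{G}_{2s, s, \ov{k}}$, since $\msf{U}_{[s,r]} \subset \msf{G}_{2s,s}$: the roots of $U$ lie in $\Phi^M$, where $f_{2s,s} = s$. By Lang's theorem for the connected perfect group $\msf{G}_{2s, s}$ (the perfection of a smooth connected group, cf.\ \cite[Lemma A.26]{Zhu17}), there is $h_0 \in \msf{G}_{2s, s, \ov{k}}$ with $\sigma(h_0)h_0^{-1} = \sigma(h)h^{-1}$. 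Then $g = h_0^{-1} h$ is $\sigma$-fixed, so $g \in \msf{G}_r(k)$ and $h = h_0 g$.

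For injectivity, suppose $h_0 g = h_0' g'$ up to left multiplication by $\msf{U}_{[s,r]} \cap \sigma^{-1}(\msf{U}_{[s,r]})$. Since that group lies inside $\msf{G}_{2s,s}$, we get $h_0'^{-1} u h_0 = g' g^{-1} \in \msf{G}_{2s, s, \ov{k}} \cap \msf{G}_r(k) = \msf{G}_{2s, s}(k)$. This is exactly the identification in the contracted product.

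Finally, I would upgrade the bijection on $\ov{k}$-points to an isomorphism of perfect schemes. Both sides are perfectly of finite type. The source is a finite disjoint union of copies of $\msf{Y}_{P, 2s, s}$, indexed by $\msf{G}_r(k)/\msf{G}_{2s,s}(k)$, and each copy maps to $\msf{Y}_{P,[s,r]}$ as a closed and open immersion onto the locus where the $\sigma$-twisted class lies in a fixed coset. This can be checked after pulling back along the finite étale Lang map $\msf{G}_{r,\ov{k}} \to \msf{G}_{r,\ov{k}}$, where the statement becomes a translation of $\msf{G}_{2s,s,\ov{k}}$ by $\msf{G}_r(k)$ inside $\msf{G}_{r,\ov{k}}$.

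The main obstacle I expect is this scheme-level step: checking that the quotient by $\msf{U}_{[s,r]} \cap \sigma^{-1}(\msf{U}_{[s,r]})$ is compatible on both sides, i.e.\ that the quotient presheaves are representable and the map is étale and bijective, hence an isomorphism for perfect schemes. The point to rely on is that both quotients are free quotients by the same unipotent group acting on the left, while the decomposition is on the right.
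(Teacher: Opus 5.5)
Your proposal is correct and follows essentially the paper's argument. The paper also starts from the natural inclusion $\msf{Y}_{P, 2s, s} \subset \msf{Y}_{P, [s, r]}$ and passes to the $\msf{U}_{[s, r]} \cap \sigma^{-1}(\msf{U}_{[s, r]})$-fibrations, where both sides are étale over $\msf{U}_{[s, r]}$ via the Lang maps, so the inclusion is a $\msf{G}_{2s, s}(k)$-equivariant open and closed immersion. It then says the claim ``follows formally'', and that formal step is exactly your surjectivity via Lang's theorem for the connected group $\msf{G}_{2s, s}$ together with $\msf{G}_{2s, s}(\ov{k}) \cap \msf{G}_r(k) = \msf{G}_{2s, s}(k)$ for injectivity. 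One small slip: the copies of $\msf{Y}_{P, 2s, s}$ in the contracted product are indexed by the right cosets $\msf{G}_{2s, s}(k) \backslash \msf{G}_r(k)$, not by $\msf{G}_r(k)/\msf{G}_{2s, s}(k)$.
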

\begin{proof}
    There is a natural inclusion $\msf{Y}_{P, 2s, s} \subset \msf{Y}_{P, [s, r]}$. If one passes to the canonical $\msf{U}_{[s, r]} \cap \sigma^{-1}(\msf{U}_{[s, r]})$-fibrations, both sides become locally finite \'{e}tale over $\msf{U}_{[s, r]}$, so the inclusion is a $\msf{G}_{2s, s}(k)$-equivariant closed and open immersion. Then, the claim follows formally. 
\end{proof}

First, we study the cohomology of $\msf{Y}_{P, [r/2, r]}$. It is directly related to $\msf{W}_{I, r}$ and we will show that the generic contribution of positive-depth parahoric Deligne-Lusztig induction is concentrated at $\msf{Y}_{P, [r/2, r]}$.

\begin{lem} \label{lem:twisted_isomorphism}
    For every $s \leq r$, the map
    \[  
        (\msf{u}^+_{[s, r]} \cap \sigma(\msf{u}^+_{[s, r]})) \times (\msf{u}^+_{[s, r]} \cap \sigma(\msf{u}^-_{[s, r]})) \to \msf{u}^+_{[s, r]}, \quad (u, a) \mapsto u - \sigma^{-1}(u) + a
    \]      
    is an isomorphism. 
\end{lem}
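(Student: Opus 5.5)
The plan is to prove the claim root space by root space, reducing it to a statement about an injective Frobenius-twisted map on an iterated chain of root spaces. Since $M$ splits over $\breve{F}$, we have $e=1$, and $\msf{u}^\pm_{[s,r]}$ decompose as direct sums of pieces $\msf{u}_{\alpha,[s,r]}$ indexed by $\alpha \in \Phi^{M\pm}_e$. The Frobenius $\sigma$ permutes these pieces according to the $\Gal(\ov{k}/k)$-action on $\Phi^M_e$, and it is $\sigma$-semilinear.

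Both sides are perfect vector groups, and the map is additive. Hence it suffices to show that it is bijective on $R$-points for every $R \in \PerfAlg$. Bijectivity is the same as injectivity together with surjectivity, and we check both by reduction to single $\sigma$-strings of roots.

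\textbf{Step 1 (decomposing into strings).} Fix $\alpha \in \Phi^{M+}_e$ and follow its $\sigma^{-1}$-orbit. Because $I_0=\emptyset$ (\Cref{lem:ass_meaning}), every $\Gal(\ov{k}/k)$-orbit meeting $\Phi^{M+}_e$ also meets $\Phi^{M-}_e$. So the orbit of $\alpha$ splits into maximal consecutive strings
\[
\beta,\ \sigma(\beta),\ \ldots,\ \sigma^{m}(\beta)\in\Phi^{M+}_e
\]
with $\sigma^{-1}(\beta)\in\Phi^{M-}_e$, which means $\beta\in I_1$. Under this splitting, $\msf{u}^+\cap\sigma(\msf{u}^+)$ corresponds to the pieces $\sigma^j(\beta)$ with $j \ge 1$. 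Likewise $\msf{u}^+\cap\sigma(\msf{u}^-)$ corresponds to the piece $\beta$; this uses $\msf{u}_{\sigma(\gamma)} = \sigma(\msf{u}_\gamma)$ from \Cref{defi:weight_decomposition}(1).

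The map respects this decomposition into strings, because $u\mapsto u-\sigma^{-1}(u)$ sends the piece at $\sigma^j(\beta)$ into the pieces at $\sigma^j(\beta)$ and $\sigma^{j-1}(\beta)$. We therefore reduce to a single string.

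\textbf{Step 2 (one string).} On one string, write $u=(u_1,\ldots,u_m)$ with $u_j\in\msf{u}_{\sigma^j(\beta)}$, and let $a\in\msf{u}_\beta$. The image of $(u,a)$ has components
\[
(a-\sigma^{-1}u_1,\ u_1-\sigma^{-1}u_2,\ \ldots,\ u_{m-1}-\sigma^{-1}u_m,\ u_m).
\]
This system is triangular, so it can be solved uniquely starting from the top. The last component determines $u_m$. Then each $u_{j-1}$ is determined by $u_j$, and finally $a$ is determined by $u_1$. This gives an explicit inverse morphism, and the inverse is defined on the perfect schemes because $\sigma^{-1}$ is an automorphism there. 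The Moy-Prasad truncation to $[s,r]$ causes no trouble, since $\sigma$ preserves the depth filtration of each root group.

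\textbf{Main obstacle.} The delicate point is Step 1: the strings must terminate in the right way. This requires that no $\Gal(\ov{k}/k)$-orbit lies entirely in $\Phi^{M+}_e$, which is exactly $I_0=\emptyset$ coming from the anisotropy of $Z_M/Z_G$. Without it, a cyclic string would give the map $u\mapsto u-\sigma^{-1}u$ on that cycle, whose kernel is nontrivial (it contains the $k$-points). Beyond that, one has to check that the truncated quotients $\msf{u}_{[s,r]}$ are $\sigma$-stable and decompose compatibly by roots; this follows from \Cref{lem:product_description_loop}.
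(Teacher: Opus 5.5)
Your proof is correct and is essentially the paper's argument. The paper discards the $a$-component and inverts $u \mapsto u - g(u)$ on $\msf{u}^+_{[s,r]} \cap \sigma(\msf{u}^+_{[s,r]})$ by the Neumann series $\sum_{n \geq 0} g^n$, where $g$ is $\sigma^{-1}$ followed by projection away from $\msf{u}^+_{[s,r]} \cap \sigma(\msf{u}^-_{[s,r]})$ and is nilpotent because $I_0 = \emptyset$; your $\sigma$-string decomposition makes this nilpotent shift explicit, and your back-substitution is that series written in coordinates.
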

\begin{proof}
    Since $\msf{u}^+_s / \msf{u}^+_{[s, r]} \cap \sigma(\msf{u}^-_{[s, r]}) \cong \msf{u}^+_s \cap \sigma(\msf{u}^+_{[s, r]})$, it is enough to show that 
    \begin{equation} \label{eq:sigma_twist}
        \msf{u}^+ \cap \sigma(\msf{u}^+_{[s, r]}) \to \msf{u}^+ \cap \sigma(\msf{u}^+_{[s, r]}) ,\quad u \mapsto u - \sigma^{-1}(u) \pmod{\msf{u}^+_{[s, r]} \cap \sigma(\msf{u}^-_{[s, r]})}
    \end{equation}
    is an isomorphism. We construct the inverse map. For each $u \in \msf{u}^+_{[s, r]} \cap \sigma(\msf{u}^+_{[s, r]})$, let 
    \[
        g(u) = \sigma^{-1}(u) \pmod{\msf{u}^+_{[s, r]} \cap \sigma(\msf{u}^-_{[s, r]})}
    \]  
    be the element in $\msf{u}^+_{[s, r]} \cap \sigma(\msf{u}^+_{[s, r]})$. Then, for every $n \geq 1$, we have
    \[
        g^n(u) \in \sigma(\msf{u}^+_{[s, r]}) \cap \msf{u}^+_{[s, r]} \cap \cdots \cap \sigma^{-n} (\msf{u}^+_{[s, r]}). 
    \]
    By \Cref{lem:ass_meaning}, $I_0 = \phi$, so $g^n(u) = 0$ for sufficiently large $n$. Then, it is easy to check that the map $u \mapsto \sum_{n \geq 0} g^n(u)$ is the inverse to \eqref{eq:sigma_twist}. 
\end{proof}

\begin{lem} \label{prop:W_inside_parabolic_DL_var}
    Let $\msf{\wtd{W}}_{I, r} = \{ g \in H_{x, r, \ov{k}} \mid \sigma(g) g^{-1} \in \msf{u}_{I, r/2} \}$. Then, there is a natural $\msf{G}_{r, r/2}(k)$-equivariant affine fibration 
    \[
        \msf{Y}_{P, r, r/2} \to \msf{\wtd{W}}_{I, r}
    \]
    of relative dimension $\dim \msf{U}_{[r/2+, r]} \cap \sigma(\msf{U}^-_{[r/2+, r]})$. 
\end{lem}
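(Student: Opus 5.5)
Write $\msf{Y}_{P, r, r/2}$ for $\msf{Y}_{P, 2s, s}$ of \Cref{lem:induction_to_Gr} with $s = r/2$; its ambient group is $\msf{G}_{r, r/2} = L^+\cl{G}_{x, r, r/2}/L^+\cl{G}_{x, r+}$. I would build the fibration by passing through the quotient map $q \colon \msf{G}_{r, r/2} \to H_{x, r, \ov{k}}$. Its kernel is $\msf{K} = L^+\cl{G}_{x, r, r/2+}/L^+\cl{G}_{x, r+}$, which, because $M$ splits over $\breve{F}$, is the vector group $\bigoplus_{\alpha \in \Phi^M} \msf{u}_{\alpha, (r/2+, r]}$ (all depths strictly between $r/2$ and $r$, modulo depth $r+$). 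The whole argument is a Lang-map computation, done separately on the unipotent pieces at depth $r/2$ and at depths in $(r/2, r]$.

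\emph{Step 1: normal form for the Lang image.} Take $h \in \msf{G}_{r, r/2}$ with $\sigma(h)h^{-1} \in \msf{U}_{[r/2, r]}$. Modulo depth $r/2+$ its class in $\msf{m}^\perp_{r/2}$ has Lang image in $\msf{u}^+_{r/2}$. Using \Cref{lem:twisted_isomorphism} at depth $r/2$, I would right-translate $h$ by an element of $\msf{U}_{[r/2, r]} \cap \sigma^{-1}(\msf{U}_{[r/2, r]})$ so that the Lang image lands in $\msf{u}^+_{r/2} \cap \sigma(\msf{u}^-_{r/2}) = \msf{u}_{I, r/2}$, which is legitimate by \Cref{lem:ass_meaning}. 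The quotient $\msf{U}_{[r/2,r]} \cap \sigma^{-1}(\msf{U}_{[r/2,r]}) \backslash \ldots$ in the definition of $\msf{Y}_{P,r,r/2}$ absorbs exactly this ambiguity. Since the map $\msf{u}_{I, r/2} \to H_{x, r, \ov{k}}$ of \Cref{prop:lifting_to_Heisenberg} is the composite of the root-group lifts, $q(h)$ then lies in $\msf{\wtd{W}}_{I, r}$. So the map $\msf{Y}_{P,r,r/2} \to \msf{\wtd{W}}_{I,r}$ is $[h] \mapsto q(h')$, where $h'$ is the normalized representative.

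\emph{Step 2: well-definedness.} The normalized representative is unique up to the subgroup $\msf{U}_{[r/2+, r]} \cap \sigma^{-1}(\msf{U}_{[r/2+, r]})$ together with the depth-$r/2$ elements $u$ with $u - \sigma^{-1}(u) = 0$. The latter vanish by the injectivity statement in \Cref{lem:twisted_isomorphism}. The former lie in $\msf{K}$, so they do not change $q(h')$. Hence the map is well defined.

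\emph{Step 3: identifying the fibers.} Fix $w \in \msf{\wtd{W}}_{I,r}$. Lifts $h$ with $q(h) = w$ form a $\msf{K}$-torsor $h_0\msf{K}$, and I would choose $h_0$ through the section built from \Cref{lem:product_description_loop}. On $\msf{K}$, which is abelian, the Lang map is $k \mapsto \sigma(k) - k$ twisted by the adjoint action of $h_0$. This twist is trivial modulo depth $r+$, because $[\msf{G}_{x, r/2}, \msf{G}_{x, r/2+}] \subset \msf{G}_{x, r+}$. The conditions on the fiber are then:
\begin{itemize}
  \item the $\msf{u}^-$-components of the Lang image at depths in $(r/2, r]$ must be killed;
  \item the $\msf{u}^+$-components are unconstrained;
  \item we divide by $\msf{U}_{[r/2+, r]} \cap \sigma^{-1}(\msf{U}_{[r/2+, r]})$.
\end{itemize}
Writing $\msf{K} = \msf{u}^+ \oplus \msf{u}^-$ and applying \Cref{lem:twisted_isomorphism} to $\msf{u}^+_{[r/2+, r]}$, I expect the fiber to be an affine space isomorphic to $\msf{u}^+_{[r/2+,r]} \cap \sigma(\msf{u}^-_{[r/2+,r]})$. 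For the $\msf{u}^-$ part, I would solve $\sigma(k^-) - k^- = c$ with $k^- \in \msf{u}^-$. This is again a Lang-type equation, and one has to argue it contributes a finite \'{e}tale cover that is absorbed into the torsor structure; this is where the main care is needed. Equivariance under $\msf{G}_{r,r/2}(k)$ acting by right multiplication is clear because $q$ is a group homomorphism.

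\emph{Main obstacle.} The hardest step is Step 3: handling the $\msf{u}^-$-directions of $\msf{K}$ and the noncommutativity between depth $r/2$ and depths in $(r/2, r]$, so as to show the fibration is genuinely Zariski-locally trivial and affine of the stated dimension rather than merely finite \'{e}tale over an affine bundle. I would first try to produce an explicit global section using the product decomposition of \Cref{lem:product_description_loop}, ordering the factors as $\msf{u}^-$, then $\msf{m}$, then $\msf{u}^+$. Triviality would then follow by solving successively in each depth layer, invoking \Cref{lem:twisted_isomorphism} at each layer.
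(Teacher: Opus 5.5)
Your overall architecture is the paper's. You pass to $H_{x,r,\ov{k}}$, use \Cref{lem:twisted_isomorphism} at depth $r/2$ to normalize the Lang image into $\msf{u}_{I,r/2}$, and observe that the kernel of $q$ is central modulo depth $r+$. Together these reduce the problem to showing that the fibres are torsors under
\[
\wtd{\msf{u}}^+_{[r/2+,r]}=\{k\in \msf{m}^\perp_{x,[r/2+,r]}\mid \sigma(k)-k\in \msf{u}^+_{[r/2+,r]}\}/\bigl(\msf{u}^+_{[r/2+,r]}\cap\sigma^{-1}(\msf{u}^+_{[r/2+,r]})\bigr),
\]
and that this group is an affine space. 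Two small slips first. The kernel of $q$ is $L^+\cl{G}_{x,r+,r/2+}/L^+\cl{G}_{x,r+}$, since $\msf{m}_{x,r}$ survives in $H_{x,r}$; your explicit description as $\bigoplus_{\alpha\in\Phi^M}$ is the correct one. Also, the quotient in $\msf{Y}_{P,r,r/2}$ is by left multiplication, not right.

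The genuine gap is in Step 3, and the mechanism you propose there is wrong. Since $\msf{u}^-$ is not $\sigma$-stable, the $\msf{u}^-$-component of $\sigma(k)-k$ is $\pi^-(\sigma(k^+))+\pi^-(\sigma(k^-))-k^-$. So there is no decoupled equation $\sigma(k^-)-k^-=c$ on $\msf{u}^-$.

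More seriously, a finite \'{e}tale contribution cannot be ``absorbed into the torsor structure''. If a genuine Lang equation on a $\sigma$-stable piece survived, the fibres would be an affine space times a nontrivial finite set. They would then be disconnected, and the map would not be an affine fibration. This is exactly what happens when $I_0\neq\emptyset$: compare $\msf{\wtd{U}}_I\cong\msf{U}_I\times V^{-I_0}$ in \Cref{lem:pi_U_isom}. The Lang map from a fibre to its image is indeed finite \'{e}tale, but what must be proved is that the source is nonetheless connected.

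The missing idea is to use $I_0=\emptyset$ (\Cref{lem:ass_meaning}, coming from anisotropy of $Z_M/Z_G$) in the depths $(r/2,r]$ as well. It implies that no $\sigma$-orbit of weights lies entirely in $\Phi^{M-}$. Work in coordinates as in the proof of \Cref{lem:pi_U_isom}. For each $\lambda$ of negative weight, the condition $\pi^-(\sigma(k)-k)=0$ reads $k_\lambda=c_\lambda^{-1}k_{\sigma^{-1}(\lambda)}^q$. Iterating $\sigma^{-1}$ until a positive weight is reached determines every $\msf{u}^-$-coordinate uniquely and polynomially from the $\msf{u}^+$-coordinates.

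Hence $\pi^+$ gives an isomorphism of vector groups
\[
\{k\mid\sigma(k)-k\in\msf{u}^+_{[r/2+,r]}\}\cong\msf{u}^+_{[r/2+,r]},
\]
and no finite \'{e}tale part appears. Dividing by the linear subspace $\msf{u}^+_{[r/2+,r]}\cap\sigma^{-1}(\msf{u}^+_{[r/2+,r]})$ then leaves an affine space of dimension $\dim\msf{u}^+_{[r/2+,r]}\cap\sigma(\msf{u}^-_{[r/2+,r]})$. This is the paper's ``same proof as in \Cref{lem:pi_U_isom}'' step.

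Your proposed remedy does not supply this. Building a global section from an ordering of root-group factors and applying \Cref{lem:twisted_isomorphism} layer by layer fails because that lemma only normalizes $\msf{u}^+$-components. It says nothing about solving for the $\msf{u}^-$-coordinates. Once the structure group is identified with a vector group, local triviality over the affine base is automatic, so that is not the real issue.
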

\begin{proof}
    First, \Cref{lem:twisted_isomorphism} implies that the natural map
    \[
        \msf{\wtd{W}}_{I, r} \to \{ g \in \msf{u}^+_{r/2} \cap \sigma^{-1}(\msf{u}^+_{r/2}) \backslash H_{x, r, \ov{k}} \mid \sigma(g) g^{-1} \in \msf{u}^+_{r/2} \}
    \]
    is an isomorphism. Moreover, there is a natural map
    \[  
        \msf{Y}_{P, r, r/2} \to \{ g \in \msf{u}^+_{r/2} \cap \sigma^{-1}(\msf{u}^+_{r/2}) \backslash H_{x, r, \ov{k}} \mid \sigma(g) g^{-1} \in \msf{u}^+_{r/2} \}
    \]  
    that is a quotient under the group action of
    \[
        \wtd{\msf{u}}^+_{[r/2+, r]} = \{ g \in \msf{m}^\perp_{x, [r/2 +, r]} / (\msf{u}^+_{[r/2+, r]} \cap \sigma^{-1}(\msf{u}^+_{[r/2+, r]})) \mid \sigma(g) - g \in \msf{u}^+_{[r/2 +, r]} \}. 
    \]  
    Since $I_0 = \phi$, the same proof as in \Cref{lem:pi_U_isom} implies that the projection $\msf{m}^\perp_{x,  [r/2+, r]} \to \msf{u}^+_{[r/2+, r]}$ induces $\wtd{\msf{u}}^+_{[r/2+, r]} \cong \msf{u}^+_{[r/2+, r]} \cap \sigma(\msf{u}^-_{[r/2+, r]})$ by \Cref{lem:twisted_isomorphism}. Thus, $\wtd{\msf{u}}^+_{[r/2+, r]}$ is an affine space of the desired dimension. 
\end{proof}

\begin{prop} \label{prop:geometric_Induction}
    For every $\msf{M}_{r-}(k)$-representation $\rho$, we have
    \[
        H_c^i(\msf{Y}_{P, [r/2, r]}, \Qla) \mathop{\otimes}\limits_{\msf{M}_r(k)} (\rho \otimes \phi) = \left\{
            \begin{alignedat}{4}
                & 0 & \quad & (i \neq \dim \msf{Y}_{P, r}) \\
                & \Ind_{r, \phi}(\rho) & \quad & (i = \dim \msf{Y}_{P, r}). 
            \end{alignedat}
        \right. 
    \]
\end{prop}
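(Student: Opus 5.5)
We reduce the computation to the Heisenberg torsor $\msf{W}_{I, r}$ and then identify the resulting induced representation with $\Ind_{r, \phi}(\rho)$.

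\emph{Step 1: reduce to $\msf{Y}_{P, r, r/2}$.} By \Cref{lem:induction_to_Gr} with $s = r/2$ we have $\msf{Y}_{P, [r/2, r]} \cong \msf{Y}_{P, r, r/2} \times^{\msf{G}_{r, r/2}(k)} \msf{G}_r(k)$. This is compatible with the right $\msf{M}_r(k)$-action, which on the left only involves $\msf{M}_r(k)$ acting through $M(F)_x \subset G(F)_x$. Hence
\[
    H_c^\bullet(\msf{Y}_{P, [r/2, r]}, \Qla) \cong \Ind^{\msf{G}_r(k)}_{\msf{G}_{r, r/2}(k) \msf{M}_r(k)} H_c^\bullet(\msf{Y}_{P, r, r/2}\times^{\msf{M}_{r, r}(k)}\msf{M}_r(k), \Qla).
\]
More precisely, first enlarge $\msf{Y}_{P, r, r/2}$ by the $\msf{M}_r(k)$-translates to get a $\msf{G}_{r,r/2}(k)\msf{M}_r(k) \times \msf{M}_r(k)$-space. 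Tensoring over $\msf{M}_r(k)$ with $\rho \otimes \phi$ commutes with this induction. Since $\rho$ is trivial on $M(F)_{x,r}$, only the action of $\msf{M}_r(k)$ through $M(F)_x$ and the $\phi$-isotypic part under $\msf{M}_{[r, r]}(k) = \msf{m}_{x, r}$ matter.

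\emph{Step 2: pass to $\msf{W}_{I, r}$.} By \Cref{prop:W_inside_parabolic_DL_var}, $\msf{Y}_{P, r, r/2} \to \msf{\wtd{W}}_{I, r}$ is an equivariant affine fibration of relative dimension $\delta = \dim \msf{U}_{[r/2+, r]} \cap \sigma(\msf{U}^-_{[r/2+, r]})$. So $H_c^i(\msf{Y}_{P, r, r/2}) \cong H_c^{i - 2\delta}(\msf{\wtd{W}}_{I, r})$ up to Tate twist. Next, $\msf{\wtd{W}}_{I, r}$ lives in $H_{x, r, \ov{k}}$, and taking the isotypic part for the central $\msf{m}_{x, r}(k)$-action along the character $\psi\circ\phi$ factors through the quotient by $\Ker(\phi)$. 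The resulting fiber is $\msf{W}_{I, r}$, up to the Lang-torsor fibers of $\Ker(\phi)$, whose cohomology is an affine-space contribution that only shifts degree. Here the central characters need care: the action $m h g^{-1}$ gives a sign or dual convention, and I would match it so that $\phi$ pairs with $\psi$, not $\psi^{-1}$.

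\emph{Step 3: identify the cohomology.} By \Cref{thm:extended_HW_rep} and \Cref{lem:ass_meaning} ($I_0 = \phi$), the $\psi$-part is concentrated in one degree $n = \dim \msf{W}_{I, r}$ and equals $\kappa_{x, r, \phi}$ as an $\msf{HW}_{x, r, \phi}$-representation. Since $M(F)_x$ acts through conjugation, this is compatible with the $\msf{M}_r(k)$-action. Thus the tensor product gives $\kappa_{x, r, \phi} \otimes \rho$ as a $G(F)_{x,r,r/2}M(F)_x$-representation, and inducing gives $\Ind_{r, \phi}(\rho)$ in a single degree.

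\emph{Step 4: check the degree.} Finally, I check that the degree equals $\dim \msf{Y}_{P, r}$. By \Cref{lem:dim_Y_Pr}, $\dim \msf{Y}_{P, r} = \dim \msf{U}_r \cap \sigma(\msf{U}_r^-)$. This splits into the depth $<r/2$ part, the depth $r/2$ part $\dim \msf{u}_{I, r/2} = \dim \msf{W}_{I,r}$, and the depth $>r/2$ part $\delta$. The depth $<r/2$ part is absorbed by the $\Ker(\phi)$-fibers and the shift in Step 2.

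I expect the degree bookkeeping in Step 4, together with the careful handling of the $\phi$-isotypic reduction from $H_{x,r}$ to $H_{x,r,\phi}$ in Step 2, to be the main obstacle. I would first verify this in the case where $P$ is a Borel and $G$ is $\GL_2$.
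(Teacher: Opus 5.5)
Your overall route is the paper's. You reduce to $\msf{Y}_{P, r, r/2}$ by \Cref{lem:induction_to_Gr}, use the affine fibration $\msf{Y}_{P, r, r/2} \to \wtd{\msf{W}}_{I, r}$ of relative dimension $\delta$ from \Cref{prop:W_inside_parabolic_DL_var}, use single-degree concentration $H_c^\bullet(\msf{W}_{I, r}, \Qla)[\psi] \cong \kappa_{x, r, \phi}$, and then identify the induced module with $\kappa_{x,r,\phi}\otimes\rho$. However, Step~2 contains a false claim, and Step~4 depends on it.

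The map $\wtd{\msf{W}}_{I, r} \to \msf{W}_{I, r}$ induced by $H_{x, r} \to H_{x, r, \phi} = H_{x, r}/\Ker(\phi)$ does not have affine-space fibers. Here $\Ker(\phi)$ is a connected \emph{central} vector subgroup. Lift a point of $\msf{W}_{I,r}$ to $H_{x,r,\ov{k}}$ and correct the lift by Lang's theorem for $\Ker(\phi)$. This shows that $\wtd{\msf{W}}_{I, r} \to \msf{W}_{I, r}$ is a finite \'etale Galois cover with finite group $\Ker(\phi)(k)$; both sides are finite \'etale over $\msf{u}_{I, r/2}$, so they have the same dimension. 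Hence taking the $\phi\vert_{\msf{m}_{x,r}}$-isotypic part amounts to taking $\Ker(\phi)(k)$-invariants and then the $\psi$-part, with \emph{no} degree shift. So $H_c^i(\msf{Y}_{P,r,r/2},\Qla)[\phi\vert_{\msf{m}_{x,r}}] \cong H_c^{i-2\delta}(\msf{W}_{I,r},\Qla)[\psi]$, which is concentrated in degree $\dim \msf{u}_{I, r/2} + 2\delta$.

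Consequently nothing is ``absorbed by the $\Ker(\phi)$-fibers''. The equality $\dim \msf{u}_{I,r/2} + 2\delta = \dim \msf{Y}_{P,r}$ is exactly the point your proposal leaves open, and it is part of the statement. The missing input, which the paper uses, is that $(G,M)$-genericity forces $r \in \bb{Z}$ in this setting. Combine this with duality $\dim \msf{u}_{\alpha, t} = \dim \msf{u}_{-\alpha, -t}$ and $\bb{Z}$-periodicity of the filtration to get $\dim \msf{u}_{\alpha,t} = \dim\msf{u}_{-\alpha,r-t}$. Then $\alpha \mapsto -\sigma^{-1}(\alpha)$ carries $\Phi^{M+}_e \cap \sigma(\Phi^{M-}_e)$ at depth $t$ to $\Phi^{M+}_e \cap \sigma^{-1}(\Phi^{M-}_e)$ at depth $r-t$. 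On each $\sigma$-orbit the sign changes $-\to+$ and $+\to-$ are equal in number, so $\dim \msf{u}^+_t \cap \sigma(\msf{u}^-_t) = \dim \msf{u}^+_{r-t} \cap \sigma(\msf{u}^-_{r-t})$. Thus the depth-$<r/2$ part of $\dim \msf{U}_r \cap \sigma(\msf{U}^-_r)$ (see \Cref{lem:dim_Y_Pr}) equals $\delta$. The $2\delta$ shift from the affine fibration accounts for all depths $\neq r/2$, and \Cref{lem:ass_meaning} identifies the depth-$r/2$ part with $\dim \msf{u}_{I,r/2}$.

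With these corrections your argument becomes the paper's proof. You should also write out the isomorphism $\cInd_{\msf{H}_{x,r}}^{\msf{H}_{x,r}\msf{M}_r(k)} \kappa_{x,r,\phi} \otimes_{\msf{M}_r(k)} (\rho\otimes\phi) \cong \kappa_{x,r,\phi}\otimes\rho$ in Step~3 explicitly, via $\cInd^{\msf{M}_r(k)}_{\msf{m}_{x,r}} \phi \cong \Qla[\msf{M}_{r-}(k)]$.
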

\begin{proof}
    By \Cref{lem:induction_to_Gr}, the left hand side equals
    \[
        \cInd^{\msf{G}_r(k)}_{\msf{G}_{r, r/2}(k) \msf{M}_r(k)} \biggl(\cInd^{\msf{G}_{r, r/2}(k) \msf{M}_r(k)}_{\msf{G}_{r, r/2}(k)} H_c^i(\msf{Y}_{P, r, r/2}, \Qla)[\phi\vert_{\msf{m}_{x, r}}] \mathop{\otimes}\limits_{\msf{M}_r(k)} (\rho \otimes \phi)\biggr). 
    \]
    Let $d = \dim \msf{U}_{[r/2+, r]} \cap \sigma(\msf{U}^-_{[r/2+, r]})$. By \Cref{prop:W_inside_parabolic_DL_var}, we have 
    \[
        H_c^i(\msf{Y}_{P, r, r/2}, \Qla)[\phi\vert_{\msf{m}_{x, r}}] \cong H_c^{i - 2d}(\msf{W}_{I, r}, \Qla)[\psi] = \left\{
            \begin{alignedat}{4}
                & 0 & \quad & (i \neq \dim \msf{u}_{I, r/2} + 2d) \\
                & \kappa_{x, r, \phi} & \quad & (i = \dim \msf{u}_{I, r/2} + 2d). 
            \end{alignedat}
        \right. 
    \]
    Since $\phi$ is $(G, M)$-generic of depth $r$, we have $r \in \bb{Z}$, so 
    \[
        \dim \msf{u}_{I, r/2} + 2d = \dim \msf{U}_{[0, r]} \cap \sigma(\msf{U}^-_{[0, r]}) = \dim \msf{Y}_{P, r}
    \]
    by \Cref{lem:dim_Y_Pr}. The $\msf{G}_{r, r/2}(k)$-action on $H_c^\bullet(\msf{Y}_{P, r, r/2}, \Qla)$ factors through $\msf{H}_{x, r}$ by \Cref{prop:W_inside_parabolic_DL_var}, so it is enough to identify 
    \[
        \cInd_{\msf{H}_{x, r}}^{\msf{H}_{x, r}\msf{M}_r(k)} \kappa_{x, r, \phi} \mathop{\otimes}\limits_{\msf{M}_r(k)} (\rho \otimes \phi) \cong \kappa_{x, r, \phi} \otimes \rho, 
    \]
    where the action on the right hand side is given as in \Cref{defi:twisted_positive_depth_induction}. 
    
    First, the twist by $\phi$ provides $\cInd^{\msf{M}_r(k)}_{\msf{m}_{x, r}} \phi\vert_{\msf{m}_{x, r}} \cong \Qla[\msf{M}_{r-}(k)]$. Then, 
    \[
        \cInd_{\msf{H}_{x, r}}^{\msf{H}_{x, r}\msf{M}_r(k)} \kappa_{x, r, \phi} \cong \kappa_{x, r, \phi} \otimes \Qla[\msf{M}_{r-}(k)]. 
    \]  
    For $a \in \kappa_{x, r, \phi}$ and $g \in \msf{M}_{r-}(k)$, the $\msf{H}_{x, r}\msf{M}_r(k)$ and the $\msf{M}_r(k)$-actions are given by 
    \[
        (m, h) \cdot (a \otimes \delta_g) = {}^gh  a \otimes \phi(m) \delta_{gm^{-1}}, \quad 
        m \cdot (a\otimes \delta_g) = ma \otimes \phi(m)^{-1} \delta_{mg}
    \]
    for $h \in \msf{H}_{x, r}$ and $m \in \msf{M}_r(k)$. Now, there is a natural map
    \[
        \kappa_{x, r, \phi} \otimes \Qla[\msf{M}_{r-}(k)] \to \Hom(\rho, \kappa_{x, r, \phi} \otimes \rho), \quad 
        a \otimes \delta_g \mapsto (v \mapsto g^{-1}a \otimes g^{-1} v).  
    \]
    It follows from the explicit actions given above that it induces an isomorphism
    \[
        \kappa_{x, r, \phi} \otimes \Qla[\msf{M}_{r-}(k)]  \mathop{\otimes}\limits_{\msf{M}_r(k)} (\rho \otimes \phi) \to \kappa_{x, r, \phi} \otimes \rho
    \]
    equivariant under $\msf{H}_{x, r}\msf{M}_r(k)$. 
\end{proof}

\subsection{Induction step in the positive depth}

We get a middle concentration
\[
    R\Gamma_c(\msf{Y}_{P, [r/2, r]}, \IC_{Y, r}\vert_{\msf{Y}_{P, [r/2, r]}}) \otimes (\rho \otimes \phi) \cong \Ind_{r, \phi}(\rho). 
\]
In fact, we expect that $R\Gamma_c(\msf{Y}_{P, r}, \IC_{Y, r}) \otimes (\rho \otimes \phi)$ should be concentrated on $\msf{Y}_{P, [r/2, r]}$ and the restriction map allows one to lift the above isomorphism to $\msf{Y}_{P, r}$. In this section, we prove this claim on $\msf{Y}_{P, [0+, r]}$ by induction along the depth filtration $\msf{Y}_{P, [s, r]}$ for $s > 0$. 

Recall $\msf{u}_s \cong \msf{U}_{[s, r]} / \msf{U}_{[s+, r]}$ with our notation. Let $\iota_s \colon \msf{u}^+_s \cap \sigma^{-1}(\msf{u}^+_s) \to \msf{u}^+_s$ be the map sending $u$ to $\sigma(u) - u$. As in the proof of \Cref{prop:depth_filtration}, $\iota_s$ is a closed immersion. 

\begin{lem} \label{lem:description_inv_image}
    Let $\msf{X}_{P, [s, r]} = \{ h \in \msf{G}_{r, \ov{k}} \mid \sigma(h) h^{-1} \in \msf{U}_{[s, r]} \}$ be an affine fibration of $\msf{Y}_{P, [s, r]}$ and let $Z \subset \msf{X}_{P, [s, r]}$ be the inverse image of $\msf{Y}_{P, [s+, r]}$. 
    Then we have 
    \[
        Z = \{ h \in \msf{X}_{P, [s, r]} \mid \sigma(h)h^{-1} \bmod{\msf{U}_{[s+, r]}} \in \Img(\iota_s) \}. 
    \]
\end{lem}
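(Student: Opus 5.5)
We describe $Z$ by unwinding the definitions through the closed immersion of \Cref{prop:depth_filtration} applied twice. By \Cref{prop:depth_filtration} (and its proof), a point $h\in \msf{X}_{P,[s,r]}$ maps into $\msf{Y}_{P,[s+,r]}$ if and only if its class $\bar h\in \msf{U}_{[s,r]}\cap\sigma^{-1}(\msf{U}_{[s,r]})\backslash \msf{G}_{r,\ov{k}}$ admits a representative $h'=uh$ with $u\in \msf{U}_{[s,r]}\cap\sigma^{-1}(\msf{U}_{[s,r]})$ such that $\sigma(h')h'^{-1}\in \msf{U}_{[s+,r]}$. We first check that this is well posed: the condition is stable under left multiplication by $\msf{U}_{[s+,r]}\cap\sigma^{-1}(\msf{U}_{[s+,r]})$, so the image of $\msf{Y}_{P,[s+,r]}$ is indeed identified with the set of such classes. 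Since $\sigma(uh)(uh)^{-1}=\sigma(u)\,\sigma(h)h^{-1}\,u^{-1}$, the condition reads: there exists $u\in \msf{U}_{[s,r]}\cap\sigma^{-1}(\msf{U}_{[s,r]})$ with $\sigma(u)\,\sigma(h)h^{-1}\,u^{-1}\in \msf{U}_{[s+,r]}$.

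Next we reduce modulo $\msf{U}_{[s+,r]}$. Both $\sigma(h)h^{-1}$ and $u,\sigma(u)$ lie in $\msf{U}_{[s,r]}$, and the quotient $\msf{U}_{[s,r]}/\msf{U}_{[s+,r]}\cong\msf{u}^+_s$ is abelian (Moy--Prasad isomorphism), with $\msf{U}_{[s+,r]}$ normal in $\msf{U}_{[s,r]}$. Writing $\bar u$ for the image of $u$ in $\msf{u}^+_s\cap\sigma^{-1}(\msf{u}^+_s)$ and $y$ for the image of $\sigma(h)h^{-1}$ in $\msf{u}^+_s$, the condition becomes $\sigma(\bar u)-\bar u+y=0$, i.e.\ $y=\iota_s(-\bar u)\in\Img(\iota_s)$. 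For the converse, given $y=\iota_s(v)$, we lift $-v$ to some $u\in\msf{U}_{[s,r]}\cap\sigma^{-1}(\msf{U}_{[s,r]})$. This is possible because the projection $\msf{U}_{[s,r]}\cap\sigma^{-1}(\msf{U}_{[s,r]})\to \msf{u}^+_s\cap\sigma^{-1}(\msf{u}^+_s)$ is surjective, by the product decomposition into root subgroups in \Cref{lem:product_description_loop} and \Cref{lem:piecewise_quotient}: the intersection is a product over roots of pieces of the filtration, and its graded piece at depth $s$ is exactly the intersection of the graded pieces. Then $\sigma(uh)(uh)^{-1}\in\msf{U}_{[s+,r]}$, so $h\in Z$.

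Finally, both descriptions are closed conditions (since $\iota_s$ is a closed immersion), so the equality holds as closed subschemes once it holds on $\ov{k}$-points; as everything is perfect and reduced, a pointwise check suffices. The main obstacle is the surjectivity of the graded-piece projection for the intersection $\msf{U}_{[s,r]}\cap\sigma^{-1}(\msf{U}_{[s,r]})$. We handle it via the root-group product structure: $\sigma$ permutes the root subgroups because $M$ splits over $\breve F$, so the intersection is spanned by the common root subgroups.
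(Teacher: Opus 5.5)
Your proposal is correct and is the same unwinding of definitions that the paper compresses into ``immediate from construction'': you identify $\ov{k}$-points of $\msf{Y}_{P,[s,r]}$ with orbits, push $\sigma(u)\,\sigma(h)h^{-1}\,u^{-1}$ into the abelian graded piece $\msf{u}^+_s$, and lift elements of $\msf{u}^+_s\cap\sigma^{-1}(\msf{u}^+_s)$ back to $\msf{U}_{[s,r]}\cap\sigma^{-1}(\msf{U}_{[s,r]})$. For that lifting step only the easy containment is needed: fix an unramified maximal torus of $M$ whose apartment contains $x$, so that $\sigma$ permutes its root subgroups; then lifting each root component and multiplying already gives an element whose $\sigma$-image stays in $\msf{U}_{[s,r]}$.
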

\begin{proof}
    The claim is immediate from construction. 
\end{proof}

We would like to show that the $\rho \otimes \phi$-isotypic part vanishes on $\msf{X}_{P, [s, r]} - Z$. In fact, one can show that $\phi\vert_{\msf{m}_{x, r}}$-isotypic part already vanishes. Let 
\[
    \msf{X}_{P, 2s, s} = \{ h \in \msf{G}_{2s, s, \ov{k}} \mid \sigma(h) h^{-1} \in \msf{U}_{[s, r]} \}. 
\]
As in \Cref{lem:induction_to_Gr}, $\msf{X}_{P, [s, r]} = \msf{X}_{P, 2s, s} \times^{\msf{G}_{2s, s}(k)} \msf{G}_r(k)$. Thus, it is enough to study the $\phi\vert_{\msf{m}_{x, r}}$-isotypic part on $\msf{X}_{P, 2s, s} - Z$. Let $\pr \colon \msf{X}_{P, 2s, s} \to \msf{U}_{[s, (r-s)-]}$ be the map sending $h$ to $\sigma(h) h^{-1}$ modulo $\msf{U}_{[r-s, r]}$. By \Cref{lem:description_inv_image}, we would like to show that
\begin{equation} \label{eq:sheaf_in_issue_s>0}
    \pr_! \Qla \mathop{\otimes}_{\msf{m}_{x, r}} \phi 
\end{equation} 
vanishes at each $u \in \msf{U}_{[s, (r-s)-]}$ that does not lie in $\Img(\iota_s)$ modulo $\msf{U}_{[s+, (r-s)-]}$. By the equivariance under $\msf{U}_{[s, r]} \cap \sigma^{-1}(\msf{U}_{[s, r]})$, the support of \eqref{eq:sheaf_in_issue_s>0} is stable under its $\sigma$-conjugation. Since $\msf{u}_s^+ = \Img(\iota_s) \oplus (\msf{u}^+_s \cap \sigma(\msf{u}^-_s))$ by \Cref{lem:twisted_isomorphism}, it is enough to prove the following. 

\begin{prop} \label{prop:vanishing_outside_Z}
    For every closed point $u \in \msf{U}_{[s, r]}$ that maps to $\msf{u}^+_s \cap \sigma(\msf{u}^-_s) - \{0\}$ modulo $\msf{U}_{[s+, r]}$, we have
    \[
        (\pr_! \Qla)_u \mathop{\otimes}_{\msf{m}_{x, r}} \phi  = 0. 
    \]
\end{prop}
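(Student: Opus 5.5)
To prove the vanishing at $u$, we will exhibit, inside the fiber $\pr^{-1}(u)$, a free action of a connected unipotent group (an affine space) on which the central variable in $\msf{m}_{x, r}$ varies through a nondegenerate linear form. Integrating $\phi$ against this form then kills the isotypic part. This is the standard ``Gauss sum'' mechanism of Deligne--Lusztig style vanishing arguments.

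The plan is as follows. Write $\ov{u} \in \msf{u}^+_s \cap \sigma(\msf{u}^-_s)$ for the nonzero image of $u$. Since $\phi = X\vert_{\msf{m}_{x,r}}$ with $X$ $(G,M)$-generic, \textbf{GE1} gives that the pairing $\msf{u}_{\alpha, s} \times \msf{u}_{-\alpha, r-s} \to \msf{m}_{E,x,r} \xrightarrow{\phi_E} \ov{k}$ is perfect for every $\alpha \in \Phi^M$. This is the same computation as in \Cref{lem:lift_X}, with $[du_\alpha, du_{-\alpha}]$ proportional to $\pi_E^{er}H_\alpha$. The condition that $r \in \bb{Z}$ (used in \Cref{prop:geometric_Induction}) makes the depth bookkeeping work, since $M$ splits over $\breve{F}$. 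We therefore choose a nonzero linear functional $v \mapsto \phi([v, \ov{u}])$ on $\sigma^{-1}$-twisted copies of $\msf{u}^-_{r-s}$, which is nonzero because $\ov{u} \neq 0$.

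Concretely, we consider the subgroup $\msf{V} = \sigma^{-1}(\msf{U}^-_{[r-s, r]})$ (or an appropriate intersection with $\msf{U}^-_{[r-s,r]}$ modulo depth $r+$) and let it act on $\msf{X}_{P, 2s, s}$ by left multiplication $h \mapsto v h$. Then $\sigma(vh)h^{-1}v^{-1} = \sigma(v)\,(\sigma(h)h^{-1})\,v^{-1}$. Modulo depth $r-s$ this is unchanged, so $\pr$ is invariant. At depth $r$, since $\sigma(v) \in \msf{U}^-_{[r-s,r]}$ and $\sigma(h)h^{-1} \equiv u$ has depth-$s$ leading term $\ov{u} \in \sigma(\msf{u}^-_s)$, the correction to the $\msf{U}_{[s,r]}$-condition is governed by commutators $[\sigma(v), u]$ in $\msf{m}_{x,r}$. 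We must then check that this action, possibly after composing with the right $\msf{m}_{x,r}$-translation and a correction by $\msf{U}^-$-components using \Cref{lem:twisted_isomorphism}, preserves $\pr^{-1}(u)$ and acts on it through a map $\msf{V} \to \msf{m}_{x,r}$, $v \mapsto [\sigma(v), \ov{u}] + (\text{terms linear in } v - \sigma^{-1}(v)\text{-type})$, intertwining with the $\msf{m}_{x,r}(k)$-action. Then $(\pr_!\Qla)_u \otimes_{\msf{m}_{x,r}} \phi$ is $R\Gamma_c$ of the fiber with coefficients in a local system $\cl{L}_\psi$ pulled back along a map that, along $\msf{V}$-orbits, is an additive polynomial of the form $\ell(v) + (\text{$\sigma$-twisted terms})$ with nonzero linear part. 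Its compactly supported cohomology along orbits vanishes by the Artin--Schreier computation: for an additive map $g\colon \bb{A}^N \to \bb{A}^1$, $R\Gamma_c(\bb{A}^N, g^*\cl{L}_\psi) = 0$ unless $g$ factors through a Lang map, as in the argument of \Cref{lem:redalg} and \Cref{lem:supppr!}. We then conclude by proper base change along the quotient by $\msf{V}$.

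We expect the main obstacle to be the precise bookkeeping showing that the $\msf{V}$-action preserves the fiber over $u$ and that the induced character is not of the form $w^q - w$ composed with something, that is, that the additive polynomial is not killed by the Lang map. This requires $\ov{u} \in \sigma(\msf{u}^-_s)$, not merely $\ov{u} \in \msf{u}^+_s$. Elements of $\Img(\iota_s)$ are exactly those for which the pairing becomes a coboundary $\langle \sigma(w) - w, \cdot\rangle$, which is why they are excluded. The plan is to first reduce to a single root orbit by choosing $v$ in the weight space dual to a nonzero weight component of $\ov{u}$ via the genuine weight decomposition of \Cref{prop:Vtau_decomposition}, and then to handle the $\sigma$-twist exactly as in \Cref{lem:redalg}.
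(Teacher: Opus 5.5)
\textbf{There is a genuine gap: the subgroup $\msf{V}$ you propose to act on the fiber does not act on it, and the step you defer is the content of the proposition.}

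Your endpoint is the right one. \textbf{GE1} makes $\ov{u}$ pair nondegenerately with depth-$(r-s)$ elements, and a nonzero additive function $f$ on an affine space kills $R\Gamma_c(-,f^*\cl{L}_\psi)$. The problem is the construction meant to produce that function.

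Left multiplication by $v \in \sigma^{-1}(\msf{U}^-_{[r-s,r]})$ does not preserve $\msf{X}_{P,2s,s}$. Depth-$s$ and depth-$(r-s)$ elements commute up to depth $r$. Hence the depth-$(r-s)$ part of $\sigma(vh)(vh)^{-1} = \sigma(v)\,\sigma(h)h^{-1}\,v^{-1}$ changes by $\sigma(\ov{v}) - \ov{v}$, whose $\msf{u}^-_{r-s}$-component is $\sigma(\ov{v}) - \pi^-(\ov{v})$, and this must vanish. Solving weight by weight:
\begin{itemize}
\item the components of $\ov{v}$ along a forward $\sigma$-orbit are Frobenius twists of one another;
\item the last component before the orbit returns to $\Phi^{M+}$ is forced to be $0$;
\item no $\sigma$-orbit lies in $\Phi^{M-}$ (this is $I_0=\emptyset$), so $\ov{v}=0$.
\end{itemize}
So no nontrivial element of $\msf{V}$ (or of its intersection with $\msf{U}^-_{[r-s,r]}$) acts on $\msf{X}_{P,2s,s}$ at depth $r-s$, let alone on $\pr^{-1}(u)$. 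The obstruction sits at depth $r-s$, not at depth $r$ where you looked for it.

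The missing idea is to parametrize the fiber rather than look for an acting subgroup.
\begin{itemize}
\item Fix $h$ with $\sigma(h)h^{-1}=u$. Points of $\pr^{-1}(u)$ in the component of $h$ are $zah$ with $z \in \msf{m}_{x,r}$ and $a \in \msf{m}^\perp_{[r-s,r]}$.
\item After an affine fibration (the higher-depth part of $a$ is determined by its $\msf{u}^+$-projection, as in \Cref{lem:pi_U_isom}, using $I_0=\emptyset$), the conditions become $a \in \msf{m}^\perp_{r-s}$ with $\sigma(a)-a \in \msf{u}^+_{r-s}$, together with $\sigma(z)-z = \pi_r([u,a])$.
\item The $a$-space is an affine space whose free coordinates are the $\msf{u}^+_{r-s}$-components; all $\msf{u}^-$-components are forced Frobenius twists of them.
\item The $\phi$-isotypic part becomes $R\Gamma_c$ of $f^*\cl{L}_\psi$ with $f(a) = \phi(\pi_r([u,a]))$.
\item $\ov{u}$ is supported on $\beta \in \Phi^{M+}\cap\sigma(\Phi^{M-})$. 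So $f$ only involves the $(-\beta)$-coordinate of $a$, which (for a $\sigma$-compatible basis) is the $q$-th power of the free $(-\sigma^{-1}(\beta))$-coordinate, with $-\sigma^{-1}(\beta)\in\Phi^{M+}$.
\item Hence $f = \sum_\gamma c'_\gamma a_\gamma^q$ over $\gamma \in \Phi^{M+}\cap\sigma^{-1}(\Phi^{M-})$, with some $c'_\gamma \neq 0$ by \textbf{GE1} and $\ov{u}\neq 0$. K\"unneth then gives the vanishing.
\end{itemize}

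Your instinct that $\sigma^{-1}(\msf{u}^-)$ matters is half right: the coordinates $f$ depends on live in $\msf{u}^+_{r-s}\cap\sigma^{-1}(\msf{u}^-_{r-s})$. But each such coordinate drags along forced components in $\Phi^{M-}$ and the central correction $z$, which is why no plain subgroup action works. Since $f$ is additive, you need neither an analogue of \Cref{lem:redalg} nor any discussion of $\Img(\iota_s)$.
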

\begin{proof}
    Fix $h \in \msf{G}_{2s, s}$ such that $\sigma(h) h^{-1} = u$. Let 
    \[
        Z_h = \{ z \in \msf{m}_{x, r}, \; a \in \msf{m}^\perp_{[r-s, r]} \mid \sigma(zah) (zah)^{-1} \in \msf{U}_{[r-s, r]} \cdot u \} \subset \pr^{-1}(u).  
    \]
    Here, the condition is rewritten as 
    \[
        (\sigma(z) - z + [u, a^{-1}]) \cdot (\sigma(a) - a) \in \msf{U}_{[r-s, r]}. 
    \]
    Since $\msf{U}_{[r-s, r]} \cong \msf{u}^+_{[r-s, r]} \subset \msf{m}^\perp_{[r-s, r]}$, $Z_h$ is finite \'{e}tale over $\msf{U}_{[r-s, r]}$. In particular, $Z_h \subset \pr^{-1}(u)$ is a closed and open subspace, so it is enough to show $R\Gamma_c(Z_h, \Qla) \mathop{\otimes}_{\msf{m}_{x, r}} \phi = 0 $. Here, the $\msf{m}_{x, r}$-action on $Z_h$ is given by addition to $z$. 

\begin{lem} \label{lem:reduce_wtd_Z_h}
    Let $\pi_{r} \colon \msf{g}_r \to \msf{m}_{x, r}$ be the natural projection and let 
    \[
        \wtd{Z}_h = \{ z \in \msf{m}_{x, r}, \; a \in \msf{m}^\perp_{r-s} \mid \sigma(z) - z = \pi_{r}([u, a]), \quad \sigma(a) - a \in \msf{u}^+_{r-s} \}. 
    \]
    Here, $\msf{m}_{x, r}$ acts by addition to $z$. The projection $\msf{m}^\perp_{[r-s, r]} \to \msf{m}^\perp_{r-s}$ induces an affine fibration $Z_h \to \wtd{Z}_h$ equivariant under $\msf{m}_{x, r}$. 
\end{lem}
\begin{proof}
    Let $(z, a) \in \wtd{Z}_h$ be a point valued in an arbitrary $\ov{k}$-algebra and fix a lift $\wtd{a} \in \msf{m}^\perp_{[r-s, r]}$ of $a$. Then, the fiber of $(z, a) \in \wtd{Z}_h$ in $Z_h$ classifies $a_+ \in \msf{m}^\perp_{[(r-s)+, r]}$ such that 
    \[
        \sigma(a_+) - a_+ \in (\id-\pi_r)([u, a^{-1}]) - (\sigma(\wtd{a}) - \wtd{a}) + \msf{u}^+_{[(r-s)+, r]}. 
    \]
    By the same proof as in \Cref{lem:pi_U_isom}, $a_+$ is uniquely determined by its projection to $\msf{u}^+_{[(r-s)+, r]}$ since $I_0 = \phi$. Since $\msf{u}^+_{[(r-s)+, r]}$ is an affine space, we get the claim. 
\end{proof}

    As a corollary, it is enough to prove $R\Gamma_c(\wtd{Z}_h, \Qla) \mathop{\otimes}_{\msf{m}_{x, r}} \phi = 0$. Let $\wtd{\msf{u}}^+ = \wtd{Z}_h / \msf{m}_{x, r}$. By the defining equation in \Cref{lem:reduce_wtd_Z_h}, 
    \[
        \wtd{\msf{u}}^+ \cong \{ a \in \msf{m}^\perp_{r-s} \mid \sigma(a) - a \in \msf{u}^+_{r-s} \}. 
    \]
    and $\wtd{Z}_h \times^{\msf{m}_{x, r}, \phi} \Qla \cong f^* \cl{L}_\psi$ where $f \colon \wtd{\msf{u}}^+ \to \bb{A}^1$ sends $a$ to $\phi(\pi_r([u, a]))$. Here, we implicitly rely on the choice of $X$. 
    Then, it is enough to show  $R\Gamma_c(\wtd{\msf{u}}^+, f^*\cl{L}_\psi) = 0$.  

    Let $u_s \in \msf{u}^+_s \cap \sigma(\msf{u}^-_s)$ be the reduction of $u$. As in the proof of \Cref{lem:pi_U_isom}, 
    for some maximal torus $T \subset M$, take a weight decomposition
    \[
        \msf{m}^\perp_{r-s} = \bigoplus_{\alpha \in \Phi^M_{r-s}} \msf{u}_{\alpha, r-s}. 
    \]
    Here, $\Phi^M_{r-s} \subset \Phi^M$ is the subset of roots $\alpha$ with $\dim \msf{u}_{\alpha, r-s} = 1$ and we have a decomposition $\Phi^M_{r-s} = \Phi^{M+}_{r-s} \sqcup \Phi^{M-}_{r-s}$. Take a basis $v_{\alpha, r-s} \in \msf{u}_{\alpha, r-s}$ so that $\sigma(v_{\alpha, r-s}) = v_{\sigma(\alpha), r-s}$. Then, 
    \[
        \wtd{\msf{u}}_+ \cong (\bb{A}^1)^{\Phi_{r-s}^{M+}} ,\quad 
        \sum u_\alpha v_{\alpha, r-s} \mapsto (u_\alpha)_{\alpha \in \Phi^{M+}_{r-s}}. 
    \]
    This is because $u_\alpha = u_{\sigma^{-d_\alpha}(\alpha)}^{q^{d_\alpha}}$ for each $\alpha \in \Phi^M_{r-s}$, where $d_\alpha \geq 0$ is the minimum integer with $\sigma^{-d_\alpha}(\alpha) \in \Phi^{M+}_{r-s}$. Now, take a similar basis $v_{- \alpha, s} \in \msf{u}_{- \alpha, s}$ for each $\alpha \in \Phi^M_{r-s}$ and write $u_s = \sum c_{-\alpha} v_{-\alpha, s}$. Then, $f$ is given by
    \[
        (\bb{A}^1)^{\Phi_{r-s}^{M+}} \to \bb{A}^1 , \quad 
        (u_\alpha)_{\alpha \in \Phi^{M+}_{r-s}} \mapsto \sum_{\alpha \in \Phi^M_{r-s}} c_{-\alpha} \phi([v_{-\alpha, s}, v_{\alpha, r-s}]) u_{\sigma^{-d_\alpha}(\alpha)}^{q^{d_\alpha}}. 
    \]
    Here, $\phi([v_{\alpha, r-s}, v_{-\alpha, s}]) \neq 0$ due to the $(G, M)$-genericity of $\phi$. Since $c_{-\alpha} = 0$ unless $-\alpha \in \Phi^{M+}_{s} \cap \sigma(\Phi^{M-}_s)$, $f$ is decomposed into a sum $\sum_{\alpha \in \Phi^{M+}_{r-s} \cap \sigma^{-1}(\Phi^{M-}_{r-s})} c'_\alpha u_\alpha^q$ and $c'_\alpha \neq 0$ for at least one $\alpha$. Then, it is easy to see $R\Gamma_c((\bb{A}^1)^{\Phi_{r-s}^{M+}}, f^*\cl{L}_\psi) = 0$ by the K\"{u}nneth formula. 
\end{proof}

As a summary, we get the following induction step for $s > 0$. 

\begin{cor} \label{cor:induction_step_positive}
    For every $0 < s < r/2$ and $i \geq 0$, the restriction map 
    \[
        H_c^i(\msf{Y}_{P, [s, r]}, \Qla) \mathop{\otimes}\limits_{\msf{M}_r(k)} (\rho \otimes \phi) \to H_c^i(\msf{Y}_{P, [s+, r]}, \Qla) \mathop{\otimes}\limits_{\msf{M}_r(k)} (\rho \otimes \phi) 
    \]
    is an isomorphism. In particular, we have
    \[
        H_c^i(\msf{Y}_{P, [s, r]}, \Qla) \mathop{\otimes}\limits_{\msf{M}_r(k)} (\rho \otimes \phi) = \left\{
            \begin{alignedat}{4}
                & 0 & \quad & (i \neq \dim \msf{Y}_{P, r}) \\
                & \Ind_{r, \phi}(\rho) & \quad & (i = \dim \msf{Y}_{P, r}). 
            \end{alignedat}
        \right. 
    \]
\end{cor}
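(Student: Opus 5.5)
The plan is to use the excision triangle for the closed immersion $\msf{Y}_{P, [s+, r]} \subset \msf{Y}_{P, [s, r]}$ of \Cref{prop:depth_filtration}, with open complement $\msf{Y}_{P, [s, r]} - \msf{Y}_{P, [s+, r]}$. The functor $- \otimes_{\msf{M}_r(k)} (\rho \otimes \phi)$ is exact in characteristic zero. Hence the long exact sequence reduces the first claim to
\[
    R\Gamma_c(\msf{Y}_{P, [s, r]} - \msf{Y}_{P, [s+, r]}, \Qla) \mathop{\otimes}_{\msf{M}_r(k)} (\rho \otimes \phi) = 0 .
\]
Since $\phi\vert_{\msf{m}_{x, r}}$ is the restriction of $\rho \otimes \phi$ to the central subgroup $\msf{m}_{x, r} \subset \msf{M}_r(k)$ (here $\rho$ is trivial on $\msf{m}_{x,r}$), it suffices to kill the $\phi\vert_{\msf{m}_{x, r}}$-isotypic part of this compactly supported cohomology.

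Next I would pass to $\msf{X}_{P, [s, r]}$. The map $\msf{X}_{P, [s, r]} \to \msf{Y}_{P, [s, r]}$ is an affine fibration, so by \Cref{lem:description_inv_image} the complement above corresponds, up to a shift and twist, to $\msf{X}_{P, [s, r]} - Z$. By the decomposition $\msf{X}_{P, [s, r]} = \msf{X}_{P, 2s, s} \times^{\msf{G}_{2s, s}(k)} \msf{G}_r(k)$, and since $\msf{m}_{x,r} \subset \msf{G}_{2s,s}(k)$ acts compatibly, one reduces further to $\msf{X}_{P, 2s, s} - Z$. Here I use that $s < r/2$ gives $2s \le r$, so $\msf{G}_{2s,s}$ makes sense.

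On $\msf{X}_{P, 2s, s}$ I would use the map $\pr$ to $\msf{U}_{[s, (r-s)-]}$ and compute $R\Gamma_c$ via the Leray spectral sequence of $\pr_!$. It then suffices that $(\pr_! \Qla) \otimes_{\msf{m}_{x, r}} \phi$ vanishes stalkwise at every point $u$ whose reduction modulo $\msf{U}_{[s+, (r-s)-]}$ lies outside $\Img(\iota_s)$. This support is stable under $\sigma$-conjugation by $\msf{U}_{[s, r]} \cap \sigma^{-1}(\msf{U}_{[s, r]})$. By \Cref{lem:twisted_isomorphism} we have $\msf{u}^+_s = \Img(\iota_s) \oplus (\msf{u}^+_s \cap \sigma(\msf{u}^-_s))$, so every such $u$ can be conjugated to one whose reduction lies in $\msf{u}^+_s \cap \sigma(\msf{u}^-_s) - \{0\}$. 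For those points the vanishing is exactly \Cref{prop:vanishing_outside_Z}. The main care needed is checking that the excision in $\msf{X}$ matches the decomposition in \Cref{lem:description_inv_image} equivariantly. I regard this as routine but the most delicate bookkeeping step.

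For the second claim, iterate the isomorphism over the finitely many jumps of the Moy--Prasad filtration in $(s, r/2)$; discreteness of $\nu$ makes this a finite induction. This gives
\[
    H_c^i(\msf{Y}_{P, [s, r]}, \Qla) \otimes (\rho \otimes \phi) \cong H_c^i(\msf{Y}_{P, [r/2, r]}, \Qla) \otimes (\rho \otimes \phi) .
\]
At the last step $s_0 < r/2$, note that $\msf{Y}_{P, [s_0+, r]} = \msf{Y}_{P, [r/2, r]}$. The right-hand side is computed by \Cref{prop:geometric_Induction}: it vanishes for $i \neq \dim \msf{Y}_{P, r}$ and equals $\Ind_{r, \phi}(\rho)$ for $i = \dim \msf{Y}_{P, r}$.
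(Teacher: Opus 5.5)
Your proposal is correct and follows the paper's proof essentially verbatim. You use excision for $\msf{Y}_{P,[s+,r]} \subset \msf{Y}_{P,[s,r]}$, reduce via $\msf{X}_{P,[s,r]} = \msf{X}_{P,2s,s} \times^{\msf{G}_{2s,s}(k)} \msf{G}_r(k)$ and the map $\pr$, move points by $\sigma$-conjugation and \Cref{lem:twisted_isomorphism} into the situation of \Cref{prop:vanishing_outside_Z}, and finish by induction on $s$ with \Cref{prop:geometric_Induction}. One harmless slip is calling $\msf{m}_{x,r}$ central in $\msf{M}_r(k)$: it is normal but in general not central, since $M(F)_x$ acts on it nontrivially by the adjoint action, yet your isotypic reduction only needs $\rho \otimes \phi$ to restrict to $\msf{m}_{x,r}$ as a multiple of $\phi\vert_{\msf{m}_{x,r}}$, which holds for any subgroup.
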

\begin{proof}
    By the above argument and \Cref{prop:vanishing_outside_Z}, it follows 
    \[
        H_c^i(\msf{Y}_{P, [s, r]} - \msf{Y}_{P, [s+, r]}, \Qla) \mathop{\otimes}\limits_{\msf{M}_r(k)} (\rho \otimes \phi) = 0.
    \]
    Thus, we get the first claim. Then, the second claim follows from \Cref{prop:geometric_Induction} and the induction on $s$. 
\end{proof}

\subsection{Comparison at depth zero}

In this section, we study the contribution 
\[
    H_c^i(\msf{Y}_{P, r} - \msf{Y}_{P, [0+, r]}, \Qla) \mathop{\otimes}\limits_{\msf{M}_r(k)} (\rho \otimes \phi)
\]
outside $\msf{Y}_{P, [0+, r]}$. It turns out that this analysis is more complicated than the positive-depth induction step. We follow the approach in \cite{IN25} to show this vanishing when $P$ is \textit{convex}. 

First, we proceed with the previous strategy and point out the difficulty. Let $\msf{U}_0 = \msf{U}_{[0, r]} / \msf{U}_{[0+, r]}$ and let $\iota_0 \colon \msf{U}_0 \cap \sigma^{-1}(\msf{U}_0) \to \msf{U}_0$ be the closed immersion sending $u$ to $\sigma(u) u^{-1}$. 

\begin{lem} \label{lem:description_inv_image_s0}
    Let $Z \subset \msf{X}_{P, r}$ be the inverse image of $\msf{Y}_{P, [0+, r]}$. 
    Then we have 
    \[
        Z = \{ h \in \msf{X}_{P, r} \mid \sigma(h)h^{-1} \bmod{\msf{U}_{[0+, r]}} \in \Img(\iota_0) \}. 
    \]
\end{lem}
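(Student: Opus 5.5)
The plan is to mirror the proof of \Cref{lem:description_inv_image} (and of \Cref{prop:depth_filtration}) at depth $0$, so I will simply unwind the definitions. First I will recall that $\msf{X}_{P, r} \to \msf{Y}_{P, r}$ is the quotient by the left translation action of $\msf{U}_r \cap \sigma^{-1}(\msf{U}_r)$. Next I will note that $\msf{Y}_{P, [0+, r]}$ sits inside $\msf{Y}_{P, r}$ via the closed immersion of \Cref{prop:depth_filtration} with $s = 0+$. That proof gives the description
\[
    \msf{Y}_{P, [0+, r]} \cong \{ h \in \msf{U}_r \cap \sigma^{-1}(\msf{U}_r) \backslash \msf{G}_{r, \ov{k}} \mid \sigma(h) h^{-1} \bmod \msf{U}_{[0+, r]} \in \Img(\iota) \},
\]
where $\iota \colon \msf{U}_{0} \cap \sigma^{-1}(\msf{U}_{0}) \to \msf{U}_0$ is $u \mapsto \sigma(u) u^{-1}$. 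This is exactly $\iota_0$.

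Pulling this back along $\msf{X}_{P, r} \to \msf{Y}_{P, r}$ gives the stated description of $Z$. Two points remain to be checked.

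\emph{The condition is well defined on $\msf{X}_{P, r}$.} Replacing $h$ by $vh$ with $v \in \msf{U}_r \cap \sigma^{-1}(\msf{U}_r)$ changes $\sigma(h)h^{-1}$ to $\sigma(v)\,\sigma(h)h^{-1}\,v^{-1}$. Modulo $\msf{U}_{[0+, r]}$, write $\bar v$ and $\bar w$ for the images of $v$ and $w = \sigma(h)h^{-1}$ in $\msf{U}_0$. If $\bar w = \sigma(\bar u)\bar u^{-1}$, then the new element becomes $\sigma(\bar v \bar u)(\bar v\bar u)^{-1}$. Since $\bar v \bar u \in \msf{U}_0 \cap \sigma^{-1}(\msf{U}_0)$, membership in $\Img(\iota_0)$ is preserved.

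\emph{The two descriptions agree.} For $h \in \msf{X}_{P, r}$ with $\sigma(h)h^{-1} \equiv \sigma(\bar u)\bar u^{-1}$, I will lift $\bar u$ to some $u \in \msf{U}_r \cap \sigma^{-1}(\msf{U}_r)$. Then $u^{-1}h$ satisfies $\sigma(u^{-1}h)(u^{-1}h)^{-1} \in \msf{U}_{[0+, r]}$. The lift exists because the reduction $\msf{U}_r \cap \sigma^{-1}(\msf{U}_r) \to \msf{U}_0 \cap \sigma^{-1}(\msf{U}_0)$ is surjective: both sides are products of root subgroups with the Moy-Prasad structure, as in \Cref{lem:product_description_loop}. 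So the image of $h$ lies in $\msf{Y}_{P, [0+, r]}$. The converse direction is the same computation read backwards.

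I expect the only mild obstacle to be this lifting and surjectivity at depth $0$, where $\msf{U}_0$ is non-abelian. Still, the argument is formal and parallels the positive-depth case. Closedness of the condition follows because $\iota_0$ is a closed immersion, which uses $I_0 = \emptyset$ exactly as in \Cref{prop:depth_filtration}.
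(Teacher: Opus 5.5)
Your argument is correct and is the same unwinding of definitions that the paper compresses into ``immediate from construction'': you pull back the description of $\msf{Y}_{P,[s,r]}$ from the proof of \Cref{prop:depth_filtration} (at $s = 0+$) along $\msf{X}_{P,r} \to \msf{Y}_{P,r}$. Your well-definedness check and the lift of $\bar u$ to $\msf{U}_r \cap \sigma^{-1}(\msf{U}_r)$ via the root-subgroup product decomposition are exactly the details the paper leaves out, and your closed-immersion remark is not needed for the equality itself.
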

\begin{proof}
    The claim is immediate from construction. 
\end{proof}

We would like to show that the $\rho \otimes \phi$-isotypic part vanishes on $\msf{X}_{P, r} - Z$. Let 
\[
    \msf{X}^0_{P, r} = \{ h \in \msf{G}^0_{r, \ov{k}} \mid \sigma(h) h^{-1} \in \msf{U}_{[0, r]} \}. 
\]
We have $\msf{X}_{P, r} = \msf{X}_{P, r}^0 \times^{\msf{G}^0_{r}(k)} \msf{G}_r(k)$. Thus, it is enough if the $\phi\vert_{\msf{m}_{x, r}}$-isotypic part vanishes on $\msf{X}_{P, r}^0 - Z$. Let $\pr \colon \msf{X}_{P, r}^0 \to \msf{U}_{[0, r-]}$ be the map sending $h$ to $\sigma(h) h^{-1}$ modulo $\msf{u}^+_r$. By \Cref{lem:description_inv_image_s0}, we would like to show that
\begin{equation} \label{eq:sheaf_in_issue_s=0}
    \pr_! \Qla \mathop{\otimes}_{\msf{m}_{x, r}} \phi
\end{equation} 
vanishes at each $u \in \msf{U}_{[0, r-]}$ that does not lie in $\Img(\iota_0)$ modulo $\msf{U}_{[0+, r-]}$. 

We will compute the stalk $(\pr_! \Qla)_u \mathop{\otimes}_{\msf{m}_{x, r}} \phi$ as in the proof of \Cref{prop:vanishing_outside_Z}. Fix $h \in \msf{G}_r^0$ such that $\sigma(h) h^{-1} = u$. Let 
\[
    Z_h = \{ g \in \msf{g}_{x, r} \mid \sigma(gh) (gh)^{-1} \in \msf{u}^+_{r} \cdot u \} \subset \pr^{-1}(u).  
\]
Here, the condition is rewritten as 
\begin{equation} \label{eq:def_eq_Zh}
    \sigma(g) - u(g) \in \msf{u}_r^+. 
\end{equation}
In particular, $Z_h$ is finite \'{e}tale over $\msf{u}_{r}^+$ and $Z_h \subset \pr^{-1}(u)$ is a closed and open subspace. Since $r \in \bb{Z}$, $\msf{g}_r \cong \msf{g}_0$, so it is enough to show the following. 

\begin{ques} \label{conj:obstruction_for_depth_0_induction}
    For each $u \in \msf{U}_0$, let 
    \[
        Z_u = \{ g \in \msf{\ov{g}}_0 \mid \sigma(g) - ug \in \msf{u}_0^+ \}
    \]
    and let $\msf{m}_{x, 0}$ act on $Z_u$ by addition. For every $M$-stable $0$-generic map $\phi \colon \msf{m}_{x, 0} \to k$, determine if we always have
    \[
        R\Gamma_c(Z_u, \Qla) \mathop{\otimes}_{\msf{m}_{x, 0}} \phi = 0 
    \]
    for $u \notin \Img(\iota_0)$. Here, $\iota_0 \colon \msf{U}_0 \cap \sigma^{-1}(\msf{U}_0) \to \msf{U}_0$ is the closed immersion sending $h$ to $\sigma(h) h^{-1}$. 
\end{ques}
\begin{rmk}
    Since $Z_u$ is a commutative unipotent group scheme, its identity component is an affine space. In particular, the above conjecture only concerns the action $\msf{m}_{x, 0} \circlearrowright \pi_0(Z_u)$. 
\end{rmk}

We see several difficulties, such as the failure of group analogue of \Cref{lem:twisted_isomorphism} and the complicated relation between $u$ and $\sigma$. The convexity assumption made in \cite{IN25} is a clean way to simplify these points. Here, we define the convexity as follows. 

\begin{defi} \label{defi:convex_parabolic}
    We say that $P \subset \breve{G}$ is convex if there are total orders $\prec$ on $\Phi^{M+} \cap \sigma(\Phi^{M+})$ and $\Phi^{M-} \cap \sigma(\Phi^{M-})$ with the following conditions for every $\alpha, \beta \in \Phi^M$. 
    \begin{enumerate}
        \item When $\alpha, \beta \in \Phi^{M+} \cap \sigma(\Phi^{M+})$, 
        \begin{enumerate}
            \item $\sigma^{-1}(\alpha) \prec \alpha$ if $\sigma^{-1}(\alpha) \in \Phi^{M+} \cap \sigma(\Phi^{M+})$, 
            \item $\beta \prec \alpha$ if $\beta - \alpha \in \Phi^{M+} \cap \sigma(\Phi^{M-})$, and
            \item $\alpha + \beta \prec \alpha$ or $\alpha + \beta \prec \beta$ if $\alpha + \beta \in \Phi^M$. 
        \end{enumerate}
        \item When $\alpha, \beta \in \Phi^{M-} \cap \sigma(\Phi^{M-})$,
        \begin{enumerate}
            \item $\sigma(\alpha) \prec \alpha$ if $\sigma(\alpha) \in \Phi^{M-} \cap \sigma(\Phi^{M-})$, 
            \item $\beta \prec \alpha$ if $\beta - \alpha \in \Phi^{M+} \cap \sigma(\Phi^{M-})$, and
        \end{enumerate}
    \end{enumerate}
\end{defi}

\begin{prop}\textup{\cite[Proposition 3.5]{IN25}} \label{prop:existence_convex}
    For every $M \subset G$, there exists a convex parabolic subgroup $P \subset \breve{G}$ such that $\breve{M} \subset P$ is a Levi subgroup. 
\end{prop}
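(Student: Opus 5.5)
The plan is to construct the convex parabolic from a generic rational cocharacter and then define the two total orders $\prec$ by comparing real numbers attached to roots. The paper cites this from \cite[Proposition 3.5]{IN25}; the proposal below is an independent route.

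\textbf{Choice of $P$.} Since $M$ splits over $\breve{F}$, the Frobenius $\sigma$ acts on $X^*(\breve{T})$ through an automorphism of finite order, say $N$. Choose $\lambda \in X_*(Z_M^\circ)\otimes \bb{R}$ generic, so that $\langle \alpha,\lambda\rangle \neq 0$ for all $\alpha \in \Phi^M$. Set $\Phi^{M+}=\{\alpha \mid \langle\alpha,\lambda\rangle>0\}$ and let $P$ be the associated parabolic, which has $\breve{M}$ as a Levi subgroup. For each $\alpha$, consider the sequence of real numbers $a_i(\alpha)=\langle \sigma^{-i}\alpha,\lambda\rangle$ for $i\in\bb{Z}/N$.

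\textbf{Definition of the order.} For $\alpha\in\Phi^{M+}\cap\sigma(\Phi^{M+})$, both $a_0(\alpha)$ and $a_1(\alpha)$ are positive. I would order such $\alpha$ lexicographically by a height-type function $h(\alpha)$, defined as the number of consecutive $i\geq 0$ with $a_i(\alpha)>0$, with a further tie-break given by the values $a_i$ themselves. Condition (a) then holds automatically, because $h(\sigma^{-1}\alpha)=h(\alpha)-1$.

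\textbf{Conditions (b) and (c).} Both require compatibility with addition. Condition (c) should follow if the order is refined by the linear functional $\langle-,\lambda\rangle$ inside each height level. Indeed, if $\alpha+\beta$ lies in the same set, then $h(\alpha+\beta)\ge\min(h(\alpha),h(\beta))$ fails only in a controlled way, and additivity of $a_i$ forces one summand to be dominant. Condition (b) concerns $\beta-\alpha\in\Phi^{M+}\cap\sigma(\Phi^{M-})$. Here $a_0(\beta)>a_0(\alpha)$ but $a_1(\beta)<a_1(\alpha)$, so the positivity pattern of $\beta$ terminates no later than that of $\alpha$. This should give $h(\beta)\le h(\alpha)$, with the tie-break settled by the $a$-values. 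The negative set is handled symmetrically, using $\sigma$ in place of $\sigma^{-1}$ and the opposite sign pattern.

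\textbf{Main obstacle.} The hardest part is choosing one function that makes (a), (b) and (c) hold at the same time. Lexicographic orders on the vector $(a_0,a_1,\dots)$ easily break either (b) or (c). My first attempt would follow \cite{IN25} and choose $\lambda$ sufficiently generic, taking the dominant key to be the first index at which $a_i$ changes sign. Where this falls short, I would reduce to the split-rank-one situation generated by $\lambda$ and $\sigma\lambda$ and verify the inequalities directly there.
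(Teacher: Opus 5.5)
There is a genuine gap, and it already appears in your first step. You take $P$ to be the parabolic attached to an arbitrary generic $\lambda$, which amounts to an arbitrary parabolic with Levi $\breve{M}$. Such a $P$ need not be convex for any choice of total orders.

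Here is an example. Let $G$ be split of type $B_3$ (e.g.\ $\mathrm{SO}_7$), let $M=T$ be an unramified elliptic maximal torus on which $\sigma$ acts by the Coxeter element $e_1\mapsto e_2\mapsto e_3\mapsto -e_1$, and let $\lambda=(3,-2,-1)$, which lies on no root hyperplane. Put $\beta=e_1$ and $\alpha=\sigma^{-1}\beta=-e_3$. Then $\langle\beta,\lambda\rangle=3$, $\langle\alpha,\lambda\rangle=1$ and $\langle\sigma^{-1}\alpha,\lambda\rangle=\langle -e_2,\lambda\rangle=2$. So both $\alpha$ and $\beta$ lie in $\Phi^{M+}\cap\sigma(\Phi^{M+})$, and condition (1)(a) forces $\alpha\prec\beta$. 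On the other hand, $\beta-\alpha=e_1+e_3$ has pairing $2>0$, while $\sigma^{-1}(e_1+e_3)=e_2-e_3$ has pairing $-1<0$. Hence $\beta-\alpha\in\Phi^{M+}\cap\sigma(\Phi^{M-})$, and condition (1)(b) forces $\beta\prec\alpha$, a contradiction.

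Since $\Phi^{M\pm}$ depend only on the chamber of $\lambda$, no ``sufficiently generic'' choice of $\lambda$ can avoid this; the chamber itself must be chosen. This is the idea you are missing. The paper uses anisotropy of $Z_M/Z_G$ to pick an unramified elliptic maximal torus $T\subset M$. It then invokes \cite[Proposition 3.5]{IN25}, which rests on the existence of convex elements in the sense of \cite{NTY25}. The total orders are then supplied by \cite[Lemma 3.4]{IN25} through the function $n_\bullet$, with ties broken by the Bruhat order.

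Independently of the choice of $P$, your height order does not verify (b) or (c).

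For (b), the same example gives $h(\beta)=3>h(\alpha)=2$, even though (b) demands $\beta\prec\alpha$. The hypothesis $\beta-\alpha\in\Phi^{M+}\cap\sigma(\Phi^{M-})$ only compares $a_0$ and $a_1$. It says nothing about where the positivity run of $\beta$ ends, so your claimed implication there is a non sequitur.

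For (c), the inequality $h(\alpha+\beta)\ge\min(h(\alpha),h(\beta))$ is automatic by additivity, but it points the wrong way. Condition (c) needs $\alpha+\beta$ to lie \emph{below} one of the summands. Nothing in your setup prevents $h(\alpha+\beta)>\max(h(\alpha),h(\beta))$. Also, refining by increasing $\langle-,\lambda\rangle$ within a height level would place $\alpha+\beta$ above both summands.

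Your final paragraph acknowledges that this difficulty is unresolved and defers to \cite{IN25}. As written, the proposal is not a proof.
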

\begin{proof}
    Since $Z_M / Z_G$ is anisotropic, we can take an unramified elliptic maximal torus $T$ of $G$ inside $M$. Then, the claim follows from \cite[Proposition 3.5]{IN25} since the convexity condition introduced in \cite{NTY25} provides desired total orders by \cite[Lemma 3.4]{IN25} in terms of the function $n_\bullet$. Here, we use the Bruhat order to break ties. 
\end{proof}

The essential benefit of the convexity is the following. 

\begin{prop} \label{prop:benefit_convexity}
    When $P$ is convex, the following hold. 
    \begin{enumerate}
        \item The map
        \[
            \msf{U}_0 \cap \sigma^{-1}(\msf{U}_0) \times \msf{U}_0 \cap \sigma(\msf{U}^-_0) \to \msf{U}_0, \quad (u, a) \mapsto \sigma(u) a u^{-1}
        \]
        is an isomorphism. 
        \item For every $u \in \msf{U}_0 \cap \sigma(\msf{U}_0^-)$, the natural projection $\msf{g}_0 \twoheadrightarrow \msf{u}^+_0$ induces $Z_u / \msf{m}_{x, 0} \cong \msf{u}_0^+$. 
    \end{enumerate}
\end{prop}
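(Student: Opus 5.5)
We prove (1) by filtering $\msf{U}_0$ by root subgroups ordered via the convexity orders $\prec$. We then deduce (2) from (1), following the pattern of \Cref{lem:twisted_isomorphism} and \Cref{lem:pi_U_isom}.

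For (1), first observe that both sides are perfectly smooth of the same dimension. Indeed, $\dim \msf{U}_0 = \dim (\msf{U}_0 \cap \sigma^{-1}(\msf{U}_0)) + \dim(\msf{U}_0\cap\sigma(\msf{U}^-_0))$, because $\Phi^{M+} = (\Phi^{M+}\cap\sigma(\Phi^{M+}))\sqcup(\Phi^{M+}\cap\sigma(\Phi^{M-}))$ and $\sigma$ is a bijection on roots. It therefore suffices to show that the map is bijective on $\ov{k}$-points; for a map of perfect varieties of the same dimension with the target smooth, bijectivity plus separability gives an isomorphism. For bijectivity we solve $\sigma(u)au^{-1} = v$ for a given $v \in \msf{U}_0$ by successive approximation. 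Using (1)(a),(b),(c) of \Cref{defi:convex_parabolic}, we extend $\prec$ to a total order on $\Phi^{M+}$ in which the roots of $\Phi^{M+}\cap\sigma(\Phi^{M-})$ are placed compatibly. The aim is that the subgroups $\msf{U}_{\succeq \alpha}$ generated by root groups of roots $\succeq\alpha$ form a normal filtration with abelian graded pieces; condition (c) is what makes commutators land deeper. On the graded piece indexed by $\alpha \in \Phi^{M+}\cap\sigma(\Phi^{M+})$, the equation becomes $\sigma(u_{\sigma^{-1}\alpha}) - u_\alpha + (\text{terms already determined}) = v_\alpha$. Condition (a) says $\sigma^{-1}(\alpha) \prec \alpha$, so $u_{\sigma^{-1}\alpha}$ is already known and $u_\alpha$ is uniquely solved. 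On pieces indexed by $\alpha\in\Phi^{M+}\cap\sigma(\Phi^{M-})$, the coordinate $a_\alpha$ absorbs the error, and condition (b) guarantees that the commutator corrections involving $a$ only feed into roots that are not yet processed. This inductive triangular structure gives existence and uniqueness. The Frobenius-twisted inverse is polynomial, as in \Cref{lem:twisted_isomorphism}, which gives the isomorphism.

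For (2), fix $u \in \msf{U}_0\cap\sigma(\msf{U}^-_0)$ and decompose $g\in\ov{\msf{g}}_0$ as $g = g_{\msf{m}} + g_+ + g_-$ along $\msf{m}\oplus\msf{u}^+\oplus\msf{u}^-$. The defining condition $\sigma(g)-ug\in\msf{u}^+_0$ constrains only the components of $\sigma(g)-ug$ in $\msf{m}_0\oplus\msf{u}^-_0$. We will show that the projection $Z_u \to \msf{u}^+_0$, $g\mapsto g_+$, is surjective with fibers exactly $\msf{m}_{x,0}$-torsors. Given $g_+$, we determine $g_-$ root by root in the order of part (2) of \Cref{defi:convex_parabolic}. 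For $\alpha\in\Phi^{M-}\cap\sigma(\Phi^{M-})$ the equation reads $\sigma(g_{\sigma^{-1}\alpha}) = g_\alpha + (\text{contributions of } u \text{ acting on earlier coordinates})$. The action of $u\in\sigma(\msf{U}^-_0)$ raises roots by elements of $\Phi^{M+}\cap\sigma(\Phi^{M-})$, so conditions (a),(b) ensure these contributions only involve coordinates $\prec\alpha$. This determines $g_\alpha$ uniquely; the pattern is as in \Cref{lem:pi_U_isom}, with $I_0=\phi$ by \Cref{lem:ass_meaning}. The roots of $\Phi^{M-}\cap\sigma(\Phi^{M+})$ are free in $\sigma(g)$ but pinned down in $ug$, and we check that they are forced as well. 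Finally, the $\msf{m}$-component satisfies $\sigma(g_{\msf{m}}) - g_{\msf{m}} = (\text{known})$, a Lang equation whose solutions form a torsor under $\msf{m}_{x,0}$. Hence $Z_u/\msf{m}_{x,0}\cong\msf{u}^+_0$.

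The main obstacle is (1): we must verify that a single total order on $\Phi^{M+}$ compatible with (a)–(c) really yields a normal filtration in which the nonabelian correction terms of $\sigma(u)au^{-1}$ are strictly later. Our first step there is to handle the $\msf{U}_0\cap\sigma^{-1}(\msf{U}_0)$ part via (c) as a graded-abelian induction, then insert the $a$-factor using (b), checking commutators $[\msf{U}_\beta,\msf{U}_\gamma]$ with $\beta\in\Phi^{M+}\cap\sigma(\Phi^{M-})$ case by case.
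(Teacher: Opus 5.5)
Part (1) has a genuine gap: the successive approximation runs in the wrong direction. Conditions (1)(b) and (1)(c) of \Cref{defi:convex_parabolic} push commutators \emph{down} in $\prec$. Condition (c) says $\alpha+\beta\prec\max(\alpha,\beta)$, and (b) says $\alpha+\gamma\prec\alpha$ for $\gamma\in\Phi^{M+}\cap\sigma(\Phi^{M-})$. So the subgroups that actually exist are $(\msf{U}_0\cap\sigma(\msf{U}_0))_{\preceq\alpha}$, with the $\prec\alpha$ part normal in them. The root groups with roots $\succeq\alpha$ need not generate anything supported on those roots. In an ascending scheme, the corrections coming from $a$ fall into roots you have already fixed.

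The leading-order (abelian) equations are not triangular upward either. The $u$-coordinates are indexed by $\Phi^{M+}\cap\sigma^{-1}(\Phi^{M+})$. Take a Frobenius chain $\beta_0,\beta_1=\sigma\beta_0,\dots,\beta_m$ in $\Phi^{M+}$ with $\sigma^{-1}\beta_0,\sigma\beta_m\in\Phi^{M-}$ and $m\ge 1$. Its unknowns are $a_{\beta_0},u_{\beta_0},\dots,u_{\beta_{m-1}}$, and its equations are:
\begin{itemize}
\item $a_{\beta_0}-u_{\beta_0}=v_{\beta_0}$;
\item $\sigma(u_{\beta_{i-1}})-u_{\beta_i}=v_{\beta_i}$ for $0<i<m$;
\item $\sigma(u_{\beta_{m-1}})=v_{\beta_m}$.
\end{itemize}
Your rule ``$u_{\sigma^{-1}\alpha}$ is known, solve for $u_\alpha$'' never determines $u_{\beta_0}$. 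At $\alpha=\beta_m$ there is no $u_\alpha$ at all, and the equation there is a constraint. Already for $m=1$ the recipe produces nothing.

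The system is triangular in the opposite direction. Use the equation at $\alpha\in\Phi^{M+}\cap\sigma(\Phi^{M+})$ to determine $u_{\sigma^{-1}\alpha}$ from $u_\alpha$, processing in \emph{decreasing} $\prec$-order. By (a), $\alpha\prec\sigma\alpha$ whenever both lie in $\Phi^{M+}\cap\sigma(\Phi^{M+})$, so $u_\alpha$ is already known. The equations on $\Phi^{M+}\cap\sigma(\Phi^{M-})$ then give $a$. This is the paper's argument. Put $w=\sigma(u)\in\msf{U}_0\cap\sigma(\msf{U}_0)$, so the map becomes $(w,a)\mapsto wa\sigma^{-1}(w)^{-1}$. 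For $x\in(\msf{U}_0\cap\sigma(\msf{U}_0))_{\preceq\alpha}\cdot(\msf{U}_0\cap\sigma(\msf{U}^-_0))$ there is a unique $w_\alpha\in\msf{U}_{\alpha,0}$ such that $w_\alpha x\sigma^{-1}(w_\alpha)^{-1}$ has no $\alpha$-component. The factor $\sigma^{-1}(w_\alpha)$ sits at $\sigma^{-1}\alpha$, which is either $\prec\alpha$ by (a) or in the $a$-part. All commutators land strictly below $\alpha$ by (b) and (c). In this direction (a), (b) and (c) all send error terms to roots not yet treated, so existence, uniqueness and a polynomial inverse come out together.

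For (2), your constructive route is sound once the same orientation slip is fixed. Condition (2)(a) applied to $\sigma^{-1}\alpha$, together with (2)(b), shows that the equation at $\alpha\in\Phi^{M-}\cap\sigma(\Phi^{M-})$ involves coordinates $\succ\alpha$, not $\prec\alpha$. So you must induct downward. You must also solve the coordinates in $\Phi^{M-}\cap\sigma(\Phi^{M+})$ \emph{first}, by induction on height (only lower coordinates of that same set feed into them through $ug$), because the remaining equations depend on them. With that fixed, solving $g_-$ triangularly from $g_+$ and then the Lang equation for $g_{\msf{m}}$ gives surjectivity directly. This is more direct than the paper, which runs the induction only on the kernel to get injectivity. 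It then gets bijectivity from the valuative criterion, a dimension count and \cite[Corollary A.16]{Zhu17}.
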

\begin{proof}
    For (1), it is essentially proved in \cite[Theorem 0.1]{NTY25}. Here, we provide a proof to explain how to use condition (1) in \Cref{defi:convex_parabolic}. Fix an isomorphism 
    \[
        \msf{U}_0 \cap \sigma(\msf{U}_0) \times \msf{U}_0 \cap \sigma(\msf{U}_0^-) \cong \msf{U}_0, \quad (u, a) \mapsto ua. 
    \]
    For each $\alpha \in \Phi_0^{M+} \cap \sigma(\Phi_0^{M+})$, let 
    \[
        (\msf{U}_0 \cap \sigma(\msf{U}_0))_{\preceq \alpha} = \prod_{\substack{\beta \in \Phi_0^{M+} \cap \sigma(\Phi_0^{M+}) \\ \beta \preceq \alpha}} \msf{U}_{\beta, 0}. 
    \]
    The product order does not matter by condition (1)--(c). 
    Now, conditions (1)--(a) and (1)--(b) imply that for every $u \in (\msf{U}_0 \cap \sigma(\msf{U}_0))_{\preceq \alpha} \times  \msf{U}_0 \cap \sigma(\msf{U}_0^-)$, there is a unique $u_\alpha \in \msf{U}_{\alpha, 0}$ such that $u_\alpha u \sigma^{-1}(u_\alpha)^{-1} \in (\msf{U}_0 \cap \sigma(\msf{U}_0))_{\prec \alpha} \times  \msf{U}_0 \cap \sigma(\msf{U}_0^-)$. The claim follows by induction on $\alpha$. 

    For (2), the projection $\msf{g}_0 \to \msf{m}_{x, 0}^\perp$ induces an isomorphism
    \begin{equation} \label{eq:Zu_mod_m_x0}
        Z_u / \msf{m}_{x, 0} \cong \{ g \in \msf{m}_{x, 0}^\perp\mid \sigma(g) - ug \in \msf{\ov{m}}_{x, 0} \oplus \msf{u}^+ \}. 
    \end{equation}
    since $u - 1$ maps $\msf{\ov{m}}_{x, 0} \oplus \msf{u}^+_0$ into $\msf{u}^+_0$. First, we show that the projection $\msf{\ov{m}}_{x, 0}^\perp \to \msf{u}^+_0$ induces an injection $Z_u / \msf{m}_{x, 0} \to \msf{u}^+_0$. Since it is additive, it is enough to show that the kernel is trivial. 

    Let $g \in \msf{u}^-_0$ be an element such that $\sigma(g) - ug \in \msf{\ov{m}}_{x, 0} \oplus \msf{u}^+$. For each $\alpha \in \Phi^{M-}_0$, fix a basis $v_\alpha \in \msf{u}_{\alpha, 0}$ and let $g = \sum_{\alpha \in \Phi^{M-}_0} c_\alpha v_\alpha$. We inductively show $c_\alpha = 0$. 
    
    First, we show $c_\alpha = 0$ for $\alpha \in \Phi^{M-}_0 \cap \sigma(\Phi^{M+}_0)$. Since $\sigma^{-1}(\alpha) \in \Phi^{M+}_0$, the $\alpha$-component of $\sigma(g)$ is zero. Moreover, the $\alpha$-component of $ug - g$ is a linear combination of $c_\beta$ for $\alpha - \beta \in \Phi^{M+}_0 \cap \sigma(\Phi^{M-}_0)$. Then, $\beta$ also lies in $\Phi^{M-}_0 \cap \sigma(\Phi^{M+}_0)$, so we get $c_\alpha = 0$ by induction on $\alpha$ along the Bruhat order in $\Phi^{M-}_0 \cap \sigma(\Phi^{M+}_0)$. 
    
    Next, we show $c_\alpha = 0$ for $\alpha \in \Phi^{M-}_0 \cap \sigma(\Phi^{M-}_0)$. Suppose $c_\beta = 0$ for every $\beta \in \Phi^{M-}_0 \cap \sigma(\Phi^{M-}_0)$ with $\alpha \prec \beta$. By condition (2)--(a) and the case for $\Phi^{M-}_0 \cap \sigma(\Phi^{M+}_0)$, the $\alpha$-component of $\sigma(g)$ is zero. Moreover, the $\alpha$-component of $ug - g$ is a linear combination of $c_\beta$ for $\alpha - \beta \in \Phi^{M+}_0 \cap \sigma(\Phi^{M-}_0)$. By condition (2)--(b), we have $\alpha \prec \beta$. By the induction hypothesis, $c_\beta = 0$ and we get $c_\alpha = 0$.  

    Now, it follows that $Z_u / \msf{m}_{x, 0} \to \msf{u}^+_0$ is an injective additive morphism. We show that it is perfectly proper by checking the valuative criterion as in \cite[Proposition A.20]{Zhu17}. Let $R$ be a perfect valuation ring over $\ov{k}$ and let $K$ be its fraction field. Let $g \in (Z_u / \msf{m}_{x, 0})(K)$ be an element such that its projection to $\msf{u}_0^+$ lies in $\msf{u}_0^+(R)$. For the valuative criterion, it is enough to show that the coefficient $c_\alpha$ of the $\alpha$-component of $g$ lies in $R$ for every  $\alpha \in \Phi^{M-}_0$. This follows from the same induction argument as previously.
    
    Now, $Z_u / \msf{m}_{x, 0} \to \msf{u}^+_0$ is an injective perfectly proper additive morphism. By definition, $Z_u$ is finite \'{e}tale over $\msf{u}_0^+$, so $\dim (Z_u / \msf{m}_{x, 0}) = \dim Z_u = \dim \msf{u}^+_0$. Thus, $Z_u / \msf{m}_{x, 0} \to \msf{u}^+_0$ is bijective and universally closed, so it is universally homeomorphic. By \cite[Corollary A.16]{Zhu17}, it is an isomorphism. 
\end{proof}

\begin{thm} \label{thm:positive_depth_DL_induction}
    When $P$ is convex, \Cref{conj:obstruction_for_depth_0_induction} is affirmative. In particular, for every $i \geq 0$, the restriction map 
    \[
        H_c^i(\msf{Y}_{P, r}, \Qla) \mathop{\otimes}\limits_{\msf{M}_r(k)} (\rho \otimes \phi) \to H_c^i(\msf{Y}_{P, [0+, r]}, \Qla) \mathop{\otimes}\limits_{\msf{M}_r(k)} (\rho \otimes \phi) 
    \]
    is an isomorphism and we have
    \[
        R\Gamma_c(\msf{Y}_{P, r}, \IC_{\msf{Y}, r}) \mathop{\otimes}\limits_{\msf{M}_r(k)} (\rho \otimes \phi) = \Ind_{r, \phi}(\rho). 
    \]
\end{thm}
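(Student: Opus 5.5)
The plan is to answer \Cref{conj:obstruction_for_depth_0_induction} affirmatively by reducing to a $u$ in the ``transversal'' part and showing that the induced Artin--Schreier sheaf on $Z_u/\msf{m}_{x,0}$ is a nontrivial additive character sheaf. The remaining claims then follow formally, as in \Cref{cor:induction_step_positive}.

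First, let $u \in \msf{U}_0$ with $u \notin \Img(\iota_0)$. By \Cref{prop:benefit_convexity}(1), we may $\sigma$-conjugate $u$ by an element of $\msf{U}_0 \cap \sigma^{-1}(\msf{U}_0)$ and write it as $u = a$ with $a \in \msf{U}_0 \cap \sigma(\msf{U}^-_0)$ and $a \neq 1$. The stalk of \eqref{eq:sheaf_in_issue_s=0} is equivariant under this conjugation, so this reduction is harmless: $\sigma(v)u v^{-1}$ with $v$ in that group gives an isomorphism $Z_u \cong Z_{\sigma(v)uv^{-1}}$ commuting with the $\msf{m}_{x,0}$-translation. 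By \Cref{prop:benefit_convexity}(2), $Z_u/\msf{m}_{x,0} \cong \msf{u}^+_0$ is an affine space. Moreover, $Z_u$ is an $\msf{m}_{x,0}$-torsor over it, namely the pullback of the Lang torsor of $\msf{m}_{x,0}$ along the map $c \colon \msf{u}_0^+ \to \ov{\msf{m}}_{x,0}$ sending $g^+$ to the $\msf{m}$-component of $\sigma(g) - ug$. Here $g$ is the unique lift in $\msf{m}^\perp_{x,0}$ given by \eqref{eq:Zu_mod_m_x0}. This map $c$ is additive. Hence
\[ R\Gamma_c(Z_u,\Qla)\otimes_{\msf{m}_{x,0}}\phi \cong R\Gamma_c(\msf{u}^+_0, (\phi\circ c)^*\cl{L}_\psi), \]
and since $\phi\circ c$ is additive, this vanishes exactly when $\phi\circ c$ is not of the form $w\mapsto \ell(w)^q-\ell(w)$, i.e. when the linear functional it defines is nonzero modulo Artin--Schreier.

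The main step is to show that $\phi \circ c \neq 0$ in this sense whenever $a \neq 1$. Take the highest root $\beta \in \Phi^{M+}_0 \cap \sigma(\Phi^{M-}_0)$, in the Bruhat order, at which $a$ has a nonzero component $c_\beta$. Then evaluate $\phi\circ c$ on the coordinate line $\msf{u}_{-\beta}$-dual direction. More precisely, take $g^+$ supported in $\msf{u}_{\alpha,0}$ with $\alpha = \sigma^{-1}(\beta)$ or $-\beta$, as appropriate. The leading contribution to the $\msf{m}$-component is $\phi([c_\beta v_\beta, v_{-\beta}])$ times a coordinate, possibly with a Frobenius power. This is nonzero by $0$-genericity, in the form of {\bf GE1}, as in the proof of \Cref{lem:lift_X}. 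I would carry this out as in the last paragraph of the proof of \Cref{prop:vanishing_outside_Z}: write $\phi\circ c$ in coordinates as $\sum c'_\alpha u_\alpha^{q^{e_\alpha}}$ and use the convex orders (2)--(a),(b) to see that the lower-order corrections from the $\msf{u}^-$-part of the lift $g$ cannot cancel the leading term. I expect this to be the main obstacle. The induction that produces $g$ from $g^+$ mixes coordinates, so one must track triangularity with respect to $\prec$ to show that a single nonvanishing, nonlinear-in-Frobenius-twist monomial survives. Once a nonzero term survives, the Künneth formula together with $R\Gamma_c(\bb{A}^1, \cl{L}_\psi(cx^{q^e}))=0$ for $c \neq 0$ gives the vanishing.

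Finally, the affirmative answer gives vanishing of $H_c^\bullet(\msf{X}_{P,r}-Z)\otimes(\rho\otimes\phi)$ via $\msf{X}_{P,r} = \msf{X}^0_{P,r}\times^{\msf{G}^0_r(k)}\msf{G}_r(k)$. Since $\msf{X}_{P,r}\to\msf{Y}_{P,r}$ is an affine fibration, the same holds on $\msf{Y}_{P,r}-\msf{Y}_{P,[0+,r]}$. The excision long exact sequence then gives the claimed isomorphism of restriction maps. Composing with \Cref{cor:induction_step_positive} for $s$ slightly above $0$, and shifting by $\dim\msf{Y}_{P,r}$ for $\IC_{\msf{Y},r}$, yields $R\Gamma_c(\msf{Y}_{P,r},\IC_{\msf{Y},r})\otimes_{\msf{M}_r(k)}(\rho\otimes\phi)=\Ind_{r,\phi}(\rho)$ in degree $0$.
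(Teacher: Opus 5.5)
Your outline agrees with the paper's proof everywhere except the key step. The paper's proof also has the reduction to $u\in\msf{U}_0\cap\sigma(\msf{U}^-_0)$, $u\neq 1$, via \Cref{prop:benefit_convexity}~(1), and the identification $Z_u/\msf{m}_{x,0}\cong\msf{u}^+_0$ via (2). It likewise passes to the additive Artin--Schreier sheaf $f^*\cl{L}_\psi$ with $f=\phi\circ c$, and concludes by excision and \Cref{cor:induction_step_positive}.

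The gap is your choice of the \emph{highest} root $\beta$ in the support of $u$. Incidentally, the test line has to be $\msf{u}_{-\sigma^{-1}(\beta),0}$, since neither $\sigma^{-1}(\beta)$ nor $-\beta$ lies in $\Phi^{M+}_0$. For $g^+=t\,v_{-\sigma^{-1}(\beta)}$ the lift $g$ has $c_{-\beta}=t^q$ up to a unit. However, the triangular system for the $\Phi^{M-}_0\cap\sigma(\Phi^{M+}_0)$-components of $g$ then propagates upward. Every $\delta$ in the support of $u$ with $\delta-\beta\in\Phi^{M-}_0$ gives a possibly nonzero component $c_{\delta-\beta}\in\ov{k}\,t^q$. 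These components, together with the higher-order terms of $\operatorname{Ad}(u)$, also feed into the $\msf{m}$-component, so the coefficient of $t^q$ is a sum that can cancel.

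Concretely, take $\beta=\delta+\delta'$ inside a $\mathrm{GL}_3$-Levi, write $u=1+xE_{12}+yE_{23}+zE_{13}$, and let $\phi(\operatorname{diag}(h_1,h_2,h_3))=\sum_i\lambda_ih_i$. A direct $3\times 3$ computation gives $c_{-\delta}=-y\,t^q$ and $c_{-\delta'}=x\,t^q$. Up to sign and normalization, $f$ on the line equals $t^q\bigl(z(\lambda_1-\lambda_3)-xy(\lambda_1-\lambda_2)\bigr)$, which vanishes identically for suitable $u$ with nonzero $\beta$-component. This does occur in the convex setting: take $M=T$ and $P=B$ in $\mathrm{Sp}_6$ with Frobenius acting by $-1$ on the roots. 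There $\Phi^{M+}_0\cap\sigma(\Phi^{M-}_0)$ is all of $\Phi^+$ and convexity is vacuous.

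The convex orders (2)--(a),(b) cannot repair this. They only govern the $\Phi^{M-}_0\cap\sigma(\Phi^{M-}_0)$-components, and those never reach the $\msf{m}$-component, because a root that is a nonnegative combination of roots of $U$ lies in $U$.

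The fix is the paper's choice: take $\alpha$ \emph{minimal} in the support of $u$ for a total order compatible with the Bruhat order. Then no root of the support lies below $\alpha$. So the only nonzero $\Phi^{M-}_0\cap\sigma(\Phi^{M+}_0)$-component of the lift of $t\,v_{-\sigma^{-1}(\alpha)}$ is $c_{-\alpha}=t^q$, and no higher-order term of $\operatorname{Ad}(u)$ can carry $v_{-\alpha}$ into $\msf{m}$. Hence $f=c\,t^q$, where $c$ is the single bracket pairing the $\alpha$-component of $u$ with $v_{-\alpha}$, and $c\neq 0$ by \textbf{GE1}. From there your K\"unneth argument finishes the proof; equivalently, so does the paper's connectedness of the Artin--Schreier cover.
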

\begin{proof}
    By the equivariance of $\msf{X}^0_{P, r}$ under $\msf{U}_{[0, r]} \cap \sigma^{-1}(\msf{U}_{[0, r]})$, the support of \eqref{eq:sheaf_in_issue_s=0} is stable under its $\sigma$-conjugation. By \Cref{prop:benefit_convexity} (1), it is enough to treat $u \in \msf{U}_0 \cap \sigma(\msf{U}_0^-) - \{ 0 \}$ in \Cref{conj:obstruction_for_depth_0_induction}. First, we have a Cartesian diagram
    \begin{center}
        \begin{tikzcd}
            Z_u \ar[r] \ar[d] & \msf{\ov{m}}_{x, 0} \ar[d, "m \mapsto \sigma(m) - m"] \\
            Z_u / \msf{m}_{x, 0} \ar[r, "\pi"] & \msf{\ov{m}}_{x, 0}, 
        \end{tikzcd}
    \end{center} 
    where the bottom map $\pi$ sends $g \in Z_u / \msf{m}_{x, 0}$ in \eqref{eq:Zu_mod_m_x0} to the projection of $\sigma(g) - ug$ to $\msf{\ov{m}}_{x, 0}$. Regard $\phi$ as a $k$-linear form on $\msf{m}_{x, 0}$ via the identification $\msf{m}_{x, 0} \cong \msf{m}_{x, r}$ and let $f = \phi \circ \pi$. Then
    \[
        Z_u \times^{\msf{m}_{x, 0}, \phi} \Qla \cong f^* \cl{L}_\psi
    \]
    as a rank $1$ local system on $Z_u / \msf{m}_{x, 0}$. Now, fix an identification $Z_u / \msf{m}_{x, 0} \cong \msf{u}_0^+$ as in \Cref{prop:benefit_convexity} (2) and regard $f$ as a polynomial map $\msf{u}_0^+ \to \bb{A}^1$. Let $\wtd{\msf{u}}_0^+$ be the following Cartesian product
    \begin{center}
        \begin{tikzcd}
            \wtd{\msf{u}}_0^+ \ar[r] \ar[d] & \bb{A}^1 \ar[d, "a \mapsto a^q - a"] \\
            \msf{u}_0^+ \ar[r, "f"] & \bb{A}^1.  
        \end{tikzcd}
    \end{center} 
    Since $\wtd{\msf{u}}_0^+$ is a commutative unipotent group scheme, its identity component is an affine space. Then $R\Gamma_c(\wtd{\msf{u}}_0^+, \Qla) \cong \Qla^{\oplus \pi_0(\wtd{\msf{u}}_0^+)}[-2 \dim \msf{u}_0^+]$ and it is enough to show that $\wtd{\msf{u}}_0^+$ is connected.

    Fix a total order of $\Phi^{M+}_0 \cap \sigma(\Phi^{M-}_0)$ that is compatible with the Bruhat order and take a decomposition $\msf{U}_0 \cap \sigma(\msf{U}^-_0) \cong \prod_{\alpha \in \Phi^{M+}_0 \cap \sigma(\Phi^{M-}_0)} \msf{U}_{\alpha, 0}$ with respect to this order. Let $\alpha \in \Phi^{M+}_0 \cap \sigma(\Phi^{M-}_0)$ be the minimal element such that the $\alpha$-component of $u$ is nontrivial. 
    
    Consider the restriction of $f$ to $\msf{u}_{-\sigma^{-1}(\alpha), 0} \subset \msf{u}_0^+$. Let $t$ be the standard coordinate of $\msf{u}_{-\sigma^{-1}(\alpha), 0} \cong \bb{A}^1$. For each $\beta \in \Phi^{M}_0$, fix a basis $v_\beta \in \msf{u}_{\beta, 0}$ and let $g = \sum_{\beta \in \Phi^{M}_0} c_\beta v_\beta$ be a section of $\msf{u}_{-\sigma^{-1}(\alpha), 0} \subset Z_u / \msf{m}_{x, 0}$ via \eqref{eq:Zu_mod_m_x0}. Since $c_\beta = 0$ for $\beta \in \Phi^{M+}_0 \backslash \{-\sigma^{-1}(\alpha)\}$ and $c_\beta = t$ for $\beta = -\sigma^{-1}(\alpha)$, $\sigma(g) - ug \in \msf{\ov{m}}_{x, 0} \oplus \msf{u}^+$ implies
    \[
        c_\beta = \left\{
            \begin{alignedat}{4}
                & t^q & \quad & (\beta  = -\alpha) \\
                & 0 & \quad & (\beta \in \Phi^{M-}_0 \cap \sigma(\Phi^{M+}_0) \backslash \{-\alpha\})
            \end{alignedat}
        \right.
    \]
    by the minimality of $\alpha$. Then $f(g) = c t^q$ for some $c \in \ov{k}^\times$ by the $(G, M)$-genericity of $\phi$. Thus, the restriction of $\wtd{\msf{u}}_0^+$ to $\msf{u}_{-\sigma^{-1}(\alpha), 0}$ is connected. As $\wtd{\msf{u}}_0^+ \to \msf{u}_0^+$ is a Galois cover, it follows that $\wtd{\msf{u}}_0^+$ is also connected. 

    Thus, \Cref{conj:obstruction_for_depth_0_induction} is affirmative. Then, \eqref{eq:sheaf_in_issue_s=0} is concentrated on $\Img(\iota_0)$, so 
    \[
        R\Gamma_c(\msf{Y}_{P, r} - \msf{Y}_{P, [0+, r]}, \IC_{\msf{Y}, r}) \mathop{\otimes}\limits_{\msf{M}_r(k)} (\rho \otimes \phi) = 0. 
    \]
    Thus, the remaining claims follows from \Cref{cor:induction_step_positive}. 
\end{proof}

\begin{rmk}
    In particular, we get the middle concentration of the $\rho \otimes \phi$-isotypic part in $\msf{Y}_{P, r}$. It would be interesting to see if the natural map
    \[
        R\Gamma_c(\msf{Y}_{P, r}, \IC_{\msf{Y}, r}) \mathop{\otimes}\limits_{\msf{M}_r(k)} (\rho \otimes \phi) \to R\Gamma(\msf{Y}_{P, r}, \IC_{\msf{Y}, r}) \mathop{\otimes}\limits_{\msf{M}_r(k)} (\rho \otimes \phi) 
    \]
    is an isomorphism as in the case of $\msf{W}_{I, r}$ (see \Cref{lem:ass_meaning}) and the usual Deligne-Lusztig varieties (see \cite[Th\'{e}or\`{e}me 11.7]{BR03}). 
\end{rmk}

\begin{rmk}
    In particular, we verify \cite[Conjecture 5.12]{CO25} for convex parabolic subgroups, which conjectures the Euler characteristic formula of $R\Gamma_c(\msf{Y}_{P, r}, \IC_{\msf{Y}, r}) \otimes (\rho \otimes \phi)$. As a consequence of \cite[Proposition 6.4]{Cha25}, \cite[Theorem 5.6]{CO25} is recovered for suitably chosen Borel subgroups without technical conditions on residual characteristic. 
\end{rmk}


\renewcommand\bibfont{\footnotesize}
\printbibliography

\end{document}